\definecolor{webcolor}{rgb}{0.8,0,0.2}
\definecolor{webbrown}{rgb}{.6,0,0}
\numberwithin{equation}{section}
\renewcommand{\AA}{\mathbb A}
\newcommand{\CC}{\mathbb C}
\newcommand{\FF}{\mathbb F}
\newcommand{\PP}{\mathbb P}
\newcommand{\QQ}{\mathbb Q}
\newcommand{\RR}{\mathbb R}
\newcommand{\ZZ}{\mathbb Z} 
\newcommand{\Zhat}{\widehat\ZZ}
\newcommand{\calG}{\mathcal G}  \newcommand{\calF}{\mathcal F}
\newcommand{\calH}{\mathcal H}
\newcommand{\calI}{\mathcal I}
\newcommand{\calJ}{\mathcal J}
\newcommand{\calS}{\mathcal S}
\newcommand{\calA}{\mathcal A}
\newcommand{\calE}{\mathcal E}
\newcommand{\calX}{\mathcal X}
\newcommand{\calL}{\mathcal L}
\newcommand{\calM}{\mathcal M}
\newcommand{\calR}{\mathcal R}
\newcommand{\m}{\mathfrak m}
\newcommand{\scrA}{\mathscr A}
\newcommand{\scrB}{\mathscr B}
\newcommand{\scrC}{\mathscr C}
\newcommand{\scrE}{\mathscr E}
\newcommand{\scrL}{\mathscr L}
\newcommand{\scrS}{\mathscr S}
\newcommand{\scrF}{\mathscr F}
\def\cyc{{\operatorname{cyc}}}
\def\ab{{\operatorname{ab}}}
\def\Spec{\operatorname{Spec}}
\def\Gal{\operatorname{Gal}}
\def\ord{\operatorname{ord}} 
\def \GL {\operatorname{GL}}
\def \SL {\operatorname{SL}}
\def\Aut{\operatorname{Aut}} 
\def\Frob{\operatorname{Frob}}
\def\tr{\operatorname{tr}}
\newcommand{\brak}[1]{[\![ #1 ]\!]}
\newcommand{\defi}[1]{\textsf{#1}} 
\newcommand\blank[1]{}
\def\bbar#1{\setbox0=\hbox{$#1$}\dimen0=.2\ht0 \kern\dimen0 
\overline{\kern-\dimen0 #1}}
\newcommand{\Qbar}{{\overline{\mathbb Q}}} 
\newcommand{\Kbar}{{\bbar{K}}}
\newcommand{\FFbar}{\overline{\FF}} 
\newtheorem{thm}{Theorem}[section]
\newtheorem{lemma}[thm]{Lemma}
\newtheorem{cor}[thm]{Corollary}
\newtheorem{prop}[thm]{Proposition}
\newtheorem{conj}[thm]{Conjecture}
\newtheorem*{programb}{Mazur's Program B}
\theoremstyle{definition}
\newtheorem{definition}[thm]{Definition}
\newtheorem{algorithm}[thm]{Algorithm}
\theoremstyle{remark}
\newtheorem{remark}[thm]{Remark}
\newtheorem{example}[thm]{Example}
\newenvironment{romanenum}{\hfill \begin{enumerate} }{\end{enumerate}}
\newenvironment{alphenum}{\hfill \begin{enumerate} }{\end{enumerate}}
\begin{document}

\title{Explicit open images for elliptic curves over $\QQ$}
\subjclass[2020]{Primary 11G05; Secondary 11F80}
\author{David Zywina}
\address{Department of Mathematics, Cornell University, Ithaca, NY 14853, USA}
\email{zywina@math.cornell.edu}

\begin{abstract}
For a non-CM elliptic curve $E$ defined over $\QQ$, the Galois action on its torsion points gives rise to a Galois representation $\rho_E\colon \Gal(\Qbar/\QQ)\to \GL_2(\Zhat)$ that is unique up to isomorphism.   A renowned theorem of Serre says that the image of $\rho_E$ is an open, and hence finite index, subgroup of $\GL_2(\Zhat)$.   We describe an algorithm that computes the image of $\rho_E$ up to conjugacy in $\GL_2(\Zhat)$; this algorithm is practical and has been implemented.   Up to a positive answer to a uniformity question of Serre and finding all the rational points on a finite number of explicit modular curves of genus at least $2$, we give a complete classification of the groups $\rho_E(\Gal(\Qbar/\QQ))\cap \SL_2(\Zhat)$ and the indices $[\GL_2(\Zhat):\rho_E(\Gal(\Qbar/\QQ))]$ for non-CM elliptic curves $E/\QQ$.   Much of the paper is dedicated to the efficient computation of modular curves via modular forms expressed in terms of Eisenstein series.
\end{abstract}

\maketitle

\section{Introduction}

\subsection{Serre's open image theorem}

Consider an elliptic curve $E$ defined over $\QQ$.  For each integer $N>1$, let $E[N]$ be the $N$-torsion subgroup of $E(\Qbar)$, where $\Qbar$ is a fixed algebraic closure of $\QQ$.  The group $E[N]$ is a free $\ZZ/N\ZZ$-module of rank $2$.    There is a natural action of the absolute Galois group $\Gal_\QQ:=\Gal(\Qbar/\QQ)$ on $E[N]$ that respects the group structure and which we may express in terms of a representation 
\[
\rho_{E,N}\colon \Gal_\QQ\to \Aut(E[N])\cong \GL_2(\ZZ/N\ZZ).
\]
By choosing compatible bases and taking the inverse limit, we can combine these representations into a single representation 
\[
\rho_E\colon \Gal_\QQ \to \GL_2(\Zhat)
\]
that encodes the Galois action on all the torsion points of $E$. Here the ring $\Zhat$ is the profinite completion of $\ZZ$.  The representation $\rho_E$ is uniquely determined up to isomorphism and hence the image $\rho_E(\Gal_\QQ)$ is uniquely determined up to conjugacy in $\GL_2(\Zhat)$.   

With respect to the profinite topology, we find that $\rho_E(\Gal_\QQ)$ is a closed subgroup of the compact group $\GL_2(\Zhat)$. In \cite{Serre-Inv72}, Serre proved the following theorem which says that, up to finite index, the image of $\rho_E$ is as large as possible when $E$ is non-CM  (it was actually shown for elliptic curves over a general number field, but we will restrict our attention to the rationals).  

\begin{thm}[Serre's open image theorem] \label{T:Serre 1972}
Let $E$ be a non-CM elliptic curve defined over $\QQ$.  Then $\rho_E(\Gal_\QQ)$ is an open subgroup of $\GL_2(\Zhat)$.  Equivalently, $\rho_E(\Gal_\QQ)$ is a finite index subgroup of $\GL_2(\Zhat)$.
\end{thm}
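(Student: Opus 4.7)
The plan is to use the decomposition $\GL_2(\Zhat) \cong \prod_\ell \GL_2(\ZZ_\ell)$ and reduce openness to each prime $\ell$ separately. I would establish two assertions: (i) for every prime $\ell$, the $\ell$-adic image $G_\ell := \rho_{E,\ell^\infty}(\Gal_\QQ)$ is open in $\GL_2(\ZZ_\ell)$; and (ii) for all but finitely many primes $\ell$, the mod-$\ell$ image $\rho_{E,\ell}(\Gal_\QQ)$ equals $\GL_2(\FF_\ell)$. Combined with a Goursat-type argument that exploits the near-simplicity of $\PSL_2(\FF_\ell)$ for large $\ell$ to control entanglement between distinct primes, these yield that $\rho_E(\Gal_\QQ)$ has finite index, hence is open, in $\GL_2(\Zhat)$.

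For (i), the key step is to reinterpret openness via Lie theory. As a closed subgroup of the $\ell$-adic Lie group $\GL_2(\ZZ_\ell)$, the image $G_\ell$ is itself an $\ell$-adic Lie group, and openness is equivalent to $\Lie(G_\ell) = \mathfrak{gl}_2(\QQ_\ell)$. The determinant of $\rho_{E,\ell^\infty}$ is the $\ell$-adic cyclotomic character, surjective onto $\ZZ_\ell^\times$, so it suffices to show $\Lie(G_\ell) \cap \mathfrak{sl}_2(\QQ_\ell) = \mathfrak{sl}_2(\QQ_\ell)$. A proper subalgebra would sit, up to conjugation, inside a Cartan or a Borel. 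In the Cartan case, the Zariski closure of $G_\ell$ would be a torus; since the commutant of the image in $\End(V_\ell E)$ equals $\End(E) \otimes \QQ_\ell$ by the Tate conjecture for elliptic curves, this forces $\End(E)$ to contain an order in an imaginary quadratic field, contradicting the non-CM hypothesis. In the Borel case there is a Galois-stable line in the Tate module $V_\ell E$, giving a chain of $\QQ$-rational $\ell$-power isogenies from $E$, which can be ruled out by finiteness of isogeny classes or by a direct analysis of Frobenius traces.

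For (ii), I would invoke the classification of maximal subgroups of $\GL_2(\FF_\ell)$ for $\ell \geq 5$: up to conjugacy these are Borel subgroups, normalizers of split or non-split Cartans, and preimages of the exceptional projective subgroups $A_4$, $S_4$, $A_5 \subset \operatorname{PGL}_2(\FF_\ell)$. Each type must be excluded for all but finitely many $\ell$. The Borel case gives a $\QQ$-rational cyclic subgroup of order $\ell$, excluded by Mazur's theorem on rational isogenies. The Cartan-normalizer cases force congruences such as $a_p \equiv 0 \pmod{\ell}$ on Frobenius traces for a positive density of primes $p$; combining Chebotarev with the Hasse bound $|a_p| \leq 2\sqrt{p}$ and the existence of primes of ordinary reduction rules these out once $\ell$ is large enough relative to $E$. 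The exceptional cases give bounded projective image, hence only $O(1)$ possible values of $a_p^2/p$ in $\FF_\ell$, incompatible with infinitely many distinct integer values of $a_p^2/p$ once $\ell$ is large.

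The principal obstacle is step (ii), and within it the non-split Cartan normalizer case: no rational isogeny is available as a witness, and the argument must proceed purely through Frobenius trace congruences and Chebotarev density, which is delicate. A secondary subtlety is the glueing step, converting openness at each prime into openness of the product; this hinges on the simplicity of $\PSL_2(\FF_\ell)$ for large $\ell$ to prevent nontrivial projections between the components and to force a direct-product structure on the image at almost all primes.
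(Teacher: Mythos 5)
The paper does not prove this theorem; it is cited from \cite{Serre-Inv72} and used as a black box. There is therefore no internal proof to compare against, only Serre's original argument.

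Your outline is a faithful sketch of that argument. The two-stage structure---$\ell$-adic openness at each prime via Lie-algebra considerations (with Tate's theorem on $\End_{\Gal}(V_\ell E)$ ruling out the Cartan case and Shafarevich finiteness ruling out the Borel case), followed by mod-$\ell$ surjectivity for almost all $\ell$ via the classification of maximal subgroups of $\GL_2(\FF_\ell)$, then a Goursat/Ribet gluing argument exploiting the simplicity of $\PSL_2(\FF_\ell)$ for $\ell \geq 5$---is exactly the architecture of Serre's proof. The paper itself deploys the same kind of gluing machinery elsewhere (Lemma~\ref{L:JH factors} and Lemma~\ref{L:prime divisors of level}), so your reading of what controls entanglement between primes is consistent with how the paper treats these groups.

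One historical caveat worth flagging: invoking Mazur's theorem to dispose of the Borel case in step (ii) is an anachronism. Mazur's rational-isogeny theorem (1978) postdates Serre's open image theorem (1972); Serre's original treatment of the Borel case is a self-contained argument via the ramification and local behaviour of the two characters appearing on the diagonal, which already yields an $E$-dependent bound on the exceptional primes, and is effective for a fixed $E$ (this is the basis of the algorithm in \cite{surjectivityalgorithm}). Mazur's result is what makes the Borel case \emph{uniform} in $E$ and is part of the motivation behind Conjecture~\ref{C:Serre question}. Using Mazur is a perfectly legitimate modern shortcut, but it is not part of Serre's proof and the theorem as stated here does not require it.
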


The group $\rho_E(\Gal_\QQ)$, when known, will have a simple description since it is open in $\GL_2(\Zhat)$, i.e., it is given by its level $N$ and a set of generators for its image modulo $N$ in $\GL_2(\ZZ/N\ZZ)$.  For a definition of the level and other conventions see \S\ref{SS:notation}.  Unfortunately, Serre's proof is in general ineffective.  

The goal of this work is to explain how, given a non-CM elliptic curve $E/\QQ$, we can compute the group $\rho_E(\Gal_\QQ)$ up to conjugacy in $\GL_2(\Zhat)$.    The algorithm we obtain is practical.  For example, we have used it to compute the image of $\rho_E$, up to conjugacy, for all non-CM elliptic curves $E/\QQ$ with conductor up to $500000$ (on the machine we ran it on, it took on average $0.015$ seconds per curve); these images are publicly available in the L-Functions and Modular Forms Database
(LMFDB) \cite{lmfdb}.  Our algorithms are implemented in \texttt{Magma} \cite{Magma} and the code can be found in the public repository \cite{github}. 

A large part of Serre's paper \cite{Serre-Inv72} is dedicated to showing that $\rho_{E,\ell}$ is surjective for all sufficiently large primes $\ell$.   Serre  asked whether there is a constant $C$, not depending on $E$, such that $\rho_{E,\ell}$ is surjective for all primes $\ell > C$, cf.~\cite[\S4.3]{Serre-Inv72}.  Moreover, he asks whether $\rho_{E,\ell}$ is surjective for all $\ell>37$ \cite[p.~399]{MR644559}.   We pose as a conjecture a slightly stronger version (it was conjectured independently in \cite{possibleimages} and \cite{MR3482279}). We denote the $j$-invariant of $E$ by $j_E$.

\begin{conj} \label{C:Serre question}
If $E$ is a non-CM elliptic curve over $\QQ$ and $\ell>13$ is a prime, then either $\rho_{E,\ell}(\Gal_\QQ)=\GL_2(\ZZ/\ell\ZZ)$ or 
\[
(\ell,j_E) \in \big\{\, (17, -17^2 \!\cdot\! 101^3/2), \,(17,-17\!\cdot\! 373^3/2^{17}),\, (37,-7\!\cdot\! 11^3),\, (37,-7\!\cdot\! 137^3\!\cdot\! 2083^3) \,\big\}.
\]
\end{conj}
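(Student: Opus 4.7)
The plan is to reduce the conjecture to a classification of rational points on a short list of modular curves, one for each non-trivial maximal subgroup of $\GL_2(\FF_\ell)$. By Dickson's classification, for $\ell \geq 5$ any subgroup $H \subsetneq \GL_2(\FF_\ell)$ with surjective determinant is contained in one of four types of maximal subgroup: a Borel $B$, the normalizer of a split Cartan, the normalizer of a non-split Cartan, or a subgroup whose image in $\mathrm{PGL}_2(\FF_\ell)$ is one of the exceptional groups $A_4$, $S_4$, or $A_5$. For a non-CM elliptic curve $E/\QQ$, the image cannot lie in a Cartan itself. So to prove Conjecture~\ref{C:Serre question} it suffices to show, for every prime $\ell > 13$, that the corresponding modular curves $X_0(\ell)$, $X_{\mathrm{sp}}^+(\ell)$, $X_{\mathrm{ns}}^+(\ell)$, and the exceptional modular curves have no non-cuspidal non-CM rational points other than those producing the four listed pairs $(\ell, j_E)$.

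Three of the four cases are essentially unconditional. The exceptional case is eliminated for $\ell > 13$ by Serre's analysis of the shape of inertia at $\ell$ combined with his bound on orders of projective exceptional images. The Borel case is handled by Mazur's theorem on $X_0(\ell)(\QQ)$: for $\ell > 13$ the only non-cuspidal rational points correspond to CM curves except at $\ell = 17$ and $\ell = 37$, where a direct computation isolates precisely the two non-CM $j$-invariants listed in each case. The split Cartan case is resolved by the Bilu--Parent--Rebolledo theorem, which shows $X_{\mathrm{sp}}^+(\ell)(\QQ) = \{\text{cusps and CM points}\}$ for all primes $\ell > 13$. These three inputs together leave one outstanding family.

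The decisive and genuinely open obstruction is the non-split Cartan case: one must prove that $X_{\mathrm{ns}}^+(\ell)(\QQ)$ consists only of cusps and CM points for every prime $\ell > 13$. This is the central remaining part of Serre's uniformity problem and is currently known only for finitely many small primes. A plausible line of attack splits into two regimes. For primes $13 < \ell \leq L$ below some explicit bound $L$, one would hope to apply quadratic or higher Chabauty (in the spirit of Balakrishnan--Dogra--M\"uller--Tuitman--Vonk), Mordell--Weil sieves, or Mazur-style formal immersion arguments, leveraging the explicit models of $X_{\mathrm{ns}}^+(\ell)$ and their Jacobians accessible through the Eisenstein-series techniques developed in this paper. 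For $\ell > L$, a new asymptotic Runge-type argument, analogous to what Bilu--Parent--Rebolledo devised for $X_{\mathrm{sp}}^+(\ell)$, would be required; constructing such an argument is the principal open problem, and this is where the main difficulty lies.

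Once the non-split Cartan obstruction is removed, verifying the four listed exceptions is routine: for each pair $(\ell, j)$ in the statement one picks an elliptic curve $E/\QQ$ with $j_E = j$ and applies the algorithm of the paper to compute $\rho_{E,\ell}(\Gal_\QQ)$ exactly, confirming that the image lies in a Borel subgroup and is hence not all of $\GL_2(\ZZ/\ell\ZZ)$. The main effort is therefore concentrated entirely in the non-split Cartan case.
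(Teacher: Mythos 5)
Conjecture~\ref{C:Serre question} is stated as a conjecture, not a theorem: the paper does not prove it, and no proof is currently known. You correctly recognize this, and your write-up is not a proof but a survey of the state of the art together with a correct identification of the precise open obstacle. What you have written matches, almost point for point, the paper's own discussion of the conjecture in the ``Some related results'' subsection: the reduction to rational non-CM points on the modular curves attached to the maximal subgroups of $\GL_2(\FF_\ell)$ with full determinant; Mazur's determination of $X_0(\ell)(\QQ)$ isolating the four exceptional pairs at $\ell = 17, 37$; Bilu--Parent--Rebolledo for the split Cartan normalizer; the elimination of the exceptional (projectively $\mathfrak{S}_4$, $\mathfrak{A}_4$, $\mathfrak{A}_5$) image; and the non-split Cartan normalizer $X_{\mathrm{ns}}^+(\ell)$ as the genuinely open case for which only finitely many small $\ell$ have been settled via quadratic Chabauty.

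Two small remarks on attributions and details. For the exceptional projective image, you invoke Serre's inertia-at-$\ell$ argument; the paper instead cites Mazur's remarks in \cite{MR488287} for the $\mathfrak{S}_4$ case. Both routes are standard and valid. You also remark that a non-CM image cannot lie in a Cartan subgroup itself; this is true but unnecessary, since any subgroup of a Cartan lies in the Cartan's normalizer, and the normalizer case already covers it. Finally, be careful not to present the two-regime plan (explicit Chabauty for small $\ell$, a hypothetical Runge-type argument for large $\ell$) as if it resolves anything: as you yourself note, the large-$\ell$ argument for $X_{\mathrm{ns}}^+(\ell)$ does not currently exist, so the conjecture remains open, and the statement cannot be proved by the methods of this paper.
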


Assuming Conjecture~\ref{C:Serre question}, one can show that the indices $[\GL_2(\Zhat):\rho_{E}(\Gal_\QQ)]$ are uniformly bounded as we vary over all non-CM elliptic curves $E/\QQ$, cf.~\cite[Theorem~1.3]{possibleindices}.    Based on the computations arising in this paper, we make the following prediction.

\begin{conj} \label{C:index bounds}
We have $[\GL_2(\Zhat):\rho_{E}(\Gal_\QQ)]\leq 2736$ for all non-CM elliptic curve $E$ over $\QQ$.
\end{conj}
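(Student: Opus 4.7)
The plan is to turn the conjecture into a finite-complexity verification by combining the algorithm of this paper with the reduction of \cite{possibleindices}. Assuming Conjecture~\ref{C:Serre question}, that reduction already yields a uniform bound; what remains is to pin down the sharp value $2736$. First I would (i) enumerate all possible $\ell$-adic images $\rho_{E,\ellinf}(\Gal_\QQ) \subseteq \GL_2(\ZZ_\ell)$ for non-CM $E/\QQ$, prime by prime; then (ii) enumerate all possible entanglements among these images; and finally (iii) compute the maximum of the resulting global index $[\GL_2(\Zhat):\rho_E(\Gal_\QQ)]$.

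For step (i), I would use the algorithm of this paper to climb the tree of open subgroups $H \subseteq \GL_2(\ZZ_\ell)$ and, for each $H$ lying above the known set of mod-$\ell$ images, decide via a rational-point search on the modular curve $X_H$ whether $H$ actually arises as $\rho_{E,\ellinf}(\Gal_\QQ)$ for some non-CM $E/\QQ$. This is essentially unconditional for primes $\ell \le 13$ thanks to existing work on modular curves, and together with Conjecture~\ref{C:Serre question} it closes out the exceptional primes $\ell \in \{17,37\}$ and leaves $\rho_{E,\ell}$ surjective for all other $\ell$. Only finitely many $\ell$-adic images can occur up to conjugacy in each case, and one records the $\ell$-adic index of each.

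For step (ii), I would bound the entanglement contribution by analyzing, for each pair of distinct primes $\ell\neq \ell'$, the common abelian quotients of the realizable $\ell$-adic and $\ell'$-adic images and determining which pairings are actually realized, once again by a rational-point search on the appropriate fibre-product modular curves. The tabulation done for the LMFDB up to conductor $500000$ already exhibits curves attaining index $2736$, so no improvement below this value is possible; the classification in (i)--(ii) would then confirm that no larger index occurs.

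The hard part will be the finite list of modular curves of genus $\geq 2$ whose rational points are needed, both within each $\ell$-adic tower and in the fibre products controlling entanglement. Proving they have no unexpected rational points requires methods such as Chabauty--Coleman, elliptic curve Chabauty, quadratic Chabauty, or the Mordell--Weil sieve, applied curve by curve, and this is precisely the bottleneck that also obstructs Conjecture~\ref{C:Serre question}. A secondary difficulty is bookkeeping: even after the per-prime images are fixed, the entanglement possibilities proliferate rapidly, so one needs a systematic way to prune the search in order to terminate the enumeration in a verifiable form.
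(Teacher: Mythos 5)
The statement you are asked to prove is an open conjecture: the paper does not prove it, and its evidential case (Lemmas~\ref{L:brave} and \ref{L:2023} in \S\ref{S:families}) is explicitly conditional both on Conjecture~\ref{C:Serre question} and on the conjectural completeness of a list of $81$ exceptional $j$-invariants arising as rational points on finitely many genus $\geq 2$ modular curves. Your proposal is therefore reasonable as a \emph{strategy} for eventually settling the conjecture, but you should be clear that you are not reconstructing a proof in the paper, and you should flag the two conditional inputs rather than folding them into ``the bottleneck.''

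More substantively, your plan of enumerating $\ell$-adic images and then all entanglements, and only then computing indices, takes precisely the route the paper deliberately avoids: in the introduction the author writes that ``the approach of computing the $\ell$-adic images and then describing all the possible entanglements seems to lead to an excessive number of cases.'' The key organizational idea you are missing is Lemma~\ref{L:KW} together with Lemma~\ref{L:abelian quotients and same commutators} and Proposition~\ref{P:agreeable closure}: by Kronecker--Weber, $G_E\cap\SL_2(\Zhat)=[G_E,G_E]=[\calG_E,\calG_E]$, so the index
\[
[\GL_2(\Zhat):\rho_E(\Gal_\QQ)]=[\SL_2(\Zhat):[\calG_E,\calG_E]]
\]
depends only on the \emph{agreeable closure} $\calG_E$, not on the full image. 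In particular, the entire ``abelian'' part of the entanglement data (the difference between $G_E$ and $\calG_E$) is irrelevant to the index, and a large portion of your step (ii) is wasted effort. The paper's enumeration runs over agreeable groups $\calG_1,\ldots,\calG_m$ satisfying the structural constraints of \S\ref{S:agreeable}, not over full images; this is a genuinely smaller search, and after reduction the index set $\calI$ of Lemma~\ref{L:brave} is exactly the set in Conjecture~\ref{C:brave}, whose maximum is $2736$. Your observation that the LMFDB data shows $2736$ is attained (by the $j$-invariant $-7\cdot 11^3$, cf.\ \S\ref{SS:largest known index}) is correct and is what makes the bound sharp; what neither approach can currently supply unconditionally is the matching \emph{upper} bound, and you should state that the obstruction is not just genus $\geq 2$ modular curves in general but a specific finite list whose rational points would determine the set $\calJ'$ of exceptional $j$-invariants.
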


\begin{remark}
An elliptic curve $E/\QQ$ with $j$-invariant $-7\!\cdot\! 11^3$ or $-7\!\cdot\! 137^3\!\cdot\! 2083^3$ satisfies $[\GL_2(\Zhat):\rho_{E}(\Gal_\QQ)]= 2736$.  Such non-CM elliptic curves $E/\QQ$ are special because they have an isogeny of degree $37$ defined over $\QQ$.  In particular, the upper bound in Conjecture~\ref{C:index bounds} would be best possible.
\end{remark}

We now make a braver conjecture on the possible values of the index $[\GL_2(\Zhat) : \rho_E(\Gal_\QQ)]$.  This conjecture holds assuming Conjecture~\ref{C:Serre question} and assuming that we have not missed any rational points on the high genus modular curves that arise in our computations.

\begin{conj} \label{C:brave}
If $E$ is a non-CM elliptic curve defined over $\QQ$, then $[\GL_2(\Zhat) : \rho_E(\Gal_\QQ)]$ lies in the set
\[
 \left\{\begin{array}{c} 2, 4, 6, 8, 10, 12, 16, 20, 24, 30, 32, 36, 40, 48, 54, 60, 72, 80, 84,
 96, 108,\\ 112, 120, 128, 144, 160, 182, 192, 200, 216, 220, 224, 240, 288, 300, 
336, \\360, 384, 480, 504, 576, 768, 864, 1152, 1200, 1296, 1536, 2736 \end{array}\right\}.
\]
\end{conj}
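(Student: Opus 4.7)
The plan is to reduce Conjecture~\ref{C:brave} to a finite computation via Conjecture~\ref{C:Serre question}, and then to rational-point problems on modular curves. Assuming Conjecture~\ref{C:Serre question}, for every non-CM $E/\QQ$ the representation $\rho_{E,\ell}$ is surjective at every prime $\ell \notin \{2,3,5,7,11,13\}$, apart from the four explicit $j$-invariants involving $\ell \in \{17,37\}$.  Combined with the effective control on the prime-power level of $\rho_E(\Gal_\QQ)$ at the small primes supplied by the main algorithm of this paper, one obtains an explicit integer $N_0$ such that the $\GL_2(\Zhat)$-conjugacy class of $\rho_E(\Gal_\QQ)$, and in particular its index, is determined by the image $\rho_{E,N_0}(\Gal_\QQ)$ inside $\GL_2(\ZZ/N_0\ZZ)$.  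Only finitely many open subgroups of $\GL_2(\Zhat)$ can therefore arise as $\rho_E(\Gal_\QQ)$, and one enumerates them.

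For each such candidate open subgroup $G \leq \GL_2(\Zhat)$, associate the modular curve $X_G/\QQ$ whose non-cuspidal, non-CM rational points parametrize (up to the relevant twisting) the non-CM elliptic curves $E/\QQ$ with $\rho_E(\Gal_\QQ)$ conjugate into $G$.  The index $i = [\GL_2(\Zhat):G]$ is realized by some non-CM $E/\QQ$ precisely when $X_G(\QQ)$ contains such a point that does not lift to $X_H(\QQ)$ for any proper $H \subsetneq G$.  The conjecture thus reduces to computing $X_G(\QQ)$ for every $G$ in the enumeration and verifying that the resulting set of indices is contained in the stated list.

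The analysis splits by the genus of $X_G$.  When $g=0$, either $X_G \cong \PP^1_\QQ$ and an explicit parametrization exhibits the realized indices, or $X_G(\QQ) = \emptyset$.  When $g=1$, one computes the Mordell-Weil group of the associated elliptic curve over $\QQ$, which is effective in every case that appears.  The main obstacle is the finite list of modular curves $X_G$ of genus at least $2$: Faltings' theorem guarantees $X_G(\QQ)$ is finite but provides no algorithm, so each such curve must be treated by case-specific techniques --- Chabauty-Coleman, the Mordell-Weil sieve, quadratic Chabauty, or descent via unramified covers to isogeny factors of the Jacobian --- to provably enumerate $X_G(\QQ)$.  This is exactly the assumption the author flags in introducing the conjecture, and it is the central remaining obstruction to turning Conjecture~\ref{C:brave} into a theorem; combined with the realizations of each of the listed indices already exhibited in the LMFDB computation, a successful rational-point analysis on every high-genus $X_G$ would complete the proof.
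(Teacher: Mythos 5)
Your reduction has a genuine gap at the crucial finiteness step.  You claim that Conjecture~\ref{C:Serre question} together with bounds on the $\ell$-power levels at small primes yields ``an explicit integer $N_0$ such that the $\GL_2(\Zhat)$-conjugacy class of $\rho_E(\Gal_\QQ)$ \dots\ is determined by the image $\rho_{E,N_0}(\Gal_\QQ)$'' and hence that only finitely many open subgroups of $\GL_2(\Zhat)$ can arise as images.  This is false, and the paper says so explicitly: infinitely many pairwise non-conjugate open subgroups arise as $\rho_E(\Gal_\QQ)$ over non-CM $E/\QQ$.  The point you are missing is entanglement.  Even when every $\ell$-adic projection $\rho_{E,\ell^\infty}(\Gal_\QQ)$ has level bounded uniformly in $E$ (which is true for a fixed $\ell$), the global level of $\rho_E(\Gal_\QQ)$ need not divide the product of the $\ell$-adic levels, because $\rho_E(\Gal_\QQ)$ can be a proper subgroup of the product of its $\ell$-adic projections.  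The Serre-curve case already illustrates this: for arbitrary squarefree $d$ the group $G_{\gamma_d}$ of \S\ref{SS:Serre curves} has every $\ell$-adic projection equal to $\GL_2(\ZZ_\ell)$, yet its level is divisible by the discriminant of $\QQ(\sqrt{d})$ and hence is unbounded.  So an enumeration of ``all candidate images $G$'' is not a finite list, and your third paragraph's reduction to computing $X_G(\QQ)$ for every such $G$ does not terminate.

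The paper's route around this obstacle is the agreeable-closure reduction, and it is where the Kronecker--Weber constraint becomes essential.  By Lemma~\ref{L:KW} one has $G_E\cap\SL_2(\Zhat)=[G_E,G_E]$ and thus $[\GL_2(\Zhat):G_E]=[\SL_2(\Zhat):[G_E,G_E]]$; by Lemma~\ref{L:abelian quotients and same commutators} and Proposition~\ref{P:agreeable closure} this commutator subgroup equals $[\calG_E,\calG_E]$ for the agreeable closure $\calG_E$.  Hence the index does \emph{not} depend on the individual group $G_E$ (of which there are infinitely many) but only on $\calG_E$, and assuming Conjecture~\ref{C:Serre question} there are only finitely many agreeable closures $\calG_1,\dots,\calG_m$ up to conjugacy (Lemmas~\ref{L:level of agreeable and primes} and \ref{L:agreeable finiteness}).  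This is precisely the content of Lemma~\ref{L:brave}: the index lands in the finite set $\calI=\{n_i\}$, where $n_i=[\SL_2(\Zhat):[\calG_i,\calG_i]]$.  Your correct observation --- that the remaining difficulty is determining the rational non-CM points on a finite list of genus $\geq 2$ modular curves --- then applies, but to the modular curves $X_{\calG_i}$ of agreeable groups (equivalently, to the finite set $\calJ'$ of exceptional $j$-invariants), not to modular curves of arbitrary candidate images.  To repair your argument, replace ``finitely many candidate images'' by ``finitely many agreeable closures'', and insert the Kronecker--Weber/commutator argument showing the index depends only on the agreeable closure.
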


\begin{remark}
All of the integers in the set from Conjecture~\ref{C:brave} actually occur as an index $[\GL_2(\Zhat) : \rho_E(\Gal_\QQ)]$ for some non-CM elliptic curve $E/\QQ$.  
\end{remark}

In our arguments, it will often be convenient to work with the dual representation 
\[
\rho_E^*\colon \Gal_\QQ\to \GL_2(\Zhat)
\]
of $\rho_E$, i.e., $\rho_E^*(\sigma)$ is the transpose of $\rho_E(\sigma^{-1})$.  Similarly, we can define $\rho_{E,N}^*(\sigma)$ to be the transpose of $\rho_{E,N}(\sigma^{-1})$.  Of course, computing the images of $\rho_E^*$ and $\rho_E$ are equivalent problems and their images have the same index in $\GL_2(\Zhat)$.

For a prime $\ell$, let $\rho_{E,\ell^\infty}\colon \Gal_\QQ \to \GL_2(\ZZ_\ell)$ be the representation obtained by taking the inverse limit of the $\rho_{E,\ell^n}$; equivalently, $\rho_{E,\ell^\infty}$ is the composition of $\rho_E$ with the $\ell$-adic projection.

\subsection{The Kronecker--Weber constraint on the image}

For a fixed non-CM elliptic curve $E/\QQ$, consider the group $G_E :=\rho_E^*(\Gal_\QQ) \subseteq \GL_2(\Zhat)$.    The group $G_E$ is open in $\GL_2(\Zhat)$ by Theorem~\ref{T:Serre 1972}.   We have $\det(G_E)=\Zhat^\times$ since $\det\circ \rho_E^*=\chi_\cyc^{-1}$, where $\chi_\cyc \colon \Gal_\QQ \to \Zhat^\times$ is the \defi{cyclotomic character}, cf.~\S\ref{S:Galois first}.    
 
 We also have the following important constraint on $G_E$ that arises from the Kronecker--Weber theorem.  For a group $G$, we will denote its commutator subgroup by $[G,G]$.
 
 \begin{lemma} \label{L:KW}
We have $G_E \cap \SL_2(\Zhat) = [G_E,G_E]$. In particular, 
\[
[\GL_2(\Zhat): G_E]=[\SL_2(\Zhat): [G_E,G_E]].
\]
\end{lemma}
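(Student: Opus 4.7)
The plan is to prove each inclusion separately. The containment $[G_E,G_E] \subseteq G_E \cap \SL_2(\Zhat)$ is immediate: the determinant $\det\colon G_E \to \Zhat^\times$ is a homomorphism into an abelian group, so every commutator has trivial determinant and hence lies in $\SL_2(\Zhat)$.

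For the harder inclusion $G_E \cap \SL_2(\Zhat) \subseteq [G_E,G_E]$, the key input is the Kronecker--Weber theorem. I would let $L \subseteq \Qbar$ be the fixed field of the kernel of the composition
\[
\Gal_\QQ \xrightarrow{\rho_E^*} G_E \twoheadrightarrow G_E/[G_E,G_E],
\]
so that $\Gal(L/\QQ) \cong G_E/[G_E,G_E]$. Since this quotient is abelian, $L/\QQ$ is an abelian extension, and by Kronecker--Weber $L$ is contained in the maximal cyclotomic extension $\QQ^\cyc = \QQ(\mu_\infty)$. Consequently any $\sigma \in \Gal_\QQ$ with $\chi_\cyc(\sigma)=1$ already fixes $L$ and therefore satisfies $\rho_E^*(\sigma) \in [G_E,G_E]$. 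Now given any $g \in G_E \cap \SL_2(\Zhat)$, lift it to some $\sigma \in \Gal_\QQ$ with $\rho_E^*(\sigma)=g$. Because $\det \circ \rho_E^* = \chi_\cyc^{-1}$ (from the Weil pairing), the assumption $\det g = 1$ forces $\chi_\cyc(\sigma)=1$, and the previous sentence yields $g \in [G_E,G_E]$.

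The index equality then follows formally from the short exact sequences
\[
1 \to G_E \cap \SL_2(\Zhat) \to G_E \xrightarrow{\det} \Zhat^\times \to 1 \quad \text{and} \quad 1 \to \SL_2(\Zhat) \to \GL_2(\Zhat) \xrightarrow{\det} \Zhat^\times \to 1,
\]
using that $\det(G_E) = \Zhat^\times$ (since $\chi_\cyc^{-1}$ is surjective). Dividing gives $[\GL_2(\Zhat):G_E] = [\SL_2(\Zhat):G_E \cap \SL_2(\Zhat)]$, and substituting the first part of the lemma yields the stated identity.

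The only substantive ingredient is Kronecker--Weber; the rest is bookkeeping. The one point to be careful about is the sign in $\det \circ \rho_E^* = \chi_\cyc^{-1}$, but this is harmless since the kernels of $\chi_\cyc$ and $\chi_\cyc^{-1}$ coincide. Note that this argument is special to the base field $\QQ$: over a general number field one only obtains a containment between $G_E \cap \SL_2(\Zhat)$ and $[G_E,G_E]$ of finite index controlled by the abelian extensions of the base.
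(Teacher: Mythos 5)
Your proof is correct and uses essentially the same approach as the paper: both hinge on Kronecker--Weber to identify $\ker\chi_\cyc = \Gal(\Qbar/\QQ^\ab)$ with the commutator subgroup of $\Gal_\QQ$, together with the identity $\det\circ\rho_E^* = \chi_\cyc^{-1}$. The paper phrases it slightly more compactly by showing $\rho_E^*(\Gal(\Qbar/\QQ^\ab))$ equals both $[G_E,G_E]$ and $G_E\cap\SL_2(\Zhat)$ directly, rather than splitting into two inclusions and introducing the fixed field $L$, but the substance is identical.
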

\begin{proof}
Let $\QQ^\ab\subseteq \Qbar$ be the maximal abelian extension of $\QQ$.   Since $\Gal(\Qbar/\QQ^\ab)$ is the commutator subgroup of $\Gal_\QQ$, we  have $\rho_E^*(\Gal(\Qbar/\QQ^\ab))= [G_E,G_E]$.   By the Kronecker--Weber theorem, $\QQ^\ab$ is the cyclotomic extension of $\QQ$.  Since $\chi_\cyc^{-1}=\det\circ \rho_E^*$, we deduce that $G_E\cap \SL_2(\Zhat)=\rho_E^*(\Gal(\Qbar/\QQ^\ab))$.  We obtain $G_E\cap \SL_2(\Zhat)=[G_E,G_E]$ by comparing our two descriptions of $\rho_E^*(\Gal(\Qbar/\QQ^\ab))$.  Since $\det(G_E)=\Zhat^\times$, we have $[\GL_2(\Zhat):G_E]=[\SL_2(\Zhat):G_E \cap \SL_2(\Zhat)]$.  The lemma is now immediate.
\end{proof}

\begin{example} \label{ex:not surjective}
The commutator subgroup of $\GL_2(\Zhat)$ is an index $2$ subgroup of $\SL_2(\Zhat)$, cf.~Lemma~\ref{L:basics commutators}, so the index of $[G_E,G_E]$ in $\SL_2(\Zhat)$ is even.   Therefore, the index of $G_E$ in $\GL_2(\Zhat)$ is even by Lemma~\ref{L:KW}.  In particular, $G_E\neq \GL_2(\Zhat)$; this was first observed by Serre, cf.~Proposition~22 of \cite{Serre-Inv72}.   Moreover, the image of $G_E$ lies in a specific index $2$ subgroup of $\GL_2(\Zhat)$, cf.~\S\ref{SS:Serre curves}.
\end{example}

\subsection{Modular curves}
 
 Let $E/\QQ$ be a non-CM elliptic curve and set $G_E :=\rho_E^*(\Gal_\QQ)$.    Our main tool for studying the group $G_E$ is the theory of \emph{modular curves}.   
 
 Consider any open subgroup $G$ of $\GL_2(\Zhat)$ that satisfies $\det(G)=\Zhat^\times$ and $-I \in G$.       Associated to $G$, we will define a \defi{modular curve} $X_G$, cf.~\S\ref{S:first modular curve}.  The modular curve $X_G$ is a smooth, projective and geometrically irreducible curve defined over $\QQ$ that comes with a morphism 
\[
\pi_G\colon X_G\to \PP_\QQ^1=\AA^1_\QQ \cup\{\infty\}.
\]   

For our applications to Serre's open image theorem, the {key property} of the curve $X_G$ is that $G_E$ is conjugate in $\GL_2(\Zhat)$ to a subgroup of $G$ if and only if the $j$-invariant $j_E$ of $E$ lies in the set $\pi_G(X_G(\QQ)) \subseteq \QQ\cup\{\infty\}$.   We say that a point $P\in X_G(\QQ)$ is \defi{non-CM} if $\pi_G(P) \in \QQ \cup\{\infty\}$ is the $j$-invariant of a non-CM elliptic curve.

The pairs $(X_G,\pi_G)$, as we vary over all $G$, will thus determine the image of $G_E$ in $\GL_2(\Zhat)/\{\pm I\}$ up to conjugacy.  However, this is an impractical approach for finding $G_E$ since there are \emph{infinitely many} groups $G$ to consider.   In fact, infinitely many open subgroups of $\GL_2(\Zhat)$ can arise as $G_E$ as we vary over all non-CM elliptic curve $E/\QQ$.

In \S\ref{S:computing modular forms}, we will describe a method for computing an explicit model for the modular curve $X_G$ given a group $G$. We are also interested in computing $\pi_G$ in terms of our model.   Our approach to computing modular curves is via related spaces of modular forms that we study in \S\ref{S:modular forms}.  Our application involves computing thousands of modular curves, so we are especially interested in finding efficient techniques.

\subsection{Agreeable closures} \label{SS:agreeable intro}

Instead of computing $G_E=\rho_E^*(\Gal_\QQ)$ directly, we first find a larger and friendlier group.  We say that a subgroup $\calG$ of $\GL_2(\Zhat)$ is  \defi{agreeable} if it is open in $\GL_2(\Zhat)$,  has full determinant, contains all the scalar matrices, and the levels of $\calG$ and $\calG\cap \SL_2(\Zhat)$ in $\GL_2(\Zhat)$ and $\SL_2(\Zhat)$, respectively,  have the same odd prime divisors.  

 There is a unique minimal agreeable subgroup $\calG_E$ satisfying $G_E\subseteq \calG_E$ which we call the \defi{agreeable closure} of $G_E$, cf.~\S\ref{S:agreeable}.  The group $G_E$ is normal in $\calG_E$ and the quotient group $\calG_E/G_E$ is finite and abelian, cf.~Proposition~\ref{P:agreeable closure}.   

We claim that $[G_E,G_E]=[\calG_E,\calG_E]$.   We have $[\calG_E,\calG_E] \subseteq G_E$ since $\calG_E/G_E$ is abelian and hence $[\calG_E,\calG_E] \subseteq G_E \cap \SL_2(\Zhat)$.   By Lemma~\ref{L:KW}, this gives the inclusion $[\calG_E,\calG_E]\subseteq [G_E,G_E]$.   The claim follows since the other inclusion is a consequence of $G_E \subseteq \calG_E$.

In particular,  the agreeable group $\calG_E$ determines $G_E\cap \SL_2(\Zhat)=[\calG_E,\calG_E]$, up to conjugation in $\GL_2(\Zhat)$, and also determines the index $[\GL_2(\Zhat): G_E]=[\SL_2(\Zhat):[\calG_E,\calG_E]]$.  The advantage of agreeable groups is that are far fewer of them to consider.   In fact, if Conjecture~\ref{C:Serre question} holds for $E$, then any prime dividing the level of $\calG_E$ in $\GL_2(\Zhat)$ must lie in the set  
\[
\calL:=\{2,3,5,7,11,13,17,37\},
\]
cf.~Lemma~\ref{L:level of agreeable and primes}.  From this, one can show that there are only \emph{finitely many} agreeable groups of the form $\calG_E$ as we vary over \emph{all} non-CM elliptic curves $E/\QQ$ for which Conjecture~\ref{C:Serre question} holds.   

The following theorem summarizes some details of our computations.

\begin{thm}  \label{T:main agreeable}
We can compute a finite set $\scrA$ of agreeable subgroups that are pairwise non-conjugate in $\GL_2(\Zhat)$ and satisfy the following conditions:
\begin{alphenum}
\item \label{I:prop of goal a}
For every group $\calG\in \scrA$, the level of $\calG$ is not divisible by any prime $\ell\notin \calL$.
\item \label{I:prop of goal b}
Let $G$ be any agreeable subgroup of $\GL_2(\Zhat)$ for which the level of $G$ is divisible only by primes in the set $\calL$ and for which $X_G(\QQ)$ has a non-CM point.
\begin{itemize}
\item
If $X_G(\QQ)$ is infinite, then $G$ is conjugate in $\GL_2(\Zhat)$ to some group $\calG\in \scrA$.
\item
If $X_G(\QQ)$ is finite, then $G$ is conjugate in $\GL_2(\Zhat)$ to a subgroup of some $\calG\in \scrA$ with $X_\calG(\QQ)$ finite.
\end{itemize}
\item  \label{I:prop of goal c}
If $\calG\in \scrA$ is a group for which $X_{\calG}(\QQ)$ is finite, then $X_G(\QQ)$ is infinite for all agreeable groups $\calG\subsetneq G \subseteq \GL_2(\Zhat)$.
\item \label{I:prop of goal d}
If $\calG\in \scrA$ is a group for which $X_{\calG}$ has genus at most $1$, then $X_\calG(\QQ)$ has a non-CM point.
\item \label{I:prop of goal e}
For any group $\calG\in \scrA$ for which  $X_\calG$ has genus at most $1$, we can compute a model for the curve $X_{\calG}$ and, with respect to this model, compute the morphism $\pi_{\calG}$ from $X_{\calG}$ to the $j$-line. 
\item \label{I:prop of goal f}
For any group $\calG\in \scrA$ and rational number $j \in \QQ-\{0,1728\}$, we can determine whether  $\pi_{\calG}(P)=j$ for some $P\in X_{\calG}(\QQ)$. 
\end{alphenum}
\end{thm}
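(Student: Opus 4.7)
The plan is to construct $\scrA$ by a systematic top-down enumeration of conjugacy classes of agreeable subgroups of $\GL_2(\Zhat)$ whose level is divisible only by primes in $\calL$, retaining precisely those groups that mark the transition between infinite and finite $\QQ$-rational points on the associated modular curves. The enumeration is finite because, for each $\ell\in\calL$, the possible local levels of an agreeable subgroup are bounded (using that the primes forced by Conjecture~\ref{C:Serre question} lie in $\calL$, cf.~Lemma~\ref{L:level of agreeable and primes}); an agreeable subgroup of $\GL_2(\Zhat)$ is then determined by a fibered-product construction over $\ell\in\calL$ subject to the condition that the odd-prime divisors of the levels of $\calG$ and $\calG\cap\SL_2(\Zhat)$ coincide.

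First I would enumerate the candidate agreeable subgroups prime-by-prime, building a directed graph of agreeable inclusions starting from precomputed databases of open subgroups of $\GL_2(\ZZ_\ell)$, and compute the genus of each $X_G$ by the standard index-and-ramification formulas for covers of $X(1)$. Second, for each candidate I would determine whether $X_G(\QQ)$ is infinite: for genus $0$, by searching for a rational point and parameterizing; for genus $1$, by identifying $X_G$ with an elliptic curve over $\QQ$ and computing its Mordell-Weil rank via descent; for genus $\geq 2$, the set is finite by Faltings. Third, I would let $\scrA$ consist of (i) the maximal agreeable groups $\calG\subsetneq\GL_2(\Zhat)$ with $X_\calG(\QQ)$ infinite and containing a non-CM point, together with (ii) those agreeable $\calG$ with $X_\calG(\QQ)$ finite for which every strict agreeable overgroup $G$ has $X_G(\QQ)$ infinite. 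Properties (a) and (c) are immediate from this construction, and (b) follows because any agreeable $G$ with the prescribed prime condition and a non-CM rational point either matches a type-(i) group or sits in an ascending chain of agreeable overgroups terminating at a type-(ii) member of $\scrA$. Property (d) amounts to exhibiting, for each $\calG\in\scrA$ of genus $\leq 1$, a non-CM rational point: the genus-$0$ case is handled by the parameterization and the genus-$1$ case by producing either a non-torsion rational point or a rational torsion point whose $j$-image is non-CM.

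Properties (e) and (f) are handled by the explicit modular-curve machinery developed later in the paper: the construction of $X_\calG$ via Eisenstein-series models in \S\ref{S:computing modular forms} yields an embedding in projective space together with a rational expression for $\pi_\calG$, and then (f) reduces, for a given $j\in\QQ-\{0,1728\}$, to solving an explicit system whose rational solvability is routine in genus $0$, an elliptic-curve point lookup in genus $1$ via the known Mordell-Weil group, and intersection with a precomputed finite rational-point list in genus $\geq 2$. The hard part will be this last genus $\geq 2$ case: the modular curves arising at levels involving $\ell\in\{11,17,37\}$ can have genera beyond what is tractable by standard Chabauty-Coleman or Mordell-Weil sieve methods, so the theorem is in effect conditional on the resolution of the rational-point problem for a finite explicit list of high-genus modular curves, in addition to Conjecture~\ref{C:Serre question} for bounding the primes appearing. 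A secondary combinatorial difficulty is managing the enumeration at $\ell=2$, where the number of admissible agreeable local levels is largest; this is handled by exploiting the tree structure of agreeable inclusions to avoid redundant genus computations.
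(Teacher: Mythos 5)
Your high-level strategy—systematic enumeration keeping the groups that mark the infinite-to-finite transition of $X_G(\QQ)$—matches the paper's intent, but your justification for why the enumeration is finite is incorrect, and this is precisely the crux of the theorem. You write that ``the possible local levels of an agreeable subgroup are bounded (using that the primes forced by Conjecture~\ref{C:Serre question} lie in $\calL$, cf.~Lemma~\ref{L:level of agreeable and primes})''. But Lemma~\ref{L:level of agreeable and primes} bounds only the \emph{set of primes} dividing the level, not the level itself; there are infinitely many agreeable subgroups of $\GL_2(\Zhat)$ with level a power of $2$, so you cannot just enumerate all agreeable groups with level supported on $\calL$. The finiteness in the paper comes from two nontrivial ingredients you do not invoke. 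First, the $\ell$-adic projections appearing are finitely many, because the paper imports the classification results of Sutherland--Zywina \cite{MR3671434} (for infinite-point cases) and Rouse--Sutherland--Zureick-Brown \cite{RSZ} (for the general $\ell$-adic situation); this yields the sets $\scrS_\ell$ in \S\ref{SS:ell-adic case for scrA}. Second, and crucially, once the $\ell$-adic projections are fixed, there are only finitely many agreeable groups with those projections---this is Lemma~\ref{L:agreeable finiteness} (L:agreeable finiteness), proved via a Goursat-type argument bounding iterated commutator subgroups, and it is what makes the descent from each $H\in\scrB$ through the maximal agreeable subgroups $\calM(\calG)$ terminate. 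Without Lemma~\ref{L:agreeable finiteness} you have no a priori control over how many ``entanglements'' a given family of $\ell$-adic projections can admit, so the ``fibered-product construction'' you describe is not visibly finite. Separately, your description of $\scrA$ as containing ``the maximal agreeable groups $\calG\subsetneq\GL_2(\Zhat)$ with $X_\calG(\QQ)$ infinite'' is imprecise in a way that could fail property (\ref{I:prop of goal b}): that property requires $\scrA$ to contain, up to conjugacy, \emph{every} agreeable group with level in $\calL$, infinitely many rational points, and a non-CM point---not just maximal ones. The paper therefore builds $\scrA_H$ as the full downward closure within a family, not a set of maximal elements.

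A smaller divergence: for property (\ref{I:prop of goal f}) in genus at least $2$ you propose intersecting with a precomputed rational-point list, which presupposes that $X_\calG(\QQ)$ has been determined. The paper deliberately avoids this (determining $X_\calG(\QQ)$ for high-genus curves is exactly the hard open problem the theorem is engineered to sidestep): instead it lifts a given $j$ through the chain of intermediate morphisms to a genus-$\leq 1$ overcurve where the fibers are computable, using the singular models of \S\ref{SS:alternate models} and, when the level is a prime power, the Frobenius-matrix sieve of \cite{RSZ}. This distinction matters, because otherwise property (\ref{I:prop of goal f}) would be as hard as resolving the open rational-points problems.
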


Our set $\scrA$ contains $315$ groups $\calG$ for which $X_\calG$ has genus $0$ (and hence $X_\calG$ is isomorphic to $\PP^1_\QQ$ since it has a rational point).  Our set $\scrA$ contains {$139$} and {$17$} groups $\calG$ for which $X_\calG$ has genus $1$ and $X_\calG(\QQ)$ is infinite or finite, respectively.  

Our original set $\scrA$ constructed contained thousands of groups $\calG$ for which $X_\calG$ has genus at least $2$.    For each such group $\calG$, $X_\calG(\QQ)$ is finite by Faltings; unfortunately, $X_\calG(\QQ)$ can sometimes be extremely difficult to compute.   Observe that whenever one can show that $X_\calG(\QQ)$ has no non-CM points, then we can remove $\calG$ from $\scrA$.  In our set $\scrA$, we know of $53$ groups $\calG$ for which $X_\calG$ has genus at least $2$ and $X_\calG(\QQ)$ has a non-CM point; these give rise to $81$ exceptional $j$-invariants of non-CM elliptic curves. \\

We will now outline how to compute $\calG_E$, up to conjugacy in $\GL_2(\Zhat)$, for a fixed non-CM elliptic curve $E/\QQ$.   Again recall that once we know $\calG_E$, we can then compute $G_E \cap \SL_2(\Zhat)=[\calG_E,\calG_E]$ up to conjugacy in $\GL_2(\Zhat)$ (see \S\ref{SSS:computing commutator subgroups} for how to compute commutator subgroups).    We can thus also compute the index $[\GL_2(\Zhat): \rho_E(\Gal_\QQ)]=[\GL_2(\Zhat):G_E]= [\SL_2(\Zhat): [\calG_E,\calG_E]]$.\\

Consider the case where Conjecture~\ref{C:Serre question} holds for $E/\QQ$ and $j_E$ is not in the finite set 
\begin{align}\label{E:calJ new}
\calJ := \bigcup_{G\in \scrA,\, X_G(\QQ) \text{ finite}} \pi_G(X_G(\QQ)).
\end{align}
We can verify whether $E/\QQ$ satisfies these conditions by using the algorithm from \cite{surjectivityalgorithm} and Theorem~\ref{T:main agreeable}(\ref{I:prop of goal f}).   (For non-CM elliptic curves over $\QQ$ that do not satisfy these conditions, we will take any alternate and more direct approach later.)

Let us now explain how Theorem~\ref{T:main agreeable}  allows us to compute the group $\calG_E$ up to conjugacy; more details can be found in \S\ref{S:finding the agreeable closure}.   We have $j_E \in \pi_{\calG_E}(X_{\calG_E}(\QQ))$ since $\rho_E^*(\Gal_\QQ)\subseteq \calG_E$.  In particular, $X_{\calG_E}(\QQ)$ contains a non-CM point.  Our assumption that Conjecture~\ref{C:Serre question} holds for $E$ implies that the level of $\calG_E$ is not divisible by any prime $\ell \notin \calL$, cf.~\S\ref{SS:closure in most cases}.    If $\calG_E$ is conjugate in $\GL_2(\Zhat)$ to a subgroup of some group $\calG\in \scrA$ with $X_\calG(\QQ)$ finite, then we have $j_E \in \pi_\calG(X_\calG(\QQ))$ which contradicts $j_E\notin \calJ$.  Therefore, $\calG_E$ is not conjugate in $\GL_2(\Zhat)$ to a subgroup of any $\calG\in \scrA$ with $X_\calG(\QQ)$ finite.   Applying Theorem~\ref{T:main agreeable}(\ref{I:prop of goal b}), we deduce that $\calG_E$ is conjugate in $\GL_2(\Zhat)$ to a unique group $\calG\in \scrA$.   So let $\calG$ be a group in $\scrA$ with maximal index in $\GL_2(\Zhat)$ amongst those that satisfy $j_E \in \pi_\calG(X_\calG(\QQ))$; this can be found using Theorem~\ref{T:main agreeable}(\ref{I:prop of goal f}).   Then the explicit group $\calG$ is conjugate in $\GL_2(\Zhat)$ to the agreeable closure $\calG_E$ of $G_E$.

\subsection{Finding the image of Galois}  \label{SS:finding Galois}

Let $E$ be a non-CM elliptic curve over $\QQ$.  Suppose that we have found an agreeable subgroup $\calG$ of $\GL_2(\Zhat)$ such that, after possibly conjugating $G_E:=\rho^*_E(\Gal_\QQ)$ in $\GL_2(\Zhat)$, $G_E$ is a subgroup of $\calG$ satisfying $G_E\cap \SL_2(\Zhat)=[\calG,\calG]$.   In particular, $G_E$ is a normal subgroup of $\calG$ and $\calG/G_E$ is finite and abelian.  As noted in \S\ref{SS:agreeable intro}, the agreeable closure $\calG_E$ of $G_E$ will satisfy these properties and is computable.

Choose an open subgroup $G$ of $\calG$ satisfying $\det(G)=\Zhat^\times$ and $G\cap \SL_2(\Zhat)=[\calG,\calG]$.  Note that such a subgroup $G$ exists since the (unknown) group $G_E$ will have these properties.  In practice, we choose $G$ with minimal possible level.   Note that $G$ is a normal subgroup of $\calG$ and that $\calG/G$ is finite and abelian.  

Let $\alpha_E\colon \Gal_\QQ  \to \calG/G$ be the homomorphism that is the composition of $\rho_E^*\colon \Gal_\QQ \to G_E \subseteq \calG$ with the quotient map $\calG\to \calG/G$.  Since $\calG/G$ is abelian, there is a unique homomorphism 
\[
\gamma_E \colon \Zhat^\times \to \calG/G
\]
satisfying $\gamma_E(\chi_\cyc(\sigma)^{-1})=\alpha_E(\sigma)$ for all $\sigma\in \Gal_\QQ$.  Now define
\begin{align} \label{E:calHE intro def}
\calH_E:=\{ g \in \calG :  g\cdot G = \gamma_E(\det g)\};
\end{align}
it is a subgroup of $\GL_2(\Zhat)$ and the following lemma shows that it is the image of $\rho_E^*$ up to conjugacy.

\begin{lemma} \label{L:intro HE}
The groups $G_E=\rho_E^*(\Gal_\QQ)$ and $\calH_E$ are conjugate in $\GL_2(\Zhat)$.
\end{lemma}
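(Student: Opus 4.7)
With the setup already arranged so that $G_E\subseteq \calG$ and $G_E\cap \SL_2(\Zhat)=[\calG,\calG]$, the plan is to prove the sharper statement $G_E=\calH_E$ as subgroups of $\calG$; the lemma will then follow immediately, since conjugating $\rho_E^*$ inside $\GL_2(\Zhat)$ conjugates both $G_E$ and $\calH_E$ by the same element.

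Before the two inclusions, I would first verify that $\gamma_E\colon \Zhat^\times\to \calG/G$ is well-defined.  Since $\calG/G$ is abelian, $\alpha_E$ factors through the abelianization of $\Gal_\QQ$.  By the Kronecker--Weber theorem the cyclotomic character descends to an isomorphism $\chi_\cyc\colon \Gal_\QQ^{\ab} \xrightarrow{\sim} \Zhat^\times$, so $\alpha_E$ factors uniquely through $\chi_\cyc^{-1}$, giving the required homomorphism $\gamma_E$.

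For the inclusion $G_E\subseteq \calH_E$, I would take $g=\rho_E^*(\sigma)\in G_E$ and use the identity $\det\circ \rho_E^*=\chi_\cyc^{-1}$ to compute $\det g=\chi_\cyc(\sigma)^{-1}$; the defining relation for $\gamma_E$ then gives $g\cdot G=\alpha_E(\sigma)=\gamma_E(\chi_\cyc(\sigma)^{-1})=\gamma_E(\det g)$, whence $g\in\calH_E$.

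For the reverse inclusion, the symmetric hypothesis $G\cap \SL_2(\Zhat)=[\calG,\calG]=G_E\cap \SL_2(\Zhat)$ becomes essential.  Given $g\in \calH_E$, I would pick $\sigma\in \Gal_\QQ$ with $\chi_\cyc(\sigma)^{-1}=\det g$, so that $g\cdot G=\gamma_E(\det g)=\alpha_E(\sigma)=\rho_E^*(\sigma)\cdot G$.  Then $h:=g\,\rho_E^*(\sigma)^{-1}$ lies in $G$ and has determinant $1$, hence $h\in G\cap \SL_2(\Zhat)=G_E\cap \SL_2(\Zhat)\subseteq G_E$, and therefore $g=h\cdot\rho_E^*(\sigma)\in G_E$.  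The only real obstacle here is the well-definedness of $\gamma_E$; once Kronecker--Weber supplies that, the two inclusions are forced by the complementary roles of $G$ and $G_E$ as sections of $\det$ modulo $[\calG,\calG]$.
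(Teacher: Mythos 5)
Your proof is correct and takes essentially the same route as the paper: both prove the sharper equality $G_E=\calH_E$ inside $\calG$, with the forward inclusion coming from $\det\circ\rho_E^*=\chi_\cyc^{-1}$ and the defining relation of $\gamma_E$, and the reverse inclusion forced by $G\cap\SL_2(\Zhat)=[\calG,\calG]=G_E\cap\SL_2(\Zhat)$. The only cosmetic difference is that you exhibit the reverse inclusion by explicitly choosing $\sigma$ with $\chi_\cyc(\sigma)^{-1}=\det g$ and factoring $g=h\cdot\rho_E^*(\sigma)$, whereas the paper phrases the same step as the observation that $\calH_E\cap\SL_2(\Zhat)=G\cap\SL_2(\Zhat)$ together with $\det(G_E)=\Zhat^\times$; your opening paragraph on the well-definedness of $\gamma_E$ via Kronecker--Weber is part of the ambient setup in the paper rather than the lemma's proof, but it is correct and harmless to include.
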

\begin{proof}
For any $\sigma\in \Gal_\QQ$, we have
\[
\rho_E^*(\sigma) \cdot G =\alpha_E(\sigma)=\gamma_E(\chi_\cyc(\sigma)^{-1}) = \gamma_E( \det(\rho_E^*(\sigma)) ).
\]
In particular, $G_E$ is a subgroup of $\calH_E$.   By assumption, we have $G_E\cap \SL_2(\Zhat)=[\calG,\calG]$ and hence
\[
\calH_E \cap \SL_2(\Zhat) = G \cap \SL_2(\Zhat) = [\calG,\calG] = G_E \cap \SL_2(\Zhat).
\]    
Since $G_E$ is a subgroup of $\calH_E$ with $G_E \cap \SL_2(\Zhat)= \calH_E \cap \SL_2(\Zhat)$ and $\det(G_E)=\Zhat^\times$, we deduce that $G_E=\calH_E$.
\end{proof}

Since $G_E$ is conjugate in $\GL_2(\Zhat)$ to the group $\calH_E$, computing the image of $\rho_E^*$ reduces to the problem of finding $\gamma_E$.

\begin{remark}
The notation $\alpha_E$, $\gamma_E$ and $\calH_E$ suppresses the dependence on the choice of conjugate of $\rho_E^*$ for which $\rho_E^*(\Gal_\QQ)\subseteq \calG$.   However, the group $\calH_E$, up to conjugacy in $\GL_2(\Zhat)$, depends only on $E$.
\end{remark}

We now give an alternate description of $\gamma_E$ and an overview of how we will compute it.

Define the open subvariety $U_\calG:=X_\calG - \pi_\calG^{-1}(\{0,1728,\infty\})$ of $X_\calG$.    In \S\ref{S:finding image given agreeable closure}, we shall describe a particular \'etale cover $\phi\colon Y\to U_\calG$ that is Galois with group $\calG/G$.  When $-I\in G$, we will have $Y=U_G \subseteq X_G$ and $\phi$ will be the natural morphism.   The main task in \S\ref{S:finding image given agreeable closure} is computing models for $Y$ and $U_\calG$, with the action of $\calG/G$ on $Y$, and finding the corresponding $\phi$ with respect to these models.  Note that from \S\ref{SS:agreeable intro}, we need only consider a finite number of agreeable groups $\calG$ if we restrict to non-CM $E/\QQ$ for which Conjecture~\ref{C:Serre question} holds.

Since $G_E$ is conjugate in $\GL_2(\Zhat)$ to a subgroup of $\calG$  and we have a model for $U_\calG$, we can choose an explicit rational point $u\in U_\calG(\QQ)$ such that $\pi_\calG(u)=j_E$.   The fiber $\phi^{-1}(u)\subseteq Y(\Qbar)$ has a simply transitive $\calG/G$-action.  We also have a $\Gal_\QQ$-action on $\phi^{-1}(u)$ since $\phi$ and $u$ are defined over $\QQ$.  The actions of $\calG/G$ and $\Gal_\QQ$ on $\phi^{-1}(u)$ commute.    Fix an element $y\in Y(\Qbar)$ with $\phi(y)=u$.    For each $\sigma\in \Gal_\QQ$, we have
\[
\sigma(y)=\alpha_u(\sigma)\cdot y
\]
for a unique $\alpha_u(\sigma) \in \calG/G$.   In this manner, we obtain a homomorphism $\alpha_u\colon \Gal_\QQ \to \calG/G$.
Since $\calG/G$ is abelian, $\alpha_u$ does not depend on the choice of $y$.   Our models produce an explicit description of $\alpha_u$.    For any sufficiently large prime $p$, by reducing our models modulo $p$ we will be able to verify that $\alpha_u$ is unramified at $p$ and also compute $\alpha_u(\Frob_p)\in \calG/G$.  

After possibly replacing $\rho_E^*$ by an isomorphic representation that still satisfies $G_E:=\rho_E^*(\Gal_\QQ)\subseteq \calG$ and $G_E\cap \SL_2(\Zhat)=[\calG,\calG]$, we will show in \S\ref{SS:finding image setup} that $\alpha_E = \chi_d\cdot \alpha_u$, where $\chi_d\colon \Gal_\QQ \to \Gal(\QQ(\sqrt{d})/\QQ)\hookrightarrow \{\pm 1\}$ is the quadratic character arising from a certain squarefree integer $d$ (the $d$ is chosen so that the quadratic twist of $E$ by $d$ is isomorphic to an explicit elliptic curve over $\QQ$ with the same $j$-invariant).   With this $\alpha_E$ and any sufficiently large prime $p$, we can verify that $\chi_d$ and $\alpha_u$ are unramified at $p$ and compute $\alpha_E(\Frob_p) = \chi_d(\Frob_p) \alpha_u(\Frob_p)$.

The homomorphism $\alpha_E$ factors through $\rho_{E,N}^*$, where $N$ is the level of $G$.   Therefore, $\alpha_E$ is unramified at all primes $p\nmid M$, where $M$ is the product of $N$ and the primes $p$ for which $E$ has bad reduction.  So the corresponding $\gamma_E$ factors through a homomorphism
\[
\bbar\gamma_E \colon \ZZ_M^\times/(\ZZ_M^\times)^e \to \calG/G,
\]
where $e$ is the exponent of the group $\calG/G$ and $\ZZ_M=\prod_{\ell|M}\ZZ_\ell$.   So to compute $\gamma_E$, it suffices to find $\bbar\gamma_E(p \cdot(\ZZ_M^\times)^e )=\alpha_E(\Frob_p)^{-1} \in \calG/G$ for a finite set of primes $p\nmid M$ that generate the finite group $\ZZ_M^\times/(\ZZ_M^\times)^e$.    

So by computing $\alpha_E(\Frob_p)$ for enough primes $p\nmid M$, we obtain $\gamma_E$ and hence can compute the group $\calH_E$.     This will complete the computation of  $\rho_E^*(\Gal_\QQ)$, up to conjugacy  in $\GL_2(\Zhat)$, since it is conjugate to $\calH_E$.   

\begin{remark} \label{R:let us make clear}
Let us make clear what we mean that $\calH_E$ is computable.  That we have computed $\gamma_E$ means we have a positive integer $D\geq 1$ such that $\gamma_E$ factors through an explicit homomorphism $(\ZZ/D\ZZ)^\times \to \calG/G$.  We also know the groups $\calG$ and $G$, i.e., we have an integer $N\geq 1$ that is divisible by the levels of $\calG$ and $G$, and we have a set of generators for the image in $\GL_2(\ZZ/N\ZZ)$ of $\calG$ and $G$.    Let $N'$ be the least common multiple of $N$ and $D$.   Then $\calH_E$ is an open subgroup of $\GL_2(\Zhat)$ whose level divides $N'$ and we can find explicit generators for the image of $\calH_E$ in $\GL_2(\ZZ/N'\ZZ)$ under reduction modulo $N'$.
\end{remark}

\subsection{Example: Serre curves} \label{SS:Serre curves}

Let us consider the largest agreeable group $\calG:=\GL_2(\Zhat)$. The commutator subgroup $[\calG,\calG]$ is the unique subgroup of $\SL_2(\Zhat)$ with level $2$ and index $2$, cf.~Lemma~\ref{L:basics commutators}.   Let $G$ be the unique subgroup of $\calG=\GL_2(\Zhat)$ with level $2$ and index $2$.  We have $G\cap \SL_2(\Zhat)=[\calG,\calG]$,  $\det(G)=\Zhat^\times$, and $\calG/G$ is cyclic of order $2$.

  For a squarefree integer $d$, let $\gamma_d\colon \Zhat^\times\to  \calG/G$ be the unique homomorphism for which $\Gal_\QQ\to \calG/G$, $\sigma\mapsto \gamma_d( \chi_\cyc(\sigma)^{-1})$  factors through $\Gal(\QQ(\sqrt{d})/\QQ)\hookrightarrow \calG/G$.  After fixing an isomorphism $\calG/G\cong \{\pm1\}$, $\gamma_d$ factors through the \emph{Kronecker character} of $\QQ(\sqrt{d})$.    Define the group 
\[
G_{\gamma_d} := \{g \in \calG :  g\cdot G = \gamma_d(\det g) \}.
\]
Observe that $G_{\gamma_d}$ is an open subgroup of $\calG=\GL_2(\Zhat)$ with index $2$ that satisfies $\det(G_{\gamma_d})=\Zhat^\times$ and $G_{\gamma_d}\cap \SL_2(\Zhat)= G\cap \SL_2(\Zhat)=[\calG,\calG]$.    We have $G_{\gamma_1}=G$.\\

Now consider any non-CM elliptic curve $E/\QQ$ and set $G_E:=\rho_E^*(\Gal_\QQ)$.  
As noted in Remark~\ref{ex:not surjective}, the index of $G_E$ in $\GL_2(\Zhat)$ is always divisible by $2$. Moreover, we can show that $G_E$ is contained in an explicit index $2$ subgroup of $\GL_2(\Zhat)$.  Let $\alpha_E\colon \Gal_\QQ \to \calG/G$ be the composition of $\rho_E^*$ with the obvious quotient map.   Let $\gamma_E \colon \Zhat^\times \to \calG/G$  be the  homomorphism  satisfying $\gamma_E(\chi_\cyc(\sigma)^{-1})=\alpha_E(\sigma)$ for all $\sigma\in \Gal_\QQ$.    We have $\gamma_E = \gamma_d$ for a unique squarefree integer $d$ and hence 
\[
G_E\subseteq G_{\gamma_d}
\] 
since $\rho_E^*(\sigma)\cdot G = \alpha_E(\sigma) = \gamma_E(\chi_\cyc(\sigma)^{-1})=\gamma_d(\det \rho_E^*(\sigma))$.  Since $G$ has level $2$, the integer $d$ can be found by studying the $2$-torsion of $E$.  A direct computation shows that $d \in \Delta\cdot (\QQ^\times)^2$, where $\Delta$ is the discriminant of a Weierstrass model of $E/\QQ$.    One can  show that $\Delta\cdot (j_E-1728)$ is always a square in $\QQ$ so $d\in (j_E-1728)\cdot (\QQ^\times)^2$.

Following Lang and Trotter \cite{MR0568299}, we say that a (non-CM) $E/\QQ$ is a \defi{Serre curve} if $[\GL_2(\Zhat): G_E]=2$.   Thus Serre curves are elliptic curves $E/\QQ$ for which the image of $\rho_E$ is as ``large as possible''.   Equivalently, a non-CM elliptic curve $E/\QQ$ is a Serre curve if and only if $G_E=G_{\gamma_d}$ for the unique squarefree integer $d\in (j_E-1728)\cdot (\QQ^\times)^2$.    The first examples of such curves were given by Serre,  see the end of \S5.5 of \cite{Serre-Inv72}.   

Now consider a Serre curve $E/\QQ$.  We have $G_E=G_{\gamma_d}$ for a  unique squarefree integer $d$.    If $d$ is divisible by an odd prime, then $G_{\gamma_d}$ is not agreeable since the level of $G_E$ is divisible by an odd prime and the level of $G_E \cap \SL_2(\Zhat)=G \cap \SL_2(\Zhat)$ is $2$.   So for $d \notin \{\pm 1,\pm 2\}$, the agreeable closure of $G_E$ is $\GL_2(\Zhat)$.   For $d \in \{\pm 1, \pm 2\}$, the group $G_{\gamma_d}$ is agreeable (and so $G_E$ is its own agreeable closure).   However, we cannot have $G_E=G_{\gamma_d}$ for $d\in \{\pm 1\}$,  since in these cases $[G_{\gamma_d},G_{\gamma_d}] \subsetneq G_{\gamma_d} \cap \SL_2(\Zhat)$.

\begin{prop}
For a non-CM elliptic curve $E/\QQ$, let $\calG_E$ be the agreeable closure of $G_E$.   Then $E$ is a Serre curve if and only if $\calG_E$ is equal to $\GL_2(\Zhat)$, $G_{\gamma_2}$ or $G_{\gamma_{-2}}$.
\end{prop}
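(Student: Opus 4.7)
The plan is to combine three ingredients already supplied in the subsection above: (i) the classification that $E$ is a Serre curve if and only if $G_E = G_{\gamma_d}$ for the unique squarefree $d \in (j_E - 1728)(\QQ^\times)^2$, and necessarily with $d \notin \{\pm 1\}$; (ii) the fact that $G_{\gamma_d}$ is agreeable precisely when $d \in \{\pm 1, \pm 2\}$ (equivalently, when the Kronecker character $\gamma_d$ has $2$-power conductor); and (iii) the identity $G_E \cap \SL_2(\Zhat) = [\calG_E,\calG_E]$ from \S\ref{SS:agreeable intro} together with Lemma~\ref{L:KW}.

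For the forward direction, suppose $E$ is a Serre curve, so $G_E = G_{\gamma_d}$ with $d$ squarefree, $d \notin \{\pm 1\}$, and $[\GL_2(\Zhat):G_E]=2$. The index being $2$ means the only subgroups of $\GL_2(\Zhat)$ containing $G_E$ are $G_E$ itself and $\GL_2(\Zhat)$, so the agreeable closure $\calG_E$ is one of these two. If $G_E$ is agreeable then $\calG_E = G_E = G_{\gamma_d}$ and (ii) combined with $d \notin \{\pm 1\}$ forces $d \in \{\pm 2\}$; otherwise $\calG_E = \GL_2(\Zhat)$. Either way $\calG_E \in \{\GL_2(\Zhat), G_{\gamma_2}, G_{\gamma_{-2}}\}$.

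For the reverse direction, assume $\calG_E$ is one of the three listed groups. From (iii) and Lemma~\ref{L:KW},
\[
[\GL_2(\Zhat) : G_E] \;=\; [\SL_2(\Zhat) : G_E \cap \SL_2(\Zhat)] \;=\; [\SL_2(\Zhat) : [\calG_E,\calG_E]].
\]
When $\calG_E = \GL_2(\Zhat)$, the right side equals $2$ by Lemma~\ref{L:basics commutators}, so $E$ is a Serre curve. When $\calG_E = G_{\gamma_{\pm 2}}$, one has $[\calG_E,\calG_E] \subseteq G_{\gamma_{\pm 2}} \cap \SL_2(\Zhat) = [\GL_2(\Zhat),\GL_2(\Zhat)]$ automatically, and everything reduces to the reverse containment $[G_{\gamma_{\pm 2}},G_{\gamma_{\pm 2}}] \supseteq [\GL_2(\Zhat),\GL_2(\Zhat)]$.

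The main obstacle is precisely this equality $[G_{\gamma_{\pm 2}},G_{\gamma_{\pm 2}}] = [\GL_2(\Zhat),\GL_2(\Zhat)]$, which stands in contrast to the strict inclusion quoted above for $d = \pm 1$. Since $G_{\gamma_{\pm 2}}$ has level $8$ (the conductor of the Kronecker character of $\QQ(\sqrt{\pm 2})$), this is a finite verification inside $\GL_2(\ZZ/8\ZZ)$: compute the abelianization of the index-$2$ subgroup $G_{\gamma_{\pm 2}} \bmod 8$ and check it has the same order as the abelianization of $\GL_2(\ZZ/8\ZZ)$, equivalently exhibit commutators of elements of $G_{\gamma_{\pm 2}} \bmod 8$ that together generate $[\GL_2(\ZZ/8\ZZ),\GL_2(\ZZ/8\ZZ)]$. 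At level $4$, where $G_{\gamma_{\pm 1}}$ lives, the analogous calculation fails and the abelianization picks up an extra $\ZZ/2$ factor, which is exactly the obstruction recorded above; passing from level $4$ to level $8$ supplies the missing commutators and removes it.
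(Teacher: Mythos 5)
Your overall route agrees with the paper's: the forward implication simply re-derives what the paper establishes in the text immediately above the proposition, and for the converse you correctly isolate the single non-trivial input, namely the equality $[G_{\gamma_{\pm 2}},G_{\gamma_{\pm 2}}]=[\GL_2(\Zhat),\GL_2(\Zhat)]$, which the paper's one-line proof asserts with ``In all three cases, we have $[\calG_E,\calG_E]=[\calG,\calG]$.''

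However, the concrete verification you propose for that key equality is wrong. Write $A := G_{\gamma_{\pm 2}} \bmod 8$, an index-$2$ subgroup of $B := \GL_2(\ZZ/8\ZZ)$. You claim that $[A,A]=[B,B]$ can be certified by checking $|A^{\ab}|=|B^{\ab}|$, and that this is equivalent to exhibiting commutators of $A$ that generate $[B,B]$. These two conditions are \emph{not} equivalent: since $[A,A]\subseteq[B,B]$ and $|B|=2|A|$, one has
\[
\frac{|B^{\ab}|}{|A^{\ab}|} \;=\; \frac{|B|}{|A|}\cdot\frac{|[A,A]|}{|[B,B]|} \;=\; 2\cdot\frac{|[A,A]|}{|[B,B]|},
\]
so $[A,A]=[B,B]$ forces $|A^{\ab}|=|B^{\ab}|/2$, while $|A^{\ab}|=|B^{\ab}|$ forces $[[B,B]:[A,A]]=2$. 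Thus ``same order of abelianizations'' is precisely the \emph{wrong} test: it would certify the $d=\pm1$ pathology rather than rule it out. Your alternative formulation (directly exhibit commutators of $A$ generating $[B,B]$) is correct and is what one should do.

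Separately, and more mildly, the mod-$8$ computation only pins down the image of $[G_{\gamma_{\pm 2}},G_{\gamma_{\pm 2}}]$ modulo $8$; to promote that to an equality of open subgroups of $\SL_2(\Zhat)$ you also need to bound the level of the commutator subgroup, e.g.\ via Lemma~\ref{L:openness criterion SL} as described in \S\ref{SSS:computing commutator subgroups}. This is routine given the tools in the paper, but it is not literally a check ``inside $\GL_2(\ZZ/8\ZZ)$.''
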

\begin{proof}
One direction we have already proved.  Now suppose that $\calG_E$ is $\GL_2(\Zhat)$, $G_{\gamma_2}$ or $G_{\gamma_{-2}}$.    In all three cases, we have $[\calG_E,\calG_E] = [\calG,\calG]$.   So $[\GL_2(\Zhat):G_E]=[\SL_2(\Zhat): [\calG,\calG]]=2$.
\end{proof}

\begin{remark}
\begin{romanenum}
\item
With notation as in \S\ref{SS:finding Galois}, there should be a related \'etale cover $\phi\colon Y\to U_\calG$ of degree $|\calG/G|=2$.   We have $U_{\calG}=\AA^1_\QQ -\{0,1728\}$ and $Y=U_G$.   

There is a model $U_G=\{t\in \AA^1_\QQ : t(t^2+1728)\neq 0\}$ with morphism $\phi\colon U_G\to U_\calG$ given by $\phi(t)=t^2+1728$.  For each $j\in U_\calG(\QQ)=\QQ-\{0,1728\}$, the fiber $\phi^{-1}(j)\subseteq U_G(\Qbar)$ consists of two points and the action of $\Gal_\QQ$ on it factors through a faithful action of $\Gal(\QQ(\sqrt{d})/\QQ)$, where $d$ is the unique squarefree integer in $(j-1728)\cdot (\QQ^\times)^2$.
\item
``Most'' elliptic curves over $\QQ$ are Serre curves, cf.~\cite{Jones}.
\item
For number fields $K\neq \QQ$ that contain no nontrivial abelian extension of $\QQ$, one can show that there are elliptic curves $E$ over $K$ with $\rho_{E}^*(\Gal_K)=\GL_2(\Zhat)$, cf.~\cite{MR2721742}.  Note that when $K\neq \QQ$, the maximal abelian extension of $K$ is strictly larger than the cyclotomic extension and hence the constraint of Lemma~\ref{L:KW} need not hold.
\end{romanenum}
\end{remark}

\subsection{An example with the largest known index} \label{SS:largest known index}

Let $E/\QQ$ be the non-CM elliptic curve defined by the Weierstrass equation $y^2 + xy + y = x^3 + x^2 - 8x + 6$; it has $j$-invariant $-7\cdot 11^3$ and conductor $5^2\cdot 7^2$.   This curve $E$ is special since it has an isogeny of degree $37$ defined over $\QQ$.   Without giving all the details, we now explain what goes into computing the group $G_E:=\rho_{E}^*(\Gal_\QQ)$.   In particular, this elliptic curve arises from a rational point on a modular curve of genus $2$, i.e., $X_0(37)$.

For every prime $\ell \neq 37$, we have $\rho_{E,\ell^\infty}(\Gal_\QQ)=\GL_2(\ZZ_\ell)$.  After choosing bases appropriately, the image of $\rho_{E,37}^*$ will lie in the group of upper triangular matrices in $\GL_2(\ZZ/37\ZZ)$.  Therefore, $G_E$ is a subgroup of 
\[
\calG:=\big\{ g\in \GL_2(\Zhat): g\equiv \left(\begin{smallmatrix}* & * \\0 & *\end{smallmatrix}\right) \pmod{37} \big\}.
\]
Making use of our explicit modular curves from Theorem~\ref{T:main agreeable}, we find that $\calG_E=\calG$, i.e., $\calG$ is the agreeable closure of $G_E$.

 The commutator subgroup $[\calG,\calG]$ is the level $2\cdot 37$ subgroup of $\SL_2(\Zhat)$ consisting of matrices whose image modulo $2$ lies in the unique index $2$ subgroup of $\SL_2(\ZZ/2\ZZ)$ and whose image modulo $37$ is of the form $\left(\begin{smallmatrix}1 & * \\0 & 1\end{smallmatrix}\right)$. 
 As noted in \S\ref{SS:agreeable intro}, since $\calG$ is the agreeable closure of $G_E$ we will have $G_E \cap \SL_2(\Zhat)=[\calG,\calG]$ and
\[
[\GL_2(\Zhat): G_E] = [\SL_2(\Zhat): [\calG,\calG]] = 2\cdot |\SL_2(\ZZ/37\ZZ)|/37 = 2736.\\
\]

Let $G$ be the open subgroup of $\GL_2(\Zhat)$ of level $2\cdot 37$ consisting of matrices whose image modulo $2$ lies in the unique index $2$ subgroup of $\GL_2(\ZZ/2\ZZ)$ and whose image modulo $37$ is of the form $\left(\begin{smallmatrix}* & * \\0 & 1\end{smallmatrix}\right)$.   Note that $G$ is an open subgroup of $\calG$ that satisfies $\det(G)=\Zhat^\times$ and $G\cap \SL_2(\Zhat)=[\calG,\calG]$.    

Let $\chi_1\colon \calG \to \{\pm 1\}$ be the homomorphism obtained by composing reduction modulo $2$ with the only nontrivial homomorphism  $\GL_2(\ZZ/2\ZZ) \twoheadrightarrow \{\pm 1\}$.  Let $\chi_2\colon \calG \to (\ZZ/37\ZZ)^\times$ be the homomorphism that takes a matrix $\left(\begin{smallmatrix}a & b \\c & d\end{smallmatrix}\right)$ to $d$ modulo $37$.    The kernel of the homomorphisms $\chi_1$ and $\chi_2$ both contain $G$ and together they induce an isomorphism
\begin{align}\label{E:chi1 and chi2}
\calG/G \xrightarrow{\sim} \{\pm 1\} \times (\ZZ/37\ZZ)^\times.
\end{align}

Let $\alpha_E\colon \Gal_\QQ  \to \calG/G$ be the homomorphism obtained by composing $\rho_E^*\colon \Gal_\QQ \to \calG$ with the quotient map $\calG/G$.  Since $\calG/G$ is abelian, there is a unique homomorphism $\gamma_E \colon \Zhat^\times \to \calG/G$ satisfying $\gamma_E(\chi_\cyc(\sigma)^{-1})=\alpha_E(\sigma)$ for all $\sigma\in \Gal_\QQ$.  Once we have found $\gamma_E$, Lemma~\ref{L:intro HE} implies that $G_E$ is conjugate in $\GL_2(\Zhat)$ to the explicit group $\calH_E:=\{ g \in \calG :  g\cdot G = \gamma_E(\det g)\}$.\\

Let $\gamma_1\colon \Zhat^\times\to \{\pm 1\}$ and $\gamma_2\colon \Zhat^\times \to (\ZZ/37\ZZ)^\times$ be the homomorphism obtained by composing $\gamma_E$ with $\chi_1$ and $\chi_2$, respectively.

We first describe $\gamma_1$.  The extension $\QQ(E[2])/\QQ$ is a Galois extension with group isomorphic to $\GL_2(\ZZ/2\ZZ)$.  So $\QQ(E[2])$ contains a unique quadratic extension; it is $\QQ(\sqrt{\Delta})$, where $\Delta=-5^3 7^2$ is the discriminant of the Weierstrass model of $E$.   Therefore, $\QQ(\sqrt{-5})$ is the unique quadratic extension of $\QQ$ in $\QQ(E[2])$.  Using this, we find  that $\gamma_1 \circ \chi_\cyc \colon \Gal_\QQ \to \{\pm 1\}$ factors through $\Gal(\QQ(\sqrt{-5})/\QQ)\hookrightarrow \{\pm 1\}$.   Therefore, $\gamma_1$ is obtained by composing the reduction modulo $20$ homomorphism $\Zhat^\times \to (\ZZ/20\ZZ)^\times$ with the unique Dirichlet character $(\ZZ/20\ZZ)^\times \to \{\pm 1\}$ of conductor $20$.  

We now describe $\gamma_2$.   There is a unique subgroup $H\subseteq E[37]$ of order $37$ that is stable under the action of $\Gal_\QQ$; the $x$-coordinates of the nonzero elements of $H$ are the roots of a degree $18$ polynomial $f(x)\in \ZZ[x]$ that can be found by factoring the $37$-th division polynomial of $E$.  Let $\beta\colon \Gal_\QQ \to (\ZZ/37\ZZ)^\times$ be the homomorphism for which $\sigma(P)=\beta(\sigma)\cdot P$ for all $P\in H$ and $\sigma\in \Gal_\QQ$.    We have 
\[
\rho_{E,37}^*(\sigma) = \left(\begin{smallmatrix}* & * \\0 & \beta(\sigma)^{-1}\end{smallmatrix}\right)
\]
for $\sigma\in \Gal_\QQ$.   From this, we find that $\gamma_2(\chi_\cyc(\sigma))=\beta(\sigma)$ for $\sigma\in \Gal_\QQ$.   The representation $\rho_{E,37}^*$, and hence also $\beta$, is unramified at all primes $p\nmid 5\cdot 7 \cdot 37$ since the conductor of $E$ is $5^2\cdot 7^2$.   Therefore, $\gamma_2$ factors through a homomorphism $\bbar{\gamma}_2 \colon (\ZZ/ (5\cdot 7\cdot 37) \ZZ)^\times = (\ZZ/ 1295 \ZZ)^\times\to (\ZZ/37\ZZ)^\times$ satisfying $\bbar{\gamma}_2(p)=\beta(\Frob_p)$ for all primes $p\nmid 5\cdot 7\cdot 37$.    We can compute $\beta(\Frob_p)$ for any prime $p\nmid 5\cdot 7\cdot 37$ by working modulo $p$ (for any point $P\in E(\FFbar_p)$ whose $x$-coordinate is a root of $f$, we have $\Frob_p(P)=\beta(\Frob_p)\cdot P$).    In particular, we find that: 
\begin{align} \label{E:beta example intro}
\beta(\Frob_{13})=6, \quad \beta(\Frob_{19})=26, \quad  \beta(\Frob_{29})=36.
\end{align}
Since $13$, $19$ and $29$ generate $(\ZZ/1295 \ZZ)^\times$, the values (\ref{E:beta example intro}) determine $\bbar\gamma_2$ and hence also $\gamma_2$.

The homomorphism $\gamma_E \colon \Zhat^\times \to \calG/G$ is thus the map $a\mapsto (\gamma_1(a),\gamma_2(a))$ composed with the inverse of (\ref{E:chi1 and chi2}).  Now that we know $\calG$ and $\gamma_E$, we can compute $\calH_E$ which gives the image of $\rho_E^*$ up to conjugacy.   A direct calculation shows that $\calH_E$ has  level $4\cdot 5\cdot 7 \cdot 37=5180$ and the image of $\calH_E$ modulo $5180$ is generated by the matrices:
\begin{align} \label{E:exceptional gen 2023}
\left(\begin{smallmatrix} 1 & 38 \\ 0 & 1\end{smallmatrix}\right), \quad \left(\begin{smallmatrix} 1 & 1 \\ 37 & 38\end{smallmatrix}\right), \quad \left(\begin{smallmatrix} 13 & 0 \\ 0 & 2391\end{smallmatrix}\right), \quad
\left(\begin{smallmatrix} 64 & 3737 \\ 37 & 2970\end{smallmatrix}\right), \quad
\left(\begin{smallmatrix} 70 & 851 \\ 37 & 5038\end{smallmatrix}\right), \quad \left(\begin{smallmatrix} 42& 1961 \\ 37 & 4318\end{smallmatrix}\right).
\end{align}
Note that the first two matrices generate the image of $[\calG,\calG]$ modulo $5180$ while the other matrices are chosen to have determinants $3$, $11$, $13$ and $19$, respectively (these primes generate the group $(\ZZ/5180\ZZ)^\times$).

\subsection{An involved example}  

We now give a more complicated example involving the \'etale morphism $\phi\colon Y\to U_\calG$ from \S\ref{SS:finding Galois}; it will arise from our computations.   

Let $\calG$ be the open subgroup of $\GL_2(\Zhat)$ of level $27$ whose image modulo $27$ is generated by the matrices: $\left(\begin{smallmatrix}1 & 1 \\0 & 1\end{smallmatrix}\right)$, $\left(\begin{smallmatrix}1 & 2 \\3 & 2\end{smallmatrix}\right)$, $\left(\begin{smallmatrix}2 & 1 \\9 & 5\end{smallmatrix}\right)$.    We have $\det(\calG)=\Zhat^\times$, $-I\in \calG$, and $[\GL_2(\Zhat):\calG]=36$.  

The curve $X_\calG$ is isomorphic to $\PP^1_\QQ$ and $\calG$ is one of the agreeable groups in our set $\scrA$ from Theorem~\ref{T:main agreeable}.   Moreover, $\QQ(X_\calG)=\QQ(t)$ for some $t$ so that the map $\pi_\calG$ to the $j$-line is given by the rational function
\[
\pi(t):= \frac{(t^3 + 3)^3  (t^9 + 9t^6 + 27t^3 + 3)^3 }{ t^3(t^6 + 9t^3 + 27)}.
\]
We have $\pi(t)-1728=(t^{18} + 18t^{15} + 135t^{12} + 504t^9 + 891t^6 + 486t^3 - 27)^2/ (t^3(t^6 + 9t^3 + 27))$.  So
\[
U_\calG=\Spec \QQ[t,1/f] \subseteq \Spec \QQ[t]=\AA_\QQ^1,
\]
where 
$f:=t (t^3+3)(t^9 + 9t^6 + 27t^3 + 3)(t^{18} + 18t^{15} + 135t^{12} + 504t^9 + 891t^6 + 486t^3 - 27)$.

Let $G$ be the open subgroup of $\GL_2(\Zhat)$ of level $54$ whose image modulo $54$ is generated by the matrices: $\left(\begin{smallmatrix}7 & 0 \\ 36 & 1\end{smallmatrix}\right)$, $\left(\begin{smallmatrix}7 & 16 \\ 0 & 25\end{smallmatrix}\right)$, $\left(\begin{smallmatrix}16 & 7 \\ 3 & 5\end{smallmatrix}\right)$.   We have $\det(G)=\Zhat^\times$, $-I\notin G$, $G\subseteq \calG$, and $[\calG:G]=36$.   The group $G$ is normal in $\calG$ and $G\cap \SL_2(\Zhat) = [\calG,\calG]$.   The quotient group $\calG/G$ is abelian of order $36$.\\

As mentioned in \S\ref{SS:finding Galois}, in \S\ref{S:finding image given agreeable closure} we describe a particular \'etale cover $\phi\colon Y\to U_\calG$ that is Galois with group $\calG/G$; it is used for computing groups $\rho^*_E(\Gal_\QQ)$ whose agreeable closure is conjugate in $\GL_2(\Zhat)$ to $\calG$.   We now state $\phi$ with respect to the explicit models that occur in our computations.

For each $1\leq i \leq 9$, define a homogeneous polynomial $F_i \in \ZZ[x_1,\ldots, x_8]$ and a polynomial $c_i \in \ZZ[t]$ as follows:
{
\small       
\begin{align*}        
& F_1:=         
x_1^2 + x_1 x_4 + x_2^2 + x_2 x_5 + x_3^2 + x_3 x_6 + x_4^2 + x_5^2 + x_6^2,
\\
&F_2:=
x_1 x_3 - x_1 x_5 + x_2 x_4 - x_2 x_6 + x_3 x_4 + x_3 x_5 + x_4 x_6,
\\
&F_3:=
x_1 x_2 - x_1 x_6 + x_2 x_3 + x_2 x_4 + x_3 x_4 + x_3 x_5 + x_4 x_5 + x_5 x_6,
\\
&F_4:=x_1^2 x_4 + x_1 x_4^2 + x_2^2 x_5 + x_2 x_5^2 + x_3^2 x_6 + x_3 x_6^2,
\\
&F_5:=x_1^2 x_3 + x_1^2 x_6 - x_1 x_2^2 + 2 x_1 x_3 x_4 + x_1 x_5^2 - x_2 x_3^2 + 2 x_2 x_4 x_5 + x_2 x_6^2 + 2 x_3 x_5 x_6 - x_4^2 x_6 + x_4 x_5^2 + x_5 x_6^2,
\\
&F_6:=x_1^3 + 3 x_1^2 x_4 + x_2^3 + 3 x_2^2 x_5 + x_3^3 + 3 x_3^2 x_6 - x_4^3 - x_5^3 - x_6^3,
\\
&F_7:=x_1^2 x_2 +	 x_1^2 x_5 + 2 x_1 x_2 x_4 - x_1 x_3^2 + x_1 x_6^2 + x_2^2 x_3 + x_2^2 x_6 + 2 x_2 x_3 x_5+ 2 x_3 x_4 x_6 - x_4^2 x_5 
+ x_4 x_6^2 - x_5^2 x_6,\\
&F_8:= x_7^2,\\
&F_9:= x_8^2,
\end{align*} 
\begin{align*} 
& c_1:= 2(t^6 + 9t^3 + 27), \\
&c_2:=-(t^6 + 9t^3 + 27), \\
& c_3:= -(t^6 + 9t^3 + 27),\\
& c_4:= -(2t^2 + 2t - 3)(t^6 + 9t^3 + 27), \\
&c_5:=3(t - 1)(t + 2)(t^6 + 9t^3 + 27), \\
&c_6:=-(2t - 3)(t^2 + 3t + 3)(t^6 + 9t^3 + 27),\\
&c_7:=(3t^2 + 4t - 3)(t^6 + 9t^3 + 27), \\
&c_8:=t(t^6 + 9t^3 + 27), \\
& c_9:=-3t(t^3 + 3)(t^6 + 9t^3 + 27)(t^9 + 9t^6 + 27t^3 + 3)(t^{18} + 18t^{15} + 135t^{12} + 504t^9 + 891t^6 + 486t^3 - 27).
\end{align*}

}       

 Let $Y$ be the closed subvariety of $\Spec \QQ[x_1,\ldots, x_8,t,1/f]$ defined by the equations 
 \[
 F_i(x_1,\ldots, x_8) = c_i(t) 
 \]
 with $1\leq i \leq 9$.   Let $\phi\colon Y \to U_\calG$ be the morphism given by $(x_1,\ldots, x_8,t)\mapsto t$.

We now describe an action of $\calG/G$ on $Y$.  Choose matrices $g_1$, $g_2$ and $g_3$ in $\calG$ that are congruent modulo $54$ to 
$\left(\begin{smallmatrix}31 & 44 \\ 36 & 25\end{smallmatrix}\right)$,  $\left(\begin{smallmatrix}28 & 27 \\ 27 & 28\end{smallmatrix}\right)$ and $\left(\begin{smallmatrix}-1 & 0 \\ 0 & -1\end{smallmatrix}\right)$, respectively. We have a unique isomorphism of abelian groups 
\[
\iota\colon \ZZ/9\ZZ \times \ZZ/2\ZZ \times \ZZ/2\ZZ \xrightarrow{\sim} \calG/G
\] 
for which $(1,0,0)\mapsto g_1G$, $(0,1,0)\mapsto g_2 G$ and $(0,0,1)\mapsto g_3 G$.   So it suffices to describe the action of each $g_i G$ on $Y$.  For any point $y=(a_1,\ldots, a_9) \in Y(\Qbar)$, we have 
\begin{align*}
g_1 G \cdot y &:= (a_2,a_3,a_4,a_5,a_6, -a_1-a_4,a_7,a_8, a_9),\\
g_2 G \cdot y &:= (a_1,a_2,a_3,a_4,a_5,a_6,-a_7,a_8, a_9),\\
g_2 G \cdot y &:= (a_1,a_2,a_3,a_4,a_5,a_6,a_7,-a_8, a_9).
\end{align*}
The action of $\calG/G$ on $Y$ is faithful and does not affect the morphism $\phi$.  In fact, $\phi\colon Y\to U_\calG$ is an \'etale morphism of degree $36$ that is Galois with the action of $\calG/G$ giving its Galois group.  The curve $Y$ is defined over $\QQ$ and is smooth and geometrically irreducible. This completes our explicit description of $\phi\colon Y\to U_\calG$.\\        
        
As an example of how to use these equations, consider the elliptic curve $E/\QQ$ give by the Weierstrass equation 
\[
y^2+y = x^3+x^2+x.
\]   
The curve $E$ has $j$-invariant $32768/19$ and conductor $19$.   We have $j_E = \pi(-1)\in \pi_\calG(X_\calG(\QQ))$, so $G_E:=\rho^*_E(\Gal_\QQ)$ is conjugate in $\GL_2(\Zhat)$ to a subgroup of $\calG$.  So after replacing $\rho^*_E$ by an isomorphic representation, we may assume that $G_E \subseteq \calG$.  In particular, the agreeable closure $\calG_E$ of $G_E$ is a subgroup of $\calG$.   Using our groups and modular curves from Theorem~\ref{T:main agreeable}, we find that $\calG_E=\calG$.

Fix $u:=-1 \in U_\calG(\QQ)=\QQ-\{0\}$.   The fiber $\phi^{-1}(u)$ is the subscheme of $\AA^8_\QQ = \Spec \QQ[x_1,\ldots, x_8]$ defined by the equations $F_i(x_1,\ldots,x_8)=c_i(-1)$ with $1\leq i \leq 9$; it is reduced of dimension $0$, has degree $36$, and $\calG/G$ acts faithfully on the $\Qbar$-points.    Let $\alpha_u\colon \Gal_\QQ \to \calG/G$ be the homomorphism such that $\sigma(y)=\alpha_u(\sigma) \cdot y$ for any $\sigma\in \Gal_\QQ$ and any $y\in Y(\Qbar)$ with $\phi(y)=u$. 

Take any prime $p\nmid 6$ for which the $\ZZ_p$-subscheme $Z \subseteq \AA^8_{\ZZ_p} = \Spec \ZZ_p[x_1,\ldots, x_8]$ defined by the equations $F_i(x_1,\ldots,x_8)=c_i(-1)$ with $1\leq i \leq 9$, is smooth and $Z_{\FF_p}$ has degree $36$.  The action of $\calG/G$ on $Z(\FFbar_p)$ is simply transitive.   Using Hensel's lemma and the smoothness of $Z$, we find that $\Frob_p(y)= \alpha_u(\Frob_p)\cdot y$ for all $y\in Z(\FFbar_p)$.  This gives our computable description of $\alpha_u(\Frob_p)$; find a point $y\in Z(\FFbar_p)$, raise its coordinates to the $p$-th power and find the unique $\alpha_u(\Frob_p) \in \calG/G$ for which $\Frob_p(y)= \alpha_u(\Frob_p)\cdot y$.   In this way, one can show that:
\begin{align} \label{E:alphau intro hard}
\alpha_u(\Frob_5) =\iota((2,0,1)), \quad \alpha_u(\Frob_{11}) =\iota((6,0,1)),\quad \alpha_u(\Frob_{13}) =\iota((4,1,1)). 
\end{align}

 With notation as in \S\ref{SS:finding image setup}, we define $\alpha_E := \chi\cdot \alpha_u \colon \Gal_\QQ \to \calG/G$, where $\chi\colon \Gal_\QQ \to \{\pm 1\}$ is the homomorphism that factors through $\Gal(\QQ(\sqrt{-19})/\QQ) \hookrightarrow \{\pm 1\}$.     By Lemma~\ref{L:alphaE properties}, after replacing $\rho_E^*$ by an isomorphic representation, we may assume that $G_E \subseteq \calG$ and that the composition of $\rho_E^*$ with the quotient map $\calG\to \calG/G$ is $\alpha_E$.  
 
 Let 
\[
\gamma_E \colon \Zhat^\times \to \calG/G
\]
be the homomorphism such that $\gamma_E(\chi_\cyc(\sigma)^{-1})=\alpha_E(\sigma)$ for all $\sigma\in \Gal_\QQ$.    With $M=2\cdot 3 \cdot 19$ and $e=18$,   we can argue as in \S\ref{SS:finding Galois} that $\gamma_E$ factors through a homomorphism $\bbar \gamma_E\colon \ZZ_M^\times/(\ZZ_M^\times)^e \to \calG/G$ such that $p \cdot(\ZZ_M^\times)^e \mapsto\alpha_E(\Frob_p)^{-1}$ for all  primes $p\nmid M$.   In particular,
\begin{align} \label{E:alphau intro hard 2}
\bbar\gamma_E(5) =\iota((7,0,1)), \quad \bbar\gamma_E({11}) =\iota((3,0,1)),\quad \bbar\gamma_E({13}) =-\iota((5,1,1))=\iota((5,1,0)). 
\end{align}
Since the primes $5$, $11$ and $13$ generate $\ZZ_M^\times/(\ZZ_M^\times)^e$, we deduce that  $\gamma_E$ is determined by $M$ and the values (\ref{E:alphau intro hard 2}).    Using this, we can show that $\gamma_E$ is the composition of the reduction homomorphism $\Zhat^\times \to (\ZZ/57\ZZ)^\times$ with the unique homomorphism $(\ZZ/57\ZZ)^\times \to \calG/G$ for which $5 \mapsto \iota((7,0,1))$ and $13\mapsto \iota((5,1,0))$.   

Now that we know $\calG$ and $\gamma_E$, we can compute the group $\calH_E$ from (\ref{E:calHE intro def}).  The group $\calH_E$ is an open subgroup of $\GL_2(\Zhat)$ with level $2\cdot 27\cdot 19=1026$ and its image modulo $1026$ is generated by the matrices:
\[
\left(\begin{smallmatrix}31 & 198 \\10 & 97\end{smallmatrix}\right), \quad \left(\begin{smallmatrix}1 & 0 \\18 & 1\end{smallmatrix}\right), \quad \left(\begin{smallmatrix}28 & 729 \\27 & 703\end{smallmatrix}\right), \quad \left(\begin{smallmatrix}149 & 681 \\271 & 448\end{smallmatrix}\right), \quad \left(\begin{smallmatrix}994 & 9 \\689 & 790\end{smallmatrix}\right);
\]
the first three matrices also generate the image of $\calH_E\cap \SL_2(\Zhat)$ modulo $1026$.   By Lemma~\ref{L:intro HE}, this gives the image of $\rho_E^*$ up to conjugacy.

\subsection{Some related results}

There has been much research on modular curves and the image of Galois representations associated to non-CM elliptic curves over $\QQ$.   We now give a brief and incomplete description of some related recent progress.  \\

Given a fixed non-CM elliptic curve $E/\QQ$,  we can determine the (finite) set of primes $\ell$ for which $\rho_{E,\ell}(\Gal_\QQ)\neq \GL_2(\ZZ/\ell\ZZ)$ using the algorithm in \cite{surjectivityalgorithm} (it is based on Serre's original proof in \cite{Serre-Inv72}).  For each prime $\ell$ for which $\rho_{E,\ell}$ is not surjective, we can also compute the subgroup $\rho_{E,\ell}(\Gal_\QQ) \subseteq \GL_2(\ZZ/\ell\ZZ)$ up to conjugacy using \cite{possible images} or \cite{MR3482279} (the first reference uses explicit modular curves while the second reference uses Frobenius matrices to give a probabilistic algorithm).

Consider primes $\ell>13$.  There has been much progress towards Conjecture~\ref{C:Serre question}.
If $\rho_{E,\ell}$ is not surjective for a non-CM elliptic curve $E/\QQ$, then $E$ gives rise to a rational non-CM point on a modular curve $X_G$ with $G$ a maximal subgroup of $\GL_2(\ZZ/\ell\ZZ)$ satisfying $\det(G)=(\ZZ/\ell\ZZ)^\times$.    When $G$ is the subgroup of upper triangular matrices, Mazur \cite{MR482230} has found the rational points of $X_0(\ell):=X_G$.   The curve $X_0(\ell)$ has no non-CM rational points for $\ell>17$ and $\ell\neq 37$ (for $\ell\in \{17,37\}$, there are non-CM rational points which lead to the $j$-invariants in the statement of Conjecture~\ref{C:Serre question}).    When $G$ is the normalizer of a split Cartan subgroup, Bilu, Parent and Rebolledo \cite{MR3137477} have shown that $X_G$ has no non-CM points.  When the image of $G$ in $\GL_2(\ZZ/\ell\ZZ)/((\ZZ/\ell\ZZ)^\times \cdot I)$ is isomorphic to $\mathfrak{S}_4$, the modular curve $X_G(\QQ)$ has no non-CM points, see the remarks  on page 36 of \cite{MR488287}.   The remaining modular curves to consider are the curves $X_{\operatorname{ns}}^+(\ell):=X_G$, where $G$ is the normalizer of a non-split Cartan subgroup of $\GL_2(\ZZ/\ell\ZZ)$.   There has been recent progress on finding the rational points on $X_{\operatorname{ns}}^+(\ell)$ for small $\ell$ using generalized versions of Chabauty's method, cf.~\cite{MR3961086,chabautyalgo}, but the general case remains open.\\
   
Now consider the images of the $\ell$-adic representations $\rho_{E,\ell^\infty}$.    If $\rho_{E,\ell}(\Gal_\QQ)=\GL_2(\ZZ/\ell\ZZ)$ for a non-CM elliptic $E/\QQ$ and a prime $\ell \geq 5$, then we have $\rho_{E,\ell^\infty}(\Gal_\QQ)=\GL_2(\ZZ_\ell)$, cf.~\S\ref{L:modell to elladic}.  Taking Conjecture~\ref{C:Serre question} and our above discussion of modulo $\ell$ representations into account, it makes sense to focus on $\ell$-adic projections for the primes $\ell\in \{2,3,5,7,11,13,17,37\}$.  For the prime $\ell=2$, Rouse and Zureick-Brown \cite{MR3500996} gave a complete description of the images $\rho_{E,\ell^\infty}(\Gal_\QQ) \subseteq \GL_2(\ZZ_\ell)$, up to conjugacy, for all non-CM elliptic curves $E/\QQ$ (they found models of all relevant modular curves and computed their rational points).  For each prime $\ell$, Sutherland and Zywina \cite{MR3671434} 
described all open subgroups $G$ of $\GL_2(\ZZ_\ell)$ with $\det(G)=\ZZ_\ell^\times$ for which $X_G(\QQ)$ is infinite and then computed a model for $X_G$ along with the morphism to the $j$-line.
In \cite{RSZ}, Rouse, Sutherland and Zureick-Brown gave a complete description of $\ell$-adic images up to possible counterexamples to Conjecture~\ref{C:Serre question} and determining the rational points on a finite number of explicit modular curves $X_G$ of genus at least $2$.\\

Once one gets a handle on the $\ell$-adic Galois images, it is natural to consider the image modulo integers divisible by several distinct primes.   There has been much recent work on understanding and classifying ``entanglements''; for example, see \cite{arXiv:2105.02060,arxiv2008.09886,MR4374148,MR3957898,MR3447646}.   For relative prime positive integers $m$ and $n$, we say that $E$ has a $(m,n)$-\defi{entanglement} if $\QQ(E[m]) \cap \QQ(E[n]) \neq \QQ$; equivalently, $\rho_{E,mn}(\Gal_\QQ)$ can be viewed as a \emph{proper} subgroup of $\rho_{E,m}(\Gal_\QQ)\times \rho_{E,n}(\Gal_\QQ)$.  In particular, entanglements describe constraints on the image $\rho_E(\Gal_\QQ)$.

While the work in this paper does give some information, we have avoided a general study of possible entanglements.    What may seem surprising at first, is that to compute the group $\rho_E(\Gal_\QQ)$ we do not first compute the $\ell$-adic projections $\rho_{E,\ell^\infty}(\Gal_\QQ)$; even though they can be found using \cite{RSZ}.  The approach of computing the $\ell$-adic images and then describing all the possible entanglements seems to lead to an excessive number of cases.  Of course once we have found $\rho_E(\Gal_\QQ)$, up to conjugacy, we can then easily compute the $\ell$-adic projections $\rho_{E,\ell^\infty}(\Gal_\QQ)$.\\

There has also been some more general study on the image of $\rho_E$.  Jones has produced upper bounds for the level of $\rho_E(\Gal_\QQ)$ in $\GL_2(\Zhat)$ for non-CM elliptic curves $E/\QQ$, cf.~\cite{MR4190460,MR2439422}.   The paper \cite{MR3350106} of Jones contains a lot of group theoretic information about the image of $\rho_E$; in particular, he generalizes the notion of a Serre curve, cf.~Remark~\ref{R:generalized Serre curves}.   The doctoral thesis of Brau Avilo \cite{BrauThesis} appears to be the first place to explicitly point out that there is an algorithm to compute $\rho_E(\Gal_\QQ)$.  His algorithm first finds the level $m$ of $\rho_E(\Gal_\QQ)$ and then computes $\rho_{E,m}(\Gal_\QQ)$ by making use of division polynomials; it it not practical in general.


Define the set of integers
\[
\calI = \left\{\begin{array}{c}2, 4, 6, 8, 10, 12, 16, 20, 24, 30, 32, 36, 40, 48, 54, 60, 72,\\ 84, 96, 108, 112,120, 144, 192, {220}, {240},  288, 336, {360}, \\ 384, {504}, 576, 768, 864, 1152, 1200, 1296, 1536 \end{array}\right\}.
\]
In \cite{possibleindices}, it is shown that there is a finite set $J\subseteq \QQ$ such that for any elliptic curve $E/\QQ$ with $j_E \notin J$ and $\rho_{E,\ell}(\Gal_\QQ)=\GL_2(\ZZ/\ell\ZZ)$ for all primes $\ell>37$, we have $[\GL_2(\Zhat): \rho_E(\Gal_\QQ)] \in \calI$.  Moreover, $\calI$ is the smallest set with this property.   The results of this paper can be used to give an elaborate alternate proof of this result (in \cite{possibleindices}, modular curves are used but no models are computed).   Note that the only new integers that arise in Conjecture~\ref{C:brave} are: $80$, $128$, $160$, $182$, $200$, $216$, $224$, $300$, $480$, $2736$.

Rakvi \cite{Rakvi} has recently given a description of the pairs $(X_G,\pi_G)$, up to a suitable notion of isomorphism, as we vary over all open subgroups $G$ of $\GL_2(\Zhat)$ satisfying $\det(G)=\Zhat^\times$, $-I \in G$, and $X_G\cong \PP^1_\QQ$ (see the end of \S\ref{S:families} for details).\\

The above results, and similar ones, are often phrased in the context of progress towards the following overarching program:

\begin{programb} \cite{MR0450283}
Given a number field $K$ and a subgroup $H$ of $\GL_2(\Zhat) =\prod_{p} \GL_2(\ZZ_p)$ classify all elliptic curves $E/K$ whose associated Galois representation on torsion points map $\Gal(\Kbar/K)$ into $H\subseteq \GL_2( \Zhat)$.
\end{programb}

\subsection{Overview}
We briefly outline the structure of the paper.  In \S\ref{S:Galois first}, we recall the connection between $\rho_E^*$ with the cyclotomic character and explain how the image of $\rho_E^*$ changes when we replace $E$ by a quadratic twist.  

Consider a subgroup $G\subseteq \GL_2(\ZZ/N\ZZ)$ satisfying $\det(G)=(\ZZ/N\ZZ)^\times$ and $-I\in G$.  In \S\ref{S:first modular curve}, we give a quick definition of modular curves $X_G$ in terms of their function fields (which will be fields consisting of modular functions).   The curve comes with a non-constant morphism $\pi_G\colon X_G \to \PP^1_\QQ=\AA^1_\QQ\cup \{\infty\}$ from $X_G$ to the $j$-line.

In \S\ref{S:modular forms}, we will define a finite dimensional $\QQ$-vector space $M_{k,G}$ consisting of modular forms for each integer $k\geq 2$.  There will be a natural isomorphism $M_{k,G} \otimes_\QQ \CC \xrightarrow{\sim} M_k(\Gamma_G)$, where $\Gamma_G$ is the congruence subgroup of $\SL_2(\ZZ)$ consisting of matrices whose image modulo $N$ lies in $G$ and $M_k(\Gamma_G)$ is the usual space of weight $k$ modular forms on $\Gamma_G$.   Much of \S\ref{S:modular forms} is dedicated to describing how to compute an explicit basis of $M_{k,G}$; our approach makes use of Eisenstein series and a theorem of Khuri-Makdisi.   Our modular forms will be expressed in terms of their $q$-expansion at \emph{every} cusp (and for which we can compute arbitrarily many terms of each $q$-expansion).    

In \S\ref{S:computing modular forms}, we explain how to compute a model for the modular curve $X_G$ and in some cases compute the morphism $\pi_G$.  The key observation is that our space of modular forms $M_{k,G}$ is the global sections of a line bundle on the modular curve $X_G$ for even $k$.  For $k$ even and sufficiently large, this line bundle will be very ample and a basis for $M_{k,G}$ will allow us to compute an explicit model of $X_G$ in some projective space $\PP_{\QQ}^n$.

Let $U_G$ be the open subvariety $X_G-\pi_G^{-1}(\{0,1728,\infty\})$ of $X_G$.  In \S\ref{S:specializations}, we use modular functions to construct an explicit representation $\varrho\colon \pi_1(U_G)\to G$ of the \'etale fundamental group of $U_G$.  For each rational point $u\in U_G(\QQ)$, the specialization of $\varrho$ at $u$ will be a Galois representation $\Gal_\QQ \to G \subseteq \GL_2(\ZZ/N\ZZ)$ that is isomorphic to $\rho_{E,N}^*\colon \Gal_\QQ \to \GL_2(\ZZ/N\ZZ)$ for a certain elliptic curve $E/\QQ$ with $j$-invariant $\pi_G(u)$.

After recalling group theoretic results concerning subgroups and quotients of $\SL_2(\Zhat)$ and $\GL_2(\Zhat)$ in \S\ref{S:group theory facts}, we will study agreeable groups in \S\ref{S:agreeable}.   In particular, in \S\ref{S:agreeable} we will prove the existence of  agreeable closures and we will also explain how to find the maximal agreeable subgroups of an agreeable group.

In \S\ref{S:constructing agreeable}, we prove Theorem~\ref{T:main agreeable}.  In \S\ref{S:finding the agreeable closure}, we explain how to find the agreeable closure of $G_E:=\rho_E^*(\Gal_\QQ)$, up to conjugacy, for a non-CM elliptic curve $E/\QQ$.   In \S\ref{S:putting it together}, we finally explain how to compute $G_E$, up to conjugacy, after understanding how to construct a certain homomorphism $\gamma_E$ in \S\ref{S:finding image given agreeable closure}.

In \S\ref{S:universal elliptic curves}, we give some insight into computing universal elliptic curves; this will follow quickly from  earlier sections but we state it separately for easy reference.   Finally in \S\ref{S:families}, we make some remarks concerning  ``families'' of groups.

\subsection{Implementation}

As already noted, our algorithms have been implemented in \texttt{Magma} \cite{Magma} and code can be found in the repository \cite{github}:
\begin{center}
\url{https://github.com/davidzywina/OpenImage}
\end{center}
This also includes files containing all the relevant groups and modular curves (unfortunately, the number of cases makes it infeasible to express in a reasonable length table).

This motivation to have a practical algorithm underlies much of the exposition and structure of this paper.   At the onset of the project, it was unclear if the approach presented here was going to be computationally feasible; for example, some modular curves took hours to find models for, using known approaches, and we had thousands of curves to study.  The precomputation required for our algorithms, which are not especially optimized, took less than half a day.

\subsection{Notation} \label{SS:notation}

We now set some notation that will hold throughout.   All profinite groups will be viewed as topological groups with their profinite topology.  In particular, finite groups will have the discrete topology.   For a topological group $G$, we define its \defi{commutator subgroup} $[G,G]$ to be the smallest closed normal subgroup of $G$ for which $G/[G,G]$ is abelian.  Equivalently, $[G,G]$ is the topological subgroup of $G$ generated by the set of commutators $\{ghg^{-1}h^{-1}: g,h \in G\}$.

For each integer $N>1$, we let $\ZZ_N$ be the ring obtained by taking the inverse limit of the $\ZZ/N^e\ZZ$ with $e\geq 1$. Let $\Zhat$ be the ring obtained taking the inverse limit of $\ZZ/n\ZZ$ over all positive integers $n$.  With the profinite topology, $\ZZ_N$ and $\Zhat$ are compact topological rings.  We have natural isomorphisms 
\[
\ZZ_N = \prod_{\ell|N} \ZZ_\ell\quad  \text{and} \quad \Zhat= \ZZ_N \times \prod_{\ell \nmid N} \ZZ_\ell =\prod_{\ell}\ZZ_\ell,
\]   
where the product is over primes $\ell$.   The symbol $\ell$ will always denote a rational prime.

The \defi{level} of an open subgroup $G$ of $\GL_2(\Zhat)$ is the smallest positive integer $n$ for which $G$ contains the kernel of the reduction modulo $n$ homomorphism $\GL_2(\Zhat)\to \GL_2(\ZZ/n\ZZ)$.  The \defi{level} of an open subgroup $G$ of $\GL_2(\ZZ_N)$ is the smallest positive integer $n$ that divides some power of $N$ and for which $G$ contains the kernel of the reduction modulo $n$ homomorphism $\GL_2(\ZZ_N)\to \GL_2(\ZZ/n\ZZ)$. Similarly, we can define the \defi{level} of open subgroups of $\SL_2(\Zhat)$ and $\SL_2(\ZZ_N)$.

\subsection{Acknowledgements}
Thanks to Eray Karabiyik for his comments and corrections.

\section{Cyclotomic constraints on the image of Galois} \label{S:Galois first}

With a fixed non-CM elliptic curve $E$ defined over $\QQ$, we consider the group  $G_E:=\rho_E^*(\Gal_\QQ)$ which from Serre we know is an open  subgroup of $\GL_2(\Zhat)$.   

\subsection{Kronecker--Weber constraint}

Let $\chi_\cyc\colon \Gal_\QQ \to \Zhat^\times$ be the \defi{cyclotomic character}, i.e., the continuous homomorphism such that for every integer $n\geq 1$ and every $n$-th root of unity $\zeta\in \Qbar$ we have $\sigma(\zeta)=\zeta^{\chi_\cyc(\sigma)\bmod{n}}$ for all $\sigma\in \Gal_\QQ$.   By considering the Weil pairing on the groups $E[n]$, we know that $ \det\circ \rho_{E}=\chi_\cyc$ and hence $\det\circ \rho_E^* = \chi_\cyc^{-1}$. In particular, we have
\[
\det(G_E)=\chi_\cyc(\Gal_\QQ)=\Zhat^\times.
\]
By Lemma~\ref{L:KW}, which makes use of the Kronecker--Weber theorem, we have
\begin{align} \label{E:KW do over}
G_E \cap \SL_2(\Zhat) = [G_E,G_E].
\end{align}
and $[\GL_2(\Zhat): G_E]=[\SL_2(\Zhat): [G_E,G_E]]$. One consequence of (\ref{E:KW do over}) is that it is possible to compute the index of $G_E$ in $\GL_2(\Zhat)$ using a group that is possibly larger than $G_E$.

\begin{lemma} \label{L:abelian quotients and same commutators}
Suppose $\calG \subseteq \GL_2(\Zhat)$ is a group such that $G_E$ is a normal subgroup of $\calG$ and $\calG/G_E$ is abelian.   Then $G_E$ and $\calG$ have the same commutator subgroup.  In particular, we have $G_E\cap\SL_2(\Zhat)=[\calG,\calG]$ and
\[
[\GL_2(\Zhat): G_E]=[\SL_2(\Zhat): [\calG,\calG]].
\]
\end{lemma}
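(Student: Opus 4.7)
The plan is to show the two inclusions $[G_E,G_E] \subseteq [\calG,\calG]$ and $[\calG,\calG] \subseteq [G_E,G_E]$, after which the remaining claims follow almost immediately from equation (\ref{E:KW do over}) and the fact that $\det(G_E)=\Zhat^\times$.

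First I would observe that the inclusion $[G_E,G_E] \subseteq [\calG,\calG]$ is immediate from $G_E \subseteq \calG$ together with the fact that commutator subgroups are functorial under inclusions of topological groups (any commutator in $G_E$ is a commutator in $\calG$, and one takes the closure).

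For the reverse inclusion $[\calG,\calG] \subseteq [G_E,G_E]$, I would use the hypothesis that $\calG/G_E$ is abelian: by the universal property of abelianization (applied to the closed normal subgroup $G_E$), we have $[\calG,\calG] \subseteq G_E$. On the other hand $[\calG,\calG] \subseteq \SL_2(\Zhat)$ automatically, since every commutator has determinant one. Combining these two inclusions gives $[\calG,\calG] \subseteq G_E \cap \SL_2(\Zhat)$, and then applying the Kronecker--Weber constraint (\ref{E:KW do over}), namely $G_E \cap \SL_2(\Zhat) = [G_E,G_E]$, yields $[\calG,\calG] \subseteq [G_E,G_E]$ as desired.

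The two inclusions together give $[G_E,G_E]=[\calG,\calG]$, i.e., $G_E$ and $\calG$ share the same commutator subgroup. Applying (\ref{E:KW do over}) once more then gives
\[
G_E \cap \SL_2(\Zhat) = [G_E,G_E] = [\calG,\calG].
\]
Finally, since $\det(G_E)=\Zhat^\times$, the determinant map induces an isomorphism $\GL_2(\Zhat)/G_E \cdot \SL_2(\Zhat) \cong \Zhat^\times/\det(G_E)$, and a standard index computation shows $[\GL_2(\Zhat):G_E] = [\SL_2(\Zhat): G_E \cap \SL_2(\Zhat)] = [\SL_2(\Zhat): [\calG,\calG]]$.

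There is no real obstacle here; the only subtlety worth flagging is keeping track of the topological (closed) commutator subgroup as defined in \S\ref{SS:notation}, but since all groups in sight are closed subgroups of $\GL_2(\Zhat)$ and both the inclusion $[\calG,\calG]\subseteq G_E$ and the identity $G_E\cap\SL_2(\Zhat)=[G_E,G_E]$ from (\ref{E:KW do over}) respect closures, no additional argument is needed.
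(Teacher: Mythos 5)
Your proof is correct and follows the same route as the paper: establish $[\calG,\calG]\subseteq G_E\cap\SL_2(\Zhat)$ from abelianness of $\calG/G_E$, invoke Lemma~\ref{L:KW} to get $[\calG,\calG]\subseteq[G_E,G_E]$, note the reverse inclusion is automatic, and finish with $\det(G_E)=\Zhat^\times$ for the index. The only cosmetic difference is that you spell out $[\calG,\calG]\subseteq\SL_2(\Zhat)$ separately before intersecting, which the paper leaves implicit.
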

\begin{proof}
We have $[\calG,\calG]\subseteq G_E$ since $\calG/G_E$ is abelian.  Therefore, $[\calG,\calG] \subseteq G_E \cap \SL_2(\Zhat)=[G_E,G_E]$, where the last equality uses Lemma~\ref{L:KW}.  The opposite inclusion $[\calG,\calG]\supseteq [G_E,G_E]$ is clear since $\calG\supseteq G_E$.  Therefore, $[\calG,\calG]=[G_E,G_E]$.    The final statement about $G_E \cap \SL_2(\Zhat)$ and the index follows from Lemma~\ref{L:KW}.
\end{proof}

\subsection{Quadratic twists} \label{SS:quadratic twist}

Fix a squarefree integer $d$ and let $E'/\QQ$ be the quadratic twist of $E$ by $d$.   In this section, we describe how the images of $\rho_E^*$ and $\rho_{E'}^*$ are related.     

Let $\chi_d\colon \Gal_\QQ \to \{\pm 1\}$ be the homomorphism that factors through $\Gal(\QQ(\sqrt{d})/\QQ) \hookrightarrow \{\pm 1\}$.  There is a unique homomorphism $\psi\colon \Zhat^\times \to \{\pm 1\}$ such that $\chi_d = \psi \circ \chi_{\cyc}^{-1}$.   Define
\[
\calH:= \{  \psi(\det g) \cdot g : g \in G_E\};
\]
it is an open subgroup of $\GL_2(\Zhat)$.

\begin{lemma}  \label{L:quadratic twist group}
The groups $\rho_{E'}^*(\Gal_\QQ)$ and  $\calH$ are conjugate in $\GL_2(\Zhat)$.
\end{lemma}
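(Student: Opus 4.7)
The plan is to exploit the standard description of how the Galois representation changes under a quadratic twist and then translate $\chi_d$ into the language of $\det\circ \rho_E^*$ using the relation $\chi_d = \psi\circ\chi_\cyc^{-1}$.

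First I would recall that if $E'$ is the quadratic twist of $E$ by $d$, then there is an isomorphism $\varphi\colon E_{\QQ(\sqrt d)}\xrightarrow{\sim} E'_{\QQ(\sqrt d)}$ that is defined over $\QQ(\sqrt d)$ but whose Galois conjugate under the nontrivial element of $\Gal(\QQ(\sqrt d)/\QQ)$ is $-\varphi$. Choosing bases of $E[N]$ and $E'[N]$ that correspond under $\varphi$, and using that $\varphi$ commutes with multiplication by $-1$, one obtains for every $\sigma\in\Gal_\QQ$ the identity
\[
\rho_{E',N}(\sigma) = \chi_d(\sigma)\cdot \rho_{E,N}(\sigma)
\]
(as matrices in $\GL_2(\ZZ/N\ZZ)$, using that $\chi_d$ takes values in $\{\pm 1\}\subseteq(\ZZ/N\ZZ)^\times$). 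Taking the inverse limit over $N$ and then transposing/inverting, the same relation passes to the dual representations: after a suitable change of basis,
\[
\rho_{E'}^*(\sigma) = \chi_d(\sigma)\cdot \rho_E^*(\sigma)
\]
for all $\sigma\in \Gal_\QQ$, since scalars are fixed by transpose and $\chi_d$ is its own inverse.

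Next I would substitute the defining relation $\chi_d=\psi\circ\chi_\cyc^{-1}$ together with $\det\circ\rho_E^*=\chi_\cyc^{-1}$ to rewrite $\chi_d(\sigma)$ in a form that depends only on $\rho_E^*(\sigma)$:
\[
\chi_d(\sigma) = \psi(\chi_\cyc(\sigma)^{-1}) = \psi(\det \rho_E^*(\sigma)).
\]
Combining with the previous display gives $\rho_{E'}^*(\sigma) = \psi(\det \rho_E^*(\sigma))\cdot \rho_E^*(\sigma)$ for all $\sigma$, so that
\[
\rho_{E'}^*(\Gal_\QQ) = \{\psi(\det g)\cdot g : g\in G_E\} = \calH,
\]
with this equality holding up to the initial choice of basis, i.e.\ up to conjugation in $\GL_2(\Zhat)$.

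The only real subtlety is the first step: checking that with compatible bases (and the right sign conventions for the twisting isomorphism $\varphi$) the twist really does act by the scalar $\chi_d(\sigma)$, and that this passes cleanly to the dual. Everything else is a mechanical substitution.
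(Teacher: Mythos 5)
Your proof is correct and follows the same route as the paper: start from $\rho_{E'}^* = \chi_d\cdot\rho_E^*$ (the paper simply invokes this up to isomorphism, whereas you sketch a derivation from the twisting isomorphism $\varphi$ over $\QQ(\sqrt d)$), then substitute $\chi_d = \psi\circ\chi_\cyc^{-1}$ and $\det\circ\rho_E^*=\chi_\cyc^{-1}$ to conclude $\rho_{E'}^*(\Gal_\QQ)=\{\psi(\det g)\cdot g: g\in G_E\}=\calH$ up to conjugacy.
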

\begin{proof}
After replacing $\rho_{E'}^*$ with an appropriate isomorphic representation, we may assume that $\rho_{E'}^* = \chi_d \cdot \rho_E^*$.  For any $\sigma\in \Gal_\QQ$, we have
\[
\rho_{E'}^*(\sigma) = \chi_d(\sigma) \cdot \rho_E^*(\sigma) = \psi(\chi_\cyc(\sigma)^{-1}) \cdot \rho_E^*(\sigma) = \psi(\det(\rho_{E}^*(\sigma)) \cdot \rho_E^*(\sigma),
\]
where we have used that $\det\circ \rho_{E}^*=\chi_\cyc^{-1}$.   Therefore, $\rho_{E'}^*(\Gal_\QQ)=\{  \psi(\det g) \cdot g : g \in \rho_E^*(\Gal_\QQ) \}.$
\end{proof}

Now suppose that we know the group $G_E$.  More specifically,, we have an integer $N\geq 1$ divisible by the level of $G_E$ and a set of generators of $G_E$ modulo $N$.   The homomorphism $\psi$ is easy to find; it factors through a Dirichlet character $ (\ZZ/D\ZZ)^\times \to \{\pm 1\}$, where $D$ is the discriminant of $\QQ(\sqrt{d})$.   Let $N'$ be the least common multiple of $N$ and $D$.  Then the level of $\calH$ divides $N'$ and we can find generators for the image of $\calH$ modulo $N'$.   By Lemma~\ref{L:quadratic twist group}, we have thus computed the image of $\rho_{E'}^*$ up to conjugacy in $\GL_2(\Zhat)$.

In particular, once we know the image of $\rho_E^*$, we can easily obtain the image for any quadratic twist of $E$ (equivalently, any elliptic curve over $\QQ$ with the same $j$-invariant).    In practice, when computing the image of $\rho_E^*$, we will first replace $E$ by a quadratic twist that has a minimal set of primes of bad reduction.

\section{The modular curve $X_G$} \label{S:first modular curve}

The goal of this section is to give a quick definition of the modular curve $X_G$, where $G$ is either an open subgroup of $\GL_2(\Zhat)$ with $\det(G)=\Zhat^\times$ or a subgroup of $\GL_2(\ZZ/N\ZZ)$ with $\det(G)=(\ZZ/N\ZZ)^\times$.  While we could define $X_G$ as a coarse moduli space, we will instead define it by explicitly giving its function field.  Let $\zeta_N$ be the primitive $N$-th root of unity $e^{2\pi i /N}$ in $\CC$.

\subsection{Modular functions} \label{SS:modular functions}

The group $\SL_2(\ZZ)$ acts by linear fractional transformations on the complex upper half-plane $\calH$ and the extended upper half-plane $\calH^*=\calH\cup \QQ \cup \{\infty\}$.  

Let $\Gamma$ be a congruence subgroup of $\SL_2(\ZZ)$.  The quotient $\calX_\Gamma:=\Gamma\backslash \calH^*$ is a smooth compact Riemann surface (away from the cusps and elliptic points, use the analytic structure coming from $\calH$ and extend to the full quotient).   Denote by $\CC(\calX_{\Gamma})$ the field of meromorphic functions on $\calX_{\Gamma}$.\\

Fix a positive integer $N$.  Every $f\in \CC(\calX_{\Gamma(N)})$ gives rise to a meromorphic function on $\calH$ that satisfies
\[
f(\tau) = \sum_{n\in \ZZ} c_n(f) q_N^{n}
\]
for $\tau\in \calH$, where $q_N:=e^{2\pi i \tau/N}$ and the $c_n(f)$ are unique complex numbers that are nonzero for only finitely many  $n <0$.   This Laurent series in $q_N$ is called the \defi{$q$-expansion} of $f$ (at the cusp $\infty$).

Let $\calF_N$ be the subfield of $\CC(\calX_{\Gamma(N)})$ consisting of all meromorphic functions $f$ such that $c_n(f)$ lies in $\QQ(\zeta_N)$ for all $n\in \ZZ$.  For example, $\calF_1=\QQ(j)$, where $j$ is the modular $j$-invariant.

\begin{lemma} \label{L:Shimura basic}
There is a unique right action $*$ of $\GL_2(\ZZ/N\ZZ)$ on the field $\calF_N$ such that the following hold for all $f\in \calF_N$:
\begin{itemize}
\item
For $A\in \SL_2(\ZZ/N\ZZ)$, we have $(f*A)(\tau) = f(\gamma\tau)$, where $\gamma\in \SL_2(\ZZ)$ is any matrix congruent to $A$ modulo $N$.
\item
For $A=\left(\begin{smallmatrix}1 & 0 \\0 & d\end{smallmatrix}\right) \in \GL_2(\ZZ/N\ZZ)$, the $q$-expansion of $f*A$ is $\sum_{n\in \ZZ} \sigma_d(c_n(f)) q_N^{n}$, where $\sigma_d$ is the automorphism of the field $\QQ(\zeta_N)$ that satisfies $\sigma_d(\zeta_N)=\zeta_N^d$.
\end{itemize}
\end{lemma}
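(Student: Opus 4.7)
The overall strategy is the classical one going back to Shimura: first construct the action of $\SL_2(\ZZ/N\ZZ)$ as a geometric action on the Riemann surface $\calX_{\Gamma(N)}$, then construct the action of the diagonal matrices $\left(\begin{smallmatrix}1 & 0 \\ 0 & d\end{smallmatrix}\right)$ via the Galois action on $q$-expansion coefficients, and finally glue them together using the fact that these elements generate $\GL_2(\ZZ/N\ZZ)$.

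\emph{Uniqueness.} The matrices in $\SL_2(\ZZ/N\ZZ)$ together with the diagonal matrices $\left(\begin{smallmatrix}1 & 0 \\ 0 & d\end{smallmatrix}\right)$ with $d\in(\ZZ/N\ZZ)^\times$ generate $\GL_2(\ZZ/N\ZZ)$ (factor out $\det$). Since the two stated properties completely prescribe the action on these generators, a right action extending them is unique.

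\emph{Existence, $\SL_2$ part.} For $\gamma\in\SL_2(\ZZ)$ define $(f*\gamma)(\tau)=f(\gamma\tau)$. This gives a right action of $\SL_2(\ZZ)$ on $\CC(\calX_{\Gamma(N)})$ that factors through $\SL_2(\ZZ)/\{\pm I\}\Gamma(N)$ and thus descends (after passing to the quotient action with $-I$ acting trivially, which it does because $-I\in\Gamma(N)$ when $N=1,2$ and one verifies $-I$ acts trivially on $\calX_{\Gamma(N)}$ directly for general $N$) to $\SL_2(\ZZ/N\ZZ)$. I would then check that this action preserves $\calF_N$, i.e.\ that $q$-expansion coefficients remain in $\QQ(\zeta_N)$. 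It suffices to verify this on the generators $T=\left(\begin{smallmatrix}1 & 1\\0 & 1\end{smallmatrix}\right)$ and $S=\left(\begin{smallmatrix}0 & -1\\1 & 0\end{smallmatrix}\right)$ of $\SL_2(\ZZ)$: for $T$ one gets $f*T=\sum c_n(f)\zeta_N^n q_N^n$, obviously in $\calF_N$; for $S$ one uses a standard argument with the Fricke involution or the explicit behavior of a known set of generators of $\calF_N$ (e.g.\ Siegel units or Fricke functions $f_{a/N,b/N}$) whose coefficients and whose images under $S$ are visibly in $\QQ(\zeta_N)$.

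\emph{Existence, Galois part.} For each $d\in(\ZZ/N\ZZ)^\times$ I need to produce a field automorphism $\tau_d$ of $\calF_N$ whose effect on $q$-expansions is $\sum c_n q_N^n\mapsto \sum \sigma_d(c_n)q_N^n$, and verify that $\tau_d$ fixes $\calF_1=\QQ(j)$ (so that it is consistent with $f*\left(\begin{smallmatrix}1&0\\0&d\end{smallmatrix}\right)$ depending only on $d$ mod $N$). The standard way is to invoke the known fact that $\calF_N/\calF_1$ is a Galois extension with Galois group isomorphic to $\GL_2(\ZZ/N\ZZ)/\{\pm I\}$, with constants $\QQ(\zeta_N)$ sitting inside as the fixed field of $\SL_2(\ZZ/N\ZZ)/\{\pm I\}$. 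Then $\sigma_d$ extends uniquely to an element of $\Gal(\calF_N/\calF_1)$ that acts trivially on a system of generators whose $q$-expansions have coefficients in $\QQ(\zeta_N)$ but treats the constants $\QQ(\zeta_N)$ via $\sigma_d$. Existence can also be shown concretely by exhibiting an explicit set of generators of $\calF_N$ over $\calF_1$ (such as Fricke functions) whose $q$-expansion coefficients are in $\QQ(\zeta_N)$ and on which the desired $\sigma_d$-twist is again such a generator; then the map on $q$-expansions extends by continuity to a field automorphism of $\calF_N$.

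\emph{Combining the two pieces.} Every $A\in\GL_2(\ZZ/N\ZZ)$ decomposes uniquely as $A=B\cdot\left(\begin{smallmatrix}1&0\\0&\det A\end{smallmatrix}\right)$ with $B\in\SL_2(\ZZ/N\ZZ)$. Defining $f*A:=(f*B)*\left(\begin{smallmatrix}1&0\\0&\det A\end{smallmatrix}\right)$, the only remaining check is that this is a right group action, which reduces to a single compatibility: for all $B\in\SL_2(\ZZ/N\ZZ)$ and $d\in(\ZZ/N\ZZ)^\times$,
\[
\left(\begin{smallmatrix}1&0\\0&d\end{smallmatrix}\right)\cdot B=B'\cdot\left(\begin{smallmatrix}1&0\\0&d\end{smallmatrix}\right)
\]
for $B':=\left(\begin{smallmatrix}1&0\\0&d\end{smallmatrix}\right)B\left(\begin{smallmatrix}1&0\\0&d\end{smallmatrix}\right)^{-1}\in\SL_2(\ZZ/N\ZZ)$. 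One then verifies that the $q$-expansion of $\tau_d(f*B)$ agrees with that of $\tau_d(f)*B'$, which in turn reduces to checking that $\sigma_d$ acting coefficient-wise commutes with the $\SL_2$-action in the twisted way dictated by conjugation. By the uniqueness step, it suffices to verify this on a set of generators of $\SL_2(\ZZ)$.

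The main obstacle I expect is the Galois step: producing the $\tau_d$ is not formal, because a priori applying $\sigma_d$ to coefficients of a $q$-expansion need not yield the $q$-expansion of a meromorphic function on $\calX_{\Gamma(N)}$. Bridging this gap requires either the nontrivial arithmetic input that $\calF_N/\QQ(j)$ is Galois with group $\GL_2(\ZZ/N\ZZ)/\{\pm I\}$ and that $\QQ(\zeta_N)$ is its field of constants (Shimura's reciprocity), or an explicit construction using Fricke/Siegel functions whose transformation law under $\sigma_d$ can be computed directly.
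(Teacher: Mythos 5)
Your proposal is correct in outline and matches the classical argument of Shimura, which is precisely what the paper invokes: the paper's proof of this lemma is a one-line citation to Theorem~6.6 and Proposition~6.9 of Shimura's book, and you have sketched the content behind that citation (the $\SL_2$ part via linear fractional transformations and Fricke/Siegel generators, the diagonal part via the Galois structure of $\calF_N/\QQ(j)$, and the conjugation compatibility to glue them). You have also correctly flagged that the nontrivial arithmetic input is the Galois step, which is exactly the substance of the cited results.
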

\begin{proof}
This follows from Theorem~6.6 and Proposition~6.9 of \cite{MR1291394}.
\end{proof}

For a subgroup $H$ of $\GL_2(\ZZ/N\ZZ)$, let $\calF_N^H$ be the subfield of $\calF_N$ fixed by $H$ under the action of Lemma~\ref{L:Shimura basic}.

\begin{lemma} \label{L:FN basics}
\begin{romanenum}
\item \label{L:FN basics i}
The matrix $-I$ acts trivially on $\calF_N$ and the right action of $\GL_2(\ZZ/N\ZZ)/\{\pm I\}$ on $\calF_N$ is faithful.
\item \label{L:FN basics ii}
We have $\calF_N^{\GL_2(\ZZ/N\ZZ)}=\calF_1=\QQ(j)$ and $\calF_N^{\SL_2(\ZZ/N\ZZ)}=\QQ(\zeta_N)(j)$.
\item \label{L:FN basics iii}
The field $\QQ(\zeta_N)$ is algebraically closed in $\calF_N$.
\end{romanenum}
\end{lemma}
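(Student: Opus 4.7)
I would deduce all three parts from the formalism of Lemma~\ref{L:Shimura basic} together with the classical Shimura description of $\calF_N$ as a Galois extension of $\QQ(j)$ with Galois group $\GL_2(\ZZ/N\ZZ)/\{\pm I\}$ (which comes from the same references cited for Lemma~\ref{L:Shimura basic}).

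For part (i), the matrix $-I$ acts as the identity on $\calH$ under linear fractional transformations, so $f*(-I)=f$ for all $f\in \calF_N$ by the first bullet of Lemma~\ref{L:Shimura basic}.  For faithfulness, suppose $A\in \GL_2(\ZZ/N\ZZ)$ acts trivially on $\calF_N$.  The constant function $\zeta_N$ lies in $\calF_N$, and writing $A=B\cdot D$ with $B\in \SL_2(\ZZ/N\ZZ)$ and $D=\left(\begin{smallmatrix}1 & 0\\ 0 & \det A\end{smallmatrix}\right)$, the first bullet shows that $B$ fixes constants while the second shows $D$ sends $\zeta_N$ to $\zeta_N^{\det A}$.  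Triviality therefore forces $\det A\equiv 1\pmod N$, so $A\in \SL_2(\ZZ/N\ZZ)$.  Lifting $A$ to $\gamma\in \SL_2(\ZZ)$, the action becomes $f(\tau)\mapsto f(\gamma\tau)$; since functions in $\calF_N$ separate points of $\Gamma(N)\backslash \calH$, this can only be trivial if $\gamma\in \{\pm I\}\cdot \Gamma(N)$, i.e.\ $A\in \{\pm I\}$.

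For part (ii), the Galois description immediately yields $\calF_N^{\GL_2(\ZZ/N\ZZ)}=\calF_1=\QQ(j)$.  The first bullet of Lemma~\ref{L:Shimura basic} shows that $\SL_2(\ZZ/N\ZZ)$ acts without affecting $q$-expansion coefficients, so $\QQ(\zeta_N)(j)\subseteq \calF_N^{\SL_2(\ZZ/N\ZZ)}$.  The Galois correspondence makes the right-hand side have degree $[\GL_2:\SL_2]=\phi(N)$ over $\QQ(j)$, which matches $[\QQ(\zeta_N)(j):\QQ(j)]=\phi(N)$, so the two fields coincide.

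For part (iii), let $E$ be the algebraic closure of $\QQ$ in $\calF_N$; I want $E=\QQ(\zeta_N)$.  The inclusion $\QQ(\zeta_N)\subseteq E$ is given by (ii).  Via the fixed embedding $\zeta_N\mapsto e^{2\pi i/N}$, the natural map $\calF_N\otimes_{\QQ(\zeta_N)}\CC\to \CC(\calX_{\Gamma(N)})$ is well-defined and injective, and its target is a field because $\calX_{\Gamma(N)}=\Gamma(N)\backslash \calH^*$ is a connected Riemann surface.  Hence $\calF_N\otimes_{\QQ(\zeta_N)}\CC$ is a domain.  If there were a proper finite subextension $\QQ(\zeta_N)\subsetneq F\subseteq E$, then $F\otimes_{\QQ(\zeta_N)}\CC$ would split as a nontrivial product of copies of $\CC$ (since $\CC$ contains all roots of any polynomial over $\QQ(\zeta_N)$), producing a zero divisor in $\calF_N\otimes_{\QQ(\zeta_N)}\CC$ --- a contradiction.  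The step I expect to require the most care is keeping the compatibility of the various embeddings of $\QQ(\zeta_N)$ straight when passing to $\CC$ in this last argument.
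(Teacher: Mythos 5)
The paper's own proof of this lemma is a one-line citation of Shimura (Theorem 6.6 and Proposition 6.9 of \cite{MR1291394}), so your proof --- which attempts to derive the three statements more explicitly from Lemma~\ref{L:Shimura basic} together with the Galois-theoretic fact that $\calF_N/\QQ(j)$ is Galois with group $\GL_2(\ZZ/N\ZZ)/\{\pm I\}$ --- is a genuinely different presentation. Your arguments for parts~(i) and~(ii) are essentially correct: the decomposition $A=BD$ with $B\in\SL_2$ and $D=\left(\begin{smallmatrix}1&0\\0&\det A\end{smallmatrix}\right)$, the use of the constant $\zeta_N$ to pin down the determinant, the fact that $\calF_N$ separates points of $\Gamma(N)\backslash\calH$ (a consequence of $\CC\cdot\calF_N=\CC(\calX_{\Gamma(N)})$, e.g.\ via Fricke functions), and the degree count $[\GL_2:\SL_2]=\phi(N)=[\QQ(\zeta_N)(j):\QQ(j)]$. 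What the citation approach buys is brevity; what your approach buys is transparency about how the pieces fit, at the cost of still implicitly resting on the same Shimura input for the degree/Galois structure invoked in (ii).

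In part~(iii) there is a real gap, and you have slightly misdiagnosed where the difficulty lies. The injectivity of the map $\calF_N\otimes_{\QQ(\zeta_N)}\CC\to\CC(\calX_{\Gamma(N)})$ is not formal --- it is the statement that $\calF_N$ and $\CC$ are linearly disjoint over $\QQ(\zeta_N)$, which is \emph{at least as strong} as the conclusion you are trying to reach (linear disjointness of $\calF_N$ from the algebraic closure of $\QQ(\zeta_N)$ is equivalent to $\QQ(\zeta_N)$ being algebraically closed in $\calF_N$ in characteristic zero). To close the gap you need the $q$-expansion principle: if $f_1,\dots,f_n\in\calF_N$ are $\QQ(\zeta_N)$-linearly independent and $\sum c_i f_i=0$ for some $c_i\in\CC$, the relations $\sum_i c_i a_n(f_i)=0$ form an infinite linear system over $\QQ(\zeta_N)$; since the rank of a matrix over $\QQ(\zeta_N)$ does not change under extension to $\CC$, a nontrivial $\CC$-solution forces a nontrivial $\QQ(\zeta_N)$-solution $(c_i')$, and then $\sum c_i' f_i$ has identically vanishing $q$-expansion, hence is zero by the identity theorem, contradicting independence. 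Your remark about ``keeping embeddings straight'' is correct but secondary; the actual content is this rank argument, without which your tensor-product argument is begging the question.
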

\begin{proof}
This also follows from Theorem~6.6 and Proposition~6.9 of \cite{MR1291394}.
\end{proof}

\subsection{Modular curves for finite groups}

Let $G$ be a subgroup of $\GL_2(\ZZ/N\ZZ)$ that satisfies $\det(G)=(\ZZ/N\ZZ)^\times$.  By Lemma~\ref{L:FN basics} and our assumption $\det(G)=(\ZZ/N\ZZ)^\times$, the field $\calF_N^G$ has transcendence degree $1$ and $\QQ$ is algebraically closed in $\calF_N^G$.   

\begin{definition}
The \defi{modular curve} $X_G$ is the smooth, projective and geometrically irreducible curve over $\QQ$ with function field $\calF_N^G$.    
\end{definition}

\subsection{Modular curves for open groups}

Consider an open subgroup $G$ of $\GL_2(\Zhat)$ that satisfies $\det(G)=\Zhat^\times$.   We define the \defi{modular curve} associated to $G$ to be the curve 
\[
X_G:=X_{\bbar{G}},
\] 
where $N$ is a positive integer that is divisible by the level of $G$ and $\bbar{G} \subseteq \GL_2(\ZZ/N\ZZ)$ is the reduction of $G$ modulo $N$.   Note that the function field $\QQ(X_G)=\calF_N^{\bbar{G}}$, and hence also $X_G$, does not depend on the choice of $N$.  

\begin{remark}
We will make use of both descriptions $X_G$ and $X_{\bbar{G}}$ of a modular curve interchangeably.  Working with open groups $G$ is more natural for our application and finite groups $\bbar{G}$ is better when dealing with computational issues.
\end{remark}

In the special case $G=\GL_2(\Zhat)$ and using $\QQ(X_G)=\QQ(j)$, we make an identification $X_G=\PP^1_\QQ$ and call it the \defi{$j$-line}.

Consider a larger group $G\subseteq G' \subseteq \GL_2(\Zhat)$.    The inclusion $\QQ(X_G) \supseteq \QQ(X_{G'})$ of fields induces a morphism $X_G\to X_{G'}$ of degree $[\pm G': \pm G]$.   In the special case $G'=\GL_2(\Zhat)$, we denote the morphism by $\pi_G\colon X_G \to \PP^1_\QQ$ (or $\pi_{\bbar{G}}\colon X_{\bbar{G}}\to \PP^1_\QQ$).\\

Let $\Gamma_G$ be the congruence subgroup $\SL_2(\ZZ) \cap G$  of $\SL_2(\ZZ)$; equivalently, the group of $A\in \SL_2(\ZZ)$ for which $A$ modulo $N$ lies in the group $\bbar{G}$ above.  We have an inclusion $\CC \cdot \QQ(X_G) \subseteq \CC(\calX_{\Gamma_G})$ of fields that both have degree $[\GL_2(\ZZ/N\ZZ):\pm G]= [\SL_2(\ZZ):\pm \Gamma_G]$ over $\CC(j)$.  Therefore, $\CC(\calX_{\Gamma_G})=\CC(X_G)$.   Using this equality of function fields, we shall identify $X_G(\CC)$ with the Riemann surface $\calX_{\Gamma_G}$.  Taking complex points, $\pi_G$ gives rise to the morphism $\calX_{\Gamma_G}\to \calX_{\SL_2(\ZZ)} \xrightarrow{\sim} \PP^1(\CC)$ of Riemann surfaces obtained by composing the natural quotient map with the isomorphism given by $j$.

The following property of $X_G$ is fundamental to our application to elliptic curves; it follows from Proposition~\ref{P:revised moduli property} which we will prove in \S\ref{SS:specializations}.

\begin{prop}  \label{P:intial moduli property}
Let $G$ be an open subgroup of $\GL_2(\Zhat)$ that satisfies $\det(G)=\Zhat^\times$ and $-I\in G$.
Let $E$ be any elliptic curve defined over $\QQ$ with $j_E\notin \{0,1728\}$.  Then $\rho_{E}^*(\Gal_\QQ)$ is conjugate in $\GL_2(\Zhat)$ to a subgroup of $G$ if and only if $j_E$ is an element of $\pi_G(X_G(\QQ)) \subseteq \QQ \cup \{\infty\}$.  
\end{prop}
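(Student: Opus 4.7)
The plan is to fix a positive integer $N$ divisible by the level of $G$ and to identify $X_G$ with $X_{\bar G}$, where $\bar G \subseteq \GL_2(\ZZ/N\ZZ)$ is the image of $G$ modulo $N$. Under this identification, $\rho_E^*(\Gal_\QQ)$ is conjugate in $\GL_2(\Zhat)$ to a subgroup of $G$ if and only if $\rho_{E,N}^*(\Gal_\QQ)$ is conjugate in $\GL_2(\ZZ/N\ZZ)$ to a subgroup of $\bar G$, since $G$ is the preimage of $\bar G$ under reduction modulo $N$. The problem thus reduces to a finite-level statement which I would extract from the moduli interpretation of $X_{\bar G}$: non-cuspidal $\QQ$-points of $X_{\bar G}$ should correspond to suitable $\QQ$-equivalence classes of pairs $(E, \bar G\text{-orbit of basis of }E[N])$.

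For the forward direction, suppose $\rho_{E,N}^*(\Gal_\QQ) \subseteq \bar G$. Choose $\tau \in \calH$ so that $\CC/(\ZZ\tau+\ZZ) \cong E_\CC$ together with a basis of $E[N]$ on which Galois acts through $\bar G$; via the analytic uniformization $X_G(\CC) = \Gamma_G\backslash\calH^*$ this produces a point $P \in X_G(\CC)$ lying over $j_E$. To establish $P \in X_G(\QQ)$ I would test rationality by evaluating elements of $\calF_N^{\bar G} = \QQ(X_G)$ at $\tau$: the action of $\GL_2(\ZZ/N\ZZ)$ on $\calF_N$ recalled in Lemma~\ref{L:Shimura basic} controls how $\Gal(\QQ(\zeta_N)/\QQ)$ permutes the specialization values through the $\sigma_d$-twist of $q$-expansions, and a direct compatibility check with $\det\circ\rho_{E,N}^* = \chi_\cyc^{-1}$ should show that these values are Galois-fixed.

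For the converse, given $P \in X_G(\QQ)$ with $\pi_G(P) = j_E \notin \{0,1728\}$, the moduli interpretation should produce an elliptic curve $E'/\QQ$ with $j_{E'} = j_E$ together with a Galois-stable $\bar G$-orbit of bases of $E'[N]$, which yields $\rho_{E',N}^*(\Gal_\QQ) \subseteq \bar G$ after conjugation. Since $j_E\notin\{0,1728\}$, the curve $E'$ must be a quadratic twist of $E$ by some squarefree integer $d$, and by \S\ref{SS:quadratic twist} we have $\rho_{E,N}^*(\sigma) = \chi_d(\sigma)\,\rho_{E',N}^*(\sigma)$ for all $\sigma\in\Gal_\QQ$; because $\chi_d$ takes values in $\{\pm 1\}$ and $-I \in \bar G$, this forces $\rho_{E,N}^*(\Gal_\QQ) \subseteq \bar G$ as required.

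The main obstacle is the moduli interpretation itself: one has to show that the Galois-equivariant datum of a pair $(E, \text{basis of }E[N]\bmod \bar G)$ descends to a $\QQ$-rational point of $X_G$ and vice versa, which requires a careful compatibility between the $\GL_2(\ZZ/N\ZZ)$-action on $\calF_N$ controlling $\QQ(\zeta_N)$-rationality of specializations and the Galois action on $N$-torsion through $\rho_{E,N}^*$. I expect that the cleanest way to handle both directions at once is to construct an explicit representation $\varrho\colon \pi_1(U_G)\to \bar G$ of the \'etale fundamental group whose specialization at each non-cuspidal rational point of $U_G$ recovers (a conjugate of) $\rho_{E,N}^*$ for an elliptic curve $E$ of the appropriate $j$-invariant; this is precisely the approach the author defers to in \S\ref{S:specializations}.
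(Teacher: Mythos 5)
Your proposal correctly identifies the paper's approach: the statement is deduced from Proposition~\ref{P:revised moduli property}, which is proved in \S\ref{SS:specializations} via an \'etale fundamental group representation attached to the explicit elliptic scheme $\scrE\to U$ over the $j$-line minus $\{0,1728,\infty\}$, together with the quadratic-twist observation that $-I\in G$ makes $\pm\rho_{E,N}^*(\Gal_\QQ)=\pm\rho_{\scrE_u,N}^*(\Gal_\QQ)$. Your paragraph on the converse direction (specialization produces a curve $E'$, then twist by $\chi_d$ and use $-I\in\bar G$) is exactly the paper's argument, and your closing paragraph correctly defers to \S\ref{S:specializations}.

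Two points worth flagging. First, your final plan phrases the key object as $\varrho\colon\pi_1(U_G)\to G$, but for the \emph{forward} direction this is not quite enough: starting from $\rho_{E,N}^*(\Gal_\QQ)\subseteq\bar G$ you only have the point $j_E\in U(\QQ)$, not yet a point of $U_G$, so you need the representation $\varrho=\varrho_{\scrE,N}^*\colon\pi_1(U,\bar\eta)\to\GL_2(\ZZ/N\ZZ)$ on the base $U$ together with the fiber-orbit correspondence over $U$ that the paper sets up (Galois-stable $\bar G$-orbits of $\varphi^{-1}(j_E)$ correspond to rational points of $U_G$ over $j_E$). The restriction to $\pi_1(U_G)$ (as in \S\ref{SSS:elliptic scheme for G}) handles specialization at an \emph{already given} rational point, i.e.\ the converse. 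Second, the analytic sketch in your second paragraph (choosing $\tau$, evaluating modular functions, checking Galois-fixedness via the determinant compatibility) is not on its own a proof of rationality of $P$ — the determinant condition alone does not suffice; it is the full hypothesis $\rho_{E,N}^*(\Gal_\QQ)\subseteq\bar G$ combined with the fiber-orbit argument that yields a Galois-stable $\bar G$-orbit. That sketch is effectively a hand-drawn version of what the \'etale $\pi_1$ machinery formalizes, and you rightly fall back on the latter as the clean route.
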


\begin{remark}
As a warning we observe that in the literature, the notation $X_G$ sometimes denotes the modular curve that we call $X_{G^t}$, where $G^t$ is the group obtained by taking the transpose of the elements of $G$. The advantage of this alternate definition is that Proposition~\ref{P:intial moduli property} could be stated with $\rho_E$ instead of the dual representation $\rho_E^*$.   Our definition is more natural when working with the right actions of $G$ on spaces of modular forms.
\end{remark}

\section{Modular forms} \label{S:modular forms}

In this section, we recall what we need concerning modular forms.  For a modular form, we are particularly interested in computing arbitrarily many terms of the $q$-expansions at \emph{every} cusp.  For the basics on modular forms see \cite{MR1291394}.   For an overview on computing modular forms see \cite{2002.04717}; we will take our own approach using Eisenstein series that treats all the $q$-expansions at each cusp with equal importance.

For a subgroup $G$ of $\GL_2(\ZZ/N\ZZ)$ with $\det(G)=(\ZZ/N\ZZ)^\times$ and an even integer $k\geq 2$, we are especially interested in computing the space of modular forms $M_{k,G}$ from \S\ref{SS:MkG}.     We will see later that $M_{k,G}$ is the global sections of a line bundle on the modular curve $X_G$.  For $k$ large enough, the line bundle will be very ample and $M_{k,G}$ will allow us to compute an explicit model of $X_G$ in $\PP_{\QQ}^n$ for some $n$.

Fix a congruence subgroup $\Gamma$ of $\SL_2(\ZZ)$.  For a positive integer $N$, define the primitive $N$-th root of unity $\zeta_N:=e^{2\pi i/N}$ in $\CC$.  

\subsection{Setup and notation} \label{SS:modular form setup}

The group $\SL_2(\ZZ)$ acts by linear fractional transformations on the complex upper half-plane $\calH$ and the extended upper half-plane $\calH^*=\calH\cup \QQ \cup \{\infty\}$.   The quotient $\calX_\Gamma:=\Gamma\backslash \calH^*$ is a smooth compact Riemann surface (away from the cusps and elliptic points use the analytic structure coming from $\calH$ and extend to the full quotient).  

 Let $g$ be the genus of the Riemann surface $\calX_\Gamma$.    Let $P_1,\ldots, P_r$ be the cusps of $\calX_\Gamma$, i.e., the $\Gamma$-orbits of $\PP^1(\QQ) = \QQ \cup \{\infty\}$.    Let $Q_1,\ldots, Q_s$ be the elliptic points of $\calX_\Gamma$ and denote their orders by $e_1,\ldots, e_s$, respectively.   Each $e_i$ is either $2$ or $3$.  Let $v_2$ and $v_3$ be the number of elliptic points of $\calX_\Gamma$ of order $2$ and $3$, respectively.

Consider an integer $k\geq 0$.  For a meromorphic function $f$ on $\calH$ and a matrix $\gamma\in \GL_2(\RR)$ with positive determinant, define the meromorphic function $f|_k \gamma$ on $\calH$ by  $(f|_k \gamma)(\tau):= \det(\gamma)^{k/2}(c\tau+d)^{-k} f(\gamma \tau)$; we call this the \defi{slash operator} of weight $k$.

\subsection{Modular forms}

For an integer $k\geq 0$, we denote by $M_k(\Gamma)$ the set of \defi{modular forms} of weight $k$ on $\Gamma$; it is a finite dimensional complex vector space. Recall that each $f\in M_k(\Gamma)$ is a holomorphic function of the upper half-plane $\calH$ that satisfies $f|_k \gamma=f$ for all $\gamma\in \Gamma$ with the familiar growth condition at each cusp.  For each modular form $f\in M_k(\Gamma)$, we have 
\[
f(\tau) = \sum_{n=0}^\infty a_n(f)\, q_w^{n}
\]
for unique $a_n(f)\in \CC$, where $w$ is the width of the cusp $\infty$ of $\Gamma$ and $q_w:=e^{2\pi i \tau/w}$.  We call this power series in $q_w$, the \defi{$q$-expansion} of $f$ (at the cusp $\infty$).     For a subring $R$ of $\CC$, we denote by $M_k(\Gamma,R)$ the $R$-submodule of $M_k(\Gamma)$ consisting of modular forms whose $q$-expansion has coefficients in $R$.  

Define the graded $\CC$-algebra of modular forms on $\Gamma$ by
\[
R_\Gamma:= \bigoplus_{k \geq 0} M_k(\Gamma),
\]
where $k$ varies over all nonnegative integers.   The $\CC$-algebra $R_\Gamma$ is finitely generated.

\subsection{$q$-expansion at cusps} \label{SS:qexpansions}

We now consider $q$-expansions at all the cusps $P_1,\ldots, P_r$ of $\calX_\Gamma$.  For each $1\leq i \leq r$, choose a matrix $A_i \in \SL_2(\ZZ)$ so that $A_i\cdot \infty \in \QQ \cup \{\infty\}$ is a representative of the cusp $P_i$.    Let $w_i$ and $h_i$ be the minimal positive integers $m$ for which $\left(\begin{smallmatrix}1 & m \\0 & 1\end{smallmatrix}\right)$ lies in $A_i^{-1} \Gamma A_i$ and $A_i^{-1} (\pm\Gamma) A_i$, respectively.    We say that $P_i$ is a \defi{regular} cusp of $\Gamma$ if $w_i=h_i$; otherwise, it is an \defi{irregular} cusp and we have $w_i=2h_i$. 

 Consider a modular form $f\in M_k(\Gamma)$.  For $1\leq i \leq r$, we have 
\begin{align} \label{E:general qexpansions}
(f|_k A_i)(\tau) =\sum_{n=0}^\infty a_{n,i}(f) \,q_{w_i}^n
\end{align}
for unique $a_{n,i}(f) \in \CC$, where $q_{w_i}=e^{2\pi i \tau/{w_i}}$.  In particular, we can identify $f|_k A_i$ with a power series in $\CC\brak{q_{w_i}}$.
The ring $\CC\brak{q_{w_i}}$ is a discrete valuation ring and we denote the corresponding valuation by $\ord_{q_{w_i}}\colon \CC\brak{q_{w_i}} \twoheadrightarrow \ZZ \cup \{+\infty\}$.   Define the value
\[
\nu_{P_i}(f) := \frac{h_i}{w_i} \ord_{q_{w_i}}(f|_k A_i).
\]

\subsection{Modular forms as global sections}
\label{SS:global sections}

Fix an \emph{even} integer $k\geq 0$.   Take any modular form $f\in M_k(\Gamma)$.  Using that $f|_k \gamma=f$ for all $\gamma\in \Gamma$,  we find that the differential form 
\begin{align} \label{E:form on H}
(2\pi i)^{k/2} f(\tau) \, (d\tau)^{k/2} = w^{k/2} \bigg(\sum_{n=0}^\infty a_n(f)\, q_w^{n}\bigg) \bigg(\frac{dq_w}{q_w} \bigg)^{k/2}
\end{align}
on $\calH$ induces a meromorphic differential $k/2$-form $\omega_f$ on $\calX_\Gamma$.   
 
Let $\operatorname{div}(\omega_f)=\sum_{P\in \calX_\Gamma} n_P\cdot P$ be the divisor of $\omega_f$.   We now describe the integer $n_P$ in terms of $f$, cf.~equations (2.4.4) and (2.4.5) of \cite{MR1291394}.   If $P$ is a cusp, then $n_{P} = \nu_{P}(f)-k/2$ and hence $n_P +k/2 \geq 0$.  Now suppose $P\in \calX_\Gamma$ is not a cusp.  Choose a $z\in \calH$ that lies over $P$ and let $e$ be its order, i.e., the order of the cyclic group $\{\gamma\in \Gamma : \gamma\cdot z = z\}/(\Gamma \cap \{\pm I\})$.  We have $n_P = \nu_z(f)/e-k/2\cdot (1-1/e)$, where $\nu_z(f)$ is the order of vanishing of the meromorphic function $f$ at $z$.   Since $f$ is holomorphic at $z$, we have $n_P + \lfloor k/2\cdot (1-1/e) \rfloor \geq -k/2\cdot (1-1/e) + \lfloor k/2\cdot (1-1/e) > -1$.  So $n_P + \lfloor k/2\cdot (1-1/e) \rfloor \geq 0$ since $n_P$ is an integer.  Therefore, $\operatorname{div}(\omega_f) + D_k \geq 0$ where $D_k$ is the divisor
\begin{align} \label{E:divisor Dk}
\sum_{i=1}^r k/2\cdot P_i + \sum_{i=1}^s\lfloor k/2 \cdot (1-1/e_i) \rfloor \, \cdot  Q_i.
\end{align}
So we have an injective $\CC$-linear map 
\[
\psi_k\colon M_k(\Gamma) \to H^0(\calX_\Gamma,\calL_k), \quad f\mapsto \omega_f,
\]
where $\calL_k$ is the invertible sheaf $\Omega_{\calX_\Gamma}^{\otimes k/2}(D_k)$ on the Riemann surface $\calX_\Gamma$.  

Moreover, $\psi_k$ is an isomorphism.  Indeed, given a differential form $\omega\in H^0(\calX_\Gamma,\calL_k)$, it lifts to a differential form (\ref{E:form on H}) on $\calH$, where $f$ is a meromorphic function on $\calH$ that satisfies $f|_k \gamma=f$ for all $\gamma\in \Gamma$.  That $f$ is holomorphic on $\calH$ and has the desired conditions at the cusps follows from $\operatorname{div}(\omega) + D_k \geq 0$.\\

The invertible sheaf $\calL_k$ has degree  $B_{k,\Gamma}:= k/2\cdot (2g-2) + k/2 \cdot r + \lfloor k/4 \rfloor \cdot v_2 +  \lfloor k/3 \rfloor \cdot v_3.$  
We have 
\begin{align} \label{E:BkGamma bound}
B_{k,\Gamma} \leq k/12 \cdot [\SL_2(\ZZ):\pm \Gamma]\end{align} 
since $g-1+v_2/4+v_3/3+r/2 = [\SL_2(\ZZ):\pm \Gamma]/12$ by \cite[Proposition~1.40]{MR1291394}. 
 
When $k\geq 2$,  we have $\deg \calL_k > 2g-2$, cf.~\cite[\S2.6]{MR1291394} and use $r\geq 1$.  So if $k\geq 2$, the Riemann--Roch theorem implies that
\begin{align} \label{E:dimension formula}
\dim_\CC M_k(\Gamma)   
= \dim_\CC  \deg(\calL_k)-g+1 
= (k-1)(g-1) + k/2 \cdot r + v_2 \cdot \lfloor k/4 \rfloor + v_3 \cdot \lfloor k/3 \rfloor.
\end{align}
In the excluded case $k=0$, we have $M_0(\Gamma)=\CC$.  We now describe how many terms of the $q$-expansions of a modular form $f$ are required to determine it.  

\begin{lemma}[Sturm bound] \label{L:advanced Sturm}
For any $f,f'\in M_k(\Gamma)$, we have $f=f'$ if and only if $\sum_{j=1}^r \nu_{P_j}(f-f')  >  B_{k,\Gamma}$.
\end{lemma}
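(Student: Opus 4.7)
\medskip

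The plan is to reduce the statement to a degree computation for an effective divisor on $\calX_\Gamma$ via the identification of modular forms with global sections of the line bundle $\calL_k = \Omega_{\calX_\Gamma}^{\otimes k/2}(D_k)$ from \S\ref{SS:global sections}. Since $M_k(\Gamma)$ is a $\CC$-vector space and $\nu_{P_j}$ is additive in the sense that $\nu_{P_j}(f-f')$ is the vanishing order of the modular form $h := f - f'\in M_k(\Gamma)$ at the cusp $P_j$, it suffices to prove: for $h \in M_k(\Gamma)$, we have $h = 0$ if and only if $\sum_{j=1}^r \nu_{P_j}(h) > B_{k,\Gamma}$.

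One direction is immediate: if $h = 0$, then $\nu_{P_j}(h) = +\infty$ for every cusp $P_j$, so the sum exceeds $B_{k,\Gamma}$. For the converse, I would argue by contrapositive. Assume $h \neq 0$, and pass to the associated global section $\omega_h = \psi_k(h) \in H^0(\calX_\Gamma, \calL_k)$, which is nonzero by the injectivity of $\psi_k$. Then $\operatorname{div}(\omega_h) + D_k$ is an \emph{effective} divisor on $\calX_\Gamma$ whose total degree equals $\deg(\calL_k) = B_{k,\Gamma}$. From the computation recalled in \S\ref{SS:global sections}, the multiplicity of this divisor at a cusp $P_j$ is exactly $(\nu_{P_j}(h) - k/2) + k/2 = \nu_{P_j}(h)$. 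Since every other coefficient of the effective divisor is nonnegative, restricting to the cusps gives
\[
\sum_{j=1}^r \nu_{P_j}(h) \;\leq\; \deg\bigl(\operatorname{div}(\omega_h) + D_k\bigr) \;=\; B_{k,\Gamma},
\]
which is the desired contrapositive.

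The key computational input is the identification of the cuspidal coefficients of $\operatorname{div}(\omega_h) + D_k$ with the normalized vanishing orders $\nu_{P_j}(h)$, which has already been carried out when $\calL_k$ was defined. There is no real obstacle here beyond being careful about the normalization factor $h_i/w_i$ appearing in the definition of $\nu_{P_j}$, which is precisely what makes the coefficient at an irregular cusp come out as an integer and match the local contribution to $D_k$; once that is in hand the argument is a one-line application of effectivity plus the degree bound.
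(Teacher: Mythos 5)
Your proof is correct and follows essentially the same route as the paper: reduce to $h = f-f' \in M_k(\Gamma)$, identify $h$ with a global section $\omega_h$ of $\calL_k$, observe that $\operatorname{div}(\omega_h)+D_k$ is an effective divisor of degree $B_{k,\Gamma}$ whose coefficient at the cusp $P_j$ equals $\nu_{P_j}(h)$, and conclude by bounding the sum over cusps by the total degree.
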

\begin{proof}
  If $f=f'$, then $\sum_{j=1}^r \nu_{P_j}(f-f')=+\infty$.   So take any distinct $f,f'\in M_k(\Gamma)$.  It remains to show that $\sum_{j=1}^r \nu_{P_j}(f-f')  \leq  B_{k,\Gamma}$.  Without loss of generality, we may assume that $f\neq0$ and $f'= 0$.

  The coefficient of the divisor $\operatorname{div}(\omega_f)+D_k$ at the cusp $P_i$  is $(\nu_{P_i}(f)-k/2)+k/2=\nu_{P_i}(f)$.  Since $\operatorname{div}(\omega_f)+D_k \geq 0$, we have $\sum_{i=1}^r \nu_{P_i}(f) \leq \deg(\operatorname{div}(\omega_f)+D_k) = k/2\cdot (2g-2)  + \deg D_k=B_{k,\Gamma}$.  Therefore, $\sum_{i=1}^r \nu_{P_i}(f) \leq B_{k,\Gamma}$ as required.
\end{proof}

Now assume further that $-I\in \Gamma$ and hence $M_k(\Gamma)=0$ for odd $k$.  Using that $\calL_{k} \otimes \calL_{k'} \subseteq \calL_{k+k'}$ for any even non-negative integers $k$ and $k'$, we find that the  isomorphisms $\psi_k$ combine to give an isomorphism of graded $\CC$-algebras:
\[
\psi\colon R_\Gamma \xrightarrow{\sim} \bigoplus_{k\geq 0 \text{ even}} H^0(\calX_\Gamma,\calL_k).
\]

\subsection{Actions} \label{SS:actions}

Fix positive integers $k$ and  $N$.  Since $\Gamma(N)$ is normal in $\SL_2(\ZZ)$, the slash operator of weight $k$ gives a right action of $\SL_2(\ZZ)$ on $M_k(\Gamma(N))$.    Take any modular form $f=\sum_{n=0}^\infty a_n(f) q_N^n$ in $M_k(\Gamma(N))$.  For every field automorphism $\sigma$ of $\CC$, there is a unique modular form $\sigma(f) \in  M_k(\Gamma(N))$ whose $q$-expansion is $\sum_{n=0}^\infty \sigma(a_n(f))\, q_N^{n}$.   This defines an action of $\Aut(\CC)$ on $M_k(\Gamma)$.   

The following describes how these actions of $\SL_2(\ZZ)$ and $\Aut(\CC)$ interact; it is \cite[Theorem~3.3]{BN2019} (they give two proofs, one using Katz modular forms and another making use of a result of Khuri-Makdisi on Eisenstein series, cf.~Theorem~\ref{T:Eisenstein span}).

\begin{lemma} \label{L:action compatibility}
Take any modular form $f\in M_k(\Gamma(N))$.  Take any $\sigma\in \Aut(\CC)$ and let $m$ be the unique element of $(\ZZ/N\ZZ)^\times$ for which $\sigma(\zeta_N)=\zeta_N^m$.   Take any $\gamma=\left(\begin{smallmatrix}a & b \\c & d\end{smallmatrix}\right) \in \SL_2(\ZZ)$ and let $\gamma'$ be any element of $\SL_2(\ZZ)$ congruent to $\left(\begin{smallmatrix}a & mb \\m^{-1}c & d\end{smallmatrix}\right)$ modulo $N$.  Then  $\sigma(f|_k \gamma) = \sigma(f)|_k \gamma'$.
\end{lemma}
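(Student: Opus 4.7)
The plan is to reduce the identity to a direct calculation on Eisenstein series and then invoke Khuri-Makdisi's spanning result.

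First I would observe that both $\sigma(f|_k\gamma)$ and $\sigma(f)|_k\gamma'$ lie in $M_k(\Gamma(N))$: normality of $\Gamma(N)$ in $\SL_2(\ZZ)$ ensures that the weight-$k$ slash action preserves $M_k(\Gamma(N))$, and the $\Aut(\CC)$-action preserves it by definition. Since the $q$-expansion at $\infty$ of an element of $M_k(\Gamma(N))$ converges to the function on all of $\calH$, it determines the modular form, so it suffices to compare $q$-expansions at $\infty$. As both operations are $\CC$-linear in $f$, it further suffices to verify the identity on any $\CC$-spanning set of $M_k(\Gamma(N))$.

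The plan is to take for this spanning set the Eisenstein series $E_{k,v}$ indexed by $v \in (\ZZ/N\ZZ)^2$ (with the usual modifications in low weight, notably $k=2$, and allowing products of Eisenstein series of lower weight where necessary). Khuri-Makdisi's theorem, cited in the paper as Theorem~\ref{T:Eisenstein span}, guarantees that such Eisenstein series $\CC$-span $M_k(\Gamma(N))$. The key virtue of these Eisenstein series is that each of the two actions admits an explicit formula on the indexing vector:
\[
E_{k,v}|_k \gamma = E_{k,\, v\gamma}, \qquad \sigma(E_{k,v}) = E_{k,\, v B_m},
\]
where $B_m := \left(\begin{smallmatrix}1 & 0 \\ 0 & m\end{smallmatrix}\right) \in \GL_2(\ZZ/N\ZZ)$. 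The first identity comes from the lattice-sum definition after the substitution $(m',n') = (m,n)\gamma$; the second expresses how $\sigma$ permutes the $N$-th roots of unity appearing in the Fourier expansion of $E_{k,v}$ at $\infty$ (which involve $\zeta_N^{\beta}$ when $v = (\alpha,\beta)$).

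Granted these two identities, the verification finishes in one line:
\[
\sigma(E_{k,v}|_k \gamma) = E_{k,\, v\gamma B_m} \quad\text{and}\quad \sigma(E_{k,v})|_k \gamma' = E_{k,\, v B_m \gamma'},
\]
so the claimed equality is equivalent to $\gamma B_m \equiv B_m \gamma' \pmod{N}$, i.e., $\gamma' \equiv B_m^{-1} \gamma B_m \pmod{N}$. A direct matrix computation yields
\[
B_m^{-1} \gamma B_m = \left(\begin{smallmatrix} a & mb \\ m^{-1}c & d \end{smallmatrix}\right),
\]
which is exactly the defining congruence of $\gamma'$. The main obstacle is pinning down the two explicit transformation rules in the normalization used in Theorem~\ref{T:Eisenstein span}: the slash identity follows at once from the lattice-sum definition, but the action of $\sigma$ on the index $v$ requires a careful computation of the $q$-expansion at $\infty$, and one must handle the delicate weight $k=2$ case (where a single Eisenstein series must be replaced by suitable differences or products staying in $M_k(\Gamma(N))$). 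Once these formulas are secured, the remainder of the proof is the routine matrix computation above.
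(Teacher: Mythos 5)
The paper gives no proof of Lemma~\ref{L:action compatibility}; it simply cites \cite[Theorem~3.3]{BN2019}, noting that Brunault--Neururer give two proofs, one of which runs through Khuri-Makdisi's spanning theorem. Your proposal reconstructs that second proof, so your route is a legitimate one, and the core reduction is sound: passing to Eisenstein series, where both actions become explicit shifts of the index vector, and then unwinding the claim to the matrix congruence $\gamma'\equiv\left(\begin{smallmatrix}1&0\\0&m^{-1}\end{smallmatrix}\right)\gamma\left(\begin{smallmatrix}1&0\\0&m\end{smallmatrix}\right)\pmod N$. Your matrix computation at the end is correct.

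Several points need repair before this is airtight. First, neither map $f\mapsto\sigma(f|_k\gamma)$ nor $f\mapsto\sigma(f)|_k\gamma'$ is $\CC$-linear in $f$; both are $\sigma$-semilinear. The conclusion you want (equality may be checked on a $\CC$-spanning set) still holds, but the justification you give does not, so it should be replaced by the observation that the two maps agree up to the \emph{same} semilinearity. Second and more substantively, Theorem~\ref{T:Eisenstein span} spans $M_k(\Gamma(N))$ for $k\geq 2$ by products $E^{(1)}_{\alpha_1}\cdots E^{(1)}_{\alpha_k}$ of weight-$1$ Eisenstein series, not by the weight-$k$ Eisenstein series $E_{k,v}$ your display is built around --- the latter span only the Eisenstein subspace and miss the cusp forms once $k\geq 3$. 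To exploit Khuri-Makdisi you therefore need the further observation that both sides of the identity respect products (the slash operator distributes over products of modular forms of appropriate weights and $\sigma$ is a ring homomorphism on $\bigoplus_k M_k(\Gamma(N))$), which reduces the verification to single weight-$1$ Eisenstein series, where the index-shift formulas and Lemma~\ref{L:Eisenstein expansions} are available. Finally, the statement applies to all weights while Khuri-Makdisi covers only $k\geq 2$: the weight-$1$ case needs a separate step, e.g.\ multiply $f\in M_1(\Gamma(N))$ by a nonzero $E^{(1)}_\beta$, apply the $k=2$ case to $fE^{(1)}_\beta$ and the direct computation to $E^{(1)}_\beta$, then divide off the zero locus of $E^{(1)}_\beta$. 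With these patches the argument is complete.
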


Using Lemma~\ref{L:action compatibility} with $\sigma\in \Aut(\CC/\QQ(\zeta_N))$ and $\gamma\in \Gamma(N)$, we find that the action of $\SL_2(\ZZ)$ on $M_k(\Gamma(N))$ via the slash operator gives rise to a well-defined action on $M_k(\Gamma(N),\QQ(\zeta_N))$.  

We have an isomorphism $(\ZZ/N\ZZ)^\times \xrightarrow{\sim} \Gal(\QQ(\zeta_N)/\QQ)$, $d\mapsto \sigma_d$, where $\sigma_d(\zeta_N)=\zeta_N^d$.     We now recall an action of $\GL_2(\ZZ/N\ZZ)$ on $M_{k}(\Gamma(N),\QQ(\zeta_N))$ viewed as a $\QQ$-vector space.   

\begin{lemma} \label{L:star action}
There is a unique right action $*$ of $\GL_2(\ZZ/N\ZZ)$ on $M_k(\Gamma(N),\QQ(\zeta_N))$ such that the following hold:
\begin{itemize}
\item
if $A \in \SL_2(\ZZ/N\ZZ)$, then  $f*A = f|_k \gamma$, where $\gamma$ is any matrix in $\SL_2(\ZZ)$ that is congruent to $A$ modulo $N$,
\item
if $A=\left(\begin{smallmatrix}1 & 0 \\0 & d\end{smallmatrix}\right)$, then $f*A=\sigma_d(f)$.
\end{itemize}
\end{lemma}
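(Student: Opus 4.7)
The plan is to first verify uniqueness and then construct the action and check the associativity relation by reducing to Lemma~\ref{L:action compatibility}.

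For uniqueness, observe that every $g \in \GL_2(\ZZ/N\ZZ)$ admits a decomposition $g = A \cdot D_d$, where $d = \det g$, $D_d := \left(\begin{smallmatrix}1 & 0 \\ 0 & d\end{smallmatrix}\right)$, and $A := g \cdot D_d^{-1} \in \SL_2(\ZZ/N\ZZ)$.  Any right action $*$ satisfying the two bullet points must therefore satisfy $f*g = (f*A)*D_d = \sigma_d(f|_k \gamma)$, where $\gamma \in \SL_2(\ZZ)$ is any lift of $A$.  This formula forces the action, giving uniqueness.

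For existence, define $f*g$ by the above formula.  Several well-definedness checks are required.  First, $f|_k \gamma$ is independent of the chosen lift $\gamma$ of $A$ since any two lifts differ by an element of $\Gamma(N)$, which stabilizes $f$.  Second, $f|_k \gamma$ lies in $M_k(\Gamma(N))$ by normality of $\Gamma(N)$ in $\SL_2(\ZZ)$, and its $q$-expansion still has coefficients in $\QQ(\zeta_N)$: this is precisely the remark preceding the lemma, obtained by applying Lemma~\ref{L:action compatibility} with $\sigma \in \Aut(\CC/\QQ(\zeta_N))$.  Third, $\sigma_d(f) \in M_k(\Gamma(N),\QQ(\zeta_N))$, since $\sigma_d$ simply permutes the coefficients of the $q$-expansion while preserving membership in $\QQ(\zeta_N)$ and the modular form property.

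It remains to verify that $*$ is a right group action, i.e., $(f*g_1)*g_2 = f*(g_1 g_2)$.  The key identity needed is
\[
\sigma_m(f|_k \gamma) \;=\; \sigma_m(f)|_k \gamma',
\]
where $\gamma' \in \SL_2(\ZZ)$ is any lift of $D_m^{-1} \overline{\gamma} D_m$, with $\overline{\gamma}$ the class of $\gamma$ modulo $N$.  A direct computation shows $D_m^{-1}\left(\begin{smallmatrix}a & b \\ c & e\end{smallmatrix}\right)D_m = \left(\begin{smallmatrix}a & mb \\ m^{-1}c & e\end{smallmatrix}\right)$, so this is exactly the content of Lemma~\ref{L:action compatibility}.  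Armed with this interchange, one writes arbitrary elements of $\GL_2(\ZZ/N\ZZ)$ in terms of the generators $\SL_2(\ZZ/N\ZZ)$ and $\{D_d\}_{d \in (\ZZ/N\ZZ)^\times}$, and uses that the slash operator already yields a right action of $\SL_2(\ZZ)$ on $M_k(\Gamma(N))$ and that $\sigma_d \circ \sigma_{d'} = \sigma_{dd'}$; the associativity then reduces to routine bookkeeping with the two bullet points and the interchange identity.  The main conceptual input is Lemma~\ref{L:action compatibility}; the rest is formal.
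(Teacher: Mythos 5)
Your proposal is correct and takes essentially the same route that the paper gestures at: the paper's proof is a citation to Brunault--Neururer's \S3, with the explicit remark that Lemma~\ref{L:action compatibility} is the key compatibility input, and your argument is precisely an unfolding of that claim. Your decomposition $g = A\cdot D_d$ with $A \in \SL_2(\ZZ/N\ZZ)$ forces the formula (uniqueness), the well-definedness checks are correctly identified, and your interchange identity $\sigma_m(f|_k\gamma) = \sigma_m(f)|_k\gamma'$ with $\gamma' \equiv D_m^{-1}\bar\gamma D_m \pmod N$ is exactly a restatement of Lemma~\ref{L:action compatibility}. One can verify the action property concretely: with $g_i = A_i D_{d_i}$ one gets $(f*g_1)*g_2 = \sigma_{d_1 d_2}\big(f|_k(\gamma_1\tilde\gamma_2)\big)$ where $\tilde\gamma_2$ lifts $D_{d_1}A_2 D_{d_1}^{-1}$, and since $g_1 g_2 = \big(A_1 D_{d_1}A_2D_{d_1}^{-1}\big)D_{d_1 d_2}$ this matches $f*(g_1 g_2)$; your ``routine bookkeeping'' phrase is justified. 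The only stylistic difference is that you have filled in details the paper delegates to \cite{BN2019}.
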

\begin{proof}
See \cite[\S3]{BN2019}; it is  Lemma~\ref{L:action compatibility} that allows us to show that the actions in the two parts are compatible.
\end{proof}

\begin{remark}
We obtain a right action $*$ of $\GL_2(\ZZ/N\ZZ)$ on the graded ring $\bigoplus_{k\geq 0} M_k(\Gamma(N),\QQ(\zeta_N))$ that respects multiplication.  If $f$ and $f'\neq 0$ are modular forms in $M_k(\Gamma(N),\QQ(\zeta_N))$, then $f/f' \in \calF_N$ and for $A\in \GL_2(\ZZ/N\ZZ)$, we have $(f/f')*A = (f*A)/(f'*A)$ with the action from \S\ref{SS:modular functions}.
\end{remark}

Now suppose that $k\neq 1$.  The natural map
\[
M_k(\Gamma(N),\QQ(\zeta_N)) \otimes_{\QQ(\zeta_N)} \CC \to M_k(\Gamma(N))
\]
is an isomorphism of complex vector spaces, cf.~\cite[\S1.7]{MR0447119}.    For any congruence subgroup $\Gamma\subseteq \SL_2(\ZZ)$ whose level divides $N$, taking $\Gamma$-invariants shows that the natural map 
\begin{align} \label{E:tensor isom}
M_k(\Gamma,\QQ(\zeta_N)) \otimes_{\QQ(\zeta_N)} \CC \to M_k(\Gamma)
\end{align} 
is an isomorphism of complex vector spaces.

\subsection{The spaces $M_{k,G}$} \label{SS:MkG}

Fix a positive integer $N$ and let $G$ be a subgroup of $\GL_2(\ZZ/N\ZZ)$ that satisfies $\det(G)=(\ZZ/N\ZZ)^\times$.   For each integer $k\geq 0$, we define the $\QQ$-vector space
\[
M_{k,G}:=M_k(\Gamma(N),\QQ(\zeta_N))^G,
\]
where we are considering the subspace fixed by the $G$-action $*$ from Lemma~\ref{L:star action}.   Let $\Gamma_G$ be the congruence subgroup of $\SL_2(\ZZ)$ consisting of matrices that are congruent modulo $N$ to an element of $H:=G\cap\SL_2(\ZZ/N\ZZ)$.  Note that $M_{k,G} \subseteq M_k(\Gamma(N),\QQ(\zeta_N))^H = M_k(\Gamma_G,\QQ(\zeta_N))$. 

\begin{lemma} \label{L:tensor agreement}
The natural homomorphisms
\[
M_{k,G} \otimes_{\QQ} \QQ(\zeta_N) \to M_k(\Gamma_G,\QQ(\zeta_N)) \quad \text{ and } \quad 
M_{k,G} \otimes_{\QQ} \CC \to  M_k(\Gamma_G)
\]
are isomorphisms for $k\neq 1$.
\end{lemma}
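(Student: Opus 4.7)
The plan is to prove the first isomorphism by Galois descent and then deduce the second by base-changing to $\CC$. Set $H:=G\cap \SL_2(\ZZ/N\ZZ)$. By the first bullet of Lemma~\ref{L:star action}, the $*$-action of $H$ on $M_k(\Gamma(N),\QQ(\zeta_N))$ agrees with the slash operator through representatives in $\SL_2(\ZZ)$, so the $H$-invariants are precisely $M_k(\Gamma_G,\QQ(\zeta_N))$. Since $H$ is normal in $G$, the $G$-action descends to an action of $G/H$ on $M_k(\Gamma_G,\QQ(\zeta_N))$, and by definition the $G/H$-invariants are $M_{k,G}$.

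Next, I would identify $G/H$ with $\Gal(\QQ(\zeta_N)/\QQ)$: the determinant induces an isomorphism $G/H \xrightarrow{\sim} (\ZZ/N\ZZ)^\times$ (since $\det G = (\ZZ/N\ZZ)^\times$), and composing with $d\mapsto \sigma_d$ gives the desired identification. The key verification is semilinearity of the $G/H$-action on the $\QQ(\zeta_N)$-vector space $M_k(\Gamma_G,\QQ(\zeta_N))$. For $g\in G$ with $\det g = d$, factor $g = s\cdot a$ in $\GL_2(\ZZ/N\ZZ)$ with $s\in \SL_2(\ZZ/N\ZZ)$ and $a = \left(\begin{smallmatrix}1 & 0 \\ 0 & d\end{smallmatrix}\right)$. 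For $c\in \QQ(\zeta_N)$ and $f\in M_k(\Gamma_G,\QQ(\zeta_N))$, Lemma~\ref{L:star action} gives
\[
(cf)*g = ((cf)*s)*a = (c(f*s))*a = \sigma_d(c)\cdot((f*s)*a) = \sigma_d(c)\cdot(f*g),
\]
since the slash operator is $\CC$-linear and $a$ acts by $\sigma_d$ on $q$-expansions. This is the semilinearity we need.

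With semilinearity established, the first isomorphism follows from the standard Galois descent statement: for any $\QQ(\zeta_N)$-vector space $V$ equipped with a semilinear action of $\Gal(\QQ(\zeta_N)/\QQ)$, the natural map $V^{\Gal(\QQ(\zeta_N)/\QQ)}\otimes_{\QQ} \QQ(\zeta_N) \to V$ is an isomorphism (a consequence of Hilbert's Theorem 90 for $\GL_n$). Applied to $V=M_k(\Gamma_G,\QQ(\zeta_N))$, this gives the first isomorphism, and notably does not require $k\neq 1$. For the second isomorphism, I would tensor the first with $\CC$ over $\QQ(\zeta_N)$ and invoke the already-cited isomorphism (\ref{E:tensor isom}) that $M_k(\Gamma_G,\QQ(\zeta_N))\otimes_{\QQ(\zeta_N)}\CC \xrightarrow{\sim} M_k(\Gamma_G)$; this is precisely where the hypothesis $k\neq 1$ enters.

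The only genuine content is the semilinearity computation; the Galois descent and the passage to $\CC$ are formal. In particular, the main obstacle is simply ensuring that the two defining properties of the $*$-action in Lemma~\ref{L:star action} glue correctly on arbitrary $g\in G$, which is handled by the factorization $g = s\cdot a$ used above.
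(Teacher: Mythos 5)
Your proof is correct and follows essentially the same route as the paper's: identify $G/H$ with $\Gal(\QQ(\zeta_N)/\QQ)$ via the determinant, observe that the induced action on $M_k(\Gamma_G,\QQ(\zeta_N))$ is semilinear, apply Galois descent, and then tensor up to $\CC$ using (\ref{E:tensor isom}). The one added value in your write-up is that you actually verify semilinearity by factoring $g=sa$ with $s\in\SL_2(\ZZ/N\ZZ)$ and $a=\left(\begin{smallmatrix}1&0\\0&d\end{smallmatrix}\right)$; the paper asserts the semilinearity relation $\sigma\bullet(cf)=\sigma(c)(\sigma\bullet f)$ without spelling out this computation.
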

\begin{proof}
Since $H$ is normal in $G$, we have a right action of $G/H$ on $M_k(\Gamma(N),\QQ(\zeta_N))^H = M_k(\Gamma_G,\QQ(\zeta_N))$.  Let $\varphi\colon G/H\to \Gal(\QQ(\zeta_N)/\QQ)$ be the homomorphism satisfying $\varphi(A)(\zeta_N)=\zeta_N^{\det A}$; it is an isomorphism since $\det(G)=(\ZZ/N\ZZ)^\times$.   Since $G/H$ is abelian, the isomorphism $\varphi$ induces a (left) action $\bullet$ of $\Gal(\QQ(\zeta_N)/\QQ)$ on $M_k(\Gamma_G,\QQ(\zeta_N))$.  We have $\sigma \bullet (cf)=\sigma(c) (\sigma\bullet f)$ for all $c\in \QQ(\zeta_N)$, $f\in M_k(\Gamma_G,\QQ(\zeta_N))$ and $\sigma\in \Gal(\QQ(\zeta_N)/\QQ)$.  By Galois descent for vector spaces (see the corollary to Proposition~6 in Chapter V \S10 of \cite{MR1994218}), the natural homomorphism 
\begin{align*}
M_{k,G}\otimes_\QQ \QQ(\zeta_N) = M_k(\Gamma_G,\QQ(\zeta_N))^{\Gal(\QQ(\zeta_N)/\QQ)} \otimes_\QQ \QQ(\zeta_N) \to M_k(\Gamma_G,\QQ(\zeta_N)) 
\end{align*}
is an isomorphism of $\QQ(\zeta_N)$-vector spaces.   The lemma follows by tensoring this isomorphism up to $\CC$ and using  (\ref{E:tensor isom}).
\end{proof}

Later we will need the following which guarantees the existence of nonzero weight $3$ modular forms whenever we have $-I \notin G$. 

\begin{lemma} \label{L:weight 3 existence}
If $-I \notin G$, then $M_{3,G}\neq 0$.
\end{lemma}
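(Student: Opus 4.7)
By Lemma~\ref{L:tensor agreement} applied with $k=3$ (which is $\neq 1$), the natural homomorphism
\[
M_{3,G}\otimes_\QQ \CC \longrightarrow M_3(\Gamma_G)
\]
is an isomorphism of $\CC$-vector spaces, so the lemma reduces to showing $M_3(\Gamma_G)\neq 0$. The hypothesis $-I\notin G$ together with $\Gamma_G=G\cap\SL_2(\ZZ/N\ZZ)$ forces $-I\notin \Gamma_G$; in particular, the level $N$ of $G$ is at least $3$, and the Hodge bundle $\underline\omega$ on $\calX(N)$ descends to an honest invertible sheaf on $\calX_{\Gamma_G}$ precisely because $-I$ acts as $-1$ on $\underline\omega$ while $-I\notin \Gamma_G$.

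I would then prove $M_3(\Gamma_G)\neq 0$ by adapting the line bundle / Riemann--Roch argument of \S\ref{SS:global sections} to the odd weight $k=3$. Concretely, the assignment $f\mapsto (2\pi i)^{3/2}f(\tau)(d\tau)^{3/2}$, interpreted via the descended $\underline\omega$, should identify $M_3(\Gamma_G)$ with the global sections of an invertible sheaf $\calL_3\cong \underline\omega^{\otimes 3}(D_3)$ on $\calX_{\Gamma_G}$, where $D_3$ is a divisor supported on cusps and elliptic points, analogous to the divisor~(\ref{E:divisor Dk}) of the even-weight case. The construction at each point requires care: at a regular cusp the local contribution is parallel to the even-weight case; at an irregular cusp (a phenomenon that occurs only when $-I\notin \Gamma_G$) the contribution must be modified to ensure the resulting section is holomorphic.

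A degree computation mirroring the one behind~(\ref{E:BkGamma bound}) will then give
\[
\deg \calL_3 = 3(g-1) + (\text{strictly positive cusp and elliptic contributions}),
\]
which exceeds $2g-2$ because $\calX_{\Gamma_G}$ always has at least one cusp and each cusp contributes positively. Riemann--Roch finishes the argument:
\[
\dim_\CC M_3(\Gamma_G) \;=\; \dim_\CC H^0(\calX_{\Gamma_G},\calL_3) \;=\; \deg \calL_3 - g + 1 \;>\; 0.
\]

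The main technical step, and the only delicate part of the proof, is the correct construction of $\calL_3$---in particular, pinning down the coefficients of $D_3$ at irregular cusps so that $f\mapsto \omega_f$ lands in $H^0(\calX_{\Gamma_G},\calL_3)$ and is an isomorphism. Once this is set up, the positivity of $\deg\calL_3 - (2g-2)$ is forced by the existence of cusps on any modular curve, and the conclusion is automatic.
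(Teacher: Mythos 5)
Your opening reduction to $M_3(\Gamma_G)\neq 0$ via Lemma~\ref{L:tensor agreement} is exactly what the paper does, and the intent of replacing the even-weight line bundle argument of \S\ref{SS:global sections} by an odd-weight analogue is reasonable for genus $\geq 1$. But the crux of the lemma is the genus $0$ case, and there your degree estimate does not close the argument. You establish (or aim to establish) $\deg\calL_3 > 2g-2$, and then assert that Riemann--Roch gives $\dim H^0(\calX_{\Gamma_G},\calL_3) = \deg\calL_3 - g + 1 > 0$. When $g=0$ the inequality $\deg\calL_3 > 2g-2 = -2$ only yields $\deg\calL_3\geq -1$, hence $\dim\geq 0$, \emph{not} $\dim\geq 1$. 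To get a nonzero section on a genus-$0$ curve you need $\deg\calL_3\geq 0$, and your sketch never produces this stronger bound. In fact, for odd weight $3$ with $-I\notin\Gamma_G$ one has (with $g=0$, $d=[\SL_2(\ZZ):\pm\Gamma_G]$) the identity $\dim_\CC M_3(\Gamma_G)=1+d/4-\varepsilon^{\mathrm{irr}}/2$, where $\varepsilon^{\mathrm{irr}}$ is the number of irregular cusps; the irregular cusps, which you flag as ``the only delicate part,'' enter with a \emph{negative} coefficient, so the positivity you need is not a soft consequence of ``each cusp contributes positively.'' This is precisely the place where a clean a priori argument is hard, and the paper handles it differently: it quotes the explicit dimension formula \cite[Theorem~2.25]{MR1291394} (which immediately settles $g\geq1$) and then, for $g=0$, verifies positivity case by case against the Cummins--Pauli classification of genus-$0$ congruence subgroups \cite{MR2016709}. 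Your sketch therefore has a genuine gap exactly where the paper resorts to an explicit finite check, and the gap is not merely a matter of spelling out the construction of $\calL_3$ at irregular cusps --- the resulting degree simply isn't bounded below the way your final display assumes.
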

\begin{proof}
We need only verify that $M_3(\Gamma_G)\neq 0$ by Lemma~\ref{L:tensor agreement}.  There is an explicit formula for the dimension $d$ of $M_3(\Gamma_G)$ over $\CC$, cf.~\cite[Theorem~2.25]{MR1291394}.  From this formula, we will clearly have $d\geq 1$ when $\Gamma_G$ has genus at least $1$.   In the genus $0$ case, we verified that $d\geq 1$ by using the classification of genus $0$ congruence subgroups from \cite{MR2016709}. 
\end{proof}
 
In \S\ref{SS:basis of M_{k,G}}, we will describe how to compute a basis of $M_{k,G}$ using Eisenstein series when $k\geq 2$.

\subsection{Eisenstein series}

We now describe some explicit modular forms.  See \cite[\S3]{MR0447119} for the basics on Eisenstein series.  For further information, we refer to \S\S2--3 of \cite{BN2019} where all the basic results below are summarized and referenced (except for the explicit constant $c_0$ in Lemma~\ref{L:Eisenstein expansions}, see \cite[Lemma~3.1]{MR3705252} instead).  

Fix positive integers $k$ and $N$.  Take any pair $\alpha\in (\ZZ/N\ZZ)^2$ and choose $a,b\in \ZZ$ so that $\alpha\equiv (a,b) \pmod{N}$.  With $\tau\in \calH$, consider the series
\begin{align} \label{E:Ealpha def}
E_\alpha^{(k)}(\tau,s) = \frac{(k-1)!}{(-2\pi i)^k} 
\sum_{\substack{\omega\in \ZZ + \ZZ\tau\\\omega \neq - (a\tau+b)/N}}  \Big(\frac{a\tau+b}{N} + \omega\Big)^{-k}
\cdot \Big| \frac{a\tau+b}{N} + \omega\Big|^{-2s}.
\end{align}
The series (\ref{E:Ealpha def}) converges when the real part of $s\in \CC$ is large enough.   Hecke proved that $E_\alpha^{(k)}(\tau,s)$ can be analytically continued to all $s\in \CC$.    Using this analytic continuation, we define the \defi{Eisenstein series} 
\[
E_{\alpha}^{(k)}(\tau) :=E_\alpha^{(k)}(\tau,0).
\]
When $k\geq 3$, we can also obtain $E_\alpha^{(k)}(\tau)$ by simply setting $s=0$ in the series (\ref{E:Ealpha def}).    

For $\gamma\in \SL_2(\ZZ)$, we have
\begin{align*}
E_{\alpha}^{(k)} |_k \gamma = E_{\alpha \gamma}^{(k)},
\end{align*}
where $\alpha \gamma \in (\ZZ/N\ZZ)^2$ denotes matrix multiplication.  In particular, $E_\alpha^{(k)}$ is fixed by $\Gamma(N)$.

\begin{lemma} \label{L:Eisenstein expansions}
Suppose that $k\geq 1$ and $k\neq 2$.  Then $E^{(k)}_\alpha$ is a modular form of weight $k$ on $\Gamma(N)$ with $q$-expansion
\[
c_0 + \sum_{\substack{m,n \geq 1\\ m\equiv a \bmod{N}}} n^{k-1} \zeta_N^{bn} q_N^{mn} +(-1)^k  \sum_{\substack{m,n \geq 1\\ m\equiv -a \bmod{N}}} n^{k-1} \zeta_N^{-bn} q_N^{mn},
\]
where $c_0$ is an element of $\QQ(\zeta_N)$.  When $k=1$, we have 
\[
c_0= \begin{cases}
     0 & \text{if $a\equiv b \equiv 0\pmod{N}$}, \\
     \frac{1}{2} \, \frac{1+\zeta_N^b}{1-\zeta_N^b} & \text{if $a\equiv 0 \pmod{N}$ and $b\not\equiv 0 \pmod{N}$},\\
     \frac{1}{2} -  \frac{a_0}{N} & \text{if $a\not\equiv 0 \pmod{N}$,}     
\end{cases}
\]
where $0\leq a_0<N$ is the integer congruent to $a$ modulo $N$.
\end{lemma}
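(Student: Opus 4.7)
The plan is to rewrite the lattice sum (\ref{E:Ealpha def}) as an iterated sum over ``rows'' and apply the Lipschitz summation formula to each row. Writing $z := (a\tau+b)/N$ and $z_m := z + m\tau$ for $m \in \ZZ$, one has
$$E_\alpha^{(k)}(\tau, s) = \frac{(k-1)!}{(-2\pi i)^k} \sum_{m \in \ZZ} \,{\sum_{n \in \ZZ}}' (z_m + n)^{-k} \, |z_m + n|^{-2s},$$
where the prime indicates omission of the unique term with $z_m + n = 0$ (which only arises when $a \equiv 0 \pmod N$ and $m = 0$). First I would handle $k \geq 3$, where the series converges absolutely at $s = 0$. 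The Lipschitz formula
$$\sum_{n \in \ZZ} (w + n)^{-k} = \frac{(-2\pi i)^k}{(k-1)!} \sum_{n \geq 1} n^{k-1} e^{2\pi i n w} \qquad (\operatorname{Im} w > 0, \ k \geq 2)$$
applied to each row with $a + mN > 0$ (to $z_m$) and each row with $a + mN < 0$ (to $-z_m$, together with $(-x)^{-k} = (-1)^k x^{-k}$) produces the two double sums in the stated $q$-expansion after reindexing $m \mapsto a + mN$ and $m \mapsto -a - mN$ respectively, and the normalization $(k-1)!/(-2\pi i)^k$ cancels the Lipschitz constant.

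The only row not handled by the above is $m = 0$ when $a \equiv 0 \pmod N$, whose contribution (plus any constant-term contributions from the Lipschitz evaluations at $n = 0$) lies in $\QQ(\zeta_N)$; this is all that is claimed about $c_0$ when $k \geq 3$. Holomorphy on $\calH$ is immediate from absolute convergence; holomorphy at the cusp $\infty$ follows from nonnegativity of the $q_N$-exponents in the expansion, and at the other cusps from $E_\alpha^{(k)}|_k \gamma = E_{\alpha\gamma}^{(k)}$ combined with the same observation applied to $E_{\alpha\gamma}^{(k)}$.

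For $k = 1$ the series is only conditionally convergent, so one must invoke Hecke's analytic continuation in $s$. The row decomposition and the Lipschitz identity still apply when inner sums are read as symmetric partial sums, so the formulas for the Fourier coefficients with $n \geq 1$ carry over verbatim. The real work is the constant term. When $a \equiv 0 \pmod N$, the $m = 0$ row is evaluated using the classical identity
$$\pi \cot(\pi w) = \frac{1}{w} + \sum_{n \geq 1}\left(\frac{1}{w+n} + \frac{1}{w - n}\right)$$
together with $\pi \cot(\pi w) = -\pi i \,(1 + e^{2\pi i w})/(1 - e^{2\pi i w})$; after multiplying by $-1/(2\pi i)$ one obtains $c_0 = 0$ when $b \equiv 0 \pmod N$ (by parity cancellation with the empty pole) and $c_0 = \tfrac{1}{2}(1+\zeta_N^b)/(1-\zeta_N^b)$ when $b \not\equiv 0 \pmod N$. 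When $a \not\equiv 0 \pmod N$ no row is real; the constant term assembles from the $-\pi i$ terms produced by Lipschitz in each row, regularized via Hecke's $|\cdot|^{-2s}$ and summed over $m$, and classically evaluates to the sawtooth $\tfrac{1}{2} - a_0/N = -B_1(a_0/N)$.

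The main obstacle is the $k = 1$ case: justifying rigorously that Hecke's continuation yields a holomorphic modular form and pinning down each of the three cases for $c_0$ from the interaction between the conditionally convergent row decomposition and the regularization as $s \to 0$. By contrast, the case $k \geq 3$ is essentially bookkeeping once Lipschitz's formula is in hand.
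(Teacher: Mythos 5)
Your outline (row decomposition plus the Lipschitz summation formula, with Hecke's $s$-regularization for $k=1$) is the classical route, and it is the one taken by the sources the paper cites for this lemma: the paper itself gives no proof, instead pointing to Katz and Brunault--Neururer for the general facts and to Brunault's Lemma~3.1 in \cite{MR3705252} for the constant $c_0$. The $k\ge 3$ part of your sketch is sound, though you should say why the $m=0$ row gives an element of $\QQ(\zeta_N)$ when $a\equiv 0\pmod N$: it equals $\frac{(-1)^{k-1}}{(-2\pi i)^k}\bigl(\frac{d}{dx}\bigr)^{k-1}[\pi\cot\pi x]\big|_{x=b/N}$, and each $x$-derivative of $\cot$ contributes a factor of $2\pi i$ while producing a rational function of $e^{2\pi i x}$.

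The genuine gap is the $k=1$ constant term for $a\not\equiv 0\pmod N$, which you correctly flag as the main obstacle but do not bridge. The problem is that the row-by-row $\mp\pi i$ accounting you describe actually produces the \emph{wrong} value: with $0<a_0<N$, rows $m\ge 0$ each contribute $-\pi i$ and rows $m\le -1$ each contribute $+\pi i$, so symmetric truncation in $m$ gives $-\pi i$ and hence $\frac12$ after multiplying by $\frac{1}{-2\pi i}$, not $\frac12-\frac{a_0}{N}$. The missing $-a_0/N$ comes from the failure of the Hecke limit to commute with sending each row to its $s=0$ value; one has to show that the divergent $\mp\pi i$ tails are effectively weighted by $|a_0+mN|^{-2s}$, so that the two tail sums become Hurwitz zeta values $\zeta(0,a_0/N)=\frac12-\frac{a_0}{N}$ and $\zeta(0,1-\frac{a_0}{N})=\frac{a_0}{N}-\frac12$, giving the constant $-\pi i\bigl(\frac12-\frac{a_0}{N}\bigr)+\pi i\bigl(\frac{a_0}{N}-\frac12\bigr)$, which normalizes to $\frac12-\frac{a_0}{N}$. ``Classically evaluates to the sawtooth'' conceals exactly this calculation, which is the content of the Brunault lemma the paper singles out.
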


\begin{remark}
For the excluded case $k=2$, one should instead consider $E_{\alpha}^{(2)}- E_{(0,0)}^{(2)}$ which belongs to $M_2(\Gamma(N))$ and has a computable $q$-expansion.
\end{remark}

Remarkably, we can recover all higher weight modular forms from the Eisenstein series of weight $1$.  

\begin{thm}[Khuri-Makdisi] \label{T:Eisenstein span}
Suppose $N\geq 3$.   Let $\calR_N$ be the $\CC$-subalgebra of $R_{\Gamma(N)}=\bigoplus_{k\geq 0} M_k(\Gamma(N))$ generated by the Eisenstein series $E_\alpha^{(1)}$ with $\alpha\in (\ZZ/N\ZZ)^2$.    Then $\calR_N$ contains all modular forms of weight $k$ on $\Gamma(N)$ for all $k\geq 2$. 
\end{thm}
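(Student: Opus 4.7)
The plan is to prove by induction on $k \geq 2$ that $M_k(\Gamma(N)) \subseteq \calR_N$, with the base case $k = 2$ doing most of the actual work. The inductive step should be reducible to showing that for $k \geq 2$, multiplication by the weight $1$ Eisenstein series is surjective, i.e.\ the map $\calR_{N,k} \otimes_\CC \mathrm{span}\{E^{(1)}_\alpha : \alpha \in (\ZZ/N\ZZ)^2\} \to M_{k+1}(\Gamma(N))$ hits everything, where $\calR_{N,k}$ is the weight $k$ graded piece of $\calR_N$. Translated via the isomorphism $\psi_k \colon M_k(\Gamma(N)) \xrightarrow{\sim} H^0(\calX_{\Gamma(N)}, \calL_k)$ from \S\ref{SS:global sections}, this is a surjectivity statement for multiplication of global sections on the compact Riemann surface $X(N)$. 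For $N \geq 3$ and $k$ large enough, such surjectivity follows from Castelnuovo--Mumford regularity or a classical Noether-type argument, since $\calL_k$ is very ample once $k \geq 2$ and the line bundle defined by the weight $1$ Eisenstein locus has controllable base locus.

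For the base case $k = 2$, split $M_2(\Gamma(N)) = \calE_2 \oplus S_2$ into the Eisenstein and cuspidal parts. To handle $\calE_2$, I would compute the $q$-expansion of $E^{(1)}_\alpha \cdot E^{(1)}_\beta$ explicitly using Lemma~\ref{L:Eisenstein expansions} and compare term by term with the expansion of the weight $2$ Eisenstein series $E^{(2)}_\gamma - E^{(2)}_{(0,0)}$; by varying $\alpha, \beta$ over $(\ZZ/N\ZZ)^2$ one obtains a linear span that visibly covers $\calE_2$ modulo $S_2$, since the constant terms and leading non-constant terms at every cusp can be matched after invoking the $\SL_2(\ZZ)$-equivariance $E^{(k)}_\alpha |_k \gamma = E^{(k)}_{\alpha\gamma}$.

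The genuinely hard part is showing $S_2 \subseteq \calR_N$. The plan is to show that, already among degree $2$ elements of $\calR_N$, one finds the full dimension of $M_2(\Gamma(N))$ --- so that equality is forced. Dimensions on the right are given by the formula in \eqref{E:dimension formula}. On the left, one counts linear relations among the $E^{(1)}_\alpha E^{(1)}_\beta$: these are precisely the ``Eisenstein relations'' that can be established via the Hecke theta-like expansions (the functional equations of $E^{(1)}_\alpha(\tau,s)$ give a finite-dimensional space of relations which Khuri-Makdisi controls). Once the dimension count matches, the inclusion $\mathrm{span}\{E^{(1)}_\alpha E^{(1)}_\beta\} \subseteq M_2(\Gamma(N))$ must be an equality, which in particular puts $S_2$ inside $\calR_N$. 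An alternative for $S_2$ uses the Manin--Drinfeld theorem: cuspidal divisors are torsion in the Jacobian, so differentials with prescribed vanishing at cusps (which weight $2$ cusp forms are, by the isomorphism $\psi_2$) can be manufactured from ratios of Eisenstein series in a controlled way.

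The main obstacle is the cuspidal base case. Once $M_2(\Gamma(N)) \subseteq \calR_N$ is known, the inductive step is comparatively routine, since both sides live as global sections of a very ample line bundle and the multiplication map of sections can be controlled either by Castelnuovo--Mumford regularity on $X(N)$ or more concretely by a Sturm-bound argument (using Lemma~\ref{L:advanced Sturm}) together with the fact that multiplication by a single non-vanishing weight $1$ Eisenstein series shifts $q$-expansion data at every cusp $P_i$ in a predictable way by Lemma~\ref{L:Eisenstein expansions}. I expect the condition $N \geq 3$ to enter in two places: first to ensure $\Gamma(N)$ is torsion-free so that $\calL_k$ has no fractional parts at elliptic points, and second to ensure there are enough distinct weight $1$ Eisenstein series to separate all cusps, which is what makes the multiplication map surjective in the induction.
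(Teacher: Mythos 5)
The paper does not actually prove Theorem~\ref{T:Eisenstein span}: its ``proof'' is a one-line citation to Khuri-Makdisi \cite{MR2904927} via \cite{BN2019}, so you are attempting to establish a result the paper treats as a black box. The shape of your plan (a low-weight base case plus surjectivity of section-multiplication maps for the weight-$1$ line bundle $\omega$ on $X(N)$) is roughly right, but the load-bearing lemmas are missing or misstated. On the inductive step: the ingredient you actually need is that the linear system $V\subseteq H^0(\omega)$ spanned by the $E^{(1)}_\alpha$ is \emph{base-point-free}; the phrase ``controllable base locus'' hides the lemma that does all the work. (And ``multiplication by a single non-vanishing weight $1$ Eisenstein series'' is not available --- every $E^{(1)}_\alpha$ vanishes on an effective divisor of degree $\deg\omega>0$.) With base-point-freeness of $V$, the base-point-free pencil trick gives surjectivity of $V\otimes H^0(\omega^k)\to H^0(\omega^{k+1})$ whenever $H^1(\omega^{k-1})=0$; by Serre duality and $\Omega_{X(N)}\cong\omega^{2}(-\mathrm{cusps})$ this vanishing holds for $k\geq 3$ but \emph{fails} for $k=2$ whenever $\Gamma(N)$ has nonzero weight-$1$ cusp forms (as it does for many $N$), so your induction breaks at the step $2\to 3$ unless you also establish $k=3$ as a base case or use a finer syzygy argument. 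The naive Castelnuovo--Mumford route using all of $H^0(\omega)$ is also unavailable: it would require $\deg\omega\geq 2g+1$, equivalently $\#\mathrm{cusps}\geq 2g+4$, and this inequality fails for every $N\geq 12$.

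On the base case: your plan for $S_2\subseteq\calR_N$ is circular. ``Count linear relations among the $E^{(1)}_\alpha E^{(1)}_\beta$ and see that the dimensions match'' restates the conclusion rather than supplying a method, and the functional-equation relations you describe as ``controlled by Khuri-Makdisi'' are exactly the nontrivial content of \cite{MR2904927}, not an available input. The Manin--Drinfeld alternative is no better: torsion of cuspidal divisor classes produces modular \emph{units}, i.e.\ ratios of forms of equal weight, and you give no route from such ratios to the inclusion $S_2\subseteq\calR_N$. The weights $k=2$ (and, per the above, $k=3$) are where the real work lives, and the proposal defers to \cite{MR2904927} for precisely those steps.
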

\begin{proof}
 This particular formulation of results of Khuri-Makdisi \cite{MR2904927} is Theorem~3.1 of \cite{BN2019}.
\end{proof}

For our applications, the important part of this theorem is we have an explicit set of modular forms that span $M_k(\Gamma(N),\QQ(\zeta_N))$ and that we understand the action $*$ of $\GL_2(\ZZ/N\ZZ)$ on these modular forms. 

\begin{cor}  \label{C:Eisenstein span}
Fix integers $k\geq 2$ and $N\geq 3$.
\begin{romanenum}
\item \label{C:Eisenstein span i}
The $\QQ(\zeta_N)$-vector space $M_k(\Gamma(N),\QQ(\zeta_N))$ is spanned by the set
\begin{align*} \label{E:gen set KN}
\Big\{ E_{\alpha_1}^{(1)} \cdots E_{\alpha_k}^{(1)} : \alpha_1,\ldots, \alpha_k \in (\ZZ/N\ZZ)^2-\{0\} \Big\}.
\end{align*}
\item \label{C:Eisenstein span ii}
For $\alpha_1,\ldots, \alpha_k \in (\ZZ/N\ZZ)^2$ and $A\in \GL_2(\ZZ/N\ZZ)$, we have 
\[
(E_{\alpha_1}^{(1)} \cdots E_{\alpha_k}^{(1)})*A  = E_{\alpha_1A}^{(1)} \cdots E_{\alpha_kA}^{(1)}.
\]
\end{romanenum}
\end{cor}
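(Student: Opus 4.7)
The plan is to prove part (ii) first, since (i) will then follow by combining Khuri--Makdisi's theorem with the observation that the resulting spanning set is already defined over $\QQ(\zeta_N)$.

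For part (ii), I would reduce to the two defining cases of the action $*$ from Lemma \ref{L:star action}. When $A\in \SL_2(\ZZ/N\ZZ)$ with lift $\gamma\in\SL_2(\ZZ)$, the slash operator is multiplicative ($\det\gamma=1$ makes the weight factor trivial), so
\[
(E^{(1)}_{\alpha_1}\cdots E^{(1)}_{\alpha_k})*A \;=\; \prod_{i=1}^{k}(E^{(1)}_{\alpha_i}|_{1}\gamma) \;=\; \prod_{i=1}^{k} E^{(1)}_{\alpha_i\gamma} \;=\; \prod_{i=1}^{k} E^{(1)}_{\alpha_i A},
\]
using the already-noted identity $E^{(k)}_\alpha|_k\gamma=E^{(k)}_{\alpha\gamma}$ from \S\ref{SS:modular form setup}. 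When $A=\left(\begin{smallmatrix}1&0\\0&d\end{smallmatrix}\right)$, the action is $f\mapsto \sigma_d(f)$, which acts on $q$-expansions as a ring automorphism and hence commutes with products. It then suffices to verify $\sigma_d(E^{(1)}_\alpha)=E^{(1)}_{\alpha A}$ where $\alpha A=(a,bd)$. Reading off the $q$-expansion from Lemma~\ref{L:Eisenstein expansions}, $\sigma_d$ sends $\zeta_N^{\pm bn}$ to $\zeta_N^{\pm bdn}$, which matches the non-constant terms of $E^{(1)}_{\alpha A}$, and a brief case analysis on the three subcases for $c_0$ in Lemma~\ref{L:Eisenstein expansions} (both coordinates zero; $a\equiv 0$ and $b\not\equiv 0$; $a\not\equiv 0$) shows that $\sigma_d(c_0)$ agrees with the constant term of $E^{(1)}_{\alpha A}$. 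Finally, any $A\in\GL_2(\ZZ/N\ZZ)$ factors as $A=A'\cdot\left(\begin{smallmatrix}1&0\\0&\det A\end{smallmatrix}\right)$ with $A'\in\SL_2(\ZZ/N\ZZ)$, and since $*$ is a group action we get the full statement by composing the two established cases.

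For part (i), Khuri--Makdisi (Theorem~\ref{T:Eisenstein span}) says that $M_k(\Gamma(N))\subseteq \calR_N$. Since $\calR_N$ is a graded $\CC$-algebra generated in degree one by the $E^{(1)}_\alpha$, the weight-$k$ part $M_k(\Gamma(N))$ is the $\CC$-span of $k$-fold products $E^{(1)}_{\alpha_1}\cdots E^{(1)}_{\alpha_k}$. The $q$-expansion formula in Lemma~\ref{L:Eisenstein expansions} with $a\equiv b\equiv 0\pmod N$ and $k=1$ gives $E^{(1)}_0=0$, so any product containing a factor $E^{(1)}_0$ vanishes and we may restrict to $\alpha_i\in(\ZZ/N\ZZ)^2-\{0\}$.

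To upgrade from spanning over $\CC$ to spanning over $\QQ(\zeta_N)$, observe that each such product lies in $M_k(\Gamma(N),\QQ(\zeta_N))$ (its $q$-expansion coefficients are in $\QQ(\zeta_N)$ by Lemma~\ref{L:Eisenstein expansions}). Let $V$ be the $\QQ(\zeta_N)$-span of these products inside $M_k(\Gamma(N),\QQ(\zeta_N))$; then $V\otimes_{\QQ(\zeta_N)}\CC=M_k(\Gamma(N))$ by what we just proved. Using the isomorphism (\ref{E:tensor isom}), comparing $\QQ(\zeta_N)$-dimensions forces $V=M_k(\Gamma(N),\QQ(\zeta_N))$, which is (i). The only step requiring genuine care is the constant-term verification in (ii); the rest is routine once one tracks the two defining cases of the $*$-action.
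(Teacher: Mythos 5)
Your proof is correct and follows essentially the same route as the paper: for (i), use Khuri--Makdisi to span $M_k(\Gamma(N))$ over $\CC$, drop $E^{(1)}_{(0,0)}=0$, and descend via the tensor isomorphism (\ref{E:tensor isom}); for (ii), handle $A\in\SL_2(\ZZ/N\ZZ)$ via $E^{(1)}_\alpha|_1\gamma=E^{(1)}_{\alpha\gamma}$ and multiplicativity of the slash operator, then handle diagonal $A$ via $\sigma_d$ acting on $q$-expansions, and compose. Your explicit check of the constant term $c_0$ in the three cases of Lemma~\ref{L:Eisenstein expansions} is a harmless elaboration of a step the paper leaves implicit, and the reversed ordering (doing (ii) before (i)) is purely cosmetic since your proof of (i) does not actually invoke (ii).
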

\begin{proof}
Let $S$ be the set of modular forms $E_{\alpha_1}^{(1)} \cdots E_{\alpha_k}^{(1)}$ with $\alpha_i \in (\ZZ/N\ZZ)^2$.  We have $S\subseteq M_k(\Gamma(N),\QQ(\zeta_N))$ by Lemma~\ref{L:Eisenstein expansions}.  As noted in \S\ref{SS:actions}, the natural map $M_k(\Gamma(N),\QQ(\zeta_N)) \otimes_{\QQ(\zeta_N)} \CC \to M_k(\Gamma(N))$ is an isomorphism.    Since $S$ spans $M_k(\Gamma(N))$ by Theorem~\ref{T:Eisenstein span}, we deduce that $S$ spans the $\QQ(\zeta_N)$-vector space $M_k(\Gamma(N),\QQ(\zeta_N))$. This proves part (\ref{C:Eisenstein span i}) after noting that $E_{(0,0)}^{(1)}=0$.

We now prove (\ref{C:Eisenstein span ii}) for a fixed matrix $A\in \SL_2(\ZZ/N\ZZ)$.  Choose any $\gamma\in \SL_2(\ZZ)$ for which $\gamma\equiv A\pmod{N}$.     We have $E_{\alpha_i}^{(1)}|_1 \gamma = E^{(1)}_{\alpha_i \gamma}=E^{(1)}_{\alpha_i A}$ for $1\leq i \leq k$ and hence 
\[
(E^{(1)}_{\alpha_1}\cdots E^{(1)}_{\alpha_k})*A = (E^{(1)}_{\alpha_1}\cdots E^{(1)}_{\alpha_k})|_k \gamma = (E_{\alpha_1}^{(1)}|_1 \gamma)\cdots (E_{\alpha_k}^{(1)}|_1 \gamma) = E^{(1)}_{\alpha_1 A} \cdots E^{(1)}_{\alpha_k A}.
\]   
It thus suffices to prove (\ref{C:Eisenstein span ii}) for any matrix $A=\left(\begin{smallmatrix}1 & 0 \\0 & d\end{smallmatrix}\right) \in \GL_2(\ZZ/N\ZZ)$.   If $\alpha_i=(a_i,b_i)\in (\ZZ/N\ZZ)^2$, the explicit $q$-expansion of $E_{\alpha_i}^{(1)}$ in Lemma~\ref{L:Eisenstein expansions} gives us that $\sigma_d(E_{\alpha_i}^{(1)}) = E^{(1)}_{(a_i,b_id)}  = E^{(1)}_{\alpha_i A}$.  Therefore, $(E_{\alpha_1}^{(1)} \cdots E_{\alpha_k}^{(1)})*A = \sigma_d(E_{\alpha_1}^{(1)} \cdots E_{\alpha_k}^{(1)})=E_{\alpha_1A}^{(1)} \cdots E_{\alpha_kA}^{(1)}$.
\end{proof}

\begin{cor}  \label{C:Eisenstein span 2}
Fix integers $k\geq 2$ and $N\geq 3$.  Let $G$ be a subgroup of $\GL_2(\ZZ/N\ZZ)$ that satisfies $\det(G)=(\ZZ/N\ZZ)^\times$.  Then the $\QQ$-vector space $M_{k,G}$ is spanned by the set of modular forms of the form
\begin{align} \label{E:is a trace}
\sum_{g\in G} \zeta_N^{j \det g}\, E_{\alpha_1 g}^{(1)} \cdots E_{\alpha_k g}^{(1)}
\end{align}
with $\alpha_i \in (\ZZ/N\ZZ)^2-\{0\}$ and $0\leq j < \phi(N):=|(\ZZ/N\ZZ)^\times|$.
\end{cor}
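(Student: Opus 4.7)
The plan is to express $M_{k,G}$ as the image of the averaging (trace) operator applied to a known $\QQ$-spanning set of $M_k(\Gamma(N),\QQ(\zeta_N))$, and then to simplify using the compatibility of the $\ast$-action with scalars.

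First, by Corollary~\ref{C:Eisenstein span}(\ref{C:Eisenstein span i}), the products $E_{\alpha_1}^{(1)}\cdots E_{\alpha_k}^{(1)}$ with $\alpha_i\in (\ZZ/N\ZZ)^2-\{0\}$ span $M_k(\Gamma(N),\QQ(\zeta_N))$ as a $\QQ(\zeta_N)$-vector space (the forms involving some $\alpha_i=0$ vanish, so we may drop them). Since $\{1,\zeta_N,\ldots,\zeta_N^{\phi(N)-1}\}$ is a $\QQ$-basis of $\QQ(\zeta_N)$, the set of all $\zeta_N^j E_{\alpha_1}^{(1)}\cdots E_{\alpha_k}^{(1)}$ with $\alpha_i\in (\ZZ/N\ZZ)^2-\{0\}$ and $0\le j<\phi(N)$ spans $M_k(\Gamma(N),\QQ(\zeta_N))$ as a $\QQ$-vector space.

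Next, I would introduce the averaging operator
\[
T\colon M_k(\Gamma(N),\QQ(\zeta_N)) \to M_{k,G}, \qquad T(f):=\sum_{g\in G} f \ast g,
\]
well-defined on $\QQ$-vector spaces. For any $h\in G$, a change of variables gives $T(f)\ast h = T(f)$, so $T$ lands in $M_{k,G}$. Conversely, for $f\in M_{k,G}$ one has $T(f) = |G|\cdot f$; since $|G|$ is a unit in $\QQ$, the operator $T$ maps the above $\QQ$-spanning set of $M_k(\Gamma(N),\QQ(\zeta_N))$ onto a $\QQ$-spanning set of $M_{k,G}$.

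The remaining computation is the $\ast$-action on the spanning elements. The key point, which follows directly from the two defining properties of $\ast$ in Lemma~\ref{L:star action}, is that for $c\in \QQ(\zeta_N)$, $f\in M_k(\Gamma(N),\QQ(\zeta_N))$ and $g\in \GL_2(\ZZ/N\ZZ)$ with $d:=\det g$, one has $(cf)\ast g = \sigma_d(c)\,(f\ast g)$: decompose $g=\left(\begin{smallmatrix}1&0\\0&d\end{smallmatrix}\right)\cdot g'$ with $g'\in \SL_2(\ZZ/N\ZZ)$, note that $\ast g'$ is $\QQ(\zeta_N)$-linear (being a slash operator), and apply $\sigma_d$ componentwise. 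Combining this with Corollary~\ref{C:Eisenstein span}(\ref{C:Eisenstein span ii}) yields
\[
(\zeta_N^j E_{\alpha_1}^{(1)}\cdots E_{\alpha_k}^{(1)})\ast g \;=\; \sigma_{\det g}(\zeta_N^j)\cdot E_{\alpha_1 g}^{(1)}\cdots E_{\alpha_k g}^{(1)} \;=\; \zeta_N^{j\det g}\,E_{\alpha_1 g}^{(1)}\cdots E_{\alpha_k g}^{(1)}.
\]
Summing over $g\in G$ gives exactly the expression (\ref{E:is a trace}), completing the proof.

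The only mildly subtle step is verifying the scalar rule $(cf)\ast g=\sigma_{\det g}(c)(f\ast g)$; everything else is formal. Even that step is immediate once one factors $g$ through a $\SL_2$-part and a diagonal part and invokes the definitions.
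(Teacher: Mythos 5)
Your proof is correct and follows the same route as the paper: average over $G$, apply $T$ to the spanning set $\{\zeta_N^j E_{\alpha_1}^{(1)}\cdots E_{\alpha_k}^{(1)}\}$, and simplify via Corollary~\ref{C:Eisenstein span}(\ref{C:Eisenstein span ii}). The one place you go beyond the paper's write-up is in explicitly verifying the semilinearity rule $(cf)*g=\sigma_{\det g}(c)(f*g)$ — a step the paper leaves implicit — and your derivation of it from Lemma~\ref{L:star action} is correct.
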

\begin{proof}
Define the $\QQ$-linear map $T\colon M_k(\Gamma(N),\QQ(\zeta_N))\to M_{k,G}$ by $f\mapsto\sum_{g\in G} f* g$.   The map $T$ is surjective since it is multiplication by $|G|$ when restricted to $M_{k,G}$. 

Let $S$ be the set of modular forms $\zeta_N^j E_{\alpha_1}^{(1)} \cdots E_{\alpha_k}^{(1)}$ with $\alpha_i \in (\ZZ/N\ZZ)^2-\{0\}$ and $0\leq j < \phi(N)$.  By Corollary~\ref{C:Eisenstein span}(\ref{C:Eisenstein span i}), we find that $S$ spans $M_k(\Gamma(N),\QQ(\zeta_N))$ as a $\QQ$-vector space.   Therefore, the $\QQ$-vector space $M_{k,G}$ is spanned by the set $T(S)$.     

The corollary follows by noting that $T(\zeta_N^j E_{\alpha_1}^{(1)} \cdots E_{\alpha_k}^{(1)})$ agrees with (\ref{E:is a trace}) by Corollary~\ref{C:Eisenstein span}(\ref{C:Eisenstein span ii}).  
\end{proof}

\subsection{Finding a basis for $M_{k,G}$} \label{SS:basis of M_{k,G}}

 Fix a positive integer $N$ and let $G$ be a subgroup of $\GL_2(\ZZ/N\ZZ)$ that satisfies $\det(G)=(\ZZ/N\ZZ)^\times$ and $-I \in G$.     Let $\Gamma_G$ be the congruence subgroup of $\SL_2(\ZZ)$ consisting of matrices whose image modulo $N$ lies in $G$.   Fix notation as in \S\ref{SS:modular form setup} with $\Gamma:=\Gamma_G$.  In particular, let $P_1,\ldots, P_r \in \QQ\cup\{\infty\}$ be representatives of the cusps of $\calX_\Gamma$.   The cusps are all regular since $ -I \in \Gamma_G$.
 
Fix an integer $k\geq 0$.   In this section, we describe how to compute a basis of $M_{k,G}$.   A modular form in our basis will be explicitly given by its $q$-expansions at each cusp of $\calX_{\Gamma_G}$ with enough terms computed to uniquely determine it.  Moreover, we will be able to compute arbitrarily many terms of these $q$-expansions.    

We may assume that $k$ is even since $-I \in G$ implies that $M_{k,G}=0$ when $k$ is odd.   We may assume that $k\geq 2$ since $M_{0,G}=\QQ$.  We can further assume that $N\geq 3$ (when $N\leq 2$, we can replace $G$ by its inverse image under the reduction map $\GL_2(\ZZ/4\ZZ)\to \GL_2(\ZZ/N\ZZ)$; this does not change $M_{k,G}$).\\

Let $d$ be the dimension of $M_{k,G}$ over $\QQ$; it agrees with the dimension of the complex vector space $M_k(\Gamma_G)$ and hence is computable by (\ref{E:dimension formula}).  We may assume that $d\geq 1$ since otherwise $M_{k,G}=0$.  Set $B_{k,G}:=B_{k,\Gamma_G}$ and let $b_{k,G}$ be the smallest integer satisfying 
\begin{align} \label{E:b inequality}
b_{k,G}  >  B_{k,G}\cdot N /[\SL_2(\ZZ):\Gamma_G].
\end{align}
Define $m:=\sum_{i=1}^r m_i$, where  $m_i := \lceil  w_i b_{k,G}/N\rceil$.   

For each $1\leq i \leq r$, we have chosen a matrix $A_i \in \SL_2(\ZZ)$ satisfying $A_i\cdot \infty=P_i$ and this gives rise to $q$-expansions (\ref{E:general qexpansions}).      Define the $\QQ$-linear map
\begin{align*}
\varphi_k\colon M_{k,G} &\to \QQ(\zeta_N)^{m},\\
\quad f &\mapsto  \big(a_{0,1}(f),\ldots, a_{m_1-1,1}(f), \,\,a_{0,2}(f),\ldots, a_{m_2-1,2}(f),\,\,\ldots,\,\, a_{0,r}(f),\ldots, a_{m_r-1,r}(f)\big).
\end{align*}

\begin{lemma}
The map $\varphi_k$ is injective.
\end{lemma}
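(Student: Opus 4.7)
My plan is to reduce the injectivity of $\varphi_k$ to the Sturm bound (Lemma~\ref{L:advanced Sturm}). The map $\varphi_k$ records the first $m_i$ coefficients of the $q$-expansion (\ref{E:general qexpansions}) of $f$ at each cusp $P_i$, so if $\varphi_k(f)=0$ then $a_{n,i}(f)=0$ for all $0\leq n < m_i$, which is exactly the statement that $\ord_{q_{w_i}}(f|_k A_i) \geq m_i$ for every $i$.

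Because $-I \in G$ implies $-I \in \Gamma_G$, every cusp of $\Gamma_G$ is regular, so $h_i = w_i$ for all $i$ and therefore
\[
\nu_{P_i}(f) = \frac{h_i}{w_i}\ord_{q_{w_i}}(f|_k A_i) = \ord_{q_{w_i}}(f|_k A_i) \geq m_i.
\]
Summing and using $m_i = \lceil w_i b_{k,G}/N\rceil \geq w_i b_{k,G}/N$, I get
\[
\sum_{i=1}^r \nu_{P_i}(f) \;\geq\; \sum_{i=1}^r m_i \;\geq\; \frac{b_{k,G}}{N}\sum_{i=1}^r w_i.
\]

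The remaining step, and the only one that is not a direct unwinding of definitions, is the identity $\sum_{i=1}^r w_i = [\SL_2(\ZZ):\Gamma_G]$. This comes from the fact that the cusp widths $w_i$ are precisely the ramification indices of the $j$-line cover $\calX_{\Gamma_G}\to \calX_{\SL_2(\ZZ)}$ over the cusp $\infty$, so their sum equals the degree of this cover, namely $[\SL_2(\ZZ):\pm\Gamma_G]$; since $-I\in \Gamma_G$ we have $\pm\Gamma_G = \Gamma_G$. Substituting this into the inequality above and invoking the defining inequality (\ref{E:b inequality}) of $b_{k,G}$ yields
\[
\sum_{i=1}^r \nu_{P_i}(f) \;\geq\; \frac{b_{k,G}}{N}[\SL_2(\ZZ):\Gamma_G] \;>\; B_{k,G} = B_{k,\Gamma_G}.
\]
Lemma~\ref{L:advanced Sturm} applied to $f$ and $f'=0$ then forces $f=0$, proving injectivity of $\varphi_k$. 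I expect no real obstacles here; the only point requiring a touch of care is the cusp-width identity and the observation that the regularity of the cusps (guaranteed by $-I\in G$) is what lets one pass from $\ord_{q_{w_i}}(f|_k A_i)$ directly to $\nu_{P_i}(f)$ without a factor of $1/2$.
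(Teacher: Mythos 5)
Your proof is correct and follows essentially the same route as the paper: lower-bound each $\nu_{P_i}(f)$ by $m_i \geq w_i b_{k,G}/N$, sum using $\sum_i w_i = [\SL_2(\ZZ):\Gamma_G]$, and invoke the Sturm bound (Lemma~\ref{L:advanced Sturm}). You are slightly more explicit about the role of cusp regularity (from $-I\in\Gamma_G$) in identifying $\nu_{P_i}(f)$ with $\ord_{q_{w_i}}(f|_k A_i)$, which the paper leaves implicit, but the argument is otherwise identical.
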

\begin{proof}
Take any $f\in \ker\varphi_k$.  For each $1\leq i \leq r$, we have
\[
\nu_{P_i}(f) = \ord_{q_{w_i}}(f) \geq  m_i \geq w_i b_{k,G}/N
\]
and hence $\sum_{i=1}^r \nu_{P_i}(f)  \geq \sum_{i=1}^r w_i\cdot b_{k,G}/N$.  We have $\sum_{i=1}^r w_i = [\SL_2(\ZZ): \Gamma_G]$; one way to see this is to add the ramification indices of the cusps with respect to the natural morphism $\calX_{\Gamma_G} \to \calX_{\SL_2(\ZZ)}$ of degree $[\SL_2(\ZZ):\pm \Gamma_G]=[\SL_2(\ZZ):\Gamma_G]$.  Therefore,  $\sum_{i=1}^r \nu_{P_i}(f)  \geq [\SL_2(\ZZ):\Gamma_G] \cdot b_{k,G}/N > B_{k,G}$.  We have $f=0$ by Lemma~\ref{L:advanced Sturm}.
\end{proof}

\begin{remark}
Of course the map $\varphi$ remains injective if we replace $b_{k,G}$ by any larger integer $b$.  In particular,  any integer $b> kN/12$ will work by (\ref{E:BkGamma bound}).
\end{remark}

\begin{algorithm}
This algorithm computes a basis $\beta$ of the $\QQ$-vector space $\varphi_k(M_{k,G})$.

\begin{enumerate}
\item \label{step:compute E}
Compute the $q$-expansion $E_{\alpha}^{(1)} + O(q_N^{b})$ for all $\alpha\in (\ZZ/N\ZZ)^2-\{0\}$ using the explicit expression from Lemma~\ref{L:Eisenstein expansions}.  

\item 
Let $S$ be the set of all $k$-tuples $(\alpha_1,\ldots, \alpha_k)$ with $\alpha_i \in (\ZZ/N\ZZ)^2-\{0\}$.  Set $\beta:=\emptyset$.

\item  \label{step:choose alpha}
Choose a $k$-tuple $(\alpha_1,\ldots, \alpha_k) \in S$.   For each integer $0\leq j < \phi(N)$, define the modular form 
\begin{align} \label{E:algorithm expansion 0}
f_j:=\sum_{g\in G} \zeta_N^{j \det g}\, E_{\alpha_1 g}^{(1)} \cdots E_{\alpha_k g}^{(1)};
\end{align}
it lies in $M_{k,G}$ by Corollary~\ref{C:Eisenstein span 2}.
Using our approximations of the $E_{\alpha}^{(1)}$ from Step \ref{step:compute E},  compute 
\begin{align} \label{E:algorithm expansion}
f_j|_k A_i + O(q_N^{b}) = \sum_{g\in G} \zeta_N^{j \det g}\, E_{\alpha_1 g A_i}^{(1)} \cdots E_{\alpha_k g A_i}^{(1)} + O(q_N^{b}) 
\end{align}
for all $1\leq i \leq r$.  Since $f_j|_k A_i \in \QQ(\zeta_N)\brak{q_{w_i}}$, this gives us $f_j|_k A_i + O(q_{w_i}^{m_i})$ where $m_i=\lceil w_ib/N \rceil$.   In particular, we can compute the vector $\varphi_k(f_j) \in \QQ(\zeta_N)^m$.

Running over the integers $0\leq j < \phi(N)$, if $\varphi_k(f_j)$ is not in the span of $\beta$ in $\QQ(\zeta_N)^m$ as a $\QQ$-vector space, then adjoin $\varphi_k(f_j)$ to the set $\beta$.

\item  \label{step:end}
Remove from the set $S$ all $k$-tuples of the form $(\alpha_{\sigma(1)} g, \ldots,\alpha_{\sigma(k)}g)$ for some $\sigma\in \mathfrak{S}_k$ and some $g\in G$.  If $|\beta| < d$, then return to Step~\ref{step:choose alpha}.

\end{enumerate}

\end{algorithm}

Since $\varphi_k$ is injective and $\dim_{\QQ} M_{k,G}=d$, if the algorithm terminates, then it will produce a basis $\beta$ of the $\QQ$-vector space $\varphi_k(M_{k,G})$.  The modular form (\ref{E:algorithm expansion 0}) does not change if we replace $(\alpha_1,\ldots, \alpha_k)$ with a $k$-tuple $(\alpha_{\sigma(1)} g, \ldots,\alpha_{\sigma(k)}g)$ with $\sigma\in \mathfrak{S}_k$ and $g\in G$; this justifies the elements removed from the set $S$ in Step~\ref{step:end} (they would not produce new modular forms).   Corollary~\ref{C:Eisenstein span 2} ensures that we will eventually find enough modular forms so that the algorithm halts with a set $\beta$ of cardinality $d$.\\

This algorithm computes the basis of $M_{k,G}$ in the sense that there is a unique basis $F_1,\ldots, F_d$ of $M_{k,G}$ as a $\QQ$-vector space so that $\varphi_k(F_1),\ldots, \varphi_k(F_d)$ is the ordered basis $\beta$. For each $1\leq e \leq d$,  the modular form $F_e$ will be of the form (\ref{E:algorithm expansion 0}) for explicit $j$ and $(\alpha_1,\ldots, \alpha_k)$.  By computing the $q$-expansions of the relevant Eisenstein series to higher precision, one can compute an arbitrary number of terms in the $q$-expansion of $F_e$ at each cusp. By Lemma~\ref{L:tensor agreement}, $F_1,\ldots, F_d$ is also a basis of the complex vector space $M_k(\Gamma_G)$.

\begin{remark} \label{R:main remarks}
\begin{romanenum}
\item
Since our basis of $M_{k,G}$ is given by a $q$-expansion at each cusp, we can also compute subspaces obtained by forcing vanishing conditions at the cusps.  For example, let $S_{k,G}$ be the $\QQ$-subspace of $M_{k,G}$ consisting of modular forms $f\in M_{k,G}$ that satisfy $a_{0,i}(f)=0$ for all $1\leq i \leq r$.   Alternatively, the group $\GL_2(\ZZ/N\ZZ)$ acts on the cusps forms $S_k(\Gamma(N),\QQ(\zeta_N))$ via $*$ and we have $S_{k,G}=S_k(\Gamma(N),\QQ(\zeta_N))^G$.    Thus $\varphi(S_{k,G})$ is an explicit subspace of $\varphi(M_{k,G})$ and a basis can be computed.

\item  \label{R:main remarks i}
Once you have a basis of $M_{k,G}$, you can construct a ``nicer'' one.   We have $(2N)^k\cdot \varphi_k(\beta) \subseteq \ZZ[\zeta_N]^m$ by considering the $q$-expansion in Lemma~\ref{L:Eisenstein expansions}.  Let 
\[
\iota\colon \ZZ[\zeta_N]^m \xrightarrow{\sim} (\ZZ^{\phi(N)})^m = \ZZ^{\phi(N)m}
\] 
be the isomorphism of $\ZZ$-modules obtained by applying to the coordinates the map $\ZZ[\zeta_N]\to \ZZ^{\phi(N)}$, $\sum_{i=1}^{\phi(N)} a_{i} \zeta_N^{i-1} \mapsto (a_1,\ldots, a_{\phi(N)})$.   Define $C:= \iota((2N)^k\cdot \varphi_k(\beta)) \subseteq  \ZZ^{\phi(N)m}$.   

Let $L$ be the subgroup of $\ZZ^{\phi(N)m}$ generated by $C$ and let $L'$ be its \emph{saturation}, i.e., the group of $a\in \ZZ^{\phi(N)m}$ such that $da \in L$ for some positive integer $d$.     Let $C'$ be a basis of the free abelian group $L'$; applying the LLL algorithm \cite[\S2.6]{MR1228206} will produce basis elements with small entries.    Define $\beta':=\iota^{-1}(C')$; it is also a basis of the $\QQ$-vector space $\varphi_k(M_{k,G})$ but with integral (and in practice simpler) entries that the original basis $\beta$.

\item
When some modular forms $f\in M_{k,G}$ are already known, we can adjoin the vectors $\varphi_k(f)$ to the set $\beta$ before beginning the algorithm (making sure the set $\beta$ is linearly independent over $\QQ$).  For example, some modular forms in $M_{k,G}$ can be obtained from modular forms of lower weight or from a larger group.

\item
Set $H:=G\cap \SL_2(\ZZ/N\ZZ)$ and let $R$ be a set of representatives of the cosets $G/H$.   With notation as above, we have 
\[
f_j|_k A_i = \sum_{g\in R}  \zeta_N^{j \det g}\,\Big(\sum_{h\in H} E_{\alpha_1 g h A_i}^{(1)} \cdots E_{\alpha_k g h A_i}^{(1)}\Big).
\]
The inner sums of this expression can be computed first and used for all $j$.

Another observation that makes computing $f_j|_k A_i$ quicker is that its $q$-expansion is a power series in $q_{w_i}=q_N^{N/w_i}$.  Let $U_i$ be the subgroup of $A_i^{-1} H A_i$ generated by $\left(\begin{smallmatrix}1 & w_i \\0 & 1\end{smallmatrix}\right)$ and let $R_i$ be a set of representatives of the cosets $(A_i^{-1} H A_i) /U_i$.  We then have
\[
\sum_{h\in H} E_{\alpha_1 g h A_i}^{(1)} \cdots E_{\alpha_k g h A_i}^{(1)} = \sum_{h\in A_i^{-1} H A_i} E_{\alpha_1 g A_i  h }^{(1)} \cdots E_{\alpha_k  g A_i  h}^{(1)} = \sum_{u\in U_i}\Big( \sum_{h\in R_i } E_{\alpha_1 g A_i  h }^{(1)} \cdots E_{\alpha_k  g A_i  h}^{(1)}\Big)*u.
\]
An easy computation shows that for a modular form $f=\sum_{n=0}^\infty c_n q_N^n$ of weight $k$, we have 
\[
\sum_{u\in U_i} f*u = \frac{N}{w_i} \sum_{\substack{n=0\\n \equiv 0 \pmod{N/w_i}}}^\infty   c_{n} q_N^n.
\]
\item 
Consider a subgroup $G\subseteq \GL_2(\ZZ/N\ZZ)$ that satisfies $-I\in G$.  Let $L_G$ be the subfield of $\QQ(\zeta_N)$ fixed by the group $\{\sigma_d : d\in \det(G)\}$.   We have $L_G=\QQ$ if and only if $\det(G)=(\ZZ/N\ZZ)^\times$.   We can define $M_{k,G}$ as before.    Now $M_{k,G}$ is an $L_G$-vector space of dimension $d$ and hence a $\QQ$-vector space of dimension $d\cdot [L_G:\QQ] = d [(\ZZ/N\ZZ)^\times : \det(G)]$.  Some easy changes in the above algorithm can be made to compute a basis of $M_{k,G}$ over $\QQ$ or $L_G$.
\item
Much of this section can be easily generalized to deal with odd weights $k\geq 3$.
\item
There are other methods for constructing a basis of $M_{k,G}$ whose $q$-expansions at each cusp we can compute arbitrarily many terms of.

In \cite{actionsoncuspsforms}, we explained how to compute an explicit basis of $M_k(\Gamma(N),\QQ(\zeta_N))$ and express the right action of $\SL_2(\ZZ)$ with respect to this basis (we did numerical computations and then identified the algebraic numbers that arose).  From this, we can then compute $M_{k,G}$.  We did not take this approach since $M_k(\Gamma(N),\QQ(\zeta_N))$ is often significantly large than $M_{k,G}$.   It should be possible to adapt the methods of \cite{actionsoncuspsforms} to be more appropriate, but we instead have used Eisenstein series since they were more algebraic in flavour.
\end{romanenum}
\end{remark}

\subsection{Explicit slash action}  \label{S:explicit slash}

Fix a modular form $f\in M_{k,G}$ with assumptions as in \S\ref{SS:basis of M_{k,G}}.    Now consider any matrix $B\in \GL_2(\QQ)$ with positive determinant.  We shall  explain how the $q$-expansion of $f$ at all the cusps allows us to find the Fourier expansion for $f|_k B$.   

It is clear how scalar matrices act, so we may assume that $B$ is in $M_2(\ZZ)$ and that the greatest common divisors of its entries is $1$.   There is a unique $1\leq j \leq r$ and a matrix $\gamma \in \Gamma$ such that $B\cdot  \infty = (\gamma A_j)\cdot \infty$.   Therefore, 
\[
B = \varepsilon \gamma A_j \left(\begin{smallmatrix}a & b \\0 & d\end{smallmatrix}\right)
\] 
for some $\varepsilon \in \{\pm 1\}$ and integers $a,b,d \in \ZZ$ with $a$ and $d$ positive and relatively prime.  Since $f|_k(-I)=(-1)^k f$ and $f|_k\gamma=f$, this implies that $f|_k B = \varepsilon^k (f|_k A_j)|_k \left(\begin{smallmatrix}a & b \\0 & d\end{smallmatrix}\right)$.  Using (\ref{E:general qexpansions}), we deduce that
\begin{align*}
(f|_k B)(\tau)&= \varepsilon^k \Big(\sum_{n=0}^\infty a_{n,j}(f) \cdot q_{w_j}^n\Big)\Big|_k \left(\begin{smallmatrix}a & b \\0 & d\end{smallmatrix}\right)
= \varepsilon^k (a/d)^{k/2} \sum_{n=0}^\infty a_{n,j}(f) \zeta_{dw_j}^b \cdot q_{dw_j}^{an}.
\end{align*}

\section{Computing modular curves} \label{S:computing modular forms}

Fix a positive integer $N$ and let $G$ be a subgroup of $\GL_2(\ZZ/N\ZZ)$ that satisfies $\det(G)=(\ZZ/N\ZZ)^\times$ and $-I\in G$.   In this section, using modular forms, we describe how to compute an explicit model of the curve $X_G$.

\subsection{Modular forms revisited} \label{SS:mf revisited}

The cusps of $\calX_{\Gamma_G}=X_G(\CC)$ are precisely the points lying over $\infty$ via $\pi_G$.   The elliptic points of $\calX_{\Gamma_G}$ of order $2$ and $3$ are the points $P\in \calX_{\Gamma}$ for which $\pi_G$ is unramified and $\pi_G(P)$ is $1728$ and $0$, respectively.  In particular, the cusps, elliptic points of order $2$, and elliptic points of order $3$ define subschemes of $X_G$.

Fix an even integer $k\geq 0$.    Let $D_k$ be the divisor of $ \calX_{\Gamma_G}=X_G(\CC)$ given by (\ref{E:divisor Dk}) with $\Gamma=\Gamma_G$.   Note that $D_k$ is also a divisor of $X_G$ defined over $\QQ$.    Define the invertible sheaf $\scrL_k := \Omega_{X_G}^{\otimes k/2}(D_k)$ on $X_G$.    Note that $\scrL_k$ gives rise to the invertible sheaf $\calL_k$ on $X_G(\CC)=\calX_{\Gamma_G}$ with notation as in \S\ref{SS:global sections} with $\Gamma=\Gamma_G$.   In particular, we have an inclusion $H^0(X_G, \scrL_k) \subseteq H^0(\calX_{\Gamma_G},\calL_k)$ which induces an isomorphism $H^0(X_G, \scrL_k) \otimes_\QQ \CC \to H^0(\calX_{\Gamma_G},\calL_k)$.  

Recall from \S\ref{SS:global sections}, we have an explicit isomorphism 
\[
\psi_k \colon  M_k(\Gamma_G) \xrightarrow{\sim} H^0(\calX_{\Gamma_G},\calL_k).
\]
Under the isomorphism $\psi_k$, we now show that $H^0(X_G, \scrL_k)$ corresponds to the $\QQ$-subspace $M_{k,G}$ of $M_k(\Gamma_G)$ from \S\ref{SS:MkG}.
 
\begin{lemma} \label{L:compatible Q structures} 
The map $\psi_k$ restricts to an isomorphism $M_{k,G} \xrightarrow{\sim} H^0(X_G,\scrL_k)$ of vector spaces over $\QQ$.
\end{lemma}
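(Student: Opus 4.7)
The plan is to exploit that $\psi_k\colon M_k(\Gamma_G)\to H^0(\calX_{\Gamma_G},\calL_k)$ is already a $\CC$-linear isomorphism. Both $M_{k,G}$ and $H^0(X_G,\scrL_k)$ are $\QQ$-structures on this common complex vector space: for $M_{k,G}$ this is Lemma~\ref{L:tensor agreement}, and for $H^0(X_G,\scrL_k)$ it is flat base change for coherent cohomology. In particular, the two $\QQ$-spaces have the same finite $\QQ$-dimension. Once I show that $\psi_k$ carries $M_{k,G}$ into $H^0(X_G,\scrL_k)$, the resulting injective $\QQ$-linear map between equal-dimensional $\QQ$-vector spaces will automatically be an isomorphism.

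For the inclusion, I would use a function-field reduction. For any fixed nonzero $f_0\in M_{k,G}$ and any $f\in M_{k,G}$, the ratio $\psi_k(f)/\psi_k(f_0)=f/f_0$ is a weight-$0$ modular function belonging to $\calF_N^G=\QQ(X_G)$. Hence, if I can exhibit a single nonzero $f_0\in M_{k,G}$ with $\psi_k(f_0)\in H^0(X_G,\scrL_k)$, then $\psi_k(f)=(f/f_0)\cdot \psi_k(f_0)$ is a $\QQ$-rational function times a $\QQ$-rational meromorphic section of $\scrL_k$, hence itself a meromorphic section defined over $\QQ$; and since it is already a global holomorphic section over $\CC$ (by well-definedness of $\psi_k$), it lies in $H^0(X_G,\scrL_k)$.

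To produce such $f_0$, I would work on the $j$-line $\PP^1_\QQ = X_{\GL_2(\Zhat)}$. For even $k\geq 4$, write $k=4a+6b$ with $a,b\geq 0$ and take $f_0:=E_4^a E_6^b\in M_k(\SL_2(\ZZ),\QQ)\subseteq M_{k,\GL_2(\Zhat)}\subseteq M_{k,G}$. Using Ramanujan's identity $\frac{1}{2\pi i}\frac{dj}{d\tau}=-E_4^2 E_6/\Delta$ together with $E_4^3=j\Delta$ and $E_6^2=(j-1728)\Delta$, one computes directly that $\psi_k(E_4^a E_6^b)$ equals an element of $\QQ(j)$ times $(dj)^{\otimes k/2}$; for instance $\psi_{12}(\Delta)=-(dj)^{\otimes 6}/(j^4(j-1728)^3)$. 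This is manifestly defined over $\QQ$, and its pullback via the $\QQ$-morphism $\pi_G\colon X_G\to \PP^1_\QQ$ coincides with $\psi_k(f_0)$ computed at level $G$, because both arise by descent from the single differential $(2\pi i)^{k/2}f_0(\tau)(d\tau)^{k/2}$ on $\calH$. The trivial case $k=0$ holds since $\psi_0$ is the identity on $\CC$, and the case $k=2$ (where $M_2(\SL_2(\ZZ))=0$) reduces to higher weight: for $f\in M_{2,G}$, one writes $\psi_2(f)=\psi_{14}(f\Delta)/\psi_{12}(\Delta)$, expressing it as a ratio of meromorphic sections defined over $\QQ$ by the cases $k=14$ and $k=12$.

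The main obstacle is the explicit divisor verification on $\PP^1_\QQ$: checking that $\psi_k(E_4^a E_6^b)$ lies in $H^0(\PP^1_\QQ, \scrL_k)$ with the correct poles at $0$, $1728$, and $\infty$ controlled by $D_k$, and that the pullback under $\pi_G$ remains in $H^0(X_G, \scrL_k)$ once the ramification of $\pi_G$ at cusps and elliptic points is accounted for. Both checks are direct but slightly delicate; everything else in the argument is formal manipulation of the function-field identity $f/f_0\in \QQ(X_G)$.
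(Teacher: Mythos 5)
Your argument is correct, but it proceeds in the opposite direction from the paper's proof, and it is worth noting the comparison. Both proofs begin with the same reduction: since $\psi_k$ is a $\CC$-linear isomorphism and both $M_{k,G}$ (by Lemma~\ref{L:tensor agreement}) and $H^0(X_G,\scrL_k)$ (by flat base change) are $\QQ$-structures on the two sides, the $\QQ$-dimensions agree, so it suffices to prove a single inclusion. You prove the \emph{forward} inclusion $\psi_k(M_{k,G})\subseteq H^0(X_G,\scrL_k)$; the paper proves the \emph{reverse} inclusion $\psi_k^{-1}(H^0(X_G,\scrL_k))\subseteq M_{k,G}$. The paper starts from a $\QQ$-rational global section $\omega=v\,(du)^{k/2}$ with $u,v\in\calF_N^G$, writes out the $q$-expansion of $f:=\psi_k^{-1}(\omega)=(2\pi i)^{-k/2}v\,(u')^{k/2}$ to see that it has coefficients in $\QQ(\zeta_N)$, and then runs a direct computation with the $*$-action (using $u|_0\gamma=\sigma_m^{-1}(u)$, $v|_0\gamma=\sigma_m^{-1}(v)$, and differentiation of $q$-expansions) to conclude $f*A=f$ for all $A\in G$. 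Your route avoids any computation with the $*$-action: you use the observation that $f/f_0\in\calF_N^G=\QQ(X_G)$ to reduce the whole problem to a single $f_0=E_4^aE_6^b$, and then push $\psi_k(f_0)$ down to the $j$-line, where it is manifestly a $\QQ$-rational meromorphic $(k/2)$-differential in $\QQ(j)\cdot(dj)^{\otimes k/2}$, and pull back. The paper's proof is slightly more self-contained and treats all even $k\geq 2$ uniformly; yours buys more explicit formulas at the cost of the separate dodge for $k=2$ via $\psi_2(f)=\psi_{14}(f\Delta)/\psi_{12}(\Delta)$.

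Two small corrections. First, $\psi_{12}(\Delta)=+(dj)^{\otimes 6}/\bigl(j^4(j-1728)^3\bigr)$, without the minus sign: the $(-1)^6$ coming from Ramanujan's identity is $+1$. Second, the ``main obstacle'' you flag at the end --- the explicit divisor verification on $\PP^1_\QQ$ and the ramification bookkeeping for $\pi_G$ --- is not actually needed. Once you know that $\psi_k(f_0)$, viewed as a meromorphic $(k/2)$-differential on $X_G$, is defined over $\QQ$ (which is immediate because it is the $\pi_G$-pullback of the $\QQ$-rational $(k/2)$-differential you computed on $\PP^1_\QQ$, and $\pi_G$ is a $\QQ$-morphism), the membership $\psi_k(f_0)\in H^0(X_G,\scrL_k)$ is automatic: over $\CC$ the section $\psi_k(f_0)$ already lies in $H^0(\calX_{\Gamma_G},\calL_k)$ because $\psi_k\colon M_k(\Gamma_G)\to H^0(\calX_{\Gamma_G},\calL_k)$ is an isomorphism, and effectivity of the divisor $\operatorname{div}(\psi_k(f_0))+D_k$ descends from $\CC$ to $\QQ$. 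No independent divisor computation against $D_k$ on either curve is required, so there is no delicate step remaining; the argument is complete as outlined.
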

\begin{proof}
The isomorphism $\psi_k$ induces an isomorphism $M_{k,G}\otimes_\QQ \CC \xrightarrow{\sim} H^0(X_G,\scrL_k)\otimes_\QQ \CC$ of complex vector spaces.  Therefore, the $\QQ$-vector spaces $M_{k,G}$ and $H^0(X_G,\scrL_k)$ have the same dimension.  Since $\psi_k$ is an isomorphism, it thus suffices to prove that $\psi_k^{-1}(H^0(X_G,\scrL_k))\subseteq M_{k,G}$.

Take any differential form $\omega \in H^0(X_G, \scrL_k) \subseteq H^0(\calX_{\Gamma_G},\calL_k)$.   Choose an element $u\in \calF_N^G-\QQ$.    Since $\QQ(X_G)=\calF_N^G$, there is a unique $v\in \calF_N^G$ such that $\omega= v\, (du)^{k/2}$.   Via the quotient map $\calH \to \calX_{\Gamma_G}$, $\omega=v (du)^{k/2}$ pulls back to the differential form $v(\tau) u'(\tau)^{k/2} (d\tau)^{k/2}$ on $\calH$.  So $f:=\psi_k^{-1}(\omega) \in M_k(\Gamma_G)$ is given by $f(\tau)= (2\pi i)^{-k/2} v(\tau) u'(\tau)^{k/2}$.

Taking the derivative of the $q$-expansion $u= \sum_{n \in \ZZ} c_n(u) q_N^n$ gives $u'(\tau) = \sum_{n\in \ZZ} 2\pi i n/N\, c_n(u) q_N^n$.   Therefore, $f(\tau)= \big(\sum_{n\in \ZZ} c_n(v) q_N^n \big)\cdot  \big(\sum_{n\in \ZZ} n/N\, c_n(u) q_N^n \big)^{k/2}$.  In particular, we find that $f$ is an element of $M_k(\Gamma_G,\QQ(\zeta_N))$ since the $q$-expansions of both $u$ and $v$ have coefficients in $\QQ(\zeta_N)$.

Now take any $A\in G$.   Set $m:=\det(A) \in (\ZZ/N\ZZ)^\times$ and take any matrix $\gamma=\left(\begin{smallmatrix}a & b \\c & d\end{smallmatrix}\right) \in \SL_2(\ZZ)$ for which $A\equiv \gamma \left(\begin{smallmatrix}1 & 0 \\0 & m\end{smallmatrix}\right) \pmod{N}$.  We have $ u|_0 \gamma = \sigma_m^{-1}(u)$ and $v|_0 \gamma = \sigma_m^{-1}(v)$ since $u*A=u$ and $v*A=v$.    The equality $ u|_0 \gamma = \sigma_m^{-1}(u)$ is the same as
$u(\gamma \tau) = \sum_{n\in \ZZ} \sigma_m^{-1}(c_n(u)) q_N^n$ and taking derivatives of both sides gives $u'(\gamma \tau) (c\tau+d)^{-2} = 2\pi i \sum_{n\in \ZZ} n/N\, \sigma_m^{-1}(c_n(u)) q_N^n$.  Therefore,
\begin{align*}
f|_k \gamma & = (2\pi i)^{-k/2} \, v|_0 \gamma \, (u' |_2 \gamma)^{k/2} \\
&= (2\pi i)^{-k/2} \, \sigma_m^{-1}(v) \, \Big(2\pi i \sum_{n\in \ZZ} \tfrac{n}{N}\, \sigma_m^{-1}(c_n(u)) q_N^n\Big)^{k/2}\\
&= \sigma_m^{-1}\Big( \Big(\sum_{n\in \ZZ} c_n(v) q_N^n  \Big)   \Big(\sum_{n\in \ZZ} \tfrac{n}{N}\, c_n(u) q_N^n\Big)^{k/2}\Big) = \sigma_m^{-1}(f).\\
\end{align*}
Since $f |_k \gamma = \sigma_m^{-1}(f)$, we have $f*A=f$.   Since $A$ was an arbitrary element of $G$, this implies that $f=\psi_k^{-1}(\omega)$ lies in $M_k(\Gamma_G,\QQ(\zeta_N))^G=M_{k,G}$.  We have $\psi_k^{-1}(H^0(X_G,\scrL_k)) \subseteq M_{k,G}$ since $\omega$ was an arbitrary element of $H^0(X_G,\scrL_k)$.
\end{proof}

Combining the $\psi_k$, we obtain an isomorphism 
\[
\bigoplus_k M_{k,G} \xrightarrow{\sim} \bigoplus_k H^0(X_G,\scrL_k)
\]
of graded $\QQ$-algebras, where the sums are over even integers $k\geq0$.

\subsection{Galois action on the cusps}

Let $U$ be the group of upper triangular matrices in $\SL_2(\ZZ)$; it is generated by $-I$ and $\left(\begin{smallmatrix}1 & 1 \\0 & 1\end{smallmatrix}\right) $. Let $U_N \subseteq \GL_2(\ZZ/N\ZZ)$ be the image of $U$ modulo $N$.  Define the set of double cosets $\scrC_G:=G\backslash \GL_2(\ZZ/N\ZZ)/U_N$.   In this section, we explain how to identify $\scrC_G$ with the set of cusps of $\calX_{\Gamma_G}=X_G(\CC)$ and describe the Galois action on it.    

The map $\SL_2(\ZZ)\to \QQ\cup\{\infty\}= \PP^1(\QQ)$, $A\mapsto A\cdot \infty$ is surjective and induces a bijection $\iota\colon\SL_2(\ZZ)/U \to \PP^1(\QQ)$.   The map $\iota$ respects the natural $\SL_2(\ZZ)$-actions and hence gives a bijection between the set of double cosets $\Gamma_G\backslash \SL_2(\ZZ)/U$ and the set of $\Gamma_G$-orbits of $\PP^1(\QQ)$, i.e., the set of cusps of $\calX_{\Gamma_G}$.  With notation as in \S\ref{SS:qexpansions}, the matrices $A_1,\ldots, A_r$ are representatives of the double cosets $\Gamma_G\backslash \SL_2(\ZZ)/U$.  Using that the level of $\Gamma_G$ divides $N$ and $\det(G)=(\ZZ/N\ZZ)^\times$, we find that the map $\Gamma_G\backslash \SL_2(\ZZ)/U \to \scrC_G$ obtained from reduction modulo $N$ is also a bijection.   This allows us to identify $\scrC_G$ with the cusps of $\calX_{\Gamma_G}=X_G(\CC)$.

For an $m\in (\ZZ/N\ZZ)^\times$ and a cusp $P:=G \cdot A \cdot U_N \in \scrC_G \subseteq X_G(\CC)$, define $m\cdot P :=G \cdot A  \left(\begin{smallmatrix}1 & 0 \\0 & m\end{smallmatrix}\right) \cdot U_N$.  This is well-defined and gives an action of $(\ZZ/N\ZZ)^\times$ on $U_N$.

\begin{lemma} \label{L:cusp Galois}
The cusps of $X_G$ are all defined over $\QQ(\zeta_N)$.  For a cusp $P\in X_G(\QQ(\zeta_N))$ and an $m\in (\ZZ/N\ZZ)^\times$, we have $\sigma_m(P)=m\cdot P$.
\end{lemma}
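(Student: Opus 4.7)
The plan is to realize each cusp as a $\QQ(\zeta_N)$-point of $X_G$ via an explicit evaluation map coming from $q$-expansions, and then compute the Galois action using Lemma~\ref{L:Shimura basic}.

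First I would fix a cusp $P \in \scrC_G$ with representative $A \in \SL_2(\ZZ)$ (so $P = G\bar{A}\, U_N$) and define
\[
\operatorname{ev}_P \colon \calF_N^G \dashrightarrow \QQ(\zeta_N), \qquad f \mapsto c_0(f * A),
\]
where $c_0(f*A)$ denotes the constant term in the $q_N$-expansion of $f*A \in \calF_N$ (and is $\infty$ if $f*A$ has a pole at $\infty$). The value lies in $\QQ(\zeta_N)$ since $f*A \in \calF_N$. To show $\operatorname{ev}_P$ depends only on the double coset $G\bar{A}\, U_N$, I would check invariance separately. If $\bar A$ is replaced by $g\bar A$ with $g \in G$, then $f*(gA) = (f*g)*A = f*A$ since $f \in \calF_N^G$. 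If $\bar A$ is replaced by $\bar A u$ with $u \in U_N$, lift $u$ to $\tilde u = \pm\left(\begin{smallmatrix}1 & b \\ 0 & 1\end{smallmatrix}\right) \in U$ and observe that $(f * A) * \tilde u$ has $q$-expansion $\sum c_n(f*A)\zeta_N^{bn} q_N^n$ (using that $-I$ acts trivially on $\calF_N$ by Lemma~\ref{L:FN basics}), whose constant term equals $c_0(f*A)$.

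Next, I would interpret $\operatorname{ev}_P$ as a $\QQ(\zeta_N)$-point of $X_G$: the valuation underlying $\operatorname{ev}_P$ on $\calF_N^G = \QQ(X_G)$ determines a closed point of the smooth projective curve $X_G$ whose residue field embeds in $\QQ(\zeta_N)$ via $\operatorname{ev}_P$; moreover, identifying $X_G(\CC)$ with $\calX_{\Gamma_G}$ shows this $\QQ(\zeta_N)$-point is precisely the cusp $P$. This proves the cusps are all defined over $\QQ(\zeta_N)$.

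For the Galois action, the point $\sigma_m(P)$ corresponds to the evaluation $\sigma_m \circ \operatorname{ev}_P$, so it suffices to show $\sigma_m \circ \operatorname{ev}_P = \operatorname{ev}_{m\cdot P}$. Since $\det(G) = (\ZZ/N\ZZ)^\times$, pick $g_0 \in G$ with $\det g_0 = m^{-1}$; then $g_0 \bar A \left(\begin{smallmatrix}1 & 0 \\ 0 & m\end{smallmatrix}\right)$ has determinant $1$, so lifts to some $A' \in \SL_2(\ZZ)$ representing the double coset $G\bar A \left(\begin{smallmatrix}1 & 0 \\ 0 & m\end{smallmatrix}\right) U_N = m\cdot P$. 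Using both clauses of Lemma~\ref{L:Shimura basic} together with $f \in \calF_N^G$,
\[
f * A' = (f * g_0) * A * \left(\begin{smallmatrix}1 & 0 \\ 0 & m\end{smallmatrix}\right) = \sigma_m(f * A),
\]
so taking constant terms gives $\operatorname{ev}_{m\cdot P}(f) = c_0(f*A') = \sigma_m(c_0(f*A)) = \sigma_m(\operatorname{ev}_P(f))$, as desired. The only nontrivial ingredient is the compatibility in Lemma~\ref{L:Shimura basic} between the $\SL_2$-action and the $\Gal(\QQ(\zeta_N)/\QQ)$-twist on Fourier coefficients; every other step is straightforward bookkeeping with double cosets and constant terms.
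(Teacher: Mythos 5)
Your proof is correct and rests on the same core computation as the paper's: combining the two clauses of Lemma~\ref{L:Shimura basic} with $G$-invariance of $f$ to show $f * (\bar A \left(\begin{smallmatrix}1 & 0 \\ 0 & m\end{smallmatrix}\right))$ has $q$-expansion obtained from that of $f*\bar A$ by applying $\sigma_m$ coefficientwise, then comparing constant terms. The main difference is one of packaging: you first set up the evaluation map $\operatorname{ev}_P$, check its well-definedness on double cosets, and deduce $\QQ(\zeta_N)$-rationality directly from this place, before computing the Galois action; the paper instead works directly with $f(P)$ versus $f(\sigma(P))$ for functions regular at the cusps, establishes $\sigma(P)=m\cdot P$ first (for any $\sigma\in\Aut(\CC)$ restricting to $\sigma_m$), and then extracts rationality over $\QQ(\zeta_N)$ from the fact that $\sigma(P)$ depends only on $\sigma|_{\QQ(\zeta_N)}$. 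Both orderings are valid, and your explicit use of $\det(G)=(\ZZ/N\ZZ)^\times$ to produce the representative $A'\in\SL_2(\ZZ)$ of $m\cdot P$ makes visible a step the paper leaves implicit in its identification $\Gamma_G\backslash\SL_2(\ZZ)/U \cong \scrC_G$.
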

\begin{proof}
Take any $m\in (\ZZ/N\ZZ)^\times$ and choose any automorphism $\sigma$ of $\CC$ for which $\sigma(\zeta_N)=\zeta_N^m$.   Take any cusp $P\in \scrC_G\subseteq X_G(\CC)$.  Fix matrices $A,A' \in \SL_2(\ZZ)$ for which $A\cdot \infty$ and $A'\cdot \infty$ are representatives of the cusps $P$ and $m\cdot P$, respectively.    There are $g\in G$ and $u\in U_N$ such that $A\left(\begin{smallmatrix}1 & 0 \\0 & m\end{smallmatrix}\right) \equiv g A' u \pmod{N}$.

Consider a rational function $f\in \QQ(X_G)$ that does not have a pole at any of the cusps.  Consider the $q$-expansion $f|_0 A =\sum_{n\in \ZZ} c_n q_N^n$.   We have $c_n=0$ for all $n<0$ since $f$ does not have a pole at $P$ and we have $c_0 = f(P)$.  We have
\[
(f|_0A')*u=f *(gA'u) =
f *\big(A\left(\begin{smallmatrix}1 & 0 \\0 & m\end{smallmatrix}\right)\big) 
= (f|_0 A)*\left(\begin{smallmatrix}1 & 0 \\0 & m\end{smallmatrix}\right) = \sum_{n\in \ZZ} \sigma_m(c_n) q_N^n
\]
and hence $f(m\cdot P)=\sigma_m(c_0) = \sigma(f(P))$.  So $f(m\cdot P)=f(\sigma(P))$ for all $f\in \QQ(X_G)$ that do not have poles at any cusps.   Therefore, $m\cdot P = \sigma(P)$.  Since $\sigma|_{\QQ(\zeta_N)}=\sigma_m$, it remains to show that $P$ lies in $X_G(\QQ(\zeta_N))$.

  Since $m\in (\ZZ/N\ZZ)^\times$ is arbitrary, we have shown that for an automorphism $\sigma$ of $\CC$, $\sigma(P)$ depends only on the restriction of $\sigma$ to $\QQ(\zeta_N)$.  Therefore, $P\in X_G(\QQ(\zeta_N))$.
\end{proof}

\subsection{Constructing a model of $X_G$} \label{L:morphisms of X_G}

Fix notation as in \S\ref{SS:mf revisited} with an even integer $k\geq 2$.   Let $P_1,\ldots, P_r$ be the cusps of $X_G(\CC)=\calX_{\Gamma_G}$.    

Take any divisor $E:=\sum_{i=1}^r e_i P_i$ of $X_G$ with integers $e_i \geq 0$ that is defined over $\QQ$.  Note that the divisor $E$ is defined over $\QQ$ if and only if the integer $e_i$ depends only on the Galois orbit of $P_i$ (such $E$ can be constructed using the explicit Galois action on cusps from Lemma~\ref{L:cusp Galois}).   Define the $\QQ$-vector space
\[
V:= \{ f\in M_{k,G} :  \nu_{P_i}(f)\geq e_i \text{ for all $1\leq i \leq r$}\}
\]
From \S\ref{SS:basis of M_{k,G}}, there is an algorithm to compute a basis of $M_{k,G}$ with enough terms of the $q$-expansions known in order to find an explicit basis of $V$.   Each modular form in the basis is given by its $q$-expansion at all the cusps of $X_G$ and we can compute arbitrarily many terms of each expansion.  

We now assume that $\dim_\QQ V \geq 2$.   Set $d:=\dim_\QQ V-1$ and denote our basis of $V$ by $f_0,\ldots, f_d$.  For each $0\leq i,j \leq d$, we have $f_j/f_i \in \calF_N^G=\QQ(X_G)$.  Let 
\[
\varphi\colon X_G \to \PP_\QQ^d
\]
be the morphism defined by $\varphi(P)=[f_0(P),\ldots,f_d(P)]$ for all but finitely many $P$.  For a fixed $0\leq i \leq d$ and all but finitely points $P$ of $X_G$, we have $\varphi(P)=[(f_0/f_i)(P),\ldots,(f_d/f_i)(P)]$.   Up to composition with an automorphism of $\PP^d_\QQ$, $\varphi$ does not depend on the choice of basis of $V$.

Set $C:=\varphi(X_G)\subseteq \PP^d_\QQ$; it is a curve since $d\geq 1$ and hence $f_0/f_1$ is non-constant.  Let $I(C)$ be the homogeneous ideal of $\QQ[x_0,\ldots,x_d]$ of $C\subseteq \PP^d_\QQ$.   We have the usual grading $I(C)=\bigoplus_{n\geq 0} I(C)_n$, where $I(C)_n$ consist of homogeneous polynomials of degree $n$.   Note that for a homogeneous polynomial $F \in \QQ[x_0,\ldots,x_d]$, the polynomial $F$ lies in $I(C)$ if and only if $F(f_0,\ldots, f_d)=0$.   \\

For a fixed integer $n\geq 0$, let us now explain how to compute a basis of the $\QQ$-vector space $I(C)_n$.   Let $\calM_n$ be the set of monic polynomials in $\QQ[x_0,\ldots, x_d]$ of degree $n$.  With notation as in \S\ref{SS:basis of M_{k,G}}, we have an injective $\QQ$-linear map $\varphi_{nk}\colon M_{nk,G}\hookrightarrow \QQ(\zeta_N)^{h}$ for an explicit integer $h\geq 1$.   For each $m\in \calM_n$, we can compute $v_m:=\varphi_{nk}(m(f_0,\ldots, f_d)) \in \QQ(\zeta_N)^{h}$ assuming we have computed enough terms of the $q$-expansions of the $f_i$.   We can then compute the $\QQ$-vector space $W:=\{ (c_m) \in \QQ^{\calM_n} : \sum_{m \in \calM_n} c_m v_m = 0\}$.   The injectivity of $\varphi_{nk}$ implies that the map $W\to I(C)_n$, $(c_m) \mapsto \sum_{m\in \calM_n} c_m m$ is an isomorphism of vector spaces over $\QQ$ and we have found $I(C)_n$.  In practice, to produce a ``nice'' basis of $W$, we apply the LLL algorithm to $W\cap \ZZ^{\calM_n}$.\\

 Define the invertible sheaf $\calF:= \scrL_k(-E)$ on $X_G$.   The isomorphism $\psi_k$ restricts to an isomorphism $V\xrightarrow{\sim} H^0(X_G,\calF)$ of $\QQ$-vector spaces; use Lemma~\ref{L:compatible Q structures} and the description of $\psi_k$ from \S\ref{SS:global sections}.  We have 
\begin{align} \label{E:deg F}
\deg \calF =  \deg \mathscr{L}_k - {\sum}_{i=1}^r e_i = k/2\cdot (2g-2) + k/2 \cdot r + \lfloor k/4 \rfloor \cdot v_2 +  \lfloor k/3 \rfloor \cdot v_3 - {\sum}_{i=1}^r e_i.
\end{align}
Using the Riemann--Roch theorem, we have $d=\deg \calF - g$ when $\deg \calF > 2g-2$.

\subsubsection{Large degree case} \label{SSS:large degree}
Assume that $\deg \calF \geq 2g+1$.  By the Riemann--Roch theorem, $\calF$ is very ample and hence $\varphi$ is an embedding giving an isomorphism between $X_G$ and $C$.     The homomorphism
\[
\QQ[x_0,\ldots, x_d]/I(C) \to \bigoplus_{n\geq 0} H^0(X_G,\calF^{\otimes n})
\]
of graded $\QQ$-algebras given by $x_i\mapsto \psi_k(f_i)$ is an isomorphism (the surjectivity follows from \cite[Theorem~6]{MR0282975} and our assumption $\deg \calF\geq 2g+1$).  The ideal $I(C) \subseteq \QQ[x_0,\ldots, x_d] $ is generated by $I(C)_2$ and $I(C)_3$, cf.~\cite{MR289516}.   If $\deg \calF \geq 2g+2$, then the homogenous ideal $I(C)$ is generated by just $I(C)_2$, cf.~\cite{MR289516}.    

Consider the special case where $\deg \calF=2g+1$.  When $g=0$, we have $C=\PP^1_\QQ$.   When $g=1$, the curve $C\subseteq \PP^2_\QQ$ is a plane cubic.

\begin{remark}
Suppose $\deg \calF \geq 2g+2$.    Consider a fixed positive integer $b$.  Suppose we have computed the $q$-expansions $f_j|_k A_i + O(q_N^b)$ for all $0\leq j \leq d$ and $1\leq i \leq r$.   From these expansions, we can find a basis of the $\QQ$-vector space $W:=\{F \in \QQ[x_0,\ldots, x_d]_2: F(f_0,\ldots, f_d) + O(q_N^b) = 0 + O(q_N^b)\}$.   Let $C'$ be the subvariety of $\PP^d_\QQ$ defined by a basis of $W$.   We have $C'\subseteq C$ since $I(C)_2 \subseteq W$ and $I(C)$ is generated by $I(C)_2$.   So if $C'$ is not zero dimensional, we have $C'=C$ and $I(C)_2=W$.   Of course, we will have $I(C)_2=W$ by taking $b$ sufficiently large.   The benefit of this approach is that we can often use a $b$ that is significantly smaller that the one arising from applying the Sturm bound.
\end{remark}

\subsubsection{Canonical map} \label{SSS:canonical map}
Consider the special case where $g\geq 3$, $k=2$, and $E=\sum_{i=1}^r P_i$.  We have 
\[
\calF:=\scrL_2(-E)=\Omega_{X_G}(D_2-E) =\Omega_{X_G}.
\]  
So $d=g-1$ and $\varphi\colon X_G\to \PP_\QQ^{g-1}$ is the \defi{canonical map}.   Define $C:=\varphi(X_G) \subseteq \PP^{g-1}_\QQ$.  We now recall some basic details, see \cite[\S7]{actionsoncuspsforms} for further details and some computation details.

First suppose that $X_G$ is (geometrically) hyperelliptic. 
Then the curve $C$ has genus $0$ and the morphism $\varphi\colon X_G\to C$ has degree $2$.   The ideal $I(C)$ is generated by $I_2(C)$ and $\dim_\QQ I_2(C)=(g-1)(g-2)/2$.

Suppose that $X_G$ is not hyperelliptic.  Then $\varphi$ is an embedding and in particular $C$ is isomorphic to $X_G$.  The dimension of $I_2(C)$ and $I_3(C)$ over $\QQ$ are $(g-2)(g-3)/2$ and $(g-3)(g^2+6g-10)/6$, respectively. If $g\geq 4$, the ideal $I(C)$ is generated by $I_2(C)$ and $I_3(C)$.   If $g=3$, then $I(C)$ is generated by $I_4(C)$ and $\dim_\QQ I_4(C)=1$.

We can compute $I_2(C)$ and its dimension over $\QQ$ determines whether or not $X_G$ is hyperelliptic.   Assume $X_G$ is not hyperelliptic.  Then by computing $I_3(C)$, and $I_4(C)$ when $g=3$, we can find equations for the curve $C\subseteq \PP_\QQ^1$.

\subsubsection{Computing a model for $X_G$} \label{L:computing a model}

If $g\geq 3$, we can first compute the image of the canonical map $\varphi\colon X_G \to \PP^{g-1}_\QQ$ and find equations defining the image $C:=\varphi(X_G)$, cf.~\S\ref{SSS:canonical map}.    If $C$ does not have genus $0$, i.e., $C$ is not hyperelliptic, then $C$ is a model of $X_G$.    If $C$ has genus $0$ and $C(\QQ)=\emptyset$, then $X_G(\QQ)=\emptyset$ (which in our application means we do not need to compute a model of $X_G$).

Now consider the general case.   Choose the smallest even integer $k\geq 2$ such that 
\[
k/2\cdot (2g-2) + k/2 \cdot r + \lfloor k/4 \rfloor \cdot v_2 +  \lfloor k/3 \rfloor \cdot v_3 \geq 2g+1.
\]
Such an integer $k$ exists since $g-1+v_2/4+v_3/3+r/2 = [\SL_2(\ZZ):\pm \Gamma_G]/12$ by \cite[Proposition~1.40]{MR1291394}.  Choose an effective divisor  $E:=\sum_{i=1}^r e_i P_i$ of $X_G$ defined over $\QQ$ so that the right hand side of (\ref{E:deg F}) is at least $2g+1$ and is as small as possible.   By our choices, we have $\deg \calF \geq 2g+1$.  One can then compute a model as in \S\ref{SSS:large degree}.

\subsubsection{Cusps of our model}
With $f_0,\ldots, f_d \in M_{k,G}$ defining the morphism
$\varphi\colon X_G \to C\subseteq \PP_\QQ^d$, we now describe the image of the cusps of $X_G$.   

Take any $1\leq j \leq r$.  From Lemma~\ref{L:cusp Galois}, we know that $\varphi(P_j)$ will lie in $C(\QQ(\zeta_N))$.  There is an $0\leq m \leq d$ such that for all $0\leq i \leq d$, the $q$-expansion of $f_i/f_m$ at the cusp $P_j$ is a power series for all $0\leq i \leq m$; denote its constant term by $c_i \in \QQ(\zeta_N)$.  Note that $f_i/f_m \in \QQ(X_G)$ is regular at $P_j$ and $(f_i/f_m)(P_j)=c_i$.   So we have  $\varphi(P_i) = [c_0,\ldots, c_d] \in \PP^d(\QQ(\zeta_N))$.

\subsection{Curves of genus $0$ and $1$} \label{SS:explicit low genus model}

Assume that $X_G$ has genus at most $1$.  Such curves are important in our application since they potentially could have infinitely many rational points.

Assume that we have found an explicit smooth projective model $C\subseteq \PP^n_\QQ$ for the modular curve $X_G$ as in \S\ref{L:morphisms of X_G}.    In particular, we have modular forms $f_0,\ldots, f_d$ in $M_{k,G}$ for some even $k\geq 2$ such that $C$ is defined by the homogeneous polynomials $F\in \QQ[x_0,\ldots, x_d]$ for which $F(f_0,\ldots, f_d)=0$.

\subsubsection{Finding a simple model} \label{SSS:Finding a simple model}
Suppose we have found a rational point $P\in C(\QQ)$.  See \S\ref{SSS:is there a rational point} for details on how we check if there is a rational point. 

If $X_G$ has genus $0$, then using the point $P$, we can compute an isomorphism $\psi\colon C \xrightarrow{\sim} \PP^1_\QQ$. Using the modular forms $f_i$ and $\psi$, we can then compute a modular function $f$ for which $\QQ(X_G)=\QQ(f)$.  Note that $f$ is given by its $q$-expansions at the cusps of $X_G$ and we can compute arbitrarily many terms of each expansion.

If $X_G$ has genus $1$, then using the point $P$, we can compute an isomorphism $\psi\colon C \xrightarrow{\sim} E$, where $E$ is an elliptic curve over $\QQ$ and $\psi(P)=0$. Using the modular forms $f_i$ and $\psi$, we can compute modular functions $x$ and $y$ for which $\QQ(X_G)=\QQ(x,y)$ and for which $x$ and $y$ satisfy a Weierstrass equation with rational coefficients defining $E$.  Note that $x$ and $y$ are given by their $q$-expansions at the cusps of $X_G$ and we can compute arbitrarily many terms of each expansion.

\subsubsection{Recognizing elements of the function field} \label{SSS:recognizing elements in function field}

Assume we have found a model for $X_G$ as in \S\ref{SSS:Finding a simple model}.  In particular, $\QQ(X_G)$ is $\QQ(f)$ or $\QQ(x,y)$ if $X_G$ has genus $0$ or $1$, respectively.

Now suppose we have a function $h\in \QQ(X_G)$ given by a $q$-expansion at each cusp of $X_G$ for which we can compute arbitrarily many terms.    Also suppose we also have an upper bound on the number of poles, with multiplicity, of $h$.   When $h$ is a quotient of two elements of $M_{k',G}$ for some even $k'$, then the number of poles of $h$ is bounded above by $k'/12\cdot [\SL_2(\ZZ): \pm \Gamma_G] = k'/12 \cdot [\GL_2(\ZZ/N\ZZ): G]$ (the divisor of a nonzero modular form in $M_{k',G}$, cf.~\cite[\S2.4]{MR1291394}, is effective and its degree can be computed using Propositions~2.16 and 1.40 of \cite{MR1291394}).

We want to express $h$ in terms of the generators of the field $\QQ(f)$ or $\QQ(x,y)$.

One approach is just to brute force search for an expression.  Consider the genus $0$ case.  For a fixed integer $d\geq 0$, one can look for a relation $h=F_1(f)/F_2(f)$ where $F_1$ and $F_2$ are polynomials in $\QQ[t]$ of degree at most $d$.   For each cusp of $X_G$, substituting our $q$-expansions in the expression $F_2(f) h -  F_1(f) =0$, we obtain a homogeneous system of linear equations over $\QQ(\zeta_N)$ whose unknowns are the coefficients of the polynomials $F_1$ and $F_2$.  For the minimal $d$ for which such a relation exists, these linear equations will have a $1$-dimension space of solutions over $\QQ(\zeta_N)$ and scaling will produce a solution in rationals unique up to scaling by a nonzero rational.    To rigorously verify that $F_2(f) h =   F_1(f)$, with specific coefficients, is $0$ it suffices to compute enough terms of the $q$-expansions at the cusps so that we have more zeros at the cusps, with multiplicity, than poles (we assumed we had a bound on the poles of $h$ and $f$ has a single simple pole).  We can increase $d\geq 0$ until we find a relation.

In the special case where there is a degree $1$ rational function $b\in \QQ(\zeta_N)(t)$ such that $b(h) \in \QQ(\zeta_N)(X_G)$ has all its poles at the cusps, we can take a more direct approach.    Since we know the $q$-expansions of $f$ at the cusps, we can find a partial fractions decomposition of $b(h)$ in $\QQ(\zeta_N)(f)$ and then find the desired expression for $h$ since $b$ has degree $1$.

Similar remarks hold when $X_G$ has genus $1$ except now we are looking for a relation $h=(F_1(x) + F_2(x)y)/F_3(x)$, where $F_1,F_3 \in \QQ[t]$ have degree at most $d$ and $F_2\in \QQ[t]$ has degree at most $d-1$.

\subsubsection{Constructing the map to the $j$-line} \label{SSS:map to jline}

We want to describe the natural morphism $\pi_G$ from $X_G$ to the $j$-line.   Since we are interested in rational points, we shall assume that $X_G$ has a rational point and that we have found a model for $X_G$ as in \S\ref{SSS:Finding a simple model}.   In particular, $\QQ(X_G)$ is $\QQ(f)$ or $\QQ(x,y)$ when $X_G$ has genus $0$ or $1$, respectively.   

For simplicity, suppose that $X_G$ has genus $0$ and hence $\QQ(X_G)=\QQ(f)$.   Since the modular $j$-invariant $j$ lies in $\QQ(X_G)$, the morphism $\pi_G$ is given by the unique $\pi(t) \in \QQ(t)$ for which $\pi(f)=j$.   We can find $\pi(t)$ using the methods from \S\ref{SSS:recognizing elements in function field} and use that $\pi(t)$ has degree $[\GL_2(\ZZ/N\ZZ):G]$.

In practice, it is more efficient to make use of intermediate fields lying between $\QQ(j)$ and $\QQ(f)$.  We can assume that $G\neq \GL_2(\ZZ/N\ZZ)$ since otherwise $X_G$ is the $j$-line.   Choose a group $G\subsetneq G_0 \subseteq \GL_2(\ZZ/N\ZZ)$ for which $[G_0:G]$ is minimal.   Then $\QQ(X_{G_0})=\QQ(f')$ and hence $f'=\varphi(f)$ for a unique rational function $\varphi(t)\in \QQ(t)$ of degree $[G_0:G]$.  We can then find $\varphi$ using the methods from \S\ref{SSS:recognizing elements in function field}.   This reduces the computation to the curve $X_{G_0}$ and we can continue in this manner until we get to the $j$-line.   The advantage of working with intermediate subfields is that in the method of \S\ref{SSS:recognizing elements in function field} we require less terms of the $q$-expansions.

Similar remarks hold when $X_G$ has genus $1$ except now $X_{G_0}$ will have genus $0$ or $1$.

\subsubsection{Determining whether there is a rational point} \label{SSS:is there a rational point}

We are interested in determining whether $X_G$, equivalently $C$, has a rational point.  We first check whether $X_G$ has a local obstruction to rational points. 

For real points this can be done without the model since we know that $X_G(\RR)\neq \emptyset$ if and only if $G$ contains an element that is conjugate in $\GL_2(\ZZ/N\ZZ)$ to $\left(\begin{smallmatrix}1 & 0 \\0 & -1\end{smallmatrix}\right)$ or $\left(\begin{smallmatrix}1 & 1 \\0 & -1\end{smallmatrix}\right)$, cf.~\cite[Proposition~3.5]{possibleindices}.

Using our explicit model, we can verify whether $C(\QQ_p)$ is empty for any given prime $p$.   If $X_G(\RR)=\emptyset$ or $C(\QQ_p)= \emptyset$ for some prime $p$, then we of course have $X_G(\QQ)=\emptyset$.

When $X_G$ has genus $0$ and $X_G(\RR)$ is nonempty, we know that we will either be able to find a rational point on $C$ or we will be able to find a prime $p$ such that $C(\QQ_p)=\emptyset$. 

\begin{remark}
There is a more general definition of $X_G$ as a coarse moduli space that would realize it as a smooth scheme over $\ZZ[1/N]$.  For a prime $p\nmid N$, the reduction $(X_G)_{\FF_p}$ would then be a smooth projective and geometrically irreducible curve of genus at most $1$ over $\FF_p$ and hence have an $\FF_p$-point.  Hensel's lemma would then implies that $X_G(\QQ_p)\neq \emptyset$.   So when looking for local obstructions, we limit ourselves to primes $p$ dividing $N$.
\end{remark}

Now suppose that $X_G$ is genus $1$ for which we have not found a rational point and we have found no local obstructions.   The Jacobian of $C$ is an elliptic curve $E$ over $\QQ$.   Using our embedding $C\subseteq \PP^d_\QQ$, we find that $C$ is a principal homogeneous space of $E$ that has order $n:=d+1$ in the Weil--Ch\^atelet group of $E$.

When $n\leq 5$, there are explicit equations for $E$ and for a covering map $\varphi\colon C \to E$ of degree $n^2$ (when base extended to a field where $C$ and $E$ are isomorphic, it corresponds to multiplication by $n$ on $E$).   When $n\leq 4$ or $n=5$, see \cite{MR1858080} and \cite{MR2448246}, respectively.   Note that these constructions have been implemented in \texttt{Magma}.  If $C$ has a rational point, then we find that $\varphi(C(\QQ))$ is a coset of $nE(\QQ)$ in $E(\QQ)$.  We can compute the weak Mordell--Weil group $E(\QQ)/nE(\QQ)$.  For points $P\in E(\QQ)$ that represent the elements of the finite group $E(\QQ)/nE(\QQ)$, we can check whether the fiber $\varphi^{-1}(P) \subseteq C$ has any rational points.  If not, then we have verified that $C(\QQ)$ is empty (otherwise, we have found a point).\\

In our application, the above determines whether $X_G$ has a rational point for all but a few cases we consider.   In the cases where this does not apply (i.e., $n>5$), there was always a group $G\subseteq G_0 \subseteq \GL_2(\ZZ/N\ZZ)$ for which $[G_0:G]=2$ and $X_{G_0} \cong \PP^1_\QQ$.  In these cases, we could find an alternate model $C'$ of $X_G$ given as a hyperelliptic curve over $\QQ$.  The above reasoning then applies to check whether there are rational points; the relevant morphism $\varphi\colon C'\to E$ now has degree $2^2$.

\subsection{Some alternate models for curves} \label{SS:alternate models}

Assume that $X_G$ has genus at least $2$ and choose a group $G\subsetneq G_0 \subseteq \GL_2(\ZZ/N\ZZ)$ for which $X_{G_0}$ has genus at most $1$.   In practice, one chooses $G_0$ with $[G_0:G]$ minimal.   We shall assume that $X_{G_0}$ has a rational point; we are interested in knowing the rational points of $X_{G}$ and there are none if $X_{G_0}(\QQ)=\emptyset$. Assume we have found a model as in \S\ref{SSS:Finding a simple model} with $\QQ(X_{G_0})$ of the form $\QQ(f)$ or $\QQ(x,y)$.   

\subsubsection{Minimal polynomial} \label{SSS:Minimal polynomial}
 
Let $S$ be a set of matrices in $\SL_2(\ZZ)$ whose reductions modulo $N$ represent the cosets $G\backslash G_0$.  

Consider a modular function $h\in \QQ(X_{G})$ for which we have computed arbitrarily many terms of the $q$-expansion at each of the cusps of $X_{G}$.  Using these expansions and \S\ref{S:explicit slash}, we can compute the $q$-expansions of $h|_k A$ for all $A\in S$.    Assume that $h|_k A \neq h$ for all $A\in S$ and hence $\QQ(X_{G_0})(h)=\QQ(X_{G})$.   To construct a suitable element $h \in \QQ(X_{G})$, we can look at the quotient of nonzero elements in $M_{k,G}$ for  even $k\geq 2$; there will be such modular forms for $k$ large enough by \S\ref{L:computing a model}. 

Define the polynomial
\[
P(t):=\prod_{A\in S}(t - h|_k A).
\]
By our choice of $S$, $P(t)$ is a polynomial of degree $[G_0:G]$ with coefficients in $\QQ(X_{G_0})$ that satisfies $P(h)=0$.    The coefficients of $P(t)$ line in $\QQ(X_{G_0})$ and can be given explicitly in $\QQ(f)$ or $\QQ(x,y)$ by the method described in \S\ref{SSS:recognizing elements in function field} (the methods of \S\ref{S:explicit slash} can be used to take the $q$-expansions, which are in terms of the cusps of $X_{G}$, to those at cusps of $X_{G_0}$).

Since $\QQ(X_{G_0})(h)=\QQ(X_{G})$, the polynomial $P(t)$ gives rise to a possibly singular model of the curve $X_{G}$ in $\PP^2_\QQ$ or $\PP^3_\QQ$ with a rational map to $X_{G_0}$ corresponding to the natural morphism $X_{G}\to X_{G_0}$.  In many situations, a singular model of $X_{G}$ will be preferably to a model that lies in some large dimensional ambient space.   An important special case is when $[G_0:G]=2$ and $X_{G_0}\cong \PP^1_\QQ$ since then we can use the singular model to find a smooth model of $X_{G}$ as a hyperelliptic curve.

\subsubsection{Serre type} \label{SSS:Serre type}

Now suppose that $N=N_1 N_2$ with $N_1>1$ a power of $2$ and $N_2$ odd.   Set $G_1=\GL_2(\ZZ/N_1\ZZ)$.   Suppose further that there is a subgroup $G_2$ of $\GL_2(\ZZ/N_2\ZZ)$ such that $G$ is an index $2$ subgroup of $G_0:=G_1\times G_2 \subseteq \GL_2(\ZZ/N\ZZ)$ so that the projective map $G\to G_i$ is surjective for $i\in \{1,2\}$.   

The kernel of the projection maps $G\to G_1$ and $G\to G_2$ are of the form $\{I\}\times H_2$ and $H_1\times \{I\}$, where $H_i$ is an index $2$ subgroup of $G_i$.  Note that $H_1\times H_2 \subseteq G \subseteq G_1\times G_2$.

Take any $i \in \{1,2\}$ and suppose that $\det(H_i)=(\ZZ/N_i\ZZ)^\times$.  As in \S\ref{SSS:Minimal polynomial}, we can find a $h\in \QQ(X_{H_i})$ such that $\QQ(X_{H_i})=\QQ(X_{G_0})(h)$ and compute an irreducible polynomial of degree $2$ in $\QQ(X_{G_0})[x]$ with root $h$.   By taking the discriminant of the polynomial, we obtain an element $c_i \in \QQ(X_{G_0})$ that is a square in $\QQ(X_{H_i})$ but not in $\QQ(X_{G_0})$.   Note that $c_i$ is unique up to multiplication by a nonzero square in $\QQ(X_{G_0})$.  

Take any $i \in \{1,2\}$ and suppose that $\det(H_i)\neq (\ZZ/N_i\ZZ)^\times$.  The group $\det(H_i)$ is an index $2$ subgroup of $(\ZZ/N_i\ZZ)^\times$.  Let $K_i \subseteq \Qbar$ be the corresponding quadratic extension of $\QQ$, i.e., the quadratic extension for which the image of  $\chi_\cyc(\Gal_{K_i})\subseteq \Zhat^\times$ modulo $N_i$ is the group $\det(H_i)$.   Let $c_i$ be the unique squarefree integer for which $K_i=\QQ(\sqrt{c_i})$.

When $i=1$, a computation shows that $c_i$ can always be chosen to lie in the set $\{-1,\pm 2, \pm (j-1728), \pm 2 (j-1728)\}$.  

Putting everything together, we find that there is a $y \in \QQ(X_G)$ such that $y^2=c_1 c_2$ and $\QQ(X_G)=\QQ(X_{G_0})(y)$.   Therefore, $y^2=c_1 c_2$ defines a singular model of $X_G$.   In the special case where $X_{G_0} \cong \PP^1_\QQ$, this gives rise to a hyperelliptic model.

\begin{remark}
Though this might seem like a very niche case, groups with the above conditions arise frequently in our application to Serre's open image theorem and are often the slowest to deal with using the strategy from \S\ref{SSS:Minimal polynomial}.
\end{remark}

\section{A generic family and modular functions} \label{S:specializations}

Let $\calE$ be the elliptic curve over $\QQ(j)$ defined by the Weierstrass equation 
\begin{align} 
\label{E:generic Weierstrass}
y^2 =  x^3 -  27 \cdot j (j-1728)  \cdot  x +54 \cdot j (j-1728)^2,
\end{align}
where $j$ is the modular $j$-invariant.   The elliptic curve $\calE$ has $j$-invariant $j$.

Fix an integer $N\geq 3$ and a nonzero modular form $f_0\in M_3(\Gamma(N),\QQ(\zeta_N))$.   Let $\calF_N$ be the field of modular functions of level $N$ whose $q$-expansions have coefficients in $\QQ(\zeta_N)$, cf.~\S\ref{SS:modular functions}.   We have $f_0^2/E_6 \in \calF_N$, where $E_6$ is the usual Eisenstein series of weight $6$ as given by (\ref{E:usual E4 and E6}).  In a field extension of $\calF_N$, choose a $\beta$ satisfying \[
\beta^2 = j\cdot {f_0^2}/{E_6}.
\] 
Note that $\beta$ need not be a modular function.   

In \S\ref{SS:N torsion of calE}, we will show that $\calF_N(\beta)$ is a minimal field extension of $\QQ(j)$ for which all of the $N$-torsion points of $\calE$ are defined.    Let $\Gal_{\QQ(j)}$ be the absolute Galois group of $\QQ(j)$ for which the implicit algebraic closure contains $\calF_N(\beta)$.   With respect to a suitable basis of $\calE[N]$, we will show that there is a surjective Galois representation
\[
\rho_{\calE,N}^* \colon \Gal_{\QQ(j)} \to \GL_2(\ZZ/N\ZZ)
\]
that satisfies
\[
\sigma(f)= f * \rho_{\calE,N}^*(\sigma)^{-1}
\]
for all $\sigma\in \Gal_{\QQ(j)}$ and $f\in \calF_N$, where the $*$ action is described in \S\ref{SS:modular functions}.  For details, see \S\ref{SS:representations modular explicit}.   In particular, $\rho_{\calE,N}^*$ induces isomorphisms $\Gal(\calF_N(\beta)/\QQ(j)) \xrightarrow{\sim} \GL_2(\ZZ/N\ZZ)$ and $\Gal(\calF_N/\QQ(j)) \xrightarrow{\sim} \GL_2(\ZZ/N\ZZ)/\{\pm I\}$.

\subsection{$N$-torsion points of $\calE$} \label{SS:N torsion of calE}

Define the usual Eisenstein series 
\begin{align} \label{E:usual E4 and E6}
E_4(\tau)= 1+240 \sum_{n=1}^\infty n^3 q^n/(1-q^n)\quad \text{ and }\quad 
E_6(\tau)= 1-504 \sum_{n=1}^\infty n^5 q^n/(1-q^n);
\end{align}
they are modular forms on $\SL_2(\ZZ)$ of weight $4$ and $6$, respectively.    Define the weight $12$ modular form $\Delta(\tau)=q\prod_{n=1}^\infty (1-q^n)^{24}$ on $\SL_2(\ZZ)$.   We have relations $E_4^3-E_6^2=1728\Delta$, $j=E_4^3/\Delta$ and $j-1728=E_6^2/\Delta$.  For $\tau\in \calH$, let $\wp(z;\tau)$ be the Weierstrass elliptic function for the lattice $\ZZ\tau+\ZZ \subseteq \CC$.   Let $\wp'(z; \tau)$ be the derivative of $\wp(z;\tau)$ with respect to $z$.   
 
Take any nonzero $\alpha\in (\ZZ/N\ZZ)^2$ and choose integers $r$ and $s$ such that $\alpha$ is congruent to $(r,s)$ modulo $N$.   Let $x_{\alpha}$ and $u_{\alpha}$ be the function of the upper half-plane defined by 
\begin{align*}
x_{\alpha}(\tau)&:= 36\, \frac{E_4(\tau) E_6(\tau)}{\Delta(\tau)} \cdot (2\pi i)^{-2}\wp(\tfrac{r}{N}  \tau + \tfrac{s}{N}; \tau)\\
u_{\alpha}(\tau)&:= 108 \,\frac{ E_6(\tau)^2}{f_0(\tau) \Delta(\tau)} \cdot (2\pi i)^{-3} \wp'(\tfrac{r}{N} \cdot \tau + \tfrac{s}{N}; \tau).
\end{align*}
These definitions do not depend on the choice of $r$ and $s$ since $\wp(z;\tau)$ and $\wp'(z;\tau)$ are unchanged if we replace $z$ by $z+\omega$ with $\omega \in \ZZ\tau + \ZZ$.

\begin{lemma} \label{L:action on x and u}
\begin{romanenum}
\item \label{L:action on x and u i}
For any nonzero $\alpha\in (\ZZ/N\ZZ)^2$, $x_\alpha$ and $u_\alpha$ are elements of $\calF_N$.
\item \label{L:action on x and u ii}
The field $\calF_N$ is the extension of $\QQ(j)$ generated by $x_\alpha$ with nonzero $\alpha\in (\ZZ/N\ZZ)^2$.
\item \label{L:action on x and u iii}
Take any nonzero $\alpha\in (\ZZ/N\ZZ)^2$ and $A\in \GL_2(\ZZ/N\ZZ)$.  We have 
\[
x_\alpha *A = x_{\alpha A} \quad \text{ and }\quad u_\alpha *A = \tfrac{f_0}{f_0*A} \,u_{\alpha A}.
\]
In particular, $u_\alpha * A =  u_{\alpha A}$ if $f_0 * A = f_0$.
\end{romanenum}
\end{lemma}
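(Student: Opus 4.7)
The plan is to deduce all three assertions from the classical transformation laws of the Weierstrass $\wp$-function together with its Fourier expansion at the cusp $\infty$. The key observation is that for $\gamma = \left(\begin{smallmatrix}a & b\\ c & d\end{smallmatrix}\right)\in\SL_2(\ZZ)$ one has the lattice identity $\ZZ+\ZZ\gamma\tau = (c\tau+d)^{-1}(\ZZ+\ZZ\tau)$, which yields $\wp(z;\gamma\tau) = (c\tau+d)^2 \wp((c\tau+d)z;\tau)$ and, by differentiating in $z$, $\wp'(z;\gamma\tau) = (c\tau+d)^3 \wp'((c\tau+d)z;\tau)$. With $z = (r\gamma\tau+s)/N$ and $(r',s') := (r,s)\gamma$, a short computation shows $(c\tau+d)z = (r'\tau+s')/N$, which rewrites the torsion argument in terms of $\alpha A$, where $A$ is the reduction of $\gamma$ modulo $N$.

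For part (\ref{L:action on x and u iii}) with $A \in \SL_2(\ZZ/N\ZZ)$, I combine this with the weight transformations of $E_4$, $E_6$, $\Delta$ (weights $4$, $6$, $12$) and the identity $f_0(\gamma\tau) = (c\tau+d)^3 (f_0*A)(\tau)$. Substituting into the definitions of $x_\alpha$ and $u_\alpha$, every $(c\tau+d)$ factor cancels, producing $x_\alpha * A = x_{\alpha A}$ and $u_\alpha * A = (f_0/(f_0*A))\cdot u_{\alpha A}$. For part (\ref{L:action on x and u i}), specializing $\gamma\in\Gamma(N)$ (so that $\alpha A = \alpha$ and $f_0*A = f_0$) gives $\Gamma(N)$-invariance of weight $0$; holomorphy on $\calH$ follows because $(r\tau+s)/N \notin \ZZ\tau+\ZZ$ for $\alpha\neq 0$ and because $f_0$ does not vanish identically; meromorphy at the cusps and $\QQ(\zeta_N)$-rationality of the $q$-expansion follow by substituting $e^{2\pi i(r\tau+s)/N} = \zeta_N^{s} q_N^{r}$ into the classical Fourier expansion of $\wp$ (and its $z$-derivative) and combining with the rational $q$-expansions of $E_4$, $E_6$, $\Delta$ together with the hypothesis that $f_0 \in M_3(\Gamma(N),\QQ(\zeta_N))$. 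The remaining case $A = \left(\begin{smallmatrix}1 & 0\\ 0 & d\end{smallmatrix}\right)$ of (\ref{L:action on x and u iii}) is then read off immediately from the same expansions: the action of $\sigma_d$ sends $\zeta_N^s \mapsto \zeta_N^{sd}$ in these Fourier coefficients, matching exactly the replacement $\alpha = (r,s) \mapsto \alpha A = (r, sd)$, while the prefactor $1/f_0$ in $u_\alpha$ becomes $1/\sigma_d(f_0) = 1/(f_0*A)$.

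For part (\ref{L:action on x and u ii}), let $K \subseteq \calF_N$ denote the subfield generated over $\QQ(j)$ by $\{x_\alpha:\alpha\neq 0\}$. Part (\ref{L:action on x and u iii}) shows that $\{x_\alpha\}$ is permuted by the $*$-action of $\GL_2(\ZZ/N\ZZ)$, so $K$ is stable under this action. By Lemma~\ref{L:FN basics} the action factors through a faithful action of $\GL_2(\ZZ/N\ZZ)/\{\pm I\}$ with fixed field $\QQ(j)$, so Galois theory reduces the statement to showing that the pointwise stabilizer of $K$ in $\GL_2(\ZZ/N\ZZ)/\{\pm I\}$ is trivial. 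An element $A$ in this stabilizer satisfies $x_{\alpha A} = x_\alpha$ for every nonzero $\alpha$; since $\wp$ is even we have $x_\alpha = x_{-\alpha}$, and interpreting $x_\alpha$ as (a normalization of) the $x$-coordinate of the $N$-torsion point of $\calE$ indexed by $\alpha$ forces $\alpha A = \pm\alpha$ for every nonzero $\alpha \in (\ZZ/N\ZZ)^2$, hence $A = \pm I$. The main obstacle will be the bookkeeping in the Fourier-expansion step: showing that substitution of $e^{2\pi i z} = \zeta_N^s q_N^r$ into the expansions of $\wp$ and $\wp'$, followed by multiplication by the pole-carrying expansions of $E_4 E_6/\Delta$ and $E_6^2/(f_0\Delta)$, yields a Laurent series in $q_N$ with coefficients in $\QQ(\zeta_N)$ on which $\sigma_d$ acts precisely as required. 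Once this is carried out, (\ref{L:action on x and u i}) and the diagonal-matrix part of (\ref{L:action on x and u iii}) follow uniformly, and the $\SL_2$-part of (\ref{L:action on x and u iii}) has already been handled by the transformation-law calculation.
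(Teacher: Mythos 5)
Your proposal is essentially correct, and for $u_\alpha$ your route coincides with the paper's: both derive the $\SL_2$-action from the transformation law of $\wp'$ and handle the diagonal case via $\sigma_d$ acting on the $q$-expansion. Where you genuinely diverge is in parts (\ref{L:action on x and u i}) and (\ref{L:action on x and u ii}) for $x_\alpha$: the paper simply observes that $x_\alpha$ agrees with Shimura's function $f_{(r/N,s/N)}$ up to a nonzero rational and invokes Shimura's Proposition~6.9(1) (and equations (6.1.3), (6.2.1) for the action), whereas you re-derive the $\SL_2$-transformation of $x_\alpha$ from the lattice homogeneity $\wp(\lambda z;\lambda\Lambda)=\lambda^{-2}\wp(z;\Lambda)$ and then prove (\ref{L:action on x and u ii}) internally via the Galois correspondence for $\calF_N/\QQ(j)$, showing that the pointwise stabilizer of $K:=\QQ(j)(\{x_\alpha\})$ in $\GL_2(\ZZ/N\ZZ)/\{\pm I\}$ is trivial. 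This is a nice self-contained alternative that avoids citing Shimura for (\ref{L:action on x and u ii}); the cost is that you must supply the separating fact yourself.

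That separating fact is the one genuine gap: you need that $x_{\alpha A}=x_\alpha$ for every nonzero $\alpha$ forces $\alpha A=\pm\alpha$, and you dispatch this with the single phrase ``interpreting $x_\alpha$ as the $x$-coordinate of the $N$-torsion point indexed by $\alpha$.'' What is actually needed is that as modular functions (not merely for a single $\tau$), $x_\alpha=x_\beta$ iff $\alpha=\pm\beta$; this follows by choosing a generic $\tau$ and using the $2$-to-$1$ property of $\wp$ on $\CC/\Lambda$, but it should be stated. You then need the elementary group-theoretic step that $\alpha A\in\{\pm\alpha\}$ for all $\alpha\in(\ZZ/N\ZZ)^2$ forces $A=\pm I$, which uses $N\geq 3$ (comparing the sign on $e_1$, $e_2$, and $e_1+e_2$); this too is silent in your sketch. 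Finally, for (\ref{L:action on x and u i}) you argue holomorphy, $\Gamma(N)$-invariance, and rationality of the $q$-expansion at $\infty$; this is enough for membership in $\calF_N$ because $\calF_N$ is defined by the condition at $\infty$ only and $x_\alpha,u_\alpha$ are weight-$0$ quotients of modular forms, but your write-up reads as if cusps other than $\infty$ also need a rationality check — it is worth being explicit that they do not.
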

\begin{proof}
Take any nonzero $\alpha\in (\ZZ/N\ZZ)^2$.  Fix integers $0\leq r < N$ and $s$ so that $(r,s)$ is congruent to $\alpha$ modulo $N$.    Our function $x_\alpha$ agrees with the function $f_{(r/N,s/N)}$ from \S6.1 of \cite{MR1291394} up to multiplication by a nonzero rational number.   Part (\ref{L:action on x and u ii}) follows from Proposition~6.9(1) of \cite{MR1291394}; this implies (\ref{L:action on x and u i}) for the function $x_\alpha$.\\

Let $h_\alpha$ be the function of the upper half-plane defined by $(2\pi i)^{-3} \wp'(\tfrac{r}{N} \cdot \tau + \tfrac{s}{N}; \tau)$.    We now explain why $h_\alpha$ lies in $M_3(\Gamma(N),\QQ(\zeta_N))$.   Recall that for $\tau \in \calH$, we have the notation $q=e^{2\pi i \tau}$ and $q_N=e^{2\pi i \tau/N}$.   We have
\[
(2\pi i)^{-2} \wp(u;\tau)=\frac{1}{12} -2 \sum_{m=1}^\infty \sum_{n=1}^\infty n q^{mn} + \sum_{n=1}^\infty n e^{2\pi i n u} 
+\sum_{m=1}^\infty\sum_{n=1}^\infty n \Big( e^{2\pi i n u} q^{mn} + e^{-2\pi i n u} q^{mn}\Big);
\]
see the proof of Proposition~6.9 on p.~140 in \cite{MR1291394} (with $\omega_1=\tau$, $\omega_2=1$, and $v$ in loc.~cit.~should be defined as $u/\omega_2$).  Therefore, 
\[
(2\pi i)^{-3} \wp'(u;\tau)= \sum_{n=1}^\infty n^2 e^{2\pi i n u} 
+\sum_{m=1}^\infty\sum_{n=1}^\infty n^2 \Big( e^{2\pi i n u} q^{mn} - e^{-2\pi i n u} q^{mn}\Big).
\]
 With $u=r/N\cdot \tau + s/N$, we have $e^{2\pi i u} = \zeta_N^s q_N^r$ and hence
\begin{align} \label{E:expansion h alpha}
h_\alpha(\tau)= \sum_{n=1}^\infty n^2 \zeta_N^{sn} q_N^{rn} 
+\sum_{m=1}^\infty\sum_{n=1}^\infty n^2 \Big( \zeta_N^{sn} q_N^{rn} q^{mn} - \zeta_N^{-sn} q_N^{-rn} q^{mn}\Big).
\end{align}
Since $0\leq r < N$, this shows that $h_\alpha$ has a $q$-expansion that is a power series in $q_N$ with coefficients in $\QQ(\zeta_N)$.

Take any matrix $\gamma =\left(\begin{smallmatrix}a & b \\c & d\end{smallmatrix}\right) \in \SL_2(\ZZ)$. Since $\wp'(u;\tau)=  \sum_{\lambda\in \ZZ\tau+\ZZ} -2/(u+\lambda)^3$, we have
\begin{align*}
(h_\alpha |_3 \gamma)(\tau)&= (c\tau+d)^{-3} \cdot (2\pi i)^{-3} \sum_{\lambda \in \ZZ\gamma\tau + \ZZ} \frac{-2}{(\tfrac{r}{N}\cdot \gamma\tau + \tfrac{s}{N} + \lambda)^3}\\
&=(2\pi i)^{-3} \sum_{\lambda \in \ZZ(a\tau+b) + \ZZ(c\tau+d)} \frac{-2}{(\tfrac{r}{N}\cdot (a\tau+b) + \tfrac{s}{N} (c\tau+d) + \lambda)^3}\\
&=(2\pi i)^{-3} \sum_{\lambda \in \ZZ\tau+\ZZ} \frac{-2}{(\tfrac{ra+sc}{N} + \tfrac{rb+sd}{N} + \lambda)^3}.
\end{align*}
So $h_\alpha |_3 \gamma = h_{\alpha \gamma}$ for all $\gamma \in \SL_2(\ZZ)$.   In particular, $h_\alpha|_3 \gamma = h_\alpha$ for all $\gamma\in \Gamma(N)$.   We conclude that $h_\alpha$ is an element of $M_3(\Gamma(N),\QQ(\zeta_N))$ since the $q$-expansions of $h_\alpha |_3 \gamma = h_{\alpha \gamma}$ is a power series in $q_N$ with coefficients in $\QQ(\zeta_N)$ for all $\gamma \in \SL_2(\ZZ)$.  

For any $A\in \GL_2(\ZZ/N\ZZ)$, we claim that $h_\alpha * A = h_{\alpha A}$.  From our above proof, this holds when $A\in \SL_2(\ZZ/N\ZZ)$, so we need only prove the claim in the case where $A=\left(\begin{smallmatrix}1 & 0 \\0 & d\end{smallmatrix}\right)$.    Applying $\sigma_d$ to the coefficients of the expansion (\ref{E:expansion h alpha}) gives the expansion where we replace $s$ by $ds$.   Therefore, $h_\alpha*A = h_{\alpha A}$ as expected.

That $u_\alpha$ lies in $\calF_N$ is clear since it is the quotient of modular forms on $\Gamma(N)$ with coefficients in $\QQ(\zeta_N)$ and it has weight $2\cdot 6-3-12 +3 =0$.  Take any $A\in \GL_2(\ZZ/N\ZZ)$.   Since $E_4$ and $E_6$ are modular forms on $\SL_2(\ZZ)$ with coefficients in $\QQ$, we find that 
\[
u_\alpha * A = 108\, \frac{E_6^2}{f_0*A \cdot \, \Delta} \cdot h_\alpha *A = \frac{f_0}{f_0*A} \cdot 108\, \frac{E_6^2}{f_0 \Delta} \cdot h_{\alpha A} =  \frac{f_0}{f_0*A} \cdot u_{\alpha A}.
\]
We have proved the parts of the lemma concerning the functions $u_\alpha$. 

Now take any nonzero $\alpha\in (\ZZ/N\ZZ)^2$.  It remains to prove that $x_\alpha * A=x_{\alpha A}$ for all $A\in \GL_2(\ZZ/N\ZZ)$.   For $A\in \SL_2(\ZZ/N\ZZ)$, this follows from equation (6.1.3) of \cite{MR1291394}.   It remains to prove it for $A=\left(\begin{smallmatrix}1 & 0 \\0 & d\end{smallmatrix}\right)$; this follows from the $q$-expansion of $x_\alpha$, see equation (6.2.1) of \cite{MR1291394}.
\end{proof}

Recall that we fixed a nonzero modular form $f_0$ in $M_3(\Gamma(N),\QQ(\zeta_N))$ and chose a $\beta$ satisfying $\beta^2 = j\cdot {f_0^2}/{E_6} \in \calF_N$.  For each nonzero $\alpha\in (\ZZ/N\ZZ)^2$, define $y_\alpha:= \beta \cdot u_\alpha \in \calF_N(\beta)$ and the pair $P_\alpha:=(x_\alpha,y_\alpha)$.

\begin{lemma} \label{L:torsion of calE}
\begin{romanenum}
\item \label{L:torsion of calE i}
For every nonzero $\alpha\in (\ZZ/N\ZZ)^2$, $P_\alpha$ is a $N$-torsion point in $\calE(\calF_N(\beta))$.
\item \label{L:torsion of calE ii}
We have a group isomorphism 
\[
\iota_N\colon (\ZZ/N\ZZ)^2 \xrightarrow{\sim} \calE[N], \quad \alpha\mapsto P_\alpha,
\] 
where $P_{(0,0)}$ is defined as the identity of $\calE$.
\end{romanenum}
\end{lemma}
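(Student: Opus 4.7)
The strategy is to first verify the Weierstrass equation by a direct computation, then transfer the group-theoretic content (torsion, additivity, injectivity) from the analytic uniformization of $\calE$ at each $\tau\in\calH$ via a specialization argument.

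First, for part (i), I will check that $(x_\alpha,y_\alpha)$ satisfies $y^2 = x^3 - 27 j (j-1728) x + 54 j (j-1728)^2$. The key input is the classical ODE $(\wp')^2 = 4\wp^3 - g_2\wp - g_3$, combined with the normalization identities $(2\pi i)^{-4} g_2 = E_4/12$ and $(2\pi i)^{-6} g_3 = -E_6/216$, and the standard relations $j = E_4^3/\Delta$ and $j-1728 = E_6^2/\Delta$. Squaring $y_\alpha = \beta u_\alpha$, substituting $\beta^2 = j f_0^2/E_6$, replacing $(2\pi i)^{-2}\wp$ by $x_\alpha \cdot \Delta/(36 E_4 E_6)$, and collecting terms gives exactly the desired equation; the constants $36$, $108$, $216$ in the definitions of $x_\alpha$, $u_\alpha$ are chosen precisely to make the numerical factors $1$, $-27$, $54$ appear. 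In particular, this also shows that $\beta^2 = j f_0^2/E_6$ is \emph{forced} by the requirement that one can rescale $(\wp,\tfrac12\wp')$ to a point on $\calE$.

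Next, for the torsion statement, the same change of variables $(X,Y) \mapsto (\mu X, \mu^{3/2} Y)$ with $\mu = 36 E_4 E_6/(\Delta (2\pi i)^2)$ exhibits, for every $\tau\in\calH$ outside the vanishing locus of $\Delta E_6 f_0$ and once a branch of $\beta$ is chosen, a group isomorphism
\[
\CC/(\ZZ\tau+\ZZ)\xrightarrow{\sim}\calE_\tau(\CC),
\]
where $\calE_\tau$ denotes the specialization of $\calE$ at $j\mapsto j(\tau)$, sending $r/N\cdot\tau+s/N$ to the specialization $P_\alpha(\tau)$. Hence $P_\alpha(\tau)$ is $N$-torsion on $\calE_\tau$. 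Unpacking $[N]P_\alpha = O$ via the $N$th division polynomial yields a polynomial identity in $x_\alpha,y_\alpha$ with coefficients in $\QQ(j)\subseteq \calF_N(\beta)$; since this identity holds after specialization at every generic $\tau$, a Zariski density argument shows it holds in $\calF_N(\beta)$. This gives (i).

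For part (ii), well-definedness of $\iota_N$ on $(\ZZ/N\ZZ)^2$ is clear from the periodicity of $\wp$ and $\wp'$ in their lattice arguments. The additivity $P_\alpha + P_{\alpha'} = P_{\alpha+\alpha'}$ is a rational identity in the coordinates that is checked by specialization, since the complex uniformization above is a group isomorphism and $(r+r',s+s')/N$ is the sum on the analytic side. Injectivity follows because for nonzero $\alpha \bmod N$, the point $r/N\cdot\tau+s/N$ is not a lattice point, so $P_\alpha(\tau)\neq O$ in $\calE_\tau$ for any generic $\tau$, forcing $P_\alpha \neq O$ in $\calE(\calF_N(\beta))$; together with $|\calE[N]| = N^2$, this gives bijectivity. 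The main obstacle will be the bookkeeping in the first step: all the normalization constants relating the analytic invariants $g_2, g_3$ to $E_4, E_6$ and to the short Weierstrass coefficients of $\calE$ must align on the nose, and this is what pins down the definition of $\beta$.
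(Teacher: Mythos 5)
Your proof is correct and follows essentially the same route as the paper's: for each $\tau \in \calH$ outside a discrete bad locus, rescale the analytic uniformization $z \mapsto (\wp(z;\tau),\wp'(z;\tau))$ by the explicit factors built into $x_\alpha, u_\alpha$ to land on the twisted curve $\delta y^2 = x^3 - 27j(j-1728)x + 54j(j-1728)^2$ with $\delta := jf_0^2/E_6$, observe that multiplying the $y$-coordinate by (a branch of) $\beta$ carries this to $\calE$, and then transport the resulting isomorphism $\ZZ^2/N\ZZ^2 \xrightarrow{\sim} \calE_\tau[N]$ to the generic fiber by a specialization argument. The only stylistic difference is that you phrase the passage to $\calF_N(\beta)$ via vanishing of the $N$th division polynomial plus Zariski density, whereas the paper simply notes that the polynomial identities encoding torsion and additivity hold at all but finitely many $\tau$ and hence identically; these are the same argument. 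Your observation that $\beta^2 = jf_0^2/E_6$ is forced by the rescaling is a nice touch and agrees with the paper's implicit computation. Minor nit: you write $(\wp, \tfrac12 \wp')$, but the paper's definitions of $x_\alpha, u_\alpha$ already absorb all numerical factors, including the $\tfrac12$, into the constants $36$ and $108$, so the literal rescaling is of $(\wp,\wp')$; this does not affect the logic.
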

\begin{proof}
Fix any $\tau\in \calH$ satisfying $j(\tau) \notin \{0,1728\}$ and $f_0(\tau)\neq 0$.  Let $\scrE_\tau$ be the elliptic curve over $\CC$ defined by the Weierstrass equation $y^2=4x^3-g_2(\tau)x-g_3(\tau)$, where $g_2$ and $g_3$ are modular invariants.  Note that  $g_2/(2\pi i)^4 = E_4/12$ and $g_3/(2\pi i)^6=-E_6/216$.    Let $\scrE_\tau'$ be the elliptic curve over $\CC$ defined by the equation
\begin{align} \label{E:twist fixed tau}
j(\tau)f_0(\tau)^2/E_6(\tau)\cdot y^2 =  x^3 -  27 \cdot j(\tau) (j(\tau)-1728)  \cdot  x +54 \cdot j(\tau) (j(\tau)-1728)^2.
\end{align}

Define $c:=36 (2\pi i)^{-2} E_4(\tau) E_6(\tau)/\Delta(\tau)$ and $d:= 108 (2\pi i)^{-3} E_6(\tau)/(\Delta(\tau) f_0(\tau))$.  By using $j=E_4^3/\Delta$ and $j-1728=E_6^2/\Delta$, and dividing (\ref{E:twist fixed tau}) by $c^3/4$, we find that
\begin{align*}
\frac{4}{c^3} \cdot \frac{j(\tau)f_0(\tau)^2}{E_6(\tau)} \cdot y^2 &= 4 (x/c)^3 - 27 \frac{4 E_4(\tau)^3E_6(\tau)^2}{c^2\Delta(\tau)^2} (x/c) + 54 \frac{4E_4(\tau)^3 E_6(\tau)^4}{c^3\Delta(\tau)^3}\\
&= 4 (x/c)^3 - (2\pi i)^4 E_4(\tau)/12\cdot (x/c) + (2\pi i)^6 E_6(\tau)/216\\
&= 4 (x/c)^3 - g_2(\tau) \cdot(x/c) -g_3(\tau).
\end{align*}
Observing that
\[
\frac{4}{c^3} \cdot \frac{j(\tau)f_0(\tau)^2}{E_6(\tau)} = \frac{4}{c^3} \cdot \frac{E_4(\tau)^3f_0(\tau)^2}{\Delta(\tau) E_6(\tau)} = \Big(\frac{(2\pi i)^3 \Delta(\tau) f_0(\tau)}{108 E_6(\tau)}\Big)^2 = \frac{1}{d}^2,
\]
we deduce that the map $(x,y)\mapsto (x/c, y/d)$ defines an isomorphism $\scrE'_\tau\to \scrE_\tau$ of elliptic curves over $\CC$.
Using this isomorphism and properties of the Weierstrass function, we obtain an isomorphism $\CC/(\ZZ\tau +\ZZ) \xrightarrow{\sim} \scrE'_\tau(\CC)$ of complex Lie groups which maps a nonidentity element $z+(\ZZ\tau+\ZZ)$ to $(c\wp(z; \tau),d\wp'(z;\tau))$.  In particular, we have a group isomorphism $\ZZ^2/N\ZZ^2  \xrightarrow{\sim} \scrE_\tau'[N]$ which away from the identity is defined by 
\begin{align} \label{E:proof isomorphism N torsion}
(r,s) +N^2\ZZ \mapsto \big(c\wp(\tfrac{r}{N}\tau + \tfrac{s}{N}; \tau),d\wp'(\tfrac{r}{N}\tau + \tfrac{s}{N};\tau)\big) = (x_\alpha(\tau),u_\alpha(t)),
\end{align}
where $\alpha$ is the image of $(r,s)$ modulo $N$.

We now view $\tau$ as a variable again.  Letting $\scrE'$ be the elliptic curve defined by (\ref{E:twist fixed tau}), say over the field of meromorphic functions of $\calH$, we find that (\ref{E:proof isomorphism N torsion}) also defines an isomorphism $\ZZ^2/N\ZZ^2 \to \scrE'[N]$ (the points are torsion and satisfy the expected property since they hold for all but finitely many specializations of $\tau$).

Now by our choice of $\beta$, we deduce that the points $P_\alpha=(x_\alpha,y_\alpha)$ lie in $\calE(\calF_N(\beta))$ and  that $(\ZZ/N\ZZ)^2 \to \calE[N]$, $\alpha\mapsto P_\alpha$ is an isomorphism.  Note that $\beta$ is not a function of the upper half-plane, so we avoided using it when we fixed specific values of $\tau$.
\end{proof}

\subsection{Galois representations of $\calE$} \label{SS:representations modular explicit}

Fix an integer $N \geq 3$.  Fix notation as in \S\ref{SS:N torsion of calE}; in particular, we have a basis $P_{(1,0)}$ and  $P_{(0,1)}$ of the $\ZZ/N\ZZ$-module $\calE[N] \subseteq \calE(\calF_N(\beta))$.   With respect to the basis $\{P_{(1,0)},P_{(0,1)}\}$, let
\[
\rho_{\calE,N}\colon \Gal_{\QQ(j)} \to \GL_2(\ZZ/N\ZZ)
\]
be the associated Galois representation, where the implicit algebraic closure of $\QQ(j)$ used for the absolute Galois group contains $\calF_N(\beta)$.    So for $\alpha\in (\ZZ/N\ZZ)^2$ and $\sigma\in \Gal_{\QQ(j)}$, we have $\sigma(P_\alpha) = P_{\alpha A}$ where $A:= \rho_{\scrE,N}(\sigma)^t = \rho_{\scrE,N}^*(\sigma)^{-1}$.

\begin{lemma} \label{L:FN revisited}
\begin{romanenum}
\item \label{L:FN revisited i}
We have $\QQ(j)(\calE[N])=\calF_N(\beta)$.

\item \label{L:FN revisited ii}
We have $\rho^*_{\calE,N}(\Gal_{\QQ(j)})=\GL_2(\ZZ/N\ZZ)$ and $\rho^*_{\calE,N}(\Gal_{\Qbar(j)})=\SL_2(\ZZ/N\ZZ)$.

\item \label{L:FN revisited iii}
For $\sigma\in \Gal_{\QQ(j)}$ and $f\in \calF_N$, we have 
\[
\sigma(f) = f * \rho_{\calE,N}^*(\sigma)^{-1}.
\]
\item \label{L:FN revisited iv}
Composing $\rho_{\calE,N}^*$ with the quotient map to $\GL_2(\ZZ/N\ZZ)/\{\pm I\}$ induces an isomorphism 
\[
\Gal(\calF_N/\QQ(j))\xrightarrow{\sim} \GL_2(\ZZ/N\ZZ)/\{\pm I\}.
\]
\item  \label{L:FN revisited v}
Let $G$ be a subgroup of $\GL_2(\ZZ/N\ZZ)$ that satisfies $\det(G)=(\ZZ/N\ZZ)^\times$ and $-I \in G$.  Then $\rho^*_{\calE,N}(\Gal_{\QQ(X_G)}) = G$.

\item  \label{L:FN revisited vi}
For $\sigma\in \Gal_{\QQ(j)}$, we have $\sigma(\beta) = \tfrac{f_0*A}{f_0} \cdot \beta$, where $A:=\rho_{\calE,N}^*(\sigma)^{-1}$.  
\end{romanenum}
\end{lemma}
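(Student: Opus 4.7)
My plan is to establish the six assertions in the order (i), (iii), (iv), (vi), (ii), (v). The first four follow formally from Lemmas \ref{L:torsion of calE} and \ref{L:action on x and u}; the main obstacle will be (ii), which I reduce to showing that $\beta \notin \calF_N$ and then verify via a local valuation argument on the modular curve $X(N)$. Part (v) is then a clean Galois-correspondence consequence.

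For (i), the inclusion $\QQ(j)(\calE[N]) \subseteq \calF_N(\beta)$ is immediate from $P_\alpha = (x_\alpha, \beta u_\alpha)$; conversely, Lemma \ref{L:action on x and u} gives $\calF_N \subseteq \QQ(j)(\calE[N])$, and choosing $\alpha = (1,0)$ makes $u_\alpha$ nonzero (the $q$-expansion of $h_\alpha$ computed in the proof of Lemma \ref{L:action on x and u} has leading term $q_N$), so $\beta = y_\alpha/u_\alpha \in \QQ(j)(\calE[N])$. For (iii), applying $\sigma$ to $P_\alpha$ and using $\sigma(P_\alpha) = P_{\alpha A}$ with $A := \rho^*_{\calE,N}(\sigma)^{-1}$ gives $\sigma(x_\alpha) = x_{\alpha A} = x_\alpha \ast A$ by Lemma \ref{L:action on x and u}; moreover, $\sigma$ and $\ast A$ agree on $\QQ(\zeta_N)$ since both send $\zeta_N$ to $\zeta_N^{\det A}$ (for $\sigma$ by the Weil-pairing identity $\det\circ\rho_{\calE,N} = \chi_\cyc$, for $\ast$ by Lemma \ref{L:Shimura basic}); since $\calF_N$ is generated over $\QQ(\zeta_N)(j)$ by the $x_\alpha$, the equality $\sigma|_{\calF_N} = \ast A$ follows. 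Part (iv) is then immediate: by Lemma \ref{L:FN basics} the extension $\calF_N/\QQ(j)$ is Galois with group $\GL_2(\ZZ/N\ZZ)/\{\pm I\}$, and (iii) identifies the Galois action with $\rho^*_{\calE,N}$ modulo $\{\pm I\}$. For (vi), I apply $\sigma$ to $y_\alpha = \beta u_\alpha$ using $\sigma(y_\alpha) = y_{\alpha A} = \beta u_{\alpha A}$ together with $\sigma(u_\alpha) = u_\alpha \ast A = (f_0/(f_0\ast A))\, u_{\alpha A}$ from Lemma \ref{L:action on x and u} to solve $\sigma(\beta) = (f_0 \ast A/f_0)\, \beta$.

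For (ii), note that (iv) already gives that $\rho^*_{\calE,N}(\Gal_{\QQ(j)})$ surjects onto $\GL_2(\ZZ/N\ZZ)/\{\pm I\}$, so its image is either $\GL_2(\ZZ/N\ZZ)$ or an index-$2$ subgroup not containing $-I$; the latter happens iff $\beta \in \calF_N$. The core task is thus to verify $\beta \notin \calF_N$, equivalently that $jf_0^2/E_6$ is not a square in $\calF_N$. I verify this by a local computation: for $N \geq 3$ the group $\Gamma(N)$ has trivial stabilizer at $\rho_0 := e^{2\pi i/3}$, so $\calH \to X(N)(\CC)$ is unramified at $\rho_0$ and $\tau - \rho_0$ is a uniformizer at any closed point $P$ of $X(N)$ above the elliptic point of $X(1)$ with $j = 0$. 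Viewing $j$, $f_0$, $E_6$ as holomorphic functions of $\tau$ near $\rho_0$, we have $\ord_{\rho_0}(j) = 3$ (from $j$'s triple zero at the elliptic point), $\ord_{\rho_0}(E_6) = 0$ (since $E_6(\rho_0) \neq 0$), and $\ord_{\rho_0}(f_0) \geq 0$, so $\ord_P(jf_0^2/E_6) = 3 + 2\ord_{\rho_0}(f_0)$ is odd and $jf_0^2/E_6$ is not a square in $\calF_N$. This forces the image of $\rho^*_{\calE,N}$ on $\Gal_{\QQ(j)}$ to be all of $\GL_2(\ZZ/N\ZZ)$; the $\SL_2$ assertion then follows by noting that $\chi_\cyc^{-1}$ is trivial on $\Gal_{\Qbar(j)}$ (so the image lies in $\SL_2$) and that the field of constants of $\calF_N(\beta)$ is exactly $\QQ(\zeta_N)$ (since $\beta^2 = jf_0^2/E_6$ is non-constant), which forces the image of $\Gal_{\Qbar(j)}$ to be the full $\SL_2(\ZZ/N\ZZ)$.

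Finally, (v) is pure Galois correspondence: under the identification $\Gal(\calF_N/\QQ(j)) \cong \GL_2(\ZZ/N\ZZ)/\{\pm I\}$ from (iv), the subfield $\QQ(X_G) = \calF_N^G$ corresponds to $G/\{\pm I\}$, so $\rho^*_{\calE,N}(\Gal_{\QQ(X_G)})$ is the preimage of $G/\{\pm I\}$ in the image of $\rho^*_{\calE,N}$, which by (ii) is all of $\GL_2(\ZZ/N\ZZ)$, and this preimage is $G$ since $-I \in G$.
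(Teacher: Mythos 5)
Parts (i), (iii), (iv), (v), and (vi) are correct and follow the same route as the paper's own proof (your extra verification in (iii) that $\sigma$ and $*A$ agree on $\QQ(\zeta_N)$ is redundant, since Lemma~\ref{L:action on x and u}(\ref{L:action on x and u ii}) already gives generation over $\QQ(j)$ rather than over $\QQ(\zeta_N)(j)$, but it does no harm). Part (ii) is where you take a genuinely different route, and it is also where a gap appears.

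The paper proves (ii) by first deducing $\pm\rho^*_{\calE,N}(\Gal_{\Qbar(j)})=\SL_2(\ZZ/N\ZZ)$ from (iv) and Lemma~\ref{L:FN basics}, then invoking the group-theoretic Lemma~\ref{L:pm H eq SL2} (no proper subgroup $H\subsetneq\SL_2(\ZZ/N\ZZ)$ satisfies $\pm H=\SL_2(\ZZ/N\ZZ)$) to get $\rho^*_{\calE,N}(\Gal_{\Qbar(j)})=\SL_2(\ZZ/N\ZZ)$; the $\GL_2$ statement then follows since $-I$ now lies in the image. You instead reduce the $\GL_2$ statement to showing $\beta\notin\calF_N$, i.e.\ that $jf_0^2/E_6$ is not a square in $\calF_N$, and verify this by a local computation at the point of $\calX_{\Gamma(N)}$ lying over $\rho_0=e^{2\pi i/3}$: the order of vanishing there is $3+2\,\ord_{\rho_0}(f_0)$, which is odd. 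This computation is correct, and it trades the group-theoretic Lemma~\ref{L:pm H eq SL2} for a self-contained geometric argument on the modular curve.

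The gap is in your derivation of $\rho^*_{\calE,N}(\Gal_{\Qbar(j)})=\SL_2(\ZZ/N\ZZ)$. You assert that the field of constants of $\calF_N(\beta)$ is $\QQ(\zeta_N)$ "since $\beta^2=jf_0^2/E_6$ is non-constant." That reason is not sufficient: $\calF_N(\beta)/\calF_N$ is a constant extension precisely when $\beta^2=c\,g^2$ for some constant $c\in\QQ(\zeta_N)^\times$ and some $g\in\calF_N$, and ``$\beta^2$ non-constant'' only excludes the case where $g$ itself is constant. Fortunately the valuation computation you already carried out closes the gap: since $\beta^2$ has odd valuation at the point over $\rho_0$ while any $c\,g^2$ would have even valuation there, $\beta^2$ is not a constant times a square, so $\calF_N(\beta)/\calF_N$ is not a constant extension. (Alternatively, having established $\rho^*_{\calE,N}(\Gal_{\QQ(j)})=\GL_2(\ZZ/N\ZZ)$, you could just finish as the paper does with Lemma~\ref{L:pm H eq SL2}.) You should replace the parenthetical justification with the valuation argument.
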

\begin{proof}
Part (\ref{L:FN revisited i}) follows from Lemma~\ref{L:action on x and u} and the definition of our points $P_\alpha$.  Take any nonzero $\alpha\in (\ZZ/N\ZZ)^2$ and $\sigma \in \Gal_{\QQ(j)}$.   Define $A:=\rho_{\scrE,N}^*(\sigma)^{-1}=\rho_{\scrE,N}(\sigma)^t$.   By our choice of basis in defining $\rho_{\scrE,N}$, we have $\sigma(P_\alpha)=P_{\alpha A}$.  We then have $\sigma(x_\alpha)=x_{\alpha A}$ by considering $x$-coordinates.  By Lemma~\ref{L:action on x and u}(\ref{L:action on x and u iii}), we have $\sigma(x_\alpha)= x_\alpha * A$.  Since $\alpha$ was an arbitrary nonzero element of $(\ZZ/N\ZZ)^2$, Lemma~\ref{L:action on x and u}(\ref{L:action on x and u ii}) implies that $\sigma(f) = f * A$ for all $f\in \calF_N$.  We have thus proved (\ref{L:FN revisited iii}).   

Using $\sigma(P_\alpha)=P_{\alpha A}$ and taking $y$-coordinates, gives $\sigma(\beta) \sigma(u_\alpha) = \beta u_{\alpha A}$.  Therefore, we have
\[
\sigma(\beta)\cdot  u_\alpha*A =\sigma(\beta) \sigma(u_\alpha) = \beta u_{\alpha A}=\beta \cdot \tfrac{f_0 *A}{f} \cdot u_\alpha *A,
\]
where we have used part (\ref{L:FN revisited iii}) and Lemma~\ref{L:action on x and u}(\ref{L:action on x and u iii}).  Part (\ref{L:FN revisited vi}) follows by cancelling $u_\alpha *A$ from both sides.

By (\ref{L:FN revisited iii}),  composing $\rho_{\calE,N}^*$ with the obvious quotient map gives an injective homomorphism $\Gal(\calF_N/\QQ(j)) \hookrightarrow \GL_2(\ZZ/N\ZZ)/\{\pm I\}$; it is an isomorphism since we know the degree of the extension $\calF_N/\QQ(j)$, cf.~Lemma~\ref{L:FN basics}(\ref{L:FN basics i}).    Therefore, $\pm\rho^*_{\calE,N}(\Gal_{\QQ(j)})=\GL_2(\ZZ/N\ZZ)$ and $\pm\rho^*_{\calE,N}(\Gal_{\Qbar(j)})=\SL_2(\ZZ/N\ZZ)$ by Lemma~\ref{L:FN basics}.   We have $\rho^*_{\calE,N}(\Gal_{\Qbar(j)})=\SL_2(\ZZ/N\ZZ)$ since there is no proper subgroup $H$ of $\SL_2(\ZZ/N\ZZ)$ for which $\pm H = \SL_2(\ZZ/N\ZZ)$, cf.~Lemma~\ref{L:pm H eq SL2}.  Part (\ref{L:FN revisited ii}) now follows.

Finally take $G$ as in (\ref{L:FN revisited v}).  From parts (\ref{L:FN revisited i}) and (\ref{L:FN revisited iii}) and $-I \in G$, the subfield of $\calF_N(\beta)$ fixed by $(\rho_{\calE,N}^*)^{-1}(G)$ is $\calF_N^G$.  Part (\ref{L:FN revisited v}) is now immediate since $\calF_N^G=\QQ(X_G)$.
\end{proof}

\subsection{Specializations} \label{SS:specializations}

Define $U:=\AA^1_\QQ - \{0,1728\} = \Spec \QQ[j, j^{-1},(j-1728)^{-1}]$ and view it as an open subvariety of the $j$-line.    Let $\pi_1(U,\bbar{\eta})$ be the \'etale fundamental group of $U$, where $\bbar{\eta}$ is the geometric generic point of $U$ corresponding to our choice of algebraic closure of $\QQ(j)$.    The Weierstrass equation (\ref{E:generic Weierstrass}) has discriminant $2^{12} 3^{12} j^2 (j - 1728)^3$ and hence defines an elliptic scheme $\scrE \to U$ whose generic fiber is the elliptic curve $\calE$ over $\QQ(j)$.  

Let $\scrE[N]$ be the $N$-torsion subscheme of $\scrE$.  We can identify the fiber of $\scrE[N]\to U$ at $\bbar{\eta}$ with $\calE[N]$.   We can view $\scrE[N]$ as a rank $2$ lisse sheaf of $\ZZ/N\ZZ$-modules over $U$ and it thus corresponds to a representation
\[
\varrho_{\scrE,N}\colon \pi_1(U,\bbar{\eta})\to \Aut(\calE[N]) \cong \GL_2(\ZZ/N\ZZ).
\]
By making an appropriate choice of basis, we may assume that the specialization of 
\[
\varrho^*_{\scrE,N}\colon \pi_1(U,\bbar{\eta})\to \GL_2(\ZZ/N\ZZ) 
\]
at the generic fiber of $U$ gives our representation $\rho^*_{\calE,N}\colon \Gal_{\QQ(j)} \to \GL_2(\ZZ/N\ZZ)$.  The representation $\varrho^*_{\scrE,N}$ is surjective by Lemma~\ref{L:FN revisited}(\ref{L:FN revisited ii}).

Take any subgroup $G$ of $\GL_2(\ZZ/N\ZZ)$ satisfying $\det(G)=(\ZZ/N\ZZ)^\times$ and $-I\in G$.   Let $\pi_G\colon U_G\to U$ be the \'etale cover corresponding to the subgroup $(\varrho^*_{\scrE,N})^{-1}(G)$ of $\pi_1(U,\bbar\eta)$.    The function field of $U_G$ is $\calF_N^G=\QQ(X_G)$ by Lemma~\ref{L:FN revisited}(\ref{L:FN revisited v}).  So we can identify $U_G$ with an open subvariety of the modular curve $X_G$ and the morphism $\pi_G$ extends to the morphism from $X_G$ to the $j$-line that we had also denoted by $\pi_G$.    In particular, $U_G$ is the open subvariety of $X_G$ that is the complement of $\pi_G^{-1}(\{0,1728,\infty\})$.  (When $-I \notin G$, we will simply define $U_G$ to be $X_G -\pi_G^{-1}(\{0,1728,\infty\})$.)

\begin{prop} \label{P:revised moduli property}
Let $G$ be a subgroup of $\GL_2(\ZZ/N\ZZ)$ satisfying $\det(G)=(\ZZ/N\ZZ)^\times$ and $-I\in G$.   Let $E$ be an elliptic curve defined over a number field $K$ with $j_E\notin\{0,1728\}$.  Then $\rho_{E,N}^*(\Gal_K)$ is conjugate in $\GL_2(\ZZ/N\ZZ)$ to a subgroup of $G$ if and only if $j_E =\pi_G(u)$ for some $u\in U_G(K)$.
\end{prop}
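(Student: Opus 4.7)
The plan is to reduce the statement to a specialization argument using the \'etale cover $\pi_G\colon U_G\to U$ constructed in \S\ref{SS:specializations}.

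First I would reduce to the case that $E$ is the specialization $E'=\scrE_{j_E}$ of the universal-looking Weierstrass equation (\ref{E:generic Weierstrass}) at $j_E$. Since $j_{E'}=j_E\notin \{0,1728\}$, the elliptic curves $E$ and $E'$ over $K$ have the same $j$-invariant and are therefore quadratic twists of each other. Hence there is a quadratic character $\chi\colon \Gal_K\to \{\pm I\}\subseteq \GL_2(\ZZ/N\ZZ)$ with $\rho_{E',N}^*=\chi\cdot \rho_{E,N}^*$. Because $-I\in G$, the image $\rho_{E,N}^*(\Gal_K)$ is conjugate in $\GL_2(\ZZ/N\ZZ)$ to a subgroup of $G$ if and only if $\rho_{E',N}^*(\Gal_K)$ is. So it suffices to prove the proposition for $E'$ in place of $E$.

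Next I would invoke the specialization machinery for the \'etale fundamental group. Choose a geometric point $\bbar u$ of $U$ lying over $j_E\in U(K)$ and an \'etale path from $\bbar u$ to $\bbar\eta$. This yields a specialization homomorphism $s\colon \Gal_K=\pi_1(\Spec K,\bbar u)\to \pi_1(U,\bbar\eta)$, and under the induced identification of $\scrE[N]_{\bbar u}$ with $\calE[N]=\scrE[N]_{\bbar\eta}$ the Galois representation on the $N$-torsion of $E'=\scrE_{j_E}$ becomes $\varrho_{\scrE,N}^*\circ s$. Different path choices change $s$ by conjugation in $\pi_1(U,\bbar\eta)$, and hence change $\varrho_{\scrE,N}^*\circ s$ by conjugation in $\GL_2(\ZZ/N\ZZ)$ (here we use that $\varrho_{\scrE,N}^*$ is surjective, by Lemma~\ref{L:FN revisited}(\ref{L:FN revisited ii})). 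After a suitable path choice we therefore have $\rho_{E',N}^*=\varrho_{\scrE,N}^*\circ s$ on the nose.

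Finally I would combine this with the Galois correspondence for finite \'etale covers. By construction, $\pi_G\colon U_G\to U$ is the cover corresponding to the open subgroup $H:=(\varrho_{\scrE,N}^*)^{-1}(G)\subseteq \pi_1(U,\bbar\eta)$. The fiber of $\pi_G$ over $\Spec K\xrightarrow{j_E} U$ is the finite \'etale $K$-scheme attached to the $\Gal_K$-set $\pi_1(U,\bbar\eta)/H$ (with action through $s$), and it has a $K$-point precisely when this $\Gal_K$-set has a fixed coset. Unravelling, this is equivalent to the existence of some $g\in \pi_1(U,\bbar\eta)$ with $g^{-1}s(\Gal_K)g\subseteq H$. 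Applying the surjective homomorphism $\varrho_{\scrE,N}^*$ and using $\rho_{E',N}^*=\varrho_{\scrE,N}^*\circ s$, this translates to the existence of $h\in \GL_2(\ZZ/N\ZZ)$ with $h^{-1}\rho_{E',N}^*(\Gal_K)h\subseteq G$, i.e., $\rho_{E',N}^*(\Gal_K)$ is conjugate in $\GL_2(\ZZ/N\ZZ)$ to a subgroup of $G$. Together with the reduction in the first paragraph this gives the claim.

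The main technical obstacle is the bookkeeping in the middle step: identifying the specialization of the sheaf $\scrE[N]$ with $\rho_{E',N}^*$ (with the correct dualizing/transpose conventions used to define $\rho_E^*$ throughout the paper), and being careful that the ambiguity in the choice of \'etale path matches exactly the ambiguity of conjugation in $\GL_2(\ZZ/N\ZZ)$ appearing in the statement.
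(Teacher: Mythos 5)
Your proof is correct and takes essentially the same approach as the paper: both reduce to the specialization of $\varrho^*_{\scrE,N}$ at $j_E$, reduce from $E$ to the fiber of $\scrE$ via a quadratic twist using $-I\in G$, and use surjectivity of $\varrho^*_{\scrE,N}$ to translate the conjugacy condition. The only cosmetic difference is that the paper works with $G$-orbits of the fiber of the full cover $Y\to U$ corresponding to $\varrho$, while you phrase the same point via the $\pi_1$-Galois correspondence for the quotient cover $\pi_G\colon U_G\to U$ directly.
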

\begin{proof}
We may assume $K\subseteq \Qbar$. Define the surjective homomorphism $\varrho:=\varrho^*_{\scrE,N}\colon \pi_1(U,\bbar\eta)\to \GL_2(\ZZ/N\ZZ)$.   For each $u\in U(K)=K-\{0,1728\}$, let $\varrho_u\colon \Gal_K \to \GL_2(\ZZ/N\ZZ)$ be the specialization of $\varrho$ at $u$; it is uniquely determined up to conjugation by an element of $\GL_2(\ZZ/N\ZZ)$.

We claim that $\varrho_u(\Gal_K)$ is conjugate in $\GL_2(\ZZ/N\ZZ)$ to a subgroup of $G$ if and only if $u=\pi_G(P)$ for some $P\in U_G(K)$.    Let $\varphi\colon Y\to U$ be the \'etale cover corresponding to $\varrho$; the curve $Y$ is defined over $\QQ$ but need not be geometrically irreducible.  The group $\GL_2(\ZZ/N\ZZ)$ acts on $Y$ and acts simply faithful on the fiber $\varphi^{-1}(u):=\{P \in Y(\Qbar) : \varphi(P)=u\}$.  The group $\Gal_K$ acts on $\varphi^{-1}(u)$ since $\varphi$ and $u$ are defined over $K$. There is a point $P_0 \in \varphi^{-1}(u)$ such that $\sigma(P_0)=\varrho_u(\sigma)\cdot P_0$ for all $\sigma \in \Gal_K$ (a different choice of $P_0$ results in a conjugate of $\varrho_u$).  The $G$-orbits of $\varphi^{-1}(u)$  correspond with the points $P\in U_G(\Qbar)$ that satisfy $\pi_G(P)=u$ via the natural morphism $Y\to U_G$.  Since $\varphi$ and $u$ are defined over $K$, those $G$-orbits of $\varphi^{-1}(u)$ that are stable under the $\Gal_K$-action correspond with the points $P\in U_G(K)$ that satisfy $\pi_G(P)=u$.  Therefore, there is a point $P\in U_G(K)$ with $\pi_G(P)=u$ if and only if there is a point $P_0 \in \varphi^{-1}(u)$ such that for each $\sigma\in \Gal_K$, we have $\sigma(P_0)= g\cdot P_0$ for some $g\in G$; equivalently, $\varrho_u(\Gal_K)$ is conjugate to a subgroup of $G$.  This proves the claim.

The specialization $\varrho_u\colon \Gal_K\to \GL_2(\ZZ/N\ZZ)$ of $\varrho$ at $u$ is isomorphic to the representation $\rho_{\scrE_u,N}^*\colon \Gal_K \to \GL_2(\ZZ/N\ZZ)$, where $\scrE_u$ is the elliptic curve over $K$ defined by the Weierstrass equation (\ref{E:generic Weierstrass}) with $j$ replaced by $u$.  Since $\scrE_u$ has $j$-invariant $u$, the above claim proves the lemma in the case where $E=\scrE_u$.

Since $-I\in G$, it now suffices to show that $\pm \rho_E^*(\Gal_K)$ and $\pm \rho_{\scrE_u}^*(\Gal_K)$ are conjugate in $\GL_2(\ZZ/N\ZZ)$ where $u:=j_E \in K-\{0,1728\}=U(K)$.   The curve $E$ and $\scrE_u$ are quadratic twists of each other since they have the same $j$-invariant which is neither $0$ or $1728$.  So after choosing compatible bases, there is a quadratic character $\chi\colon \Gal_K \to \{\pm 1\}$ such that $\rho_{\scrE_u,N}^*=\chi\cdot \rho_{E,N}^*$.   In particular, $\pm \rho_{\scrE_u,N}^*(\Gal_K)=\pm \rho_{E,N}^*(\Gal_K)$.
\end{proof}

\subsubsection{Elliptic scheme for the group $G$} \label{SSS:elliptic scheme for G}

Let $G$ be a subgroup of $\GL_2(\ZZ/N\ZZ)$ satisfying $\det(G)=(\ZZ/N\ZZ)^\times$ and $-I\in G$.   Let $\scrE_G\to U_G$ be the elliptic scheme obtained by base extending $\scrE\to U$ by the \'etale morphism $\pi_G\colon U_G\to U$.  Then we can construct a representation $\varrho_{\scrE_G,N}^*\colon \pi_1(U_G,\bbar{\eta}) \to \GL_2(\ZZ/N\ZZ)$ as before, where now $\bbar\eta$ denotes the geometric generic point of $U_G$ corresponding to our choice of algebraic closure of $\QQ(j)$ (which contains $\QQ(X_G)$).   The morphism $\pi_G$ allows us to view $\pi_1(U_G,\bbar\eta)$ as a subgroup of $\pi_1(U,\bbar\eta)$; we may assume that our representation was chosen so that the restriction of $\varrho_{\scrE,N}^*$ to $\pi_1(U_G,\bbar\eta)$ agrees with $\varrho_{\scrE_G,N}^*$.    In particular, we have a surjective homomorphism
\[
\varrho_{\scrE_G,N}^*\colon \pi_1(U_G,\bbar\eta) \to G.
\]

Take any number field $K\subseteq \Qbar$ and point $u\in U_G(K)$.   Let $(\scrE_G)_u$ be the elliptic curve over $K$ that is the fiber of $\scrE_G\to U_G$ over $u$; it is isomorphic to the elliptic curve over $K$ defined by (\ref{E:generic Weierstrass}) with $j$ replaced by $\pi_G(u)$.  The specialization of $\varrho_{\scrE_G,N}^*$ at $u$ is a representation $\Gal_\QQ \to G \subseteq \GL_2(\ZZ/N\ZZ)$ that is isomorphic to $\rho_{(\scrE_G)_u,N}^*$.

\section{Some basic group theory} \label{S:group theory facts}

We now collect some basic group theoretic results that we will use.  Most of it concerns the groups $\SL_2(\ZZ_\ell)$ and $\GL_2(\ZZ_\ell)$, with $\ell$ prime, and the groups $\SL_2(\Zhat)$ and $\GL_2(\Zhat)$.  

\subsection{Goursat's lemma}
We will make frequent use of the following in \S\ref{S:agreeable}.

\begin{lemma}[Goursat's lemma, \cite{Ribet-76}*{Lemma 5.2.1}]  
\label{L:Goursat}
Let $G_1$ and $G_2$ be two groups and let $H$ be a subgroup of $G_1\times G_2$ so that the projection maps $p_1\colon H\to G_1$ and $p_2\colon H \to G_2$ are surjective.   Let $N_1$ and $N_2$ be the normal subgroups of $G_1$ and $G_2$, respectively, for which $\ker(p_2)=N_1\times \{1\}$ and $\ker(p_1)=\{1\}\times N_2$.    Then the image of $H$ in $(G_1\times G_2)/(N_1\times N_2)= G_1/N_1\times G_2/N_2$ is the graph of an isomorphism $G_1/N_1\xrightarrow{\sim} G_2/N_2$.
\end{lemma}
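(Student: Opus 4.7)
The plan is to show that the image $\bar H$ of $H$ in $G_1/N_1\times G_2/N_2$ is the graph of a function $\varphi\colon G_1/N_1\to G_2/N_2$ (necessarily a homomorphism, since $\bar H$ is a subgroup), and then exploit the symmetry between the two factors to conclude that $\varphi$ is an isomorphism.

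First I would verify that $N_1\trianglelefteq G_1$ (the case of $N_2$ is symmetric). Given $g_1\in G_1$ and $n_1\in N_1$, surjectivity of $p_1$ produces some $(g_1,g_2)\in H$, while $(n_1,1)\in H$ by definition of $N_1$; conjugating inside $H$ gives $(g_1 n_1 g_1^{-1},1)\in H$, hence $g_1n_1g_1^{-1}\in N_1$. Next I would check that $\bar H$ is the graph of a well-defined function from $G_1/N_1$ to $G_2/N_2$. Existence of a value $\bar g_2$ for each $\bar g_1$ is immediate from $p_1$ being surjective. For uniqueness, suppose $(g_1,g_2)$ and $(g_1',g_2')$ lie in $H$ with $g_1N_1=g_1'N_1$; writing $g_1=g_1'n_1$ with $n_1\in N_1$, the product $(g_1',g_2')^{-1}(g_1,g_2)=(n_1,(g_2')^{-1}g_2)$ lies in $H$, and multiplying by $(n_1^{-1},1)\in H$ gives $(1,(g_2')^{-1}g_2)\in H$, so $(g_2')^{-1}g_2\in N_2$ and therefore $g_2N_2=g_2'N_2$.

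This defines the map $\varphi\colon G_1/N_1\to G_2/N_2$, and it is a group homomorphism because its graph $\bar H$ is closed under multiplication. Running the same argument with the roles of $G_1$ and $G_2$ reversed shows that $\bar H$ is simultaneously the graph of a map $\psi\colon G_2/N_2\to G_1/N_1$; hence $\varphi$ and $\psi$ are mutually inverse, and $\varphi$ is an isomorphism. There is no real obstacle here: the only point requiring any care is the uniqueness step, which is just the observation that the ``vertical fiber'' of $\bar H$ over the identity is trivial by construction of $N_2$.
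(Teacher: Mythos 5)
Your proof is correct and complete. Note that the paper does not supply its own proof of this lemma — it cites Ribet's Lemma 5.2.1 — so there is no argument in the paper to compare against. What you have written is the standard, self-contained proof of Goursat's lemma: normality of $N_i$ by conjugation inside $H$, well-definedness via the computation $(n_1^{-1},1)\cdot(n_1,(g_2')^{-1}g_2)=(1,(g_2')^{-1}g_2)\in\ker(p_1)$, homomorphism property from $\bar H$ being a subgroup, and bijectivity by symmetry. All steps are sound.
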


\subsection{Determining closed subgroups by their reductions}

\begin{lemma} \label{L:openness criterion GL}
Fix an integer $N=\prod_p p^{e_p}>1$ with $e_2 \neq 1$.  For each prime $\ell$ dividing  $N$, define the integer
\[
N_\ell := \ell^{e_\ell} \prod_{{p|N,\, p^2\equiv 1 \bmod{\ell}}} p;
\]
it is a divisor of $N$.  Let $G$ be a closed subgroup of $\GL_2(\ZZ_N)$.   For each prime $\ell$ dividing $N$, assume that the image of $G$ in $\GL_2(\ZZ/N_\ell\ell\ZZ)$ contains all matrices that are congruent to $I$ modulo $N_\ell$.   Then $G$ is an open subgroup of $\GL_2(\ZZ_N)$ with level dividing $N$. 
\end{lemma}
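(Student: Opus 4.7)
The plan is to reduce the claim to showing that $G \supseteq K$, where $K := \ker(\GL_2(\ZZ_N)\to \GL_2(\ZZ/N\ZZ))$; this is equivalent to $G$ being open of level dividing $N$. Via $\ZZ_N = \prod_{\ell\mid N}\ZZ_\ell$, we have $K = \prod_{\ell\mid N} K_\ell$ with $K_\ell := I + \ell^{e_\ell}M_2(\ZZ_\ell)$. The hypothesis $e_2 \neq 1$ ensures that each $K_\ell$ is a finitely generated pro-$\ell$ group whose Frattini subgroup equals $K_\ell^+ := I + \ell^{e_\ell+1}M_2(\ZZ_\ell)$; for $\ell$ odd this follows from $\exp$ identifying $K_\ell$ topologically with $(\ZZ_\ell)^4$, while for $\ell = 2$ we need $e_2 \geq 2$, since for $e_2 = 1$ one can check directly that $\Phi(K_2)\subsetneq K_2^+$. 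Because $K$ is pronilpotent, every closed subgroup of $K$ is pronilpotent; in particular $G\cap K$ decomposes as the direct product of its pro-$\ell$ Sylow subgroups, each of which is $G\cap (K_\ell\times\{I\})$ under the splitting $\GL_2(\ZZ_N) = \GL_2(\ZZ_\ell)\times J_\ell$ with $J_\ell := \prod_{p\mid N,\, p\neq\ell}\GL_2(\ZZ_p)$. Thus the task reduces to proving, for each prime $\ell\mid N$, that $K_\ell\times\{I\}\subseteq G$.

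Fix such an $\ell$. The hypothesis says that for each $A\in M_2(\FF_\ell)$ we can find $g = (g_\ell, g_J)\in G$ with $g_\ell \equiv I+\ell^{e_\ell}A\pmod{\ell^{e_\ell+1}}$ and with the $p$-component of $g_J$ lying in $I+pM_2(\ZZ_p)$ whenever $p\mid N$, $p\neq\ell$, and $p^2\equiv 1\pmod\ell$. The crucial step is to produce $(g_\ell, I)\in G$ by extracting the pro-$\ell$ part of $g$. For each prime $p\mid N$ with $p\neq \ell$, the pro-$\ell$ part of the $p$-component of $g_J$ vanishes for one of two disjoint reasons: if $p^2\equiv 1\pmod\ell$ the component lies in the pro-$p$ group $I+pM_2(\ZZ_p)$; if instead $p^2\not\equiv 1\pmod\ell$, then $\ell$ does not divide $|\GL_2(\FF_p)| = p(p-1)^2(p+1)$, so the Sylow $\ell$-subgroup of $\GL_2(\ZZ_p)$ is itself trivial. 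Hence $g_J$ has trivial pro-$\ell$ part. Since the procyclic closure $\overline{\langle g\rangle}\subseteq G$ is abelian profinite and so splits as a direct product of its pro-$\ell$ and pro-$\ell'$ Sylow subgroups, the pro-$\ell$ part of $g$ lies in $G$ and, in the coordinate decomposition, equals $(g_\ell, I)$.

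Let $S := \{k\in K_\ell : (k,I)\in G\}$; this is a closed subgroup of $K_\ell$. The previous paragraph produces, for every $A\in M_2(\FF_\ell)$, an element of $S$ congruent to $I+\ell^{e_\ell}A$ modulo $\ell^{e_\ell+1}$, so the image of $S$ in the Frattini quotient $K_\ell/K_\ell^+$ is the entire group. The Frattini lemma for finitely generated pro-$\ell$ groups then forces $S = K_\ell$, giving $K_\ell\times\{I\}\subseteq G$ as required. The main technical obstacle in this plan is the identification of the Frattini subgroup of $K_\ell$: for $(\ell,e_\ell) = (2,1)$ one has $\Phi(K_2)\subsetneq K_2^+$, and control modulo $\ell^{e_\ell+1}$ would then not suffice to generate $K_\ell$; this is exactly why the condition $e_2\neq 1$ is built into the hypothesis.
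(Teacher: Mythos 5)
Your proof is correct and follows the same underlying strategy as the paper's; the difference is mainly one of packaging. The paper reduces to a single prime $\ell$ by replacing $g_0$ with a carefully chosen power $g_0^f$ (with $f\equiv 1\pmod\ell$) and then passing to a limit by compactness of $G$; you instead invoke the Sylow decomposition of the abelian profinite group $\overline{\langle g\rangle}$ to extract the pro-$\ell$ part directly. These are the same mechanism dressed differently, and your identification of precisely which $p$-components can carry nontrivial pro-$\ell$ content (namely $\ell\mid\lvert\GL_2(\FF_p)\rvert\Rightarrow p^2\equiv 1\pmod\ell$) is the same number-theoretic observation the paper uses. The other half of the argument — bootstrapping from control modulo $\ell^{e_\ell+1}$ to all of $K_\ell$ — the paper does by an explicit induction, raising $I+\ell^iA$ to the $\ell$-th power and tracking binomial coefficients (with $i\geq 2$ when $\ell=2$); you replace this with the topological Nakayama/Frattini argument, which requires knowing $\Phi(K_\ell)=I+\ell^{e_\ell+1}M_2(\ZZ_\ell)$.

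Two small remarks. First, the Frattini identification is exactly the content of the paper's induction, so by stating it rather than proving it you have not shortened the argument so much as relocated its computational core; a careful write-up should either cite the theory of uniformly powerful pro-$p$ groups or reproduce a version of the paper's power computation. Second, the parenthetical that $\exp$ identifies $K_\ell$ with $(\ZZ_\ell)^4$ is a little loose: $\exp$ is a homeomorphism but not a group homomorphism, so one really wants the uniform-pro-$p$ structure (or Baker--Campbell--Hausdorff) to conclude $\Phi(K_\ell)=K_\ell^\ell=I+\ell^{e_\ell+1}M_2(\ZZ_\ell)$. Your observation that the reduction to the individual $K_\ell\times\{I\}$ already follows from $K=\prod_\ell K_\ell$ makes the opening paragraph's appeal to pronilpotence of $G\cap K$ unnecessary, though it is of course harmless. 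Your diagnosis of why $e_2\neq 1$ is needed — the Frattini subgroup drops below $I+2^{e_2+1}M_2(\ZZ_2)$ when $e_2=1$ — is a cleaner explanation than what the paper makes explicit, and is correct.
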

\begin{proof}
We first consider the special case where $N=\ell^{e_\ell}>1$ is a prime power (with $e_\ell\geq 2$ if $\ell=2$).  For $i\geq 1$, define the group $H_i:=G \cap (I+\ell^i M_2(\ZZ_\ell))$.   We have an injective homomorphism
\[
\phi_i\colon H_i/H_{i+1} \hookrightarrow \mathfrak{g},\quad 1+ \ell^i A \mapsto A \bmod{\ell},
\]
where $\mathfrak{g}:=M_2(\FF_\ell)$.

We claim that $\phi_i$ is surjective for all $i \geq e_\ell$.  We shall prove this by induction on $i$.   We have $N_\ell=\ell^{e_\ell}$ and the homomorphism $\phi_{e_\ell}$ is surjective by our assumption that the image of $G$ in $\GL_2(\ZZ/N_\ell \ell \ZZ)=\GL_2(\ZZ/\ell^{e_\ell+1}\ZZ)$ contains all matrices that are congruent to $I$ modulo $\ell^{e_\ell}$.      Now consider any $i\geq e_\ell$ for which $\phi_i$ is surjective.     Take any $B\in \mathfrak{g}$.   Since $\phi_i$ is surjective, there is a matrix $A\in M_2(\ZZ_\ell)$ such that $I + \ell^i A \in G$ and $A\equiv B \pmod{\ell}$.    Raising to the $\ell$-th power, we find that $g:=(I + \ell^i A)^\ell$ is an element of $G$ with $g\equiv I + \ell^{i+1} A \pmod{\ell^{i+2}}$ (this uses that $ {\ell \choose j}\equiv 0 \pmod{\ell}$ when $0<j<\ell$ and that $i \geq 2$ when $\ell=2$).   So $g \in H_{i+1}$ and $\phi_{i+1}(g\cdot H_{i+2}) \equiv A \equiv B \pmod{\ell}$.   Since $B\in \mathfrak{g}$ was arbitrary, this proves that $\phi_{i+1}$ is surjective.  The claim follows by induction.

For any $i\geq e_\ell$, the above claim implies that the image of $G$ in $\GL_2(\ZZ/\ell^i \ZZ)$ contains all matrices $A\in\GL_2(\ZZ/\ell^i \ZZ)$ with $A\equiv I \pmod{\ell^{e_\ell}}$.  Since $G$ is a closed subgroup of $\GL_2(\ZZ_\ell)$, we deduce that $G$ is an open subgroup of $\GL_2(\ZZ_\ell)$ whose level divides $\ell^{e_\ell}$.   This completes the proof in the special case where $N$ is a prime power.\\

We now consider the general case.  Take any prime $\ell|N$ and let 
\[
\varphi\colon G \to \prod_{p|N,\, p\neq \ell} \GL_2(\ZZ_p)
\]
be the projection homomorphism.   We have $\ker(\varphi)=H_\ell \times \{I\}$, where $H_\ell$ is a closed subgroup of $\GL_2(\ZZ_\ell)$.

Take any  $B\in \mathfrak{g}$.    By assumption, the image of $G$ in $\GL_2(\ZZ/
N_\ell \ell \ZZ)$ contains all matrices that are congruent to $I$ modulo $N_\ell$.   So there is a $g_0\in G$ such that $g_0\equiv I +\ell^{e_\ell} B \pmod{\ell^{e_\ell+1}}$ and $g_0\equiv I \pmod{p}$ for all $p|N$ with $p^2\equiv 1 \pmod{\ell}$.  Observe that if $\ell$ divides $|\GL_2(\ZZ/p\ZZ)|=(p-1)^2(p+1) p$ for a prime $p\neq \ell$, then $p^2\equiv 1\pmod{\ell}$.  So for any $m\geq 1$, there is a positive integer $f\equiv 1 \pmod{\ell}$ for which $g_0^f\equiv \varphi(g_0)^f \equiv I \pmod{\prod_{p|N, p\neq \ell} p^{m}}$.  We have $g_0^f \equiv  I +\ell^{e_\ell} B \pmod{\ell^{e_\ell+1}}$.  By taking $m$ larger and larger and using that $G$ is a closed, and hence compact, subgroup of $\GL_2(\ZZ_N)$, we deduce that there is a $g\in \ker(\varphi)$ for which $g\equiv I +\ell^{e_\ell} B \pmod{\ell^{e_\ell+1}}$.    Since $B \in \mathfrak{g}$ was arbitrary, we deduce that the image of $H_\ell$ in $\GL_2(\ZZ/\ell^{e_\ell+1}\ZZ)$ contains all matrices that are congruent to $I$ modulo $\ell^{e_\ell}$.   

By the prime power case of the lemma, which we have already proved, $H_\ell$ is an open subgroup of $\GL_2(\ZZ_\ell)$ whose level divides $\ell^{e_\ell}$.  Since $\ell$ was an arbitrary prime divisor of $N$, we deduce that $\prod_{\ell|N} H_\ell$ is a subgroup of $G$ and hence $G$ is an open subgroup of $\GL_2(\ZZ_N)$ whose level divides $\prod_{\ell|N}\ell^{e_\ell} = N$.
\end{proof}

For an open subgroup $G$ of $\GL_2(\ZZ_N)$, the following lemma is useful for finding its maximal subgroups; the issue being that these maximal subgroups may have strictly larger level.

\begin{lemma}
Fix an integer $N=\prod_p p^{e_p}>1$ with $e_2\neq 1$.   Let $G$ be an open subgroup of $\GL_2(\ZZ_N)$ whose level divides $N$.  If $M$ is a maximal open subgroup of $G$, then the level of $M$ divides $N \ell$ for some $\ell |N$.
\end{lemma}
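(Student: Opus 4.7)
The plan is to argue by contradiction: suppose that the level of $M$ does not divide $N\ell$ for any prime $\ell\mid N$, and derive $M=G$.

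For each prime $\ell \mid N$, let $K_m$ denote the kernel of the reduction $\GL_2(\ZZ_N)\to \GL_2(\ZZ/m\ZZ)$ for any $m$ dividing a power of $N$. Because $G$ has level dividing $N$, we have $K_{N\ell}\subseteq K_N\subseteq G$, and since $K_{N\ell}$ is normal in $\GL_2(\ZZ_N)$, the set $M\cdot K_{N\ell}$ is a subgroup of $G$ containing $M$.  Our contrapositive hypothesis says $K_{N\ell}\not\subseteq M$, so $M\cdot K_{N\ell}\supsetneq M$; maximality of $M$ therefore forces $M\cdot K_{N\ell}=G$ for every prime $\ell\mid N$.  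Intersecting with $K_N$ gives $K_N=(M\cap K_N)\, K_{N\ell}$ for every such $\ell$, which is the main identity I will exploit.

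Set $L:=M\cap K_N$; this is a closed subgroup of $K_N=\prod_{\ell\mid N}K_{N,\ell}$, where $K_{N,\ell}=I+\ell^{e_\ell}M_2(\ZZ_\ell)$.  Let $L_\ell$ be the image of $L$ under the projection $K_N\to K_{N,\ell}$.  Since $K_N/K_{N\ell}\cong K_{N,\ell}/K_{\ell^{e_\ell+1},\ell}\cong M_2(\FF_\ell)$, the identity $K_N=L\cdot K_{N\ell}$ tells us that $L_\ell$ surjects onto $K_{N,\ell}/K_{\ell^{e_\ell+1},\ell}$.  Now I apply the prime-power case of Lemma~\ref{L:openness criterion GL} (which is proved by repeated $\ell$-th powering, and which requires $e_2\geq 2$ when $\ell=2$—hence the hypothesis $e_2\neq 1$ in the current lemma): this yields $L_\ell=K_{N,\ell}$ for each $\ell\mid N$.

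To combine these projections, I use Goursat's lemma (Lemma~\ref{L:Goursat}) inductively over the primes $\ell\mid N$.  At each inductive step I have a closed subgroup of a product $A\times B$ whose projections are surjective, where $A$ is a finite product of pro-$\ell$ groups (for a collection of primes) and $B$ is pro-$\ell'$ for a new prime $\ell'$.  Goursat's lemma produces a common quotient of $A$ and $B$; since such a quotient is simultaneously pro-$\ell$ (for several $\ell$'s) and pro-$\ell'$ for distinct primes, it must be trivial, so the subgroup is the full product.  Iterating, $L=\prod_{\ell\mid N}K_{N,\ell}=K_N$, hence $K_N\subseteq M$.  Combined with $M\cdot K_N=G$ (which is the $\ell$-case collapsed to the mod-$N$ reduction), this gives $M=G$, contradicting that $M$ is a proper subgroup.

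The only real obstacle is the final Goursat step, and the key point that makes it go through is the coprimality of the pro-orders $K_{N,\ell}$ for distinct primes $\ell$; everything else is a direct application of Lemma~\ref{L:openness criterion GL} and the normality of the kernels $K_{N\ell}$.
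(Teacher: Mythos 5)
Your proof is correct, but it takes a genuinely different route from the paper's. The paper's proof works directly with $M$ itself: it observes that if $M$ and $G$ have different images mod $N\ell$ for some $\ell\mid N$ then maximality forces $M$ to be cut out by a mod-$N\ell$ condition (so level $\mid N\ell$); otherwise $M$ and $G$ have the same image mod $N\ell$ for every $\ell\mid N$, and since $G$ has level dividing $N$, the group $M$ then satisfies the hypotheses of the full Lemma~\ref{L:openness criterion GL}, which yields level $\mid N$ at once. You instead run the argument by contradiction through the subgroup $L=M\cap K_N$: the identity $M\cdot K_{N\ell}=G$ (equivalent to the paper's ``same image mod $N\ell$'') gives, after intersecting with $K_N$, that each projection $L_\ell$ is full by the \emph{prime-power} case of Lemma~\ref{L:openness criterion GL}, and then you re-assemble the projections with Goursat plus coprimality of pro-orders. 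That last step is doing the work that the \emph{general} case of Lemma~\ref{L:openness criterion GL} accomplishes with the $f\equiv 1\pmod{\ell}$ power-and-compactness trick — your Goursat argument is a clean alternative here because your hypotheses are strictly stronger than the ones in Lemma~\ref{L:openness criterion GL} (you have full surjections onto each $K_{N,\ell}/K_{\ell^{e_\ell+1},\ell}$, not merely the $N_\ell$-truncated version). Net effect: the paper's proof is shorter because it treats Lemma~\ref{L:openness criterion GL} as a black box applied to $M$; yours trades that economy for a transparent coprimality argument and uses only the prime-power piece of the lemma. Both correctly invoke the $e_2\neq 1$ hypothesis via the $\ell=2$ restriction in the $\ell$-th-powering step.
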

\begin{proof}
Let $M$ be a maximal open subgroup of $G$.   Take any prime $\ell |N$.  If the image of $M$ and $G$ modulo $N\ell$ give different subgroups of $\GL_2(\ZZ/N\ell\ZZ)$, then the level of $M$ divides $N\ell$ (using that $M$ is maximal, it agrees with the group of $g\in G$ whose image modulo $N\ell$ lies in the image of $M$ modulo $N\ell$).   

So we can assume that $M$ and $G$ have the same image modulo $N\ell$ for all primes $\ell |N$.   Since $G$ has level $N$, we deduce that for each $\ell|N$, the image of $M$ modulo $N\ell$ contains all matrices $A\in \GL_2(\ZZ/N\ell\ZZ)$ satisfying $A\equiv I \pmod{N}$.   Applying Lemma~\ref{L:openness criterion GL} to the group $M$, we deduce that the level of $M$ divides $N$.
\end{proof}

The following is an analogous version of Lemma~\ref{L:openness criterion GL}.

\begin{lemma} \label{L:openness criterion SL}
Fix an integer $N=\prod_p p^{e_p}>1$ with $e_2 \neq 1$.  For each prime $\ell$ dividing  $N$, define the integer
\[
N_\ell := \ell^{e_\ell} \prod_{{p|N,\, p^2\equiv 1 \bmod{\ell}}} p;
\]
it is a divisor of $N$. 
  Let $G$ be a closed subgroup of $\SL_2(\ZZ_N)$.   For each prime $\ell$ dividing $N$, assume that the image of $G$ in $\SL_2(\ZZ/N_\ell\ell\ZZ)$ has level which divides $N_\ell$.    Then $G$ is an open subgroup of $\SL_2(\ZZ_N)$ with level dividing $N$. 
\end{lemma}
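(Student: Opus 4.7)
The plan is to mimic closely the proof of Lemma~\ref{L:openness criterion GL}, replacing $M_2(\FF_\ell)$ throughout by the Lie algebra
\[
\mathfrak{sl}:=\{A\in M_2(\FF_\ell) : \tr(A)=0\},
\]
which has dimension $3$ over $\FF_\ell$. The point is that for $A\in M_2(\ZZ_\ell)$, an element $I+\ell^i A$ has determinant $1$ modulo $\ell^{i+1}$ precisely when $\tr(A)\equiv 0 \pmod{\ell}$ (when $i\geq 1$, and when $i\geq 2$ if $\ell=2$, which is ensured by our hypothesis $e_2\neq 1$ once we are in the prime power case).

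First I would treat the prime power case $N=\ell^{e_\ell}$. For each $i\geq 1$, define $H_i:=G\cap (I+\ell^i M_2(\ZZ_\ell))$ and the injective homomorphism
\[
\phi_i\colon H_i/H_{i+1} \hookrightarrow \mathfrak{sl},\qquad I+\ell^i A \mapsto A \bmod{\ell}.
\]
The hypothesis that the image of $G$ in $\SL_2(\ZZ/\ell^{e_\ell+1}\ZZ)$ has level dividing $\ell^{e_\ell}$ gives surjectivity of $\phi_{e_\ell}$. Then I would run the same induction as in Lemma~\ref{L:openness criterion GL}: given $\phi_i$ surjective with $i\geq e_\ell$, lift any $B\in \mathfrak{sl}$ to $A\in M_2(\ZZ_\ell)$ with $I+\ell^i A\in G$ and $A\equiv B\pmod{\ell}$, and observe that
\[
(I+\ell^i A)^\ell \equiv I + \ell^{i+1} A \pmod{\ell^{i+2}},
\]
which lies in $H_{i+1}$ and satisfies $\phi_{i+1}(\,\cdot\,)\equiv B$. (The congruence uses $\binom{\ell}{j}\equiv 0\pmod{\ell}$ for $0<j<\ell$ together with $i\geq 2$ when $\ell=2$.) Hence $\phi_i$ is surjective for all $i\geq e_\ell$, which shows $G$ is open in $\SL_2(\ZZ_\ell)$ with level dividing $\ell^{e_\ell}$.

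To globalize, fix a prime $\ell\mid N$ and consider the projection
\[
\varphi\colon G\to \prod_{p\mid N,\,p\neq \ell}\SL_2(\ZZ_p),
\]
whose kernel has the form $H_\ell\times\{I\}$ for a closed subgroup $H_\ell\subseteq \SL_2(\ZZ_\ell)$. The key point is that if $\ell$ divides $|\SL_2(\ZZ/p\ZZ)|=(p-1)(p+1)p$ for some prime $p\neq \ell$, then $p^2\equiv 1\pmod{\ell}$, so the definition of $N_\ell$ captures exactly the primes $p$ for which a nontrivial element mod $p$ of $\ell$-power order could obstruct the argument. Given any $B\in \mathfrak{sl}$, pick $g_0\in G$ with $g_0\equiv I+\ell^{e_\ell}B\pmod{\ell^{e_\ell+1}}$ and $g_0\equiv I\pmod{p}$ for all primes $p\mid N$ with $p^2\equiv 1\pmod{\ell}$; such a $g_0$ exists by the hypothesis on the image of $G$ in $\SL_2(\ZZ/N_\ell\ell\ZZ)$. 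For each $m\geq 1$, choose $f\equiv 1\pmod{\ell}$ such that $\varphi(g_0)^f\equiv I\pmod{\prod_{p\mid N,\,p\neq \ell}p^m}$; such $f$ exists because the order of $\varphi(g_0)$ has $\ell$-part coprime to the orders of the groups $\SL_2(\ZZ/p^m\ZZ)$ for $p\nmid N_\ell/\ell^{e_\ell}$. Then $g_0^f\equiv I+\ell^{e_\ell}B \pmod{\ell^{e_\ell+1}}$. Taking $m\to\infty$ and invoking compactness of $G$ produces $g\in \ker\varphi$ with the same property. Applied to every $B\in\mathfrak{sl}$, this shows $H_\ell$ satisfies the hypothesis of the prime power case already handled, so $H_\ell$ is open of level dividing $\ell^{e_\ell}$. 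Since $\prod_{\ell\mid N}H_\ell\subseteq G$, the group $G$ is open in $\SL_2(\ZZ_N)$ with level dividing $N$.

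The main obstacle is the compactness/congruence argument in the last paragraph: one must verify that an $f\equiv 1\pmod\ell$ can be found so that $g_0^f$ kills the coordinates at primes $p\neq \ell$ to arbitrarily high $p$-adic precision, and this is where the definition of $N_\ell$ via $p^2\equiv 1\pmod\ell$ is essential, exactly as in the $\GL_2$ version.
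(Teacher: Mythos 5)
Your proof is correct and takes essentially the same approach as the paper, which simply remarks that the $\SL_2$ version follows by the same argument as Lemma~\ref{L:openness criterion GL} with $\mathfrak{g}$ replaced by the trace-zero matrices. You have filled in the details the paper leaves implicit (in particular, that $|\SL_2(\FF_p)|=p(p^2-1)$ plays the same role as $|\GL_2(\FF_p)|$ in selecting the primes $p$ with $p^2\equiv 1\bmod\ell$, and that the $\ell$-th power map still lands inside $\SL_2$), and all of these check out.
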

\begin{proof}
This can be proved in the exact same way as Lemma~\ref{L:openness criterion GL} with $\GL_2$ replaced by $\SL_2$ and with $\mathfrak{g}=\{A\in M_2(\FF_\ell) :  \tr(A)=0\}$.
\end{proof}

\begin{lemma} \label{L:reduction original Serre alt}
Fix a prime $\ell$.  Let $G$ be a closed subgroup of $\SL_2(\ZZ_\ell)$ whose image modulo $\ell^e$ is $\SL_2(\ZZ/\ell^e\ZZ)$, where $e$ is $3$, $2$ or $1$ when $\ell$ is $2$, $3$ or at least $5$, respectively.  Then $G=\SL_2(\ZZ_\ell)$.
\end{lemma}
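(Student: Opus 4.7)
The plan is to mirror the proof of Lemma~\ref{L:openness criterion GL}, adapting from $\GL_2$ to $\SL_2$ by replacing the tangent space $M_2(\FF_\ell)$ with the trace-zero subspace $\mathfrak{g} := \{A \in M_2(\FF_\ell) : \tr(A) = 0\}$. For $i \geq 1$, set $H_i := G \cap (I + \ell^i M_2(\ZZ_\ell))$. Since $\det(I + \ell^i A) = 1 + \ell^i \tr(A) + \ell^{2i}\det(A)$, the condition $I + \ell^i A \in \SL_2(\ZZ_\ell)$ forces $\tr(A) \equiv 0 \pmod{\ell}$ (for $i \geq 1$), and we obtain injective homomorphisms $\phi_i\colon H_i/H_{i+1} \hookrightarrow \mathfrak{g}$ sending $I + \ell^i A$ to $A \bmod \ell$. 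The key claim is that $\phi_i$ is surjective for all $i \geq e$; granted this, one concludes exactly as in Lemma~\ref{L:openness criterion GL}, using closedness of $G$ and the hypothesis $\bar{G}_e = \SL_2(\ZZ/\ell^e\ZZ)$ to deduce $G = \SL_2(\ZZ_\ell)$.

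For the inductive step, suppose $\phi_i$ is surjective for some $i \geq e$. Given $B \in \mathfrak{g}$, lift to $A \in M_2(\ZZ_\ell)$ with $I + \ell^i A \in G$ and $A \equiv B \pmod{\ell}$. A binomial expansion shows that for each pair $(\ell,e)$ in the statement and for $i \geq e$,
\[
(I + \ell^i A)^\ell \equiv I + \ell^{i+1} A \pmod{\ell^{i+2}},
\]
using that $v_\ell\!\left(\binom{\ell}{k}\right) \geq 1$ for $1 \leq k \leq \ell-1$ and that $\ell i \geq i+2$ (and $2i \geq i+2$ when $\ell = 2$, which requires $i \geq 2$, hence the choice $e = 3$). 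This yields an element of $H_{i+1}$ with $\phi_{i+1}$-image $B$.

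The main obstacle is the base case: showing $\phi_e$ is surjective, i.e., that the kernel $L$ of $\bar{G}_{e+1} \twoheadrightarrow \bar{G}_e = \SL_2(\ZZ/\ell^e\ZZ)$ is all of $\mathfrak{g}$. The group $L$ is a normal subgroup of $\bar{G}_{e+1}$, and the conjugation action factors through the adjoint action of $\SL_2(\FF_\ell)$ on $\mathfrak{g}$, so $L$ is an $\SL_2(\FF_\ell)$-submodule of $\mathfrak{g}$. For $\ell \geq 5$, the adjoint representation is irreducible, so $L = 0$ or $L = \mathfrak{g}$. If $L = 0$, iterating the same argument forces every $\bar{G}_n \to \bar{G}_1 = \SL_2(\FF_\ell)$ to be an isomorphism, so $G$ itself would be isomorphic to $\SL_2(\FF_\ell)$ and hence contain an element of order $\ell$; but an eigenvalue analysis shows $\SL_2(\ZZ_\ell)$ has no order-$\ell$ elements for $\ell \geq 5$ (such an element would have eigenvalues $\zeta_\ell^{\pm 1}$ whose trace generates a degree-$(\ell-1)/2$ extension of $\QQ_\ell$, impossible for $\ell \geq 5$), a contradiction.

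For $\ell \in \{2,3\}$ the adjoint representation on $\mathfrak{g}$ is not irreducible, which is exactly why the hypothesis is strengthened to surjectivity mod $\ell^e$ for $e = 3$ or $e = 2$. The strategy is the same, but now one uses that any proper $\SL_2(\FF_\ell)$-stable subspace of $\mathfrak{g}$ is incompatible with $\bar{G}_e$ being all of $\SL_2(\ZZ/\ell^e\ZZ)$: one must verify directly for these two small primes that the extension $1 \to \mathfrak{g} \to \SL_2(\ZZ/\ell^{e+1}\ZZ) \to \SL_2(\ZZ/\ell^e\ZZ) \to 1$ admits no section, which can be checked by a finite computation (e.g.\ exhibiting an element of $\SL_2(\ZZ/\ell^e\ZZ)$ whose $\ell$-power structure forces any preimage to have a nontrivial image in $\mathfrak{g}$). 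This finite computation for the primes $\ell = 2, 3$ is where all the low-characteristic subtlety is concentrated.
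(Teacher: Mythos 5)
The paper dispatches this in one line: for $\ell\geq 5$ it simply cites Serre (\emph{Abelian $\ell$-adic representations}, IV, Lemma 3), and for $\ell\in\{2,3\}$ it invokes the already-established Lemma~\ref{L:openness criterion SL}. You instead re-derive Serre's result from scratch by mirroring the proof of Lemma~\ref{L:openness criterion GL}. That general strategy is sound, and your inductive step (the $\ell$-th power argument for $i\geq e$) is correct. However, the base case for $\ell\geq 5$ has a real gap, and your reformulation for $\ell\in\{2,3\}$ is slightly weaker than what is actually needed.

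The gap for $\ell\geq 5$: you write ``If $L=0$, iterating the same argument forces every $\bar G_n\to\bar G_1=\SL_2(\FF_\ell)$ to be an isomorphism.'' This does not follow. Irreducibility of the adjoint module gives you $L_i\in\{0,\mathfrak g\}$ for each $i$, and your inductive step shows $L_i=\mathfrak g$ implies $L_{i+1}=\mathfrak g$; but there is no implication running in the other direction, so $L_1=0$ does \emph{not} force $L_j=0$ for $j>1$. In particular you cannot conclude that $G$ is finite, and your order-$\ell$ argument about elements of $\SL_2(\ZZ_\ell)$ never gets to apply. The correct endgame is shorter: $L_1=0$ already means $\bar G_2\hookrightarrow\SL_2(\ZZ/\ell^2\ZZ)$ is a subgroup isomorphic to $\SL_2(\FF_\ell)$, hence contains an element $g$ of order $\ell$ with $\bar g\neq I$. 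One then shows directly that $\SL_2(\ZZ/\ell^2\ZZ)$ has no such element when $\ell\geq 5$ (write $g=U(I+\ell N)$ with $U=\left(\begin{smallmatrix}1&1\\0&1\end{smallmatrix}\right)$ after conjugation; then $g^\ell\equiv U^\ell\bigl(I+\ell\sum_{i=0}^{\ell-1}U^{-i}NU^i\bigr)\pmod{\ell^2}$, and $\sum_i U^{-i}NU^i\equiv 0\pmod\ell$ precisely because $\sum_{i=0}^{\ell-1}i$ and $\sum_{i=0}^{\ell-1}i^2$ vanish mod $\ell$ for $\ell\geq 5$, so $g^\ell\equiv U^\ell\neq I$). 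Note your eigenvalue argument concerns $\SL_2(\ZZ_\ell)$; it is true, but it only becomes relevant if you have first shown $G$ is finite, which is exactly the unjustified step.

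A secondary point for $\ell\in\{2,3\}$: since the adjoint module is reducible there, the image of $\phi_e$ need not be $0$ or $\mathfrak g$ — it could a priori be a proper nonzero submodule — so ``the extension admits no section'' is strictly weaker than what you need. The correct finite statement is that no proper subgroup of $\SL_2(\ZZ/\ell^{e+1}\ZZ)$ surjects onto $\SL_2(\ZZ/\ell^e\ZZ)$ (equivalently, every such surjecting subgroup contains all of $\mathfrak g$). This is still a finite computation at the specific pairs $(\ell,e)=(2,3),(3,2)$, but it is a different and stronger one than the splitting question. With that substitution, your approach for these primes matches what is implicitly used when the paper applies Lemma~\ref{L:openness criterion SL}.
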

\begin{proof}
For $\ell \geq 5$, the lemma follows from \cite[IV, Lemma 3]{Serre-abelian}. For $\ell \in \{2,3\}$, this follows from Lemma~\ref{L:openness criterion SL}.  
\end{proof}

Many of the open subgroups $G$ of $\GL_2(\ZZ_N)$ we consider will contain all the scalar matrices; the following will be useful for computing levels.

\begin{lemma} \label{L:level by adjoining scalars}
Fix an integer $N>1$.  Let $H$ be a subgroup of $\GL_2(\ZZ_N)$ for which $H \cap \SL_2(\ZZ_N)$ is an open subgroup of $\SL_2(\ZZ_N)$ whose level divides $N_0$.  Assume that $N_0\equiv 0 \pmod{4}$ if $N_0$ is even.  Define $N_1:=N_0$ if $N_0$ is odd and $N_1:=2N_0$ if $N_0$ is even.   Then $\ZZ_N^\times \cdot H$ is an open subgroups of $\GL_2(\ZZ_N)$ whose level divides $N_1$.
\end{lemma}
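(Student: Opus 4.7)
The plan is to show the inclusion $K_{N_1} \subseteq \ZZ_N^\times \cdot H$, where $K_{N_1}$ denotes the kernel of the reduction map $\GL_2(\ZZ_N) \to \GL_2(\ZZ/N_1\ZZ)$; this simultaneously establishes openness and the bound on the level. My first move is to replace $H$ by its smaller but more concrete subgroup $\Gamma_{N_0} := \ker(\SL_2(\ZZ_N) \to \SL_2(\ZZ/N_0\ZZ))$, which is contained in $H \cap \SL_2(\ZZ_N)$ by the level hypothesis. It thus suffices to prove $K_{N_1} \subseteq \ZZ_N^\times \cdot \Gamma_{N_0}$.

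Given $g \in K_{N_1}$, I look for a scalar $\lambda \in \ZZ_N^\times$ such that $\gamma := \lambda^{-1} g$ lies in $\Gamma_{N_0}$. Unwinding, the conditions on $\lambda$ are: (i) $\lambda^2 = \det g$, so that $\gamma \in \SL_2(\ZZ_N)$; and (ii) $\lambda \equiv 1 \pmod{N_0}$, which together with $g \equiv I \pmod{N_0}$ (a consequence of $N_0 \mid N_1$) forces $\gamma \equiv I \pmod{N_0}$. Producing such a $\lambda$ will finish the argument, since then $g = \lambda\gamma \in \ZZ_N^\times \cdot \Gamma_{N_0} \subseteq \ZZ_N^\times \cdot H$.

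I construct $\lambda = (\lambda_\ell)_{\ell \mid N} \in \prod_{\ell \mid N} \ZZ_\ell^\times$ prime by prime, using Hensel-style analysis of the squaring map on principal units. At each odd prime $\ell$ with $v_\ell(N_0) \geq 1$, the squaring map restricts to a group isomorphism $1 + \ell^{v_\ell(N_0)}\ZZ_\ell \xrightarrow{\sim} 1 + \ell^{v_\ell(N_0)}\ZZ_\ell$, since the derivative of $x \mapsto x^2$ at $x = 1$ equals the unit $2 \in \ZZ_\ell^\times$. Because $v_\ell(N_1) = v_\ell(N_0)$ for odd $\ell$, we have $(\det g)_\ell \in 1 + \ell^{v_\ell(N_0)}\ZZ_\ell$, and inverting the squaring map delivers $\lambda_\ell$ satisfying both (i) and (ii) at $\ell$.

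The main obstacle, and the reason for the parity distinction in the definition of $N_1$, is the anomalous behavior of squaring on $\ZZ_2$: squaring is \emph{not} a self-bijection of $1 + 2^k\ZZ_2$, but rather a group isomorphism $1 + 2^k\ZZ_2 \xrightarrow{\sim} 1 + 2^{k+1}\ZZ_2$ for $k \geq 2$. The hypothesis $4 \mid N_0$ ensures that $k := v_2(N_0) \geq 2$, and the choice $N_1 = 2N_0$ places $(\det g)_2$ in $1 + 2^{k+1}\ZZ_2$, exactly the target of this isomorphism. Inverting produces $\lambda_2 \in 1 + 2^k\ZZ_2$ with $\lambda_2^2 = (\det g)_2$, completing the construction of $\lambda$. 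Assembling the local pieces yields the desired factorization $g = \lambda\gamma$, so $\ZZ_N^\times \cdot H \supseteq K_{N_1}$, which is open of finite index in $\GL_2(\ZZ_N)$. Hence $\ZZ_N^\times \cdot H$ is open with level dividing $N_1$, as claimed.
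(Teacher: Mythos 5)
Your proof is correct and takes essentially the same route as the paper's. In both, one factors an arbitrary $g$ in the kernel of reduction modulo $N_1$ as $\lambda\gamma$, where $\lambda \in \ZZ_N^\times$ is a square root of $\det g$ chosen prime-by-prime so that $\lambda \equiv 1 \pmod{N_0}$ (exploiting that $(1+\ell^e\ZZ_\ell)^2 = 1+\ell^e\ZZ_\ell$ for $\ell$ odd and $(1+2^e\ZZ_2)^2 = 1+2^{e+1}\ZZ_2$ for $e \geq 2$), and $\gamma$ then lies in the level-$N_0$ principal congruence subgroup of $\SL_2(\ZZ_N)$, hence in $H$; the paper simply packages this as a reduction to the prime-power case. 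One remark worth making: both you and the paper tacitly restrict attention to primes $\ell \mid N_0$, silently ignoring primes $\ell \mid N$ with $\ell \nmid N_0$. At such a prime $(\det g)_\ell$ need not be a square in $\ZZ_\ell^\times$ and the factorization fails, so the lemma as literally stated is false (take $N=5$, $N_0=1$, $H=\SL_2(\ZZ_5)$: the level of $\ZZ_5^\times\cdot\SL_2(\ZZ_5)$ is $5$, not $1$). This is an oversight in the statement rather than in either proof; in all the paper's applications every prime dividing $N$ does divide $N_0$.
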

\begin{proof}
By considering the $\ell$-adic projections, we reduce to the case where $N$ is a power of a prime $\ell$ and $N_0=\ell^e$ (with $e\geq 2$ if $\ell=2$).  Let $f=e$ if $\ell$ is odd and $f=e+1$ if $\ell=2$.  One can check that $(1+\ell^e\ZZ_\ell)^2=1+\ell^f \ZZ_\ell$; this uses that $e\geq 2$ when $\ell=2$.

  Take any matrix $A \in I+\ell^f M_2(\ZZ_\ell)$.   We have $\det(A) \in 1+\ell^f \ZZ_\ell = (1+\ell^e \ZZ_\ell)^2$, so $\det(A)=d^2$ for some $d\in 1+\ell^e \ZZ_\ell$.  The matrix $d^{-1}A \in \GL_2(\ZZ_\ell)$ has determinant $1$ and is congruent to $I$ modulo $\ell^e$.   Since the level of $H\cap \SL_2(\ZZ_N)$ in $\SL_2(\ZZ_N)$ divides $N_0=\ell^e$, we must have $d^{-1} A \in H$.   Therefore, $A = d\cdot d^{-1} A$ is an element of $\Zhat^\times H$.  The level of $\Zhat^\times H$ divides $\ell^f=N_1$ since $A$ was an arbitrary element of $I+\ell^f M_2(\ZZ_\ell)$.
\end{proof}

\subsection{Commutator subgroups}

We first consider some basic results about commutator subgroups.

\begin{lemma} \label{L:basics commutators}
\begin{romanenum}
\item  \label{L:basics commutators i}
If $\ell\geq 5$, the group $\SL_2(\ZZ_\ell)$ is perfect, i.e., it is equal to its commutator subgroup.
\item \label{L:basics commutators ii}
The commutator subgroup of $\SL_2(\ZZ_3)$ has level $3$ and index $3$.
\item \label{L:basics commutators iii}
The commutator subgroup of $\SL_2(\ZZ_2)$ has level $4$ and index $4$, and the quotient is a cyclic group of order $4$.
\item  \label{L:basics commutators iv}
 If $\ell \geq 3$, the commutator subgroup of $\GL_2(\ZZ_\ell)$ is $\SL_2(\ZZ_\ell)$.
 \item  \label{L:basics commutators v}
 The commutator subgroup of $\GL_2(\ZZ_2)$ is a subgroup of $\SL_2(\ZZ_2)$ of level $2$ and index $2$.
\item 
\label{L:basics commutators vi}
The commutator subgroup of $\GL_2(\Zhat)$ is an open subgroup of $\SL_2(\Zhat)$ of level $2$ and index $2$.
\end{romanenum}
\end{lemma}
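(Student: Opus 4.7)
The plan is to handle the three $\SL_2$ statements (i)--(iii) by reducing to finite commutator computations via Lemma~\ref{L:reduction original Serre alt}, then to deduce (iv) and (v) from them using simple commutator identities inside $\GL_2(\ZZ_\ell)$, and finally to assemble (vi) from the $\ell$-adic data.  Throughout we use that the image of $[H,H]$ under a continuous surjection is the commutator subgroup of the image of $H$, and that $[H,H]$ is a closed subgroup by our convention in \S\ref{SS:notation}.

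For (i), the classical simplicity (and nonabelianness) of $\PSL_2(\FF_\ell)$ for $\ell\geq 5$ forces $\SL_2(\FF_\ell)$ to be perfect.  Hence $[\SL_2(\ZZ_\ell),\SL_2(\ZZ_\ell)]$ surjects modulo $\ell$ onto $\SL_2(\FF_\ell)$, and Lemma~\ref{L:reduction original Serre alt} upgrades this to equality with $\SL_2(\ZZ_\ell)$.  Parts (ii) and (iii) follow the same pattern, now reduced via Lemma~\ref{L:reduction original Serre alt} to identifying the commutator subgroups of $\SL_2(\ZZ/9\ZZ)$ and $\SL_2(\ZZ/8\ZZ)$ respectively.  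For (ii), the isomorphism $\SL_2(\FF_3)\cong 2{\cdot}A_4$ gives $[\SL_2(\FF_3),\SL_2(\FF_3)]=Q_8$ of index $3$; combined with the identity
\[
\left[\begin{pmatrix}a & 0\\ 0 & a^{-1}\end{pmatrix},\begin{pmatrix}1 & x\\ 0 & 1\end{pmatrix}\right]=\begin{pmatrix}1 & (a^2-1)x\\ 0 & 1\end{pmatrix},
\]
for $a\in (\ZZ/9\ZZ)^\times$ with $a^2-1$ a unit (and the lower-triangular analogue), this shows that $[\SL_2(\ZZ/9\ZZ),\SL_2(\ZZ/9\ZZ)]$ contains the kernel of reduction modulo $3$ and equals the preimage of $Q_8$, of level $3$ and index $3$.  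For (iii), a finite verification (most cleanly in \texttt{Magma}, or by tracing how the abelianization $\SL_2(\ZZ)^{\mathrm{ab}}=\ZZ/12$ descends modulo $8$) identifies $[\SL_2(\ZZ/8\ZZ),\SL_2(\ZZ/8\ZZ)]$ as an index-$4$ level-$4$ subgroup with cyclic quotient of order $4$.

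Parts (iv) and (v) follow by combining (i)--(iii) with the identity
\[
\left[\begin{pmatrix}a & 0\\ 0 & 1\end{pmatrix},\begin{pmatrix}1 & x\\ 0 & 1\end{pmatrix}\right]=\begin{pmatrix}1 & (a-1)x\\ 0 & 1\end{pmatrix}
\]
inside $\GL_2(\ZZ_\ell)$.  Since $\GL_2(\ZZ_\ell)/\SL_2(\ZZ_\ell)\cong \ZZ_\ell^\times$ is abelian, the commutator lies in $\SL_2(\ZZ_\ell)$.  For $\ell\geq 3$, taking $a-1$ a unit produces every upper (and symmetrically every lower) triangular unipotent matrix as a commutator, and these generate $\SL_2(\ZZ_\ell)$ since $\ZZ_\ell$ is Euclidean; this gives (iv).  For $\ell=2$ the identity only produces entries in $2\ZZ_2$, so $[\GL_2(\ZZ_2),\GL_2(\ZZ_2)]$ reduces modulo $2$ onto $[\SL_2(\FF_2),\SL_2(\FF_2)]=A_3$ of index $2$; a short computation in $\GL_2(\ZZ/4\ZZ)$ together with Lemma~\ref{L:reduction original Serre alt} confirms it equals the (unique) index-$2$ level-$2$ subgroup of $\SL_2(\ZZ_2)$, giving (v).

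Finally, (vi) follows by reassembling the $\ell$-adic factors: under $\GL_2(\Zhat)=\prod_\ell \GL_2(\ZZ_\ell)$, taking commutators componentwise yields $[\GL_2(\Zhat),\GL_2(\Zhat)]=\prod_\ell [\GL_2(\ZZ_\ell),\GL_2(\ZZ_\ell)]$ (as a closed product), which by (iv) and (v) is the index-$2$ subgroup of $\SL_2(\Zhat)$ whose only obstruction to being all of $\SL_2(\Zhat)$ is at the prime $2$, and whose level is therefore $2$.  The main obstacle is the finite-group bookkeeping hidden in (iii) and (v): verifying precisely the cyclic order-$4$ structure of the abelianization of $\SL_2(\ZZ/8\ZZ)$, and checking that $[\GL_2(\ZZ_2),\GL_2(\ZZ_2)]$ is exactly the preimage of $A_3$ rather than some strictly smaller subgroup.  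Every other step reduces, via Lemma~\ref{L:reduction original Serre alt} and the perfection of $\PSL_2(\FF_\ell)$ for $\ell\geq 5$, to elementary-matrix generation of $\SL_2$ over a Euclidean ring.
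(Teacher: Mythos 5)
Your route is more self-contained than the paper's, which simply cites \cite[Lemma~A.1]{MR2721742} for (i)--(iii); in fact your elementary-matrix identity already proves (iv) for $\ell\geq 3$ without any input from (i)--(iii), since taking $a-1\in\ZZ_\ell^\times$ produces all elementary matrices and those generate $\SL_2(\ZZ_\ell)$ over the Euclidean ring $\ZZ_\ell$.  However, the proof as written has a genuine gap: in (ii), (iii), and (v) you invoke Lemma~\ref{L:reduction original Serre alt} to lift a finite computation back to the $\ell$-adic group, but that lemma applies only to closed subgroups whose image modulo $\ell^e$ is \emph{all} of $\SL_2(\ZZ/\ell^e\ZZ)$.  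The commutator subgroups you are analyzing reduce modulo $\ell$ to the proper subgroups $Q_8\subsetneq\SL_2(\FF_3)$ and $A_3\subsetneq\SL_2(\FF_2)$, so the hypothesis fails.  The tool you actually need is Lemma~\ref{L:openness criterion SL}: once you know that the image of the commutator subgroup modulo $\ell^{e+1}$ has level dividing $\ell^e$ (i.e.\ contains everything $\equiv I\bmod \ell^e$), that lemma converts this into the statement that the $\ell$-adic commutator subgroup has level dividing $\ell^e$, which combined with the reduction modulo $\ell$ pins down the level and index exactly.

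There are also two slips in the explicit part of (ii).  First, no $a\in(\ZZ/9\ZZ)^\times$ satisfies ``$a^2-1$ a unit,'' since $a^2\equiv 1\pmod 3$ for every unit; the condition you want is that $(a^2-1)/3$ be a unit (e.g.\ $a=2$), so the identity produces entries in $3\cdot(\ZZ/9\ZZ)^\times$, which is what you need.  Second, the upper- and lower-unipotent commutators account for only two of the three $\FF_3$-dimensions of the kernel of $\SL_2(\ZZ/9\ZZ)\to\SL_2(\FF_3)$; you still have to reach the diagonal direction $I+3\,\mathrm{diag}(1,-1)$, which one can get, for instance, from the commutator of $\left(\begin{smallmatrix}1&1\\0&1\end{smallmatrix}\right)$ with $\left(\begin{smallmatrix}1&0\\3&1\end{smallmatrix}\right)$.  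All of these issues are repairable, and the underlying strategy (finite check modulo $\ell^{e+1}$ plus an openness criterion) is the correct one, but as stated the argument does not close.
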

\begin{proof}
Parts (\ref{L:basics commutators i})--(\ref{L:basics commutators iii}) follow immediately from \cite[Lemma A.1]{MR2721742}.  Now take any prime $\ell$.   We have 
\[
[\SL_2(\ZZ_\ell), \SL_2(\ZZ_\ell)] \subseteq [\GL_2(\ZZ_\ell),\GL_2(\ZZ_\ell)] \subseteq \SL_2(\ZZ_\ell).
\]   
Part (\ref{L:basics commutators iv}) with $\ell \geq 5$ is now a consequence of (\ref{L:basics commutators i}).

When $\ell = 3$, we deduce from (\ref{L:basics commutators ii}) that $[\GL_2(\ZZ_3),\GL_2(\ZZ_3)]$ has level $1$ or $3$ in $\SL_2(\ZZ_3)$.   Part (\ref{L:basics commutators iv}) with $\ell=3$ follows by verifying that $[\GL_2(\ZZ/3\ZZ),\GL_2(\ZZ/3\ZZ)]=\SL_2(\ZZ/3\ZZ)$.

When $\ell = 2$, we deduce from (\ref{L:basics commutators iii}) that the level of $[\GL_2(\ZZ_2),\GL_2(\ZZ_2)]$ in $\SL_2(\ZZ_2)$ divides $4$.    Part (\ref{L:basics commutators v}) with $\ell=2$ follows by verifying that $[\GL_2(\ZZ/4\ZZ),\GL_2(\ZZ/4\ZZ)]$ is a subgroup of $\SL_2(\ZZ/4\ZZ)$ of index $2$ that contains all the matrices $A\in \SL_2(\ZZ/4\ZZ)$ with $A\equiv I \pmod{2}$.

For the group $\GL_2(\Zhat)=\prod_{\ell} \GL_2(\ZZ_\ell)$, part (\ref{L:basics commutators vi}) follows from (\ref{L:basics commutators iv}) and (\ref{L:basics commutators v}).
\end{proof}

\begin{lemma} \label{L:pm H eq SL2}
Take any integer $N>1$.  There are no proper subgroups $H$ of $\SL_2(\ZZ/N\ZZ)$ that satisfy $\pm H=\SL_2(\ZZ/N\ZZ)$.
\end{lemma}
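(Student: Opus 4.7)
The plan is to argue by contradiction. Suppose $H$ is a proper subgroup of $\SL_2(\ZZ/N\ZZ)$ with $\pm H = \SL_2(\ZZ/N\ZZ)$. First I would observe that $-I \notin H$: otherwise $\pm H = H$ would equal $\SL_2(\ZZ/N\ZZ)$, contradicting properness. Consequently the cosets $H$ and $(-I)H$ are disjoint (any overlap would force $-I \in H$) and together cover $\SL_2(\ZZ/N\ZZ)$, so $[\SL_2(\ZZ/N\ZZ):H]=2$. Any subgroup of index $2$ is normal, so $H$ arises as the kernel of some surjective homomorphism
\[
\chi\colon \SL_2(\ZZ/N\ZZ)\to \{\pm 1\}
\]
with $\chi(-I)=-1$.

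To derive the contradiction I would exhibit $-I$ as a square in $\SL_2(\ZZ/N\ZZ)$: namely the matrix $S:=\left(\begin{smallmatrix}0 & -1\\ 1 & 0\end{smallmatrix}\right)$ lies in $\SL_2(\ZZ/N\ZZ)$ and satisfies $S^2=-I$. Then
\[
\chi(-I)=\chi(S^2)=\chi(S)^2=1,
\]
contradicting $\chi(-I)=-1$. Hence no such proper $H$ exists.

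There is essentially no obstacle: the whole proof hinges on the single identity $S^2=-I$, which holds in $\SL_2(\ZZ/N\ZZ)$ for every $N>1$. No reduction to prime powers or appeal to the structure results of Lemma~\ref{L:basics commutators} is needed, although one could alternatively give a longer argument by combining the Chinese remainder decomposition with the computation of each abelianization $\SL_2(\ZZ/\ell^e\ZZ)^{\mathrm{ab}}$ (trivial for $\ell\geq 5$, of order $3$ for $\ell=3$, and cyclic of order dividing $4$ for $\ell=2$) and checking in each case that the image of $-I$ is a square; the direct square-root argument is the most economical.
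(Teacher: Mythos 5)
Your argument is correct, and it is genuinely different from (and slicker than) the one in the paper. The paper's proof observes that a proper subgroup $H$ with $\pm H = \SL_2(\ZZ/N\ZZ)$ has index $2$, hence contains the commutator subgroup of $\SL_2(\ZZ/N\ZZ)$; it then invokes the prime-by-prime description of the commutator subgroups of $\SL_2(\ZZ_\ell)$ (Lemma~\ref{L:basics commutators}) to conclude that $N$ must be even and that $H$ is forced to be the unique index-$2$ subgroup consisting of matrices congruent to $I$ mod $2$ — but that subgroup visibly contains $-I$, contradiction. Your proof instead notes that $H$ would be the kernel of a character $\chi\colon \SL_2(\ZZ/N\ZZ) \to \{\pm 1\}$ with $\chi(-I)=-1$, and that this is impossible because $-I = S^2$ with $S = \left(\begin{smallmatrix}0&-1\\1&0\end{smallmatrix}\right) \in \SL_2(\ZZ/N\ZZ)$, so $\chi(-I)$ must be a square in $\{\pm 1\}$, i.e.\ equal to $1$. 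This avoids both the Chinese-remainder reduction and any appeal to the abelianization computations; it is more elementary, more self-contained, and works uniformly over any commutative ring (not just $\ZZ/N\ZZ$). The paper's route has the side benefit of identifying exactly what the unique index-$2$ subgroup looks like, which it needs elsewhere, but for the purposes of this lemma alone your argument is the more economical one.
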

\begin{proof}
Suppose that $H$ is a proper subgroup of $\SL_2(\ZZ/N\ZZ)$ that satisfies $\pm H=\SL_2(\ZZ/N\ZZ)$.  The group $H$ has index $2$ in $\SL_2(\ZZ/N\ZZ)$ and contains the commutator subgroup of $\SL_2(\ZZ/N\ZZ)$.   Using the description of commutator subgroups of $\SL_2(\ZZ_\ell)$ with $\ell|N$ from Lemma~\ref{L:basics commutators}, we find that $N$ is even and $H$ is the unique index $2$ subgroup of $\SL_2(\ZZ/N\ZZ)$ containing all matrices $A\in \SL_2(\ZZ/N\ZZ)$ for which $A \equiv I \pmod{2}$.   In particular, we have $-I \in H$ since $-I\equiv I \pmod{2}$.   This contradicts that $H$ is a proper subgroup of $\pm H$.
\end{proof}

\begin{lemma}  \label{L:modell to elladic}
For a prime $\ell \geq 5$, let $G$ be a closed subgroup of $\GL_2(\ZZ_\ell)$ for which $\det(G)=\ZZ_\ell^\times$ and $G$ modulo $\ell$ is equal to $\GL_2(\ZZ/\ell\ZZ)$.  Then $G=\GL_2(\ZZ_\ell)$.
\end{lemma}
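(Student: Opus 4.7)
The plan is to reduce the statement to the commutator computation at level $\ell$ together with Lemma~\ref{L:reduction original Serre alt}. Set $H := G \cap \SL_2(\ZZ_\ell)$; it is a closed subgroup of $\SL_2(\ZZ_\ell)$. The first step is to show that $H$ equals $\SL_2(\ZZ_\ell)$, and the second step is to combine this with the hypothesis $\det(G) = \ZZ_\ell^\times$ to upgrade to $G = \GL_2(\ZZ_\ell)$.

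For the first step, I would consider the reduction $\bbar{H}$ of $H$ modulo $\ell$ inside $\SL_2(\FF_\ell)$. Since every commutator of elements of $G$ has determinant $1$, the commutator subgroup $[G,G]$ lies in $H$, and hence its image in $\GL_2(\FF_\ell)$, which is $[\bbar{G},\bbar{G}] = [\GL_2(\FF_\ell),\GL_2(\FF_\ell)]$, lies in $\bbar{H}$. For $\ell \geq 5$, the group $\SL_2(\FF_\ell)$ is perfect (equivalently, $\PSL_2(\FF_\ell)$ is simple and non-abelian), so $[\GL_2(\FF_\ell),\GL_2(\FF_\ell)] = \SL_2(\FF_\ell)$. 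Thus $\bbar{H} = \SL_2(\FF_\ell)$, and Lemma~\ref{L:reduction original Serre alt} (applied with $e=1$ for $\ell \geq 5$) gives $H = \SL_2(\ZZ_\ell)$.

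For the second step, take any $A \in \GL_2(\ZZ_\ell)$. Since $\det(G) = \ZZ_\ell^\times$, there exists $B \in G$ with $\det(B) = \det(A)$. Then $AB^{-1} \in \SL_2(\ZZ_\ell) = H \subseteq G$, so $A = (AB^{-1}) B \in G$. Thus $G = \GL_2(\ZZ_\ell)$.

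The argument is essentially routine once the perfectness of $\SL_2(\FF_\ell)$ for $\ell \geq 5$ is invoked; there is no real obstacle, and the hypothesis $\ell \geq 5$ enters precisely in two places (in computing the commutator at the residue level and in applying Lemma~\ref{L:reduction original Serre alt} with $e=1$). Note the analogous statement fails for $\ell \in \{2,3\}$, consistent with Lemma~\ref{L:basics commutators}.
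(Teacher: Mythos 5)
Your proof is correct and follows essentially the same route as the paper: both hinge on the observation that for $\ell \geq 5$ the image of $[G,G]$ (or equivalently of $G \cap \SL_2(\ZZ_\ell)$) modulo $\ell$ already equals $\SL_2(\FF_\ell)$ — because $\SL_2(\FF_\ell)$ is perfect — and then invoke Lemma~\ref{L:reduction original Serre alt} with $e=1$ to lift this to $\SL_2(\ZZ_\ell)$, finishing with the determinant condition. The only cosmetic difference is that the paper applies Lemma~\ref{L:reduction original Serre alt} directly to $[G,G]$ rather than to $G \cap \SL_2(\ZZ_\ell)$, but this is immaterial.
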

\begin{proof}
The commutator subgroup $[G,G]$ is a closed subgroup of $\SL_2(\ZZ_\ell)$ whose image modulo $\ell$ is $[\GL_2(\ZZ/\ell\ZZ),\GL_2(\ZZ/\ell\ZZ)]=\SL_2(\ZZ/\ell\ZZ)$, where we have used our assumption on the image of $G$ modulo $\ell$ and Lemma~\ref{L:basics commutators}(\ref{L:basics commutators i}).   By Lemma~\ref{L:reduction original Serre alt}, we have $[G,G]=\SL_2(\ZZ_\ell)$.   We conclude that $G=\GL_2(\ZZ_\ell)$ since $G\supseteq [G,G]=\SL_2(\ZZ_\ell)$ and $\det(G)=\ZZ_\ell^\times$.
\end{proof}

\begin{lemma} \label{L:openness of commutator}
Let $G$ be an open subgroup of $\GL_2(\Zhat)$ or $\SL_2(\Zhat)$.  Then the commutator subgroup $[G,G]$ is an open subgroup of $\SL_2(\Zhat)$.
\end{lemma}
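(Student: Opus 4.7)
The plan is to reduce the claim to a prime-by-prime calculation inside principal congruence subgroups, and then directly expand commutators. First, I observe that $[G,G]\subseteq \SL_2(\Zhat)$ in either case, since $\det$ is a continuous homomorphism to the abelian group $\Zhat^\times$ and therefore kills every commutator. If $G$ is open in $\GL_2(\Zhat)$, then $H:=G\cap \SL_2(\Zhat)$ is open in $\SL_2(\Zhat)$ and $[G,G]\supseteq [H,H]$, so it is enough to prove the lemma when $G$ is an open subgroup of $\SL_2(\Zhat)$. Such a $G$ contains the principal congruence subgroup $\Gamma(N):=\ker(\SL_2(\Zhat)\to \SL_2(\ZZ/N\ZZ))$ for some integer $N\geq 1$, which I enlarge so that $4\mid N$. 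Since $[G,G]\supseteq [\Gamma(N),\Gamma(N)]$, it suffices to show that $[\Gamma(N),\Gamma(N)]$ is open in $\SL_2(\Zhat)$.

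Next I would decompose $\Gamma(N)=\prod_\ell \Gamma_\ell$, where $\Gamma_\ell:=\ker(\SL_2(\ZZ_\ell)\to\SL_2(\ZZ/\ell^{v_\ell(N)}\ZZ))$ and $\Gamma_\ell=\SL_2(\ZZ_\ell)$ when $\ell\nmid N$. A standard lifting argument (take $a,b\in \prod_\ell \Gamma_\ell$ supported on a single prime and pass to the closure) shows that the closed commutator subgroup of a product of compact groups is the product of closed commutator subgroups, giving $[\Gamma(N),\Gamma(N)]=\prod_\ell [\Gamma_\ell,\Gamma_\ell]$. So it suffices to verify that each factor $[\Gamma_\ell,\Gamma_\ell]$ is open in $\SL_2(\ZZ_\ell)$. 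For $\ell\nmid N$ this is immediate from Lemma~\ref{L:basics commutators}.

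For $\ell\mid N$, set $n:=v_\ell(N)$; our choice of $N$ gives $n\geq 1$ in general and $n\geq 2$ when $\ell=2$. Writing $u=I+\ell^n A$ and $v=I+\ell^n B$ with $A,B\in M_2(\ZZ_\ell)$, and using $u^{-1}=I-\ell^n A+\ell^{2n}A^2-\cdots$, a direct expansion yields
\[
uvu^{-1}v^{-1}\equiv I+\ell^{2n}(AB-BA)\pmod{\ell^{3n}}.
\]
The set $\{AB-BA:A,B\in M_2(\ZZ_\ell)\}$ spans $\mathfrak{sl}_2(\ZZ_\ell)$ over $\ZZ_\ell$ (witnessed by $[E_{11},E_{12}]=E_{12}$, $[E_{11},E_{21}]=-E_{21}$, $[E_{12},E_{21}]=E_{11}-E_{22}$), so the image of $[\Gamma_\ell,\Gamma_\ell]$ in $\SL_2(\ZZ/\ell^{2n+1}\ZZ)$ contains every matrix of the form $I+\ell^{2n}C$ with $C\in\mathfrak{sl}_2(\FF_\ell)$, which is the full kernel of the reduction $\SL_2(\ZZ/\ell^{2n+1}\ZZ)\to\SL_2(\ZZ/\ell^{2n}\ZZ)$. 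Applying Lemma~\ref{L:openness criterion SL} with $N$ replaced by $\ell^{2n}$ then shows that $[\Gamma_\ell,\Gamma_\ell]$ is open in $\SL_2(\ZZ_\ell)$ with level dividing $\ell^{2n}$, completing the proof.

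The main obstacle is organizing the pro-$\ell$ commutator expansion so that Lemma~\ref{L:openness criterion SL} applies cleanly; one must be particularly careful at $\ell=2$, which is exactly why the enlargement to $4\mid N$ is made. This ensures $n\geq 2$ at the prime $2$ so that the $O(\ell^{3n})$ error is genuinely lower order than the $\ell^{2n}$ leading term, and the key congruence modulo $\ell^{2n+1}$ retains enough content to produce a full $\mathfrak{sl}_2(\FF_\ell)$-worth of directions in the image.
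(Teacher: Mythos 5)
Your reduction steps are fine (restricting to $G$ open in $\SL_2(\Zhat)$, passing to $\Gamma(N)=\prod_\ell\Gamma_\ell$, and using that the closed commutator of a product is the product of the closed commutators), and the commutator expansion $uvu^{-1}v^{-1}\equiv I+\ell^{2n}(AB-BA)\pmod{\ell^{3n}}$ is correct. But there is a genuine gap in the spanning claim at $\ell=2$, and it is not fixed by requiring $4\mid N$.

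The issue is that you treat $A,B$ as arbitrary elements of $M_2(\ZZ_\ell)$, invoking the witnesses $[E_{11},E_{12}]=E_{12}$, $[E_{11},E_{21}]=-E_{21}$, $[E_{12},E_{21}]=E_{11}-E_{22}$ to span $\mathfrak{sl}_2$. However, your $u,v$ must lie in $\Gamma_\ell\subseteq\SL_2(\ZZ_\ell)$, and $\det(I+\ell^n A)=1$ forces $\tr(A)=-\ell^n\det(A)\equiv 0\pmod{\ell^n}$; in particular $A\bmod\ell$ must lie in $\mathfrak{sl}_2(\FF_\ell)$. So $E_{11}$ (trace $1$) is not an admissible choice, and the achievable directions modulo $\ell^{2n+1}$ live in $[\mathfrak{sl}_2(\FF_\ell),\mathfrak{sl}_2(\FF_\ell)]$, not all of $\mathfrak{sl}_2(\FF_\ell)$. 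For $\ell\geq 3$ this is still everything (e.g.\ $[E,F]=H$, $[H,E]=2E$, $[H,F]=-2F$, and $2$ is a unit). But over $\FF_2$ one has $[H,E]=2E=0$ and $[H,F]=0$, so $[\mathfrak{sl}_2(\FF_2),\mathfrak{sl}_2(\FF_2)]$ is one-dimensional (spanned by $[E,F]=H$, which equals the scalar matrix over $\FF_2$). Thus the image of $[\Gamma_2,\Gamma_2]$ in $\SL_2(\ZZ/2^{2n+1}\ZZ)$ is a proper subgroup of the kernel of reduction to $\SL_2(\ZZ/2^{2n}\ZZ)$, the hypothesis of Lemma~\ref{L:openness criterion SL} with modulus $\ell^{2n}$ is not met, and your argument does not close at $\ell=2$. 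The enlargement $4\mid N$ only controls the size of the error term, which was never the obstruction (one only needs $3n\geq 2n+1$, i.e.\ $n\geq 1$).

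The paper avoids this exactly by not staying inside $\SL_2$: it replaces $G_\ell$ by $W:=\ZZ_\ell^\times\cdot G_\ell$, which contains $1+\ell^e M_2(\ZZ_\ell)$ (including non-$\SL_2$ elements such as $I+\ell^e E_{11}$), observes $[W,W]=[G_\ell,G_\ell]$ since $\ZZ_\ell^\times$ is central so $W/G_\ell$ is abelian, and then applies Lang--Trotter's lemma to $W$ inside $\GL_2(\ZZ_\ell)$. To repair your argument along your own lines you would need some analogous device to escape the trace-zero constraint at $\ell=2$ (e.g.\ adjoin scalars, or take commutators of $\Gamma_\ell$ with elements of $\GL_2(\ZZ_\ell)$ congruent to $I$ mod $\ell^n$, which still land in $\SL_2$).
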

\begin{proof}
We obviously have $[G,G]\subseteq \SL_2(\Zhat)$.  We can always replace $G$ by a smaller group since this will make $[G,G]$ smaller as well.   So without loss of generality, we may assume that $G$ is an open subgroup of $\SL_2(\Zhat)$ and that 	$G=\prod_{\ell} G_\ell$ with $G_\ell$ an open subgroup of $\SL_2(\ZZ_\ell)$.     For $\ell\geq 5$ large enough, we have $G_\ell=\SL_2(\ZZ_\ell)$ and $[G_\ell,G_\ell]=\SL_2(\ZZ_\ell)$ by Lemma~\ref{L:basics commutators}(\ref{L:basics commutators i}).    So for any fixed prime $\ell$, we need only show that $[G_\ell,G_\ell]$ is an open subgroup of $\SL_2(\ZZ_\ell)$.

Take any prime $\ell$.   By Lemma~\ref{L:level by adjoining scalars}, there is an integer $e\geq 1$ such that $W:=\ZZ_\ell^\times \cdot G_\ell \supseteq 1+\ell^e M_2(\ZZ_\ell)$.   By Lemma~1 of \cite{MR0568299}*{p.163}, we have $[G_\ell,G_\ell]=[W,W]\supseteq (1+\ell^{2e} M_2(\ZZ_\ell)) \cap \SL_2(\ZZ_\ell)$ and hence $[G_\ell,G_\ell]$ is an open subgroup of $\SL_2(\ZZ_\ell)$.
\end{proof}

\subsubsection{Computing commutator subgroups} \label{SSS:computing commutator subgroups}

Now let $G$ be an open subgroup of $\SL_2(\Zhat)$ or $\GL_2(\Zhat)$.     Assume $G$ is given explicitly, i.e., we have generators for the image of $G$ modulo $m$ for some positive integer $m$ divisible by the level of $G$.    

We will need to be able to compute the commutator subgroup $[G,G]$; note the level of $[G,G]$ in $\SL_2(\Zhat)$ may be strictly larger than $m$.

By Lemma~\ref{L:openness of commutator}, $[G,G]$ is an open subgroup of $\SL_2(\Zhat)$.  By  Lemma~\ref{L:basics commutators}(\ref{L:basics commutators i}), we find that the level of $[G,G]$ is not divisible by any prime $\ell\geq 5$ for which $\ell\nmid m$.  For any positive integer $n$, we can compute the image of $[G,G]$ modulo $n$; it equals $[\bbar{G},\bbar{G}]\subseteq \SL_2(\ZZ/n\ZZ)$, where $\bbar{G}$ is the image of $G$ modulo $n$.   By computing $[G,G]$ modulo $n$ for suitable $n$ that are not divisible by any prime $\ell\geq 5$ satisfying $\ell\nmid m$, we can use the criterion of Lemma~\ref{L:openness criterion SL} to find a positive integer $N$ that is divisible by the level of $[G,G]$ in $\SL_2(\Zhat)$.   Once we have such a $N$, we have explicitly found $[G,G]$ since it is determined by $N$ and its image modulo $N$.

\subsection{Full $\ell$-adic images}

The next lemma shows that if $G$ is a proper closed subgroup of $\GL_2(\ZZ_\ell)$ with full determinant, then it is prosolvable.
 
\begin{lemma} \label{L:JH factors} 
Take any prime $\ell$ and let $G$ be a closed subgroup of $\GL_2(\ZZ_\ell)$ with $\det(G)=\ZZ_\ell^\times$.  If $S$ is a nonabelian simple group that occurs as the quotient of some closed normal subgroup of $G$, then $\ell\geq 5$,  $G=\GL_2(\ZZ_\ell)$ and $S\cong \SL_2(\FF_\ell)/\{\pm I\}$.
\end{lemma}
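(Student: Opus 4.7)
The plan is to project the situation through the congruence filtration and the scalar center, reducing the problem to one about subgroups of $\operatorname{PGL}_2(\FF_\ell)$, which can then be attacked via Dickson's classification.

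First I would dispose of $\ell\in\{2,3\}$. The kernel $K:=\ker(\GL_2(\ZZ_\ell)\to \GL_2(\FF_\ell))$ is pro-$\ell$ and the finite group $\GL_2(\FF_\ell)$ is solvable (since $\operatorname{PGL}_2(\FF_2)\cong S_3$ and $\operatorname{PGL}_2(\FF_3)\cong S_4$). Hence $\GL_2(\ZZ_\ell)$ is prosolvable, every closed subgroup of it is prosolvable, and therefore admits no nonabelian simple quotient. The hypothesis forces $\ell\geq 5$.

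Now assume $\ell\geq 5$, and let $N\trianglelefteq G$ be a closed normal subgroup with $N\twoheadrightarrow S$. Since $N\cap K$ is pro-$\ell$ and $S$ is not an $\ell$-group, the image of $N\cap K$ in $S$ is a normal pro-$\ell$ subgroup of the simple group $S$, hence trivial; so the surjection factors through $\bar N:=N/(N\cap K)\subseteq \GL_2(\FF_\ell)$. Repeating the argument for the abelian subgroup $\bar N\cap Z$ (where $Z$ denotes the scalars in $\GL_2(\FF_\ell)$), one sees $S$ is a quotient of $\bar{\bar N}$, the image of $\bar N$ in $\operatorname{PGL}_2(\FF_\ell)$. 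By Dickson's classification of subgroups of $\operatorname{PGL}_2(\FF_\ell)$, the only ones admitting a nonabelian simple quotient are $A_5$ (when it embeds) and $\PSL_2(\FF_\ell)$; in either case $\bar{\bar N}\cong S$.

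In the main case $\bar{\bar N}\cong \PSL_2(\FF_\ell)$, normality $\bar{\bar N}\trianglelefteq \bar{\bar G}$ forces $\bar{\bar G}\in\{\PSL_2(\FF_\ell),\operatorname{PGL}_2(\FF_\ell)\}$. The option $\bar{\bar G}=\PSL_2(\FF_\ell)$ is ruled out because its preimage in $\GL_2(\FF_\ell)$ is $Z\cdot\SL_2(\FF_\ell)$, whose determinants lie in $(\FF_\ell^\times)^2$, contradicting $\det(\bar G)=\FF_\ell^\times$ for odd $\ell$. Hence $\bar{\bar G}=\operatorname{PGL}_2(\FF_\ell)$. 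Then $[\bar G,\bar G]\subseteq \SL_2(\FF_\ell)$ surjects onto $[\operatorname{PGL}_2,\operatorname{PGL}_2]=\PSL_2(\FF_\ell)$, so $\{\pm I\}\cdot[\bar G,\bar G]=\SL_2(\FF_\ell)$; by Lemma~\ref{L:pm H eq SL2} we conclude $[\bar G,\bar G]=\SL_2(\FF_\ell)$. Thus $\bar G\supseteq \SL_2(\FF_\ell)$, and $\det(\bar G)=\FF_\ell^\times$ forces $\bar G=\GL_2(\FF_\ell)$; Lemma~\ref{L:modell to elladic} then yields $G=\GL_2(\ZZ_\ell)$, with $S\cong \PSL_2(\FF_\ell)$ as required.

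The main obstacle is ruling out the exceptional case $\bar{\bar N}\cong A_5$ with $\ell\neq 5$ (when $\ell=5$, $A_5\cong\PSL_2(\FF_5)$ and this case reduces to the previous one). The classical input is that for a prime $\ell\neq 5$ with $A_5\hookrightarrow \PSL_2(\FF_\ell)$, the two $\PSL_2(\FF_\ell)$-conjugacy classes of $A_5$ are interchanged by the nontrivial outer automorphism of $\PSL_2(\FF_\ell)$, so that $N_{\operatorname{PGL}_2(\FF_\ell)}(A_5)=A_5\subseteq \PSL_2(\FF_\ell)$. Since $\bar{\bar G}$ normalizes $\bar{\bar N}=A_5$, this gives $\bar{\bar G}\subseteq \PSL_2(\FF_\ell)$, and the same determinant argument as above produces the contradiction $\det(\bar G)\subseteq (\FF_\ell^\times)^2\neq \FF_\ell^\times$.
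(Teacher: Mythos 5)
Your proof is correct, and it does reach the same endpoint (reduce mod $\ell$, pass to $\operatorname{PGL}_2(\FF_\ell)$, land in $\SL_2(\FF_\ell)$, then lift), but the intermediate organization is genuinely different from the paper's, so a brief comparison is worthwhile.

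The paper's proof works directly with $\bar G$ rather than with the image $\bar N$ of the normal subgroup. After noting that $\bar G$ is nonsolvable (since $S$ is a composition factor) and $\ell\geq 5$, it splits on whether $\ell\mid |\bar G|$. If $\ell\nmid|\bar G|$, Serre's classification in \S2.6 of \cite{Serre-Inv72} gives that the image of $\bar G$ in $\operatorname{PGL}_2(\FF_\ell)$ is $\mathfrak{A}_5$; since $\mathfrak{A}_5$ is perfect and $\det$ modulo squares factors through $\operatorname{PGL}_2$, this forces $\det(\bar G)\subseteq(\FF_\ell^\times)^2$ immediately, a contradiction. If $\ell\mid|\bar G|$, Proposition~15 of \cite{Serre-Inv72} gives $\bar G\supseteq\SL_2(\FF_\ell)$ directly, and the lift to $\GL_2(\ZZ_\ell)$ is then via Lemma~\ref{L:reduction original Serre alt}. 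Your route instead applies Dickson to $\bar{\bar N}$, which is the right move if one insists on tracking the normal subgroup, but it then leaves you knowing only the shape of $\bar{\bar N}$ and not of $\bar{\bar G}$; closing that gap in the $A_5$ case is what costs you the extra classical input about $N_{\operatorname{PGL}_2(\FF_\ell)}(A_5)=A_5$ (fusion of the two $\PSL_2$-conjugacy classes of $A_5$ under $\operatorname{PGL}_2$). The paper sidesteps this entirely: by classifying $\bar G$ itself, the determinant obstruction in the $A_5$ case is a one-line consequence of perfectness, with no normalizer computation. In short, the two proofs are logically interchangeable, but the paper's choice of casing on $\ell\mid|\bar G|$ (rather than on the Dickson type of $\bar{\bar N}$) lets Serre's off-the-shelf statements do all the work and avoids the extra $A_5$-normalizer fact your version needs.
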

\begin{proof}
The kernel of the reduction modulo $\ell$ homomorphism $G\to \GL_2(\FF_\ell)$ is prosolvable.   So the image $\bbar{G}$ of $G$ in $\GL_2(\FF_\ell)$ satisfies $\det(\bbar{G})=\FF_\ell^\times$ and contains the nonabelian simple group $S$ as a factor in its composition series.  
Since $\GL_2(\FF_2)$ and $\GL_2(\FF_3)$ are solvable, we have $\ell\geq 5$.   

First suppose that the cardinality of $\bbar{G}$ is not divisible by $\ell$.  Since $\bbar{G}$ is nonsolvable, we find from \S2.6 of \cite{Serre-Inv72} that the image of $\bbar{G}$ in $\operatorname{PGL}_2(\FF_\ell):=\GL_2(\FF_\ell)/(\FF_\ell^\times\cdot I)$ is isomorphic to the alternating group $\mathfrak{A}_5$.  Since $\mathfrak{A}_5$ is nonabelian and simple, we must have $\det(\bbar{G})\subseteq (\FF_\ell^\times)^2$ which is a contradiction.

So the prime $\ell$ must divide the cardinality of $\bbar{G}$.   Since $\bbar{G}$ is nonsolvable,  \cite[Proposition~15]{Serre-Inv72} shows that $\bbar{G}\supseteq \SL_2(\FF_\ell)$.  The group $\SL_2(\FF_\ell)$ is perfect since $\ell\geq 5$ and hence $[G,G]$ is a closed subgroup of $\SL_2(\ZZ_\ell)$ whose image modulo $\ell$ is $\SL_2(\FF_\ell)$. We have $[G,G]=\SL_2(\ZZ_\ell)$ by Lemma~\ref{L:reduction original Serre alt}.   We thus have $G=\GL_2(\ZZ_\ell)$ since $\det(G)=\ZZ_\ell^\times$.   Finally $S$ must be isomorphic to $\SL_2(\FF_\ell)/\{\pm I\}$ since it is the only nonabelian simple group that occurs as a factor in a composition series of $\bbar{G}=\GL_2(\FF_\ell)$.
\end{proof}

\begin{lemma} \label{L:prime divisors of level}
Let $G$ be an open subgroup of $\GL_2(\Zhat)$ satisfying $\det(G)=\Zhat^\times$.  Take any prime $\ell \geq 5$ and let $G_\ell$ be the image of $G$ under the projection map $\GL_2(\Zhat)\to \GL_2(\ZZ_\ell)$.   Then the following are equivalent:
\begin{alphenum}
\item \label{I:a}
$G_\ell = \GL_2(\ZZ_\ell)$,
\item \label{I:b}
$\ell$ does not divide the level of $[G,G]$ in $\SL_2(\Zhat)$,
\item \label{I:c}
$\ell$ does not divide the level of $G\cap \SL_2(\Zhat)$ in $\SL_2(\Zhat)$.
\end{alphenum}
\end{lemma}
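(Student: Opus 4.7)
The plan is to establish the cycle (a) $\Rightarrow$ (b) $\Rightarrow$ (c) $\Rightarrow$ (a). The key reformulation, verified directly, is that for an open subgroup $H$ of $\SL_2(\Zhat) = \prod_p \SL_2(\ZZ_p)$, the prime $\ell$ does not divide the level of $H$ if and only if $H$ contains $S_\ell := \SL_2(\ZZ_\ell) \times \prod_{p \neq \ell}\{I\}$: if the level $N$ is coprime to $\ell$ then $H \supseteq \ker(\SL_2(\Zhat) \to \SL_2(\ZZ/N\ZZ)) \supseteq S_\ell$, while conversely $S_\ell \subseteq H$ forces the image of $H$ modulo any multiple of its level to split off the $\ell$-adic factor as $\SL_2(\ZZ/\ell^a\ZZ)$, forcing $\ell$ to drop out of the level. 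Given this, (b) $\Rightarrow$ (c) is immediate from $[G,G] \subseteq G \cap \SL_2(\Zhat)$, and (c) $\Rightarrow$ (a) follows since $S_\ell \subseteq G \cap \SL_2(\Zhat) \subseteq G$ gives $G_\ell \supseteq \SL_2(\ZZ_\ell)$, which together with $\det(G_\ell) = \ZZ_\ell^\times$ yields $G_\ell = \GL_2(\ZZ_\ell)$.

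The main step is (a) $\Rightarrow$ (b). Let $G^{(\ell)}$ denote the image of $G$ under projection onto $\prod_{p \neq \ell}\GL_2(\ZZ_p)$. Applying Goursat's lemma (Lemma~\ref{L:Goursat}) to $G \subseteq G_\ell \times G^{(\ell)}$ yields a normal subgroup $N_\ell \triangleleft G_\ell = \GL_2(\ZZ_\ell)$ for which $Q := G_\ell/N_\ell$ is simultaneously a quotient of $\GL_2(\ZZ_\ell)$ and of $G^{(\ell)}$. I claim $Q$ is abelian. Each nonabelian simple composition factor of $Q$ appears in a chief factor and thus, pulled back through $\GL_2(\ZZ_\ell) \twoheadrightarrow Q$, arises as a simple quotient of a closed normal subgroup of $\GL_2(\ZZ_\ell)$; by Lemma~\ref{L:JH factors} it must be $\operatorname{PSL}_2(\FF_\ell)$. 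Pulled back symmetrically through $G^{(\ell)} \twoheadrightarrow Q$ and reduced via an iterated Goursat decomposition of $G^{(\ell)} \subseteq \prod_{p \neq \ell} G_p$, the same composition factor arises as a simple quotient of a closed normal subgroup of some individual $G_p$ with $p \neq \ell$, which Lemma~\ref{L:JH factors} forces to be $\operatorname{PSL}_2(\FF_p)$ with $p \geq 5$. Since $|\operatorname{PSL}_2(\FF_q)| = q(q^2-1)/2$ is strictly increasing in $q \geq 5$, no common such composition factor exists, and $Q$ is prosolvable. But for $\ell \geq 5$ the unique maximal proper closed normal subgroup of $\SL_2(\ZZ_\ell)$ is $\ker(\SL_2(\ZZ_\ell) \to \operatorname{PSL}_2(\FF_\ell))$, so every nontrivial quotient of $\SL_2(\ZZ_\ell)$ surjects onto $\operatorname{PSL}_2(\FF_\ell)$; applied to $[Q,Q] \cong \SL_2(\ZZ_\ell) N_\ell/N_\ell$, this would produce the excluded composition factor unless $[Q,Q]$ is trivial. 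Hence $Q$ is abelian, so $N_\ell \supseteq [\GL_2(\ZZ_\ell),\GL_2(\ZZ_\ell)] = \SL_2(\ZZ_\ell)$ by Lemma~\ref{L:basics commutators}(iv), which unwinds to $S_\ell \subseteq G$. Since $\SL_2(\ZZ_\ell)$ is perfect for $\ell \geq 5$ (Lemma~\ref{L:basics commutators}(i)), $S_\ell = [S_\ell,S_\ell] \subseteq [G,G]$, giving (b).

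The main obstacle is the iterated Goursat argument on $G^{(\ell)}$: one must carefully verify that every nonabelian simple quotient of a closed normal subgroup of a closed subgroup of $\prod_{p \neq \ell} G_p$ (with surjective projections onto each factor, each $G_p$ having full determinant) ultimately arises, through a finite chain of Goursat decompositions, as a quotient of a closed normal subgroup of some individual $G_p$, so that Lemma~\ref{L:JH factors} can be invoked directly at the single-factor level. Everything else is a routine combination of the structure of commutator subgroups in $\GL_2(\ZZ_\ell)$ and the classification of closed normal subgroups of $\SL_2(\ZZ_\ell)$ for $\ell \geq 5$.
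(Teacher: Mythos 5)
Your proof is correct and rests on the same core tools as the paper---Goursat's lemma, Lemma~\ref{L:JH factors}, and perfectness of $\SL_2(\ZZ_\ell)$ for $\ell\geq5$---but you apply Goursat in a different place. The paper takes $H\in\{[G,G],\,G\cap\SL_2(\Zhat)\}$, notes that its $\ell$-adic projection $H_\ell$ is already $\SL_2(\ZZ_\ell)$, and applies Goursat to $H\subseteq H_\ell\times H'$ inside $\SL_2(\Zhat)$. Because $H_\ell$ is perfect, any nontrivial Goursat quotient $H_\ell/B$ immediately produces a nonabelian simple group, which L:JH factors plus the cardinality observation rules out; this forces $H=\SL_2(\ZZ_\ell)\times H'$ directly and gives (b) and (c) simultaneously. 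You instead apply Goursat to $G\subseteq\GL_2(\ZZ_\ell)\times G^{(\ell)}$; since $\GL_2(\ZZ_\ell)$ has abelian quotients, the Goursat quotient $Q$ need not be trivial, so you must first argue via composition factors that $Q$ is abelian, then unwind through $N_\ell\supseteq\SL_2(\ZZ_\ell)$, $S_\ell\subseteq G$, and finally $S_\ell\subseteq[G,G]$. Both routes work; the paper's choice of $H$ avoids the excursion through chief factors and is a bit shorter.

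The ``iterated Goursat'' step you flag is not specific to your route: the paper relies on the identical reduction when it asserts, without proof, that ``there is a simple group $S$ that is the quotient of both $H_\ell$ and $H_p$ for some prime $p\neq\ell$.'' It is fillable by a standard argument that does not actually need Goursat. Since $S$ is finite, the kernel of $M\twoheadrightarrow S$ (with $M$ the relevant closed normal subgroup of $G^{(\ell)}$) is open, so $S$ is a quotient of the image of $M$ in a finite subproduct $\prod_{p\in T_0}G_p$; these images $M_p$ are closed normal subgroups of $G_p$. Letting $K_p$ be the kernel of the $p$-coordinate projection, note $\bigcap_{p\in T_0}K_p=\{1\}$ and that if every $K_p$ surjected onto $S$, then the containments $[K_p,K_{p'}]\subseteq K_p\cap K_{p'}$ would show inductively that $\bigcap_p K_p$ still surjects onto $[S,S]=S$, a contradiction. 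Hence some $K_p$ lies in the kernel, and $S$ is a quotient of $M_p\triangleleft G_p$, which is exactly what L:JH factors needs.
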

\begin{proof}
First suppose that $G_\ell = \GL_2(\ZZ_\ell)$.    Let $H$ be one of the groups $[G,G]$ or $G\cap \SL_2(\Zhat)$; it is an open subgroup of $\SL_2(\Zhat)$.  Since $G\cap \SL_2(\Zhat) \supseteq [G,G]$, we have $\SL_2(\ZZ_\ell)\supseteq H_\ell \supseteq [G,G]_\ell = [G_\ell,G_\ell]$, where $H_\ell$ and $[G,G]_\ell$ are the images of $H$ and $[G,G]$, respectively, under the projection map $\SL_2(\Zhat)\to \SL_2(\ZZ_\ell)$.    By Lemma~\ref{L:basics commutators}(\ref{L:basics commutators iv}), we deduce that $H_\ell=\SL_2(\ZZ_\ell)$.      Let $H'$ be the image of $H$ under the projection $\SL_2(\Zhat)\to \prod_{p\neq \ell} \SL_2(\ZZ_p)$.   So we may identify $H$ with a closed subgroup of $H_\ell \times H'$.   Let $B$ and $B'$ be the normal subgroups of $H_\ell$ and $H'$, respectively, such that $B\times\{I\}$ is the kernel of the projection $H\to H'$ and $\{I\} \times B'$ is the kernel of the projection $H\to H_\ell$.   Since $H$ is open in $\SL_2(\Zhat)$, the groups $B$ and $B'$ are open in $H_\ell$ and $H'$, respectively.  By Goursat's lemma, we have an isomorphism of (finite) groups $H_\ell/B \cong H'/B'$.   

Suppose that $H_\ell/B\neq 1$.  There is a simple group $S$ that is the quotient of both $H_\ell$ and $H_p$ for some prime $p\neq \ell$.   The groups $H_\ell$ and $H_p$ are normal subgroups of $G_\ell$ and $G_p$, respectively, since $H$ is a normal subgroup of $G$.  Since the groups $\SL_2(\FF_\ell)/\{\pm I\}$ and $\SL_2(\FF_p)/\{\pm I\}$ have different cardinalities and $G_\ell$ and $G_p$ have full determinants, Lemma~\ref{L:JH factors} implies that $S$ is abelian.  However, $H_\ell=\SL_2(\ZZ_\ell)$ has no nontrivial abelian quotients by Lemma~\ref{L:basics commutators}(\ref{L:basics commutators i}).   Therefore, $H_\ell/B=1$.  

The groups $H_\ell/B$ and $H'/B'$ are trivial.  Therefore, $H \supseteq B\times B' = H_\ell \times H'$ and hence $H=H_\ell\times H' = \SL_2(\ZZ_\ell) \times H'$.  This description of $H$ shows that $\ell$ does not divide its level.  This shows that (\ref{I:a}) implies (\ref{I:b}) and (\ref{I:c}).

Now suppose that (\ref{I:b}) or (\ref{I:c}) holds.  In either case, we have $G_\ell \supseteq \SL_2(\ZZ_\ell)$.  Since $G$ has full determinant, this implies (\ref{I:a}).
\end{proof}

\subsection{Some $3$-adic computations} \label{SS:3-adic computations}

To study the prime $\ell=3$, we define a surjective homomorphism
\[
\varphi_3\colon \GL_2(\ZZ_3)\to \GL_2(\FF_3)\to \GL_2(\FF_3)/[\SL_2(\FF_3),\SL_2(\FF_3)] \xrightarrow{\sim} \mathfrak{S}_3,
\]
where we compose reduction modulo $3$, the natural quotient map, and a choice of isomorphism with the symmetric group $\mathfrak{S}_3$.  Note that $\varphi_3(\SL_2(\ZZ_3))$ is the alternating group $\mathfrak{A}_3$.

\begin{lemma} \label{L:varphi3 condition}
If $H$ is a closed normal subgroup of $\GL_2(\ZZ_3)$, then either $\varphi_3(H)=1$ or $H\supseteq \SL_2(\ZZ_3)$.
\end{lemma}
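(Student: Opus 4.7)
The plan is to first use the easy classification of normal subgroups of $\GL_2(\FF_3)$ to reduce to the case where the mod-$3$ image of $H$ contains $\SL_2(\FF_3)$, and then bootstrap up to all of $\SL_2(\ZZ_3)$ via a single explicit commutator combined with the irreducibility of $\mathfrak{sl}_2(\FF_3)$ under the adjoint $\GL_2(\FF_3)$-action.

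First I would classify the normal subgroups of $\GL_2(\FF_3)$: they form the chain $\{I\} \subsetneq \{\pm I\} \subsetneq Q_8 \subsetneq \SL_2(\FF_3) \subsetneq \GL_2(\FF_3)$, where $Q_8 = [\SL_2(\FF_3), \SL_2(\FF_3)]$ is the unique subgroup of order $8$. Since $\varphi_3$ factors through the quotient $\GL_2(\FF_3)/Q_8 \cong \mathfrak{S}_3$, we have $\varphi_3(H) = 1$ if and only if the mod-$3$ image $\bar H$ of $H$ lies in $Q_8$. So if $\varphi_3(H) \neq 1$, the normal subgroup $\bar H \subseteq \GL_2(\FF_3)$ is not contained in $Q_8$, and the classification forces $\bar H \supseteq \SL_2(\FF_3)$.

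Now assume $\bar H \supseteq \SL_2(\FF_3)$; I want to conclude $H \supseteq \SL_2(\ZZ_3)$. It suffices to show $J \subseteq H$, where $J := \ker(\SL_2(\ZZ_3) \to \SL_2(\FF_3))$, for then any $g \in \SL_2(\ZZ_3)$ can be written as a product of a lift in $H$ of $\bar g$ with an element of $J \subseteq H$. The subgroup $H \cap J$ is closed and normal in $\GL_2(\ZZ_3)$, and letting $J^{(2)}$ denote the level-$9$ principal congruence subgroup of $\SL_2(\ZZ_3)$, the quotient $J/J^{(2)}$ is naturally identified with $\mathfrak{sl}_2(\FF_3)$ via $I + 3B \mapsto B \bmod 3$, with conjugation by $\GL_2(\ZZ_3)$ becoming the adjoint $\GL_2(\FF_3)$-action. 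The key structural fact is that this adjoint action is irreducible: it factors through $\operatorname{PGL}_2(\FF_3) \cong \mathfrak{S}_4$ and realizes one of the two $3$-dimensional irreducible representations of $\mathfrak{S}_4$ over $\FF_3$.

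The crux is therefore to exhibit one nonzero element in the image of $H \cap J$ in $\mathfrak{sl}_2(\FF_3)$; once we have this, the submodule property forces the image to be all of $\mathfrak{sl}_2(\FF_3)$, i.e.\ the image of $H \cap J$ in $\SL_2(\ZZ/9\ZZ)$ has level dividing $3$, so Lemma~\ref{L:openness criterion SL} applied to $H \cap J$ (with $N = \ell = 3$) yields $H \cap J \supseteq J$. To produce such an element I would pick any $h \in H$ lifting $\left(\begin{smallmatrix}1 & 1 \\ 0 & 1\end{smallmatrix}\right) \in \SL_2(\FF_3)$ (available by $\bar H \supseteq \SL_2(\FF_3)$) and set $g := \left(\begin{smallmatrix}1 & 0 \\ 3 & 1\end{smallmatrix}\right) \in \GL_2(\ZZ_3)$; by normality $[g, h] \in H$, by Lemma~\ref{L:basics commutators}(\ref{L:basics commutators iv}) $[g, h] \in \SL_2(\ZZ_3)$, and since $\bar g = I$ also $[g, h] \in J$. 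A short direct computation modulo $9$, whose outcome does not depend on the chosen lift $h$, gives $[g, h] \equiv \left(\begin{smallmatrix}7 & 3 \\ 0 & 4\end{smallmatrix}\right) \pmod{9}$, corresponding to the nonzero class $\left(\begin{smallmatrix}2 & 1 \\ 0 & 1\end{smallmatrix}\right) \in \mathfrak{sl}_2(\FF_3)$. This explicit mod-$9$ commutator identity is the only computational input of the proof; everything else is a packaging of the normality of $H$ together with the irreducibility of the adjoint representation and the level-detection criterion of Lemma~\ref{L:openness criterion SL}.
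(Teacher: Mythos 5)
The ``hard'' part of your argument --- producing a nonzero class in $\mathfrak{sl}_2(\FF_3)$ via the commutator $[g,h]$, invoking irreducibility of the adjoint representation, and applying Lemma~\ref{L:openness criterion SL} --- is a genuinely different route from the paper's, which instead does a direct finite computation with normal subgroups of $\SL_2(\ZZ/9\ZZ)$, and those steps of yours all check out. The gap is in the reduction: the sentence ``It suffices to show $J\subseteq H$, for then any $g\in\SL_2(\ZZ_3)$ can be written as a product of a lift in $H$ of $\bar g$ with an element of $J\subseteq H$'' is not justified. If $h\in H$ lifts $\bar g\in\SL_2(\FF_3)$, then $h^{-1}g$ reduces to $I$ modulo $3$, but $\det(h^{-1}g)=\det(h)^{-1}$ need not equal $1$: the hypothesis $\bar H\supseteq\SL_2(\FF_3)$ only provides a lift with $\det h\equiv 1\pmod{3}$, not $\det h=1$, so $h^{-1}g$ lies in $\ker(\GL_2(\ZZ_3)\to\GL_2(\FF_3))$ but not necessarily in $J\subseteq\SL_2(\ZZ_3)$, and you cannot conclude $g\in H$. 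In fact the implication ``$J\subseteq H$ and $\bar H\supseteq\SL_2(\FF_3)$ force $\SL_2(\ZZ_3)\subseteq H$'' is \emph{false} for non-normal $H$: one can build graph-type subgroups inside the determinant-$\equiv 1$-mod-$3$ part of $\GL_2(\ZZ_3)$ that contain $J$ and have mod-$3$ image $\SL_2(\FF_3)$, yet meet $\SL_2(\ZZ_3)$ only in the preimage of $Q_8$.

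The implication you want is true for \emph{normal} $H$, but seeing that requires one more commutator argument. Set $H':=H\cap\SL_2(\ZZ_3)$, a closed normal subgroup of $\GL_2(\ZZ_3)$. For $A\in\SL_2(\FF_3)$ and $B\in\GL_2(\FF_3)$, lift to $h\in H$ (using $\bar H\supseteq\SL_2(\FF_3)$) and $g\in\GL_2(\ZZ_3)$; then $[h,g]\in H$ by normality, $[h,g]\in\SL_2(\ZZ_3)$ since commutators have determinant $1$, and $\overline{[h,g]}=[A,B]$. Hence $\overline{H'}\supseteq[\SL_2(\FF_3),\GL_2(\FF_3)]=\SL_2(\FF_3)$, and together with $J\subseteq H'$ (which you did establish correctly) this forces $H'=\SL_2(\ZZ_3)$. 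This is in spirit the same commutator move the paper makes at the outset of its proof to replace $H$ by $H\cap\SL_2(\ZZ_3)$; you need an analogue of it to close your reduction.
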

\begin{proof}
If $H\supseteq \SL_2(\ZZ_3)$, then $\varphi_3(H) \supseteq \varphi_3(\SL_2(\ZZ_3))=\mathfrak{A}_3\neq 1$.  So we may now assume that $\varphi_3(H)\neq 1$.  We need to show that $H\supseteq \SL_2(\ZZ_3)$.  

We have $\varphi_3(H)\supseteq \mathfrak{A}_3$ since $\varphi_3(H)$ is a nontrivial normal subgroup of $\mathfrak{S}_3$.  So there is an $a\in H$ such that $\varphi_3(a) \in \mathfrak{A}_3-\{1\}$.  The two elements of $\mathfrak{A}_3-\{1\}$  are conjugate in $\mathfrak{S}_3$, so there is a $g\in \GL_2(\ZZ_3)$ such that  $\varphi_3(g a^{-1} g^{-1}) = \varphi_3(g) \varphi_3(a)^{-1} \varphi_3(g)^{-1}$ equals $\varphi_3(a)$.  Therefore, $\varphi_3(g a^{-1} g^{-1} a)= \varphi_3(a)^2$ has order $3$.    Since $H$ is normal in $\GL_2(\ZZ_3)$, we deduce that $\varphi_3(g a^{-1} g^{-1} a)$ is an element of $\varphi_3(H \cap \SL_2(\ZZ_3))$ of order $3$.   So after replacing $H$ by $H\cap \SL_2(\ZZ_3)$, we may assume that $H\subseteq \SL_2(\ZZ_3)$.   


The image $\bbar{H}$ of $H$ modulo $3$ is a normal subgroup of $\SL_2(\ZZ/3\ZZ)$ that contains an element of order $3$.   We have $\bbar{H}=\SL_2(\ZZ/3\ZZ)$ since the only maximal normal subgroup of $\SL_2(\ZZ/3\ZZ)$ is its commutator subgroup which has order $8$.   A direct computation shows that there are no normal subgroup of $\SL_2(\ZZ/9\ZZ)$ whose image modulo $3$ is the group $\SL_2(\ZZ/3\ZZ)$ (however, there are such subgroups if we exclude the normal condition).   So $H$ is a closed subgroup of $\SL_2(\ZZ_3)$ whose image modulo $9$ equals $\SL_2(\ZZ/9\ZZ)$.  By Lemma~\ref{L:reduction original Serre alt}, we have $H=\SL_2(\ZZ_3)$.
\end{proof}

\section{Agreeable groups} \label{S:agreeable}

We say that subgroup $\calG$ of $\GL_2(\Zhat)$ is \defi{agreeable} if it is open in $\GL_2(\Zhat)$, satisfies $\det(\calG)=\Zhat^\times$, contains the scalar matrices $\Zhat^\times\cdot I$, and the levels of $\calG\subseteq\GL_2(\Zhat)$ and $\calG\cap \SL_2(\Zhat)\subseteq\SL_2(\Zhat)$ have the same odd prime divisors.

The following proposition, which we will prove in \S\ref{SS:key construction proof}, shows that every open subgroup $G$ of $\GL_2(\Zhat)$ with full determinant lies in a unique minimal agreeable group $\calG \subseteq\GL_2(\Zhat)$.   We call $\calG$ the \defi{agreeable closure} of $G$. 

\begin{prop} \label{P:agreeable closure}
Let $G$ be an open subgroup of $\GL_2(\Zhat)$ with $\det(G)=\Zhat^\times$.   Then there is a unique minimal  agreeable subgroup $\calG$ of $\GL_2(\Zhat)$, with respect to inclusion, that satisfies $G \subseteq \calG$.   We have $[G,G]=[\calG,\calG]$ and hence $G$ is a normal subgroup of $\calG$ with $\calG/G$ finite and  abelian.
\end{prop}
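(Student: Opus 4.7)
My approach is to construct $\calG$ directly as a specific enlargement of $G$ in two stages (adjoin central scalars, then modify at the prime $3$ if needed) and to verify minimality by tracking what any agreeable supergroup of $G$ must contain. The guiding observation is that the agreeable condition is vacuous at primes $\ell\geq 5$ (via Lemma~\ref{L:prime divisors of level}) and requires no condition at $\ell=2$, so all genuine work concentrates at $\ell=3$.

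First I would set $\calG_0 := \Zhat^\times\cdot G$. By Lemma~\ref{L:level by adjoining scalars} this is open in $\GL_2(\Zhat)$; it inherits $\det(\calG_0)=\Zhat^\times$ from $G$ and contains the scalars by construction. Because $\Zhat^\times\cdot I$ is central in $\GL_2(\Zhat)$, a direct commutator computation gives $[\calG_0,\calG_0]=[G,G]$, so the identity $[\calG,\calG]=[G,G]$ that we ultimately want will hold provided any further enlargement we perform preserves commutators.  Note that $\calG_0/G$ is already abelian.

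It remains to arrange the level-at-odd-primes condition. For each $\ell\geq 5$, Lemma~\ref{L:prime divisors of level} makes the condition automatic, so the only work is at $\ell=3$.  If $\calG_0$ is already agreeable at $3$, take $\calG:=\calG_0$; otherwise I would enlarge.  In the non-agreeable situation, the image $H_3:=(\calG_0\cap\SL_2(\Zhat))_3$ is a closed normal subgroup of $(\calG_0)_3\subseteq\GL_2(\ZZ_3)$, and Lemma~\ref{L:varphi3 condition} (applied to the normal closure in $\GL_2(\ZZ_3)$) forces either $H_3\supseteq\SL_2(\ZZ_3)$ or $\varphi_3(H_3)=1$; the non-agreeable case is precisely the second alternative.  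I would then enlarge $\calG_0$ by adjoining a single element $s\in\SL_2(\ZZ_3)$, embedded at the $3$-adic factor as $(s,I,I,\ldots)\in\GL_2(\Zhat)$, with $\varphi_3(s)$ generating the missing $\mathfrak{A}_3$-coset.  Together with a Hensel-type lift governed by Lemma~\ref{L:reduction original Serre alt}, this enlargement produces $\calG$ whose $3$-part of level matches on the $\GL$ and $\SL$ sides, so $\calG$ is agreeable.  The crucial verification is that $s$ can be chosen so that $[\calG,\calG]=[G,G]$ still holds; this is arranged by placing $s$ in a coset of $[G,G]$ that is already represented inside $\calG_0$, using that $\calG_0/[G,G]$ is abelian to control conjugation modulo $[G,G]$.

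For uniqueness, any agreeable supergroup $\calG'\supseteq G$ must contain all scalars, hence $\calG'\supseteq\calG_0$.  If $\calG_0$ is agreeable, minimality is immediate.  Otherwise, the agreeable condition at $3$ together with $\calG'\supseteq\calG_0$ forces $\calG'$ to contain an element realizing the missing $\mathfrak{A}_3$-coset at the $3$-adic factor, so $\calG'\supseteq\calG$.  The remaining statements of the proposition follow readily from $[\calG,\calG]=[G,G]\subseteq G$: this inclusion gives $G\triangleleft\calG$ with $\calG/G$ abelian, and finiteness of $\calG/G$ follows because $G$ is open in the open group $\calG$.

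The main obstacle will be the careful choice of the $3$-adic generator $s$ in the third step so that $[\calG,\calG]$ stays exactly equal to $[G,G]$.  Naively adjoining a lift of the missing $\mathfrak{A}_3$-class can introduce commutators $[s,g]$ with $g\in\calG_0$ that live in the $3$-adic factor and need not a priori lie in $[G,G]$, whose $3$-adic component may be proper.  The fix is to combine the constraint from Lemma~\ref{L:varphi3 condition}, the abelianness of $\calG_0/[G,G]$, and a suitable adjustment of $s$ by an element of $[G,G]$ so that all new commutators are forced into $[G,G]$; making this rigorous is where the technical care of the proof concentrates.
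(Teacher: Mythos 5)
Your construction has a gap at primes $\ell\geq 5$: the claim that "the agreeable condition is vacuous at primes $\ell\geq 5$" is false, and Lemma~\ref{L:prime divisors of level} does not make it automatic. That lemma tells you when $\ell$ divides the level of $\calG\cap\SL_2(\Zhat)$ (namely, iff $\calG_\ell\neq\GL_2(\ZZ_\ell)$), but the agreeable condition also involves the level of $\calG$ itself in $\GL_2(\Zhat)$, and a prime $\ell\geq 5$ can divide that level through entanglement even when $\calG_\ell=\GL_2(\ZZ_\ell)$. Concretely, fix primes $\ell,p\geq 5$, let $\chi_q\colon\GL_2(\ZZ/q\ZZ)\to\{\pm1\}$ be the Legendre symbol of the determinant, and set $G=\{(g_\ell,g_p):\chi_\ell(g_\ell\bmod\ell)=\chi_p(g_p\bmod p)\}\times\prod_{q\neq\ell,p}\GL_2(\ZZ_q)$. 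This $G$ is open with full determinant and already contains $\Zhat^\times\cdot I$, so your $\calG_0=\Zhat^\times\cdot G$ equals $G$; its level in $\GL_2(\Zhat)$ is $\ell p$, yet $G\cap\SL_2(\Zhat)=\SL_2(\Zhat)$ has level $1$. Thus $\calG_0$ is not agreeable, $G_3=\GL_2(\ZZ_3)$ so there is nothing to fix at $3$, and your construction halts at a non-agreeable group. The true agreeable closure here is $\GL_2(\Zhat)$, which your procedure never reaches. The uniqueness argument inherits the same gap: you only account for enlargements at the prime $3$.

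The paper's construction in \S\ref{SS:key construction proof} resolves exactly this by explicitly disentangling. It sets $N$ to be the product of primes dividing the level of $[G,G]$ (so by Lemma~\ref{L:prime divisors of level}, a prime $\ell\geq 5$ divides $N$ iff $G_\ell\neq\GL_2(\ZZ_\ell)$), and then defines $\calG:=(\ZZ_N^\times\cdot G_N)\times\prod_{\ell\nmid N}\GL_2(\ZZ_\ell)$. The key point is that the factors at $\ell\nmid N$ are taken to be the full $\GL_2(\ZZ_\ell)$ as a genuine direct product, forcibly eliminating any entanglement between those primes and the rest; adjoining scalars alone does not do this. The 3-adic subtlety you identify is genuine — Lemma~\ref{L:finally}(\ref{L:finally b}) addresses it — but it plays a different role: it is used to show that if $3\mid N$ then $3$ necessarily divides the level of $\calG\cap\SL_2(\Zhat)$, not to decide which element to adjoin. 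And for minimality, the paper argues that any agreeable $H\supseteq G$ has $[H,H]\supseteq[G,G]$ and hence level of $H$ divides $N$, which forces $H=H_N\times\prod_{\ell\nmid N}\GL_2(\ZZ_\ell)\supseteq\calG$; your version of this argument misses the fact that $H$ must also carry the full product structure away from $N$.
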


Recall that for our application to Serre's open image theorem, we will study the agreeable closure $\calG_E$ of $\rho_E^*(\Gal_\QQ)$ for non-CM elliptic curves $E/\QQ$.   Since $\rho_E^*$ is defined up to isomorphism, the group $\calG_E\subseteq \GL_2(\Zhat)$ is uniquely determined up to conjugacy in $\GL_2(\Zhat)$.

\subsection{Projection notation} \label{projection notation}

We now introduce notation that we will use through \S\ref{S:agreeable}.

Let $G$ be a subgroup of $\GL_2(\Zhat)$.  For each positive integer $n$, we let $G_n$ be the image of $G$ under the homomorphism $\GL_2(\Zhat)\to \GL_2(\ZZ_n)$ arising from the natural ring homomorphism $\Zhat=\ZZ_n \times \prod_{\ell\nmid n} \ZZ_\ell \to \ZZ_n$.    For example, the \emph{level} of an open subgroup $G$ of $\GL_2(\Zhat)$ is the smallest positive integer $n$ for which we have a natural identification $G=G_n \times \prod_{\ell \nmid n} \GL_2(\ZZ_\ell)$.

Let $G$ be a subgroup of $\GL_2(\ZZ_N)$ for an integer $N>1$.  For a positive integer $n$ that divides some power of $N$, we denote by $G_n$ the image of $G$ under the homomorphism $\GL_2(\ZZ_N)\to \GL_2(\ZZ_n)$ arising from the projection ring homomorphism $\ZZ_N=\ZZ_n \times \prod_{\ell|N, \, \ell\nmid n} \ZZ_\ell \to \ZZ_n$. 

Suppose $N=n_1 n_2$ with $n_1$ and $n_2$ positive integers that are relatively prime.   Then any subgroup $G$ of $\GL_2(\ZZ_N)$ can be identified with a subgroup of $G_{n_1}\times G_{n_2}$; the projections of $G$ onto the first and second coordinate are surjective.  Note that we are now in a setting where we can apply Goursat's lemma (Lemma~\ref{L:Goursat}).

\subsection{Finiteness of agreeable groups with given projections}

An advantage of working with agreeable groups is that there are only finitely many with given $\ell$-adic projections.

\begin{lemma} \label{L:agreeable finiteness}
For each prime $\ell$, fix an open subgroup $H_\ell$ of $\GL_2(\ZZ_\ell)$ satisfying $\det(H_\ell)=\ZZ_\ell^\times$ and $\ZZ_\ell^\times \cdot I \subseteq H_\ell$.   Then there are only finite many agreeable subgroups $G$ of $\GL_2(\Zhat)$ such that $G_\ell=H_\ell$ for all $\ell$.
\end{lemma}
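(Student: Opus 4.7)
The strategy is to bound the level of any such $G$ in $\GL_2(\Zhat)$ by an integer $N_0$ depending only on the $H_\ell$; the claim then follows since there are only finitely many open subgroups of $\GL_2(\ZZ/N_0\ZZ)$. Let $S := \{\ell : H_\ell \neq \GL_2(\ZZ_\ell)\}$, which must be a finite set (otherwise no open $G$ could have the prescribed projections), and set $S' := S \cup \{2, 3\}$. Any such $G$ lies in $\mathcal{H} := \prod_\ell H_\ell$ and surjects onto each factor $H_\ell$.

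First I would show that the level of $G$ is divisible only by primes in $S'$. Fix a prime $\ell \notin S'$, so $\ell \geq 5$ and $H_\ell = \GL_2(\ZZ_\ell)$. Apply Goursat's lemma (Lemma~\ref{L:Goursat}) to the inclusion
\[
G \cap \SL_2(\Zhat) \subseteq \SL_2(\ZZ_\ell) \times \prod_{p \neq \ell}(H_p \cap \SL_2(\ZZ_p)).
\]
Any entanglement at $\ell$ would produce a nontrivial common quotient of $\SL_2(\ZZ_\ell)$ and the product over $p \neq \ell$. Since $\SL_2(\ZZ_\ell)$ is perfect by Lemma~\ref{L:basics commutators}, every nontrivial quotient has a composition factor isomorphic to $\SL_2(\FF_\ell)/\{\pm I\}$; by Lemma~\ref{L:JH factors} this simple group is not a composition factor of any $H_p$ for $p \neq \ell$. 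Hence there is no entanglement at $\ell$, so $\ell$ does not divide the level of $G \cap \SL_2(\Zhat)$. The agreeable condition on $G$ then forces $\ell$ not to divide the level of $G$ either.

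Next I would bound the $\ell$-adic valuation of the level of $G$ for $\ell \in S'$. Since $G$ contains the scalars and has full determinant, Lemma~\ref{L:level by adjoining scalars} shows that the level of $G$ divides twice the level of $G \cap \SL_2(\Zhat)$, so it suffices to bound the latter. Setting $K_\ell := H_\ell \cap \SL_2(\ZZ_\ell)$, the group $G \cap \SL_2(\Zhat)$ is open in $\prod_\ell K_\ell$ and surjects onto each $K_\ell$. By Lemma~\ref{L:openness of commutator}, each commutator $[K_\ell, K_\ell]$ is open in $\SL_2(\ZZ_\ell)$, so the abelianization $K_\ell^{\mathrm{ab}}$ is finite. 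Combined with the fact that the only nonabelian simple quotient of $K_\ell$ for $\ell \geq 5$ is $\SL_2(\FF_\ell)/\{\pm I\}$ and that these are mutually incompatible across distinct primes (again via Lemma~\ref{L:JH factors}), an iterated Goursat argument across the finite set $S'$ bounds the index $[\prod_\ell K_\ell : G \cap \SL_2(\Zhat)]$ by an integer depending only on the $H_\ell$. Since $\prod_\ell K_\ell$ has fixed finite level, this bounds the level of $G \cap \SL_2(\Zhat)$ and hence of $G$.

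The main obstacle is making the iterated Goursat argument precise when $|S'| > 2$: pairwise bounds on common finite quotients do not directly yield a bound on all simultaneous entanglements, so one must proceed by induction on the primes in $S'$, adjoining one factor at a time and at each step using that the new factor's finite abelianization controls the index increment. Once this inductive bound is in hand, all agreeable $G$ with the prescribed projections have level dividing a fixed $N_0$, and the finiteness assertion of the lemma follows.
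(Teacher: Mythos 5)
Your strategy is genuinely different from the paper's, and it has two gaps, the first of which is fatal to the argument as written. The paper works not with $G\cap\SL_2(\Zhat)$ but with the fourth derived subgroup $B:=G^{(4)}$. Since forming derived subgroups commutes with passing to quotients, $B$ automatically surjects onto each $(H_\ell)^{(4)}$; the paper shows that $(H_\ell)^{(4)}$ is pro-$\ell$ when $\ell\leq 3$ or $H_\ell\neq\GL_2(\ZZ_\ell)$, and equals $\SL_2(\ZZ_\ell)$ otherwise, so these factors have pairwise disjoint finite simple quotients and the Goursat-type argument forces $B=\prod_\ell(H_\ell)^{(4)}$ \emph{exactly}, with no residual index to bound. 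Then $G$ is sandwiched between the fixed open subgroup $\prod_{\ell\mid N}\ZZ_\ell^\times\cdot(H_\ell)^{(4)}\times\prod_{\ell\nmid N}\GL_2(\ZZ_\ell)$ and $\GL_2(\Zhat)$, which yields finiteness. (Your first paragraph essentially reproves Lemma~\ref{L:prime divisors of level}; you could simply cite it.)

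The gaps: first, the assertion that $G\cap\SL_2(\Zhat)$ surjects onto each $K_\ell:=H_\ell\cap\SL_2(\ZZ_\ell)$ is false even under the agreeable hypothesis. Take $H_\ell=\GL_2(\ZZ_\ell)$ for every $\ell$ and $G:=G_{\gamma_2}$, the agreeable index-$2$ subgroup of $\GL_2(\Zhat)$ from \S\ref{SS:Serre curves}. Then $G_\ell=\GL_2(\ZZ_\ell)$ for all $\ell$, yet $G\cap\SL_2(\Zhat)=[\GL_2(\Zhat),\GL_2(\Zhat)]$ has $2$-adic projection $[\GL_2(\ZZ_2),\GL_2(\ZZ_2)]$, a proper index-$2$ subgroup of $K_2=\SL_2(\ZZ_2)$ by Lemma~\ref{L:basics commutators}. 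Without surjective projections, Goursat's lemma does not apply in the form you invoke; working with $G^{(4)}$ is precisely what sidesteps this. Second, you flag the iterated Goursat step as the main obstacle and do not carry it out; moreover, the sketch you give — that the new factor's finite abelianization controls the index increment at each inductive stage — is not correct, since the common quotient Goursat produces is a simultaneous quotient of two prosolvable groups and need not be abelian. Bounding its size uniformly requires a careful accounting of common composition factors across the $K_\ell$, which is exactly the bookkeeping that the paper's derived-series reduction is designed to avoid.
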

\begin{proof}
Take any agreeable subgroup $G$ of $\GL_2(\Zhat)$ such that $G_\ell = H_\ell$ for all $\ell$.  We may assume that there are only finite many primes $\ell$ with $H_\ell \neq \GL_2(\ZZ_\ell)$ since otherwise it would contradict that $G$ is open in $\GL_2(\Zhat)$.   Let $N$ be the product of primes $\ell$ for which $\ell \leq 3$ or $H_\ell \neq \GL_2(\ZZ_\ell)$.   

For a group $H$ and an integer $n\geq 0$, let $H^{(n)}$ be the group obtained from $H$ by taking commutator subgroups $n$ times, i.e., $H^{(0)}:=H$ and $H^{(n)}:=[H^{(n-1)},H^{(n-1)}]$ for $n\geq 1$.    Define $B:=G^{(4)}$; it is an open subgroup of $\SL_2(\Zhat)$ since $G$ is open in $\GL_2(\Zhat)$, cf.~Lemma~\ref{L:openness of commutator}.   We have a natural inclusion
\[
B \subseteq \prod_\ell H_\ell^{(4)}
\]
so that the projection map $B\to B_\ell =(G_\ell)^{(4)}=H_\ell^{(4)}$ is surjective for all $\ell$.

Suppose that $B=\prod_\ell H_\ell^{(4)}$.  The primes dividing the level of $B\subseteq \SL_2(\Zhat)$ divide $N$ since $H_\ell^{(4)}=\SL_2(\ZZ_\ell)^{(4)}=\SL_2(\ZZ_\ell)$ for $\ell \nmid N$, where we have used Lemma~\ref{L:basics commutators}(\ref{L:basics commutators i}).   Since $B\subseteq [G,G]$ and $G$ is agreeable, we find that the primes dividing the level of $G$ divide $N$ as well.   Therefore, we have an inclusion
\[
W:= \prod_{\ell |N} (\ZZ_\ell^\times \cdot H_\ell^{(4)}) \times \prod_{\ell\nmid N}\GL_2(\ZZ_\ell) \subseteq G.
\]
Since $W$ is an open subgroup of $\GL_2(\Zhat)$, $G$ is one of the finitely many groups that lies between $W$ and $\GL_2(\Zhat)$.

Now suppose that $B\neq \prod_\ell H_\ell^{(4)}$.   To finish the proof, it suffices to obtain a contradiction.  By Goursat's lemma, we find that there is a finite simple group that is isomorphic to the quotient of $H_\ell^{(4)}$ for two distinct primes $\ell$.  For $\ell \nmid N$, we have $H_\ell^{(4)}=\SL_2(\ZZ_\ell)$ by Lemma~\ref{L:basics commutators}(\ref{L:basics commutators i}) and its only simple quotient is isomorphic to $\SL_2(\FF_\ell)/\{\pm I\}$.

For $\ell | N$, we claim that $H_{\ell}^{(4)}$ is pro-$\ell$ and hence every simple quotient is isomorphic to $\ZZ/\ell\ZZ$.   By Lemma~\ref{L:JH factors}, the group $H_\ell \subseteq \GL_2(\ZZ_\ell)$ is prosolvable.   So to verify the claim, it suffices to prove that $M^{(4)}=1$ for any solvable subgroup $M\subseteq \GL_2(\ZZ/\ell\ZZ)$.   For $\ell \in \{2,3\}$ this holds since a direct computation shows that $\GL_2(\ZZ/\ell\ZZ)^{(4)}=1$.   Now assume $\ell \geq 5$.  For a description of the subgroups of $\GL_2(\ZZ/\ell\ZZ)$, see \S2 of \cite{Serre-Inv72}.  If $M$ is contained in the normalizer of a Cartan subgroup or a Borel subgroups, we have $M^{(2)}=1$.  The remaining possibility is that the image of $M$ in $\GL_2(\ZZ/\ell\ZZ)/((\ZZ/\ell\ZZ)^\times\cdot I)$ is isomorphic to a subgroup of $\mathfrak{S}_4$.  So $M^{(4)}=1$ since $\mathfrak{S}_4^{(3)}=1$. So we have proved that there is no simple group that is isomorphic to quotients of $H_\ell^{(4)}$ for two distinct primes $\ell$; this is the desired contradiction. 
\end{proof}

\subsection{Agreeable closure} \label{SS:key construction proof}

Fix an open subgroup $G$ of $\GL_2(\Zhat)$ that satisfies $\det(G)=\Zhat^\times$.  Let $N$ be the product of primes that divide the level of $[G,G] \subseteq \SL_2(\Zhat)$.  Define the subgroup
\begin{align}\label{E:agreeable closure}
\calG:= (\ZZ_N^\times \cdot G_N) \times \prod_{\ell\nmid N} \GL_2(\ZZ_\ell)
\end{align}
of $\GL_2(\Zhat)$.    The rest of this section is devoted to showing that $\calG$ is the agreeable group satisfying the properties in Proposition~\ref{P:agreeable closure}.  \\

We clearly have $G\subseteq \calG$ and $\Zhat^\times \cdot I \subseteq \calG$.  The group $\calG$ is open in $\GL_2(\Zhat)$ with full determinant since $G$ has these properties.  The integer $N$ is even since the commutator subgroup of $\GL_2(\Zhat)$ has level $2$ in $\SL_2(\Zhat)$ by Lemma~\ref{L:basics commutators}(\ref{L:basics commutators vi}).  The commutator subgroups of $\ZZ_N^\times \cdot G_N$ and $G_N$ agree, and $\GL_2(\ZZ_\ell)$ has commutator subgroup $\SL_2(\ZZ_\ell)$ for all $\ell\nmid N$ by Lemma~\ref{L:basics commutators}(\ref{L:basics commutators iv}).    Therefore,
\[
[\calG,\calG] = [G_N,G_N] \times \prod_{\ell\nmid N} \SL_2(\ZZ_\ell) = [G,G],
\]
where the last equality uses that $N$ is the product of primes that divide the level of $[G,G]$.   In particular, $G$ is a normal subgroup of $\calG$ with $\calG/G$ abelian.  

\begin{lemma} \label{L:finally}
\begin{romanenum}
\item\label{L:finally a}
For an odd prime $\ell$, we have $G_\ell=\GL_2(\ZZ_\ell)$ if and only if $\calG_\ell=\GL_2(\ZZ_\ell)$.   
\item \label{L:finally b}
Suppose that $3$ divides $N$ and $\calG_3=\GL_2(\ZZ_3)$.  Then there is a surjective homomorphism $\psi\colon \calG_{N/3} \to \mathfrak{S}_3$ such that 
\[
\calG_{N} \subseteq \{ (a,b) \in \GL_2(\ZZ_3) \times \calG_{N/3} :  \varphi_3(a)=\psi(b) \},
\]
with $\varphi_3$ as in \S\ref{SS:3-adic computations}.
\item \label{L:finally c}
The levels of $[\calG,\calG]$ and $\calG \cap \SL_2(\Zhat)$ in $\SL_2(\Zhat)$ and the level of $\calG$ in $\GL_2(\Zhat)$ have the same odd prime divisors as $N$.
\end{romanenum}
\end{lemma}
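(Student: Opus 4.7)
The plan is to address each part in turn, with part~(\ref{L:finally b}) carrying the main technical content. For part~(\ref{L:finally a}), the forward direction is trivial since $G \subseteq \calG$. For the converse, I would fix an odd prime $\ell$ with $\calG_\ell = \GL_2(\ZZ_\ell)$ and split into subcases. If $\ell \nmid N$, then $[G_\ell, G_\ell] = \SL_2(\ZZ_\ell)$ follows directly from the definition of $N$, and combined with $\det(G_\ell) = \ZZ_\ell^\times$ this forces $G_\ell = \GL_2(\ZZ_\ell)$. If $\ell \mid N$, scalars are central so $[G_\ell, G_\ell] = [\calG_\ell, \calG_\ell] = [\GL_2(\ZZ_\ell), \GL_2(\ZZ_\ell)] = \SL_2(\ZZ_\ell)$ by Lemma~\ref{L:basics commutators}(\ref{L:basics commutators iv}), and the same conclusion follows.

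For part~(\ref{L:finally b}), I would apply Goursat's lemma (Lemma~\ref{L:Goursat}) to the image of $G$ in $\GL_2(\ZZ_3) \times G_{N/3}$; note that both projections are surjective because part~(\ref{L:finally a}) gives $G_3 = \GL_2(\ZZ_3)$. Goursat produces normal subgroups $K_1 \trianglelefteq \GL_2(\ZZ_3)$ and $K_2 \trianglelefteq G_{N/3}$ and an isomorphism $\GL_2(\ZZ_3)/K_1 \xrightarrow{\sim} G_{N/3}/K_2$ whose graph is $G_N$. The critical step is to rule out $K_1 \supseteq \SL_2(\ZZ_3)$: if it held then $\SL_2(\ZZ_3) \times \{I\} \subseteq G_N$, and by commutating $(s, I)$ against a lift $(a, b) \in G_N$ with $a$ ranging over $\GL_2(\ZZ_3)$ I would obtain $[\SL_2(\ZZ_3), \GL_2(\ZZ_3)] \times \{I\} \subseteq [G_N, G_N]$. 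A short commutator identity gives $[\SL_2(\ZZ_3), \GL_2(\ZZ_3)] = [\GL_2(\ZZ_3), \GL_2(\ZZ_3)] = \SL_2(\ZZ_3)$ (use that $\GL_2(\ZZ_3)/\SL_2(\ZZ_3)$ is abelian together with Lemma~\ref{L:basics commutators}(\ref{L:basics commutators iv})), and combined with the fact that $[G_N, G_N]$ surjects onto $[G_{N/3}, G_{N/3}]$ this forces $[G_N, G_N] = \SL_2(\ZZ_3) \times [G_{N/3}, G_{N/3}]$, contradicting $3 \mid N$. Hence $K_1 \not\supseteq \SL_2(\ZZ_3)$, so by Lemma~\ref{L:varphi3 condition} we must have $\varphi_3(K_1) = 1$, and $\varphi_3$ descends through the Goursat isomorphism to produce a surjection $\psi' \colon G_{N/3} \twoheadrightarrow \mathfrak{S}_3$ with $G_N \subseteq \{(a,b) : \varphi_3(a) = \psi'(b)\}$. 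To extend to $\calG_{N/3} = \ZZ_{N/3}^\times \cdot G_{N/3}$, I would set $\psi(\lambda b) := \psi'(b)$ for $\lambda \in \ZZ_{N/3}^\times$ and $b \in G_{N/3}$; this is well defined because any scalar in $G_{N/3}$ is central and therefore lands in the trivial center of $\mathfrak{S}_3$, and the containment for $\calG_N$ follows because $\varphi_3$ likewise annihilates scalars.

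For part~(\ref{L:finally c}), the identity $[\calG,\calG] = [G,G]$ already established just before the lemma immediately identifies the prime divisors of the level of $[\calG,\calG]$ with those of $N$. For any odd $\ell$, I would verify that the three conditions $\calG_\ell = \GL_2(\ZZ_\ell)$, $(\calG \cap \SL_2(\Zhat))_\ell = \SL_2(\ZZ_\ell)$, and $[\calG, \calG]_\ell = \SL_2(\ZZ_\ell)$ are equivalent: the downward implications are immediate, and the reverse implications pull $\SL_2(\ZZ_\ell)$ back up to $\GL_2(\ZZ_\ell)$ using $\det(\calG_\ell) = \ZZ_\ell^\times$ and Lemma~\ref{L:basics commutators}(\ref{L:basics commutators iv}). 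Consequently the odd prime divisors of all three levels coincide with those of $N$.

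The hardest point will be the contradiction in part~(\ref{L:finally b}): the identity $[\SL_2(\ZZ_3), \GL_2(\ZZ_3)] = \SL_2(\ZZ_3)$ is not recorded explicitly in the excerpt and must be extracted from Lemma~\ref{L:basics commutators}(\ref{L:basics commutators iv}) by a brief manipulation exploiting that $\GL_2(\ZZ_3)/\SL_2(\ZZ_3)$ is abelian. Once that identification is in hand, everything else is careful bookkeeping of level divisibilities using Goursat's lemma and the structure of $\varphi_3$.
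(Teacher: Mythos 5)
Your part~(a) is correct and is the same argument the paper gives (with the two cases $\ell\nmid N$ and $\ell\mid N$ made slightly more explicit). Parts~(b) and~(c), however, each contain a real gap.

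\textbf{Part (b).} Your Goursat setup on $G_N\subseteq \GL_2(\ZZ_3)\times G_{N/3}$ is fine, as is the reduction: assuming $K_1\supseteq\SL_2(\ZZ_3)$, you correctly deduce $[\SL_2(\ZZ_3),\GL_2(\ZZ_3)]\times\{I\}\subseteq[G_N,G_N]$. But you then assert that ``a short commutator identity gives $[\SL_2(\ZZ_3),\GL_2(\ZZ_3)]=[\GL_2(\ZZ_3),\GL_2(\ZZ_3)]$'' from the abelianness of $\GL_2(\ZZ_3)/\SL_2(\ZZ_3)$ and Lemma~\ref{L:basics commutators}(\ref{L:basics commutators iv}). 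No such identity holds: for a normal subgroup $H\trianglelefteq K$ with $K/H$ abelian and $[K,K]=H$, one does \emph{not} get $[H,K]=[K,K]$. Take $K=Q_8$ and $H=Z(Q_8)$: then $[K,K]=H$ and $K/H$ is abelian, yet $[H,K]=1\subsetneq H$. The statement $[\SL_2(\ZZ_3),\GL_2(\ZZ_3)]=\SL_2(\ZZ_3)$ is in fact true, but it needs its own argument — for instance, observe that $[\SL_2(\ZZ_3),\GL_2(\ZZ_3)]$ is a closed normal subgroup of $\GL_2(\ZZ_3)$ whose $\varphi_3$-image contains $[\mathfrak A_3,\mathfrak S_3]=\mathfrak A_3\neq 1$, so by Lemma~\ref{L:varphi3 condition} it contains $\SL_2(\ZZ_3)$. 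The paper avoids this extra fact entirely by running Goursat on $\calG_N$ rather than $G_N$: there the normal subgroup $B$ contains the scalars, so $B\supseteq\SL_2(\ZZ_3)$ forces $\GL_2(\ZZ_3)/B$ to have order at most $2$; one can then lift any $a,b\in\GL_2(\ZZ_3)$ to pairs $(a,c^i),(b,c^j)\in\calG_N$ with second coordinates that commute, yielding $([a,b],I)\in[\calG_N,\calG_N]$ directly and requiring only Lemma~\ref{L:basics commutators}(\ref{L:basics commutators iv}).

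\textbf{Part (c).} The equivalence of the three \emph{projection} conditions $\calG_\ell=\GL_2(\ZZ_\ell)$, $(\calG\cap\SL_2(\Zhat))_\ell=\SL_2(\ZZ_\ell)$, $[\calG,\calG]_\ell=\SL_2(\ZZ_\ell)$ is correct, but it does not imply the stated conclusion about \emph{levels}. A group can have full $\ell$-adic projection while still having level divisible by $\ell$, because of entanglement between the $\ell$-adic factor and the rest. That is exactly what happens here at $\ell=3$: when $3\mid N$ and $\calG_3=\GL_2(\ZZ_3)$, all three projection conditions hold, yet $3$ must divide the level of $\calG$, of $\calG\cap\SL_2(\Zhat)$, and of $[\calG,\calG]$. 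This case is precisely the point of part~(b), and the paper's proof of~(c) invokes~(b) to produce the containment $\calG_N\cap\SL_2(\ZZ_N)\subseteq\{(a,b):\varphi_3(a)=\psi(b)\}$, which is a proper subgroup of $\SL_2(\ZZ_N)$ whose level is divisible by $3$. Your argument never uses part~(b), which is a strong signal something is missing; the ``Consequently'' step is a non sequitur. For $\ell\geq 5$ your reasoning is fine (Lemma~\ref{L:prime divisors of level} rules out entanglement at such primes since $\SL_2(\FF_\ell)/\{\pm I\}$ is nonabelian simple), but the $\ell=3$ case needs the entanglement analysis of~(b).
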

\begin{proof}
We first prove (\ref{L:finally a}).    If $G_\ell=\GL_2(\ZZ_\ell)$, then we have $\calG_\ell=\GL_2(\ZZ_\ell)$ by the inclusion $G_\ell \subseteq \calG_\ell$.   We now may assume that $\calG_\ell=\GL_2(\ZZ_\ell)$.   Since $ \ZZ_\ell^\times \cdot G_\ell \supseteq \calG_\ell$, we have $G_\ell \supseteq [G_\ell,G_\ell] \supseteq [\calG_\ell,\calG_\ell] = [\GL_2(\ZZ_\ell),\GL_2(\ZZ_\ell)]$.   Since $\ell$ is odd, we have $G_\ell \supseteq \SL_2(\ZZ_\ell)$ by Lemma~\ref{L:basics commutators}(\ref{L:basics commutators iv}) and hence $G_\ell=\GL_2(\ZZ_\ell)$ since $G$ has full determinant.   \\

We now prove (\ref{L:finally b}).  Assume that $3$ divides $N$ and $\calG_3=\GL_2(\ZZ_3)$.  Set $m:=N/3$.  We can identify $\calG_N$ with a proper subgroup of $\GL_2(\ZZ_3) \times \calG_m$ so that the projection maps $\calG_N\to \GL_2(\ZZ_3)$ and $\calG_N\to \calG_m$ are surjective.   Let $B$ be the subgroup of $\GL_2(\ZZ_3)$ for which $B\times\{I\}$ is the kernel of $\calG\to \calG_m$.  The group $B$ is open and normal in $\GL_2(\ZZ_3)$.   By Goursat's lemma, there is a surjective homomorphism $\psi'\colon \calG_m \to \GL_2(\ZZ_3)/B$ such that 
\[
\calG_N = \{ (a,b) \in \GL_2(\ZZ_3)\times \calG_m :  a B=\psi'(b)\}.
\]

Suppose that $B\supseteq \SL_2(\ZZ_3)$.  Since $B$ contains the scalar matrices $\ZZ_3^\times\cdot I$, we deduce that $B \supseteq  \ZZ_3^\times \cdot\SL_2(\ZZ_3):=W$.  The group $W$ is the index $2$ subgroup of $\GL_2(\ZZ_3)$ consisting of matrices whose determinant is a square in $\ZZ_3^\times$.    Therefore, $\calG_3/B$ is cyclic of order $1$ or $2$.    Fix an element $c\in \calG_m$ that generates $\calG_m/\ker(\psi')$.      Take any $a,b\in \calG_3=\GL_2(\ZZ_3)$.  There are integers $i,j \in \{0,1\}$ such that $(a,c^i)$ and $(b,c^j)$ lie in $\calG_N \subseteq \calG_3 \times \calG_m$.   Taking the commutator of these two elements, we find that $(aba^{-1}b^{-1},I) \in [\calG_N,\calG_N] = [G_N,G_N]$.   Since the commutator subgroup of $\GL_2(\ZZ_3)$ is $\SL_2(\ZZ_3)$ by Lemma~\ref{L:basics commutators}(\ref{L:basics commutators iv}), we deduce that $[G_N,G_N]$ equals $\SL_2(\ZZ_3) \times [G_m,G_m]$.   However, this contradicts that $3$ divides $N$.  Therefore, $B\not\supseteq \SL_2(\ZZ_3 )$.

Since $B\not\supseteq \SL_2(\ZZ_3)$, Lemma~\ref{L:varphi3 condition} implies that $\varphi_3(B)=1$ and we thus have a surjective homomorphism $\varphi_3\colon \GL_2(\ZZ_3)/B \to \mathfrak{S}_3$.  Let $\psi\colon \calG_m \to \mathfrak{S}_3$ be the surjective homomorphism obtained by composing $\psi'$ and $\varphi_3$.   We have
\[
\calG_N \subseteq \{ (a,b)\in \GL_2(\ZZ_3) \times \calG_m : \varphi_3(a)=\psi(b) \}.
\]
This completes the proof of (\ref{L:finally b}).\\ 

Since $[G,G]=[\calG,\calG]$, part (\ref{L:finally c}) is immediate for the group $[\calG,\calG]$ from the definition of $N$.   From its construction, the level of $\calG$ cannot be divisible by primes $\ell\nmid N$.   Since the level of $\calG \cap \SL_2(\Zhat)$ divides the level of $\calG$, it suffices to prove (\ref{L:finally c}) for the group $\calG\cap \SL_2(\Zhat)$.

For a prime $\ell\geq 5$, Lemmas~\ref{L:prime divisors of level} and \ref{L:finally}(\ref{L:finally a}) imply that $\ell$ divides $N$ if and only if $\calG_\ell \neq \GL_2(\ZZ_\ell)$.   So the integer $N$ and the level of $\calG\cap \SL_2(\Zhat)$ have the same prime divisors $\ell \geq 5$ by Lemma~\ref{L:prime divisors of level}.     It thus remains to prove (\ref{L:finally c}) for the group $\calG\cap \SL_2(\Zhat)$ and the prime $\ell=3$.  

If $3\nmid N$, then from the definition of $\calG$ we find that the level of $\calG \cap \SL_2(\Zhat)$ is not divisible by $3$.   So we may assume that $3$ divides $N$; we need to prove that $3$ also divides the level of $\calG \cap \SL_2(\Zhat)$.     

Suppose that $\calG_3 \neq \GL_2(\ZZ_3)$.   We have $(\calG \cap \SL_2(\Zhat))_3 \subseteq \calG_3 \cap\ \SL_2(\ZZ_3) \subsetneq \SL_2(\ZZ_3)$ since $\calG_3$ has full determinant.   This shows that $\calG \cap \SL_2(\Zhat)$ has level divisible by $3$.  

Finally suppose that $3$ divides $N$ and $\calG_3=\GL_2(\ZZ_3)$.    So there is a surjective homomorphism $\psi\colon \calG_{N/3}\to\mathfrak{S}_3$ as in (\ref{L:finally b}).  We thus have 
\[
(\calG \cap \SL_2(\Zhat))_N = \calG_N \cap \SL_2(\ZZ_N) \subseteq \{ (a,b) \in \SL_2(\ZZ_3) \times (\calG_{N/3} \cap \SL_2(\ZZ_{N/3})) :  \varphi_3(a)=\psi(b) \}.
\]
Since $\varphi_3(\SL_2(\ZZ_3))=\mathfrak{A}_3$, we deduce that $\calG_N \cap \SL_2(\ZZ_N)$ is a proper subgroup of $\SL_2(\ZZ_N)$ whose level is divisible by $3$.  Therefore, $3$ divides the level of $\calG \cap \SL_2(\Zhat)$.
\end{proof}

\begin{proof}[Proof of Proposition~\ref{P:agreeable closure}]
We have already shown that the group $\calG$ defined by (\ref{E:agreeable closure}) contains $G$ and has the same commutator subgroup.   We have already observed that $\calG$ is open in $\GL_2(\Zhat)$, contains the scalars $\Zhat^\times\cdot I$ and satisfies $\det(\calG)=\Zhat^\times$.     Lemma~\ref{L:finally}(\ref{L:finally c}) says that the levels of $\calG$ and $[\calG,\calG]$ have the same odd prime divisors.  Therefore, $\calG$ is agreeable.

Now take any agreeable group $H\subseteq\GL_2(\Zhat)$ that satisfies $G\subseteq H$.  We have $[G,G] \subseteq [H,H]$, so the primes that divide the level of $[H,H]$ must also divide $N$.  Since $N$ is even and $H$ is agreeable, the primes dividing the level of $H$ all divide $N$. Therefore, $H=H_N \times \prod_{\ell \nmid N} \GL_2(\ZZ_\ell).$   Since $G$ and  $\Zhat^\times \cdot I$ are subgroups of $H$, we have $\ZZ_N^\times \cdot G_N \subseteq H_N$.  
 Therefore, $\calG\subseteq H$ from our definition  of $\calG$.   This proves that $\calG$ is the minimal agreeable group, with respect to inclusion, that contains $G$.
\end{proof}

\subsection{Maximal agreeable subgroups} \label{SS:maximal WAWA}

Fix an agreeable subgroup $\calG$ of $\GL_2(\Zhat)$.   Amongst the proper agreeable subgroups $G$ of $\calG$ that satisfy $G_\ell=\calG_\ell$ for all primes $\ell$, we define $\calM(\calG)$ to be the set of those that are maximal with respect to inclusion.  The set $\calM(\calG)$ is finite by Lemma~\ref{L:agreeable finiteness}.   

In this section, we give information about the groups in $\calM(\calG)$ and explain how they can be computed.  The results are straightforward but a little tedious due to the annoying nature of the primes $2$ and $3$.  Let $\varphi_3\colon \GL_2(\ZZ_3)\to \mathfrak{S}_3$ be the surjective homomorphism from \S\ref{SS:3-adic computations}.

\begin{lemma} \label{L:agreeable different primes}
Take any group $G\in \calM(\calG)$.  Let $M$ and $N$ be the product of the primes that divide the level of $G$ and $\calG$, respectively.
\begin{romanenum}
\item \label{L:calM description i}
The integers $M$ and $N$ have the same prime divisors $\ell \geq 5$ and $M$ is divisible by $N$.  In particular, we have $M \in \{N,2N,3N,6N\}$.
\item \label{L:calM description ii}
Suppose $M=2N$.   Then there are surjective homomorphisms $\alpha \colon \GL_2(\ZZ_2)\to \{\pm 1\}$ and $\beta\colon \calG_N \to \{\pm 1\}$ such that 
\[
G_M = \{(a,b) \in \GL_2(\ZZ_2)\times \calG_{N} : \alpha(a)=\beta(b)\}.
\]
The level of $G$ is $2$, $4$ or $8$ times the level of $\calG$.

\item \label{L:calM description iii}
Suppose $M=3N$.  Then there is a surjective homomorphism $\psi\colon \calG_N \to \mathfrak{S}_3$ such that 
\[
G_M = \{(a,b) \in \GL_2(\ZZ_3)\times \calG_{N} : \varphi_3(a)=\psi(b)\}.
\]
The level of $G$ is $3$ times the level of $\calG$.
\item \label{L:calM description iv}
Suppose $M=6N$.  Then there is a surjective homomorphism $\psi\colon \GL_2(\ZZ_2) \to \mathfrak{S}_3$ such that 
\[
G_M = \{(a,b) \in \GL_2(\ZZ_2)\times \GL_2(\ZZ_3) : \psi(a)=\varphi_3(b)\} \times \calG_N.
\]
The level of $G$ is $6$ times the level of $\calG$.
\end{romanenum}
\end{lemma}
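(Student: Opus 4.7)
My plan is to decompose along primes and use Goursat's lemma to describe $G_M$ as a fiber product over a minimal common quotient $Q$, handling the additional primes in $M/N$ (which by part (i) must lie in $\{2,3\}$) separately from those already dividing $N$.

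\textbf{Part (i).} The divisibility $N \mid M$ is immediate: if $\ell \mid N$ then $\calG_\ell \neq \GL_2(\ZZ_\ell)$, so $G_\ell = \calG_\ell \neq \GL_2(\ZZ_\ell)$, forcing $\ell \mid M$. Conversely, suppose $\ell \geq 5$ divides $M$. Since $G$ is agreeable, $\ell$ divides the level of $G \cap \SL_2(\Zhat)$, and Lemma~\ref{L:prime divisors of level} then yields $G_\ell \neq \GL_2(\ZZ_\ell)$. Since $\calG_\ell = G_\ell$, we conclude $\ell \mid N$. Thus $M/N$ is supported on $\{2,3\}$, and $M \in \{N, 2N, 3N, 6N\}$.

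\textbf{Goursat decomposition for Parts (ii)--(iv).} Set $m := M/N$ and note $\gcd(m,N) = 1$, so $\calG_M \cong \GL_2(\ZZ_m) \times \calG_N$ (we use $\calG_\ell = \GL_2(\ZZ_\ell)$ for $\ell \mid m$ since $m$ is coprime to the radical $N$ of the level of $\calG$). The assumption $G_\ell = \calG_\ell$ for all $\ell$ makes both projections of $G_M$ surjective, so Lemma~\ref{L:Goursat} presents $G_M$ as the preimage of the graph of an isomorphism $\GL_2(\ZZ_m)/A \xrightarrow{\sim} \calG_N/B$ for normal subgroups $A, B$; call the common quotient $Q$. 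Maximality of $G \in \calM(\calG)$ makes $Q$ as small as possible, subject to every prime dividing $m$ actually appearing in the level of $G$. For part (ii) with $m = 2$, the minimal such $Q$ is $\{\pm 1\}$, yielding the maps $\alpha, \beta$. For parts (iii) and (iv), Lemma~\ref{L:varphi3 condition} is the key: in order for the prime $3$ to enter the level of $G$, the normal subgroup $A \lhd \GL_2(\ZZ_3)$ in Goursat's lemma cannot contain $\SL_2(\ZZ_3)$, so $\varphi_3(A) = 1$ and $Q$ admits $\mathfrak{S}_3$ as a quotient. The minimal choice $Q = \mathfrak{S}_3$ gives $\psi$ in part (iii); combining it with $\GL_2(\ZZ_2) \twoheadrightarrow \GL_2(\FF_2) \cong \mathfrak{S}_3$ produces the displayed structure in part (iv), with no extra entanglement with $\calG_N$.

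\textbf{Level computations and main obstacle.} The level of $G$ equals the level of $G_M$ in $\GL_2(\ZZ_M)$, which equals the conductor of the surjection $\calG_M \twoheadrightarrow Q$. The inclusion $G \subseteq \calG$ forces the level of $\calG$ to divide the level of $G$, so the factor contributed by $\calG_N$ is exactly the level of $\calG$, and the factor contributed by $\GL_2(\ZZ_m)$ is the conductor of the quotient map to $Q$. In parts (iii) and (iv) the conductor of $\varphi_3$ is $3$, and in part (iv) the conductor of $\GL_2(\ZZ_2) \twoheadrightarrow \mathfrak{S}_3$ is $2$, giving the stated factors $3$ and $6$. In part (ii) the conductor of a surjection $\alpha\colon \GL_2(\ZZ_2) \to \{\pm 1\}$ is one of $\{2,4,8\}$: this is a finite check on the index-$2$ open subgroups of $\GL_2(\ZZ_2)$, obtained by computing the $2$-torsion of the abelianization $\GL_2(\ZZ_2)^{\mathrm{ab}}$ using Lemma~\ref{L:basics commutators}(\ref{L:basics commutators v}) together with the structure of $\ZZ_2^\times \cong \{\pm 1\}\times (1+4\ZZ_2)$ (a conductor-$2$ character coming from the sign on $\GL_2(\FF_2)\cong\mathfrak{S}_3$, and conductor-$4$ and $-8$ characters pulled back from $\ZZ_2^\times$ via $\det$). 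The main obstacle is justifying the ``minimality of $Q$'' assertion: one must rule out that an agreeable subgroup $G'$ with a strictly larger $Q'$ lies properly between $G$ and $\calG$ (which would contradict either maximality of $G$ or the requirement $G'_\ell = \calG_\ell$), and conversely verify that the fiber products constructed from minimal $Q$ are genuinely agreeable subgroups of $\calG$. Both verifications reduce to finite computations in $\GL_2(\FF_2)$, $\GL_2(\FF_3)$, and the relevant abelianizations.
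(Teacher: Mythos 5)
Your overall strategy—decompose $\calG_M$ along coprime parts, apply Goursat's lemma, and invoke Lemma~\ref{L:varphi3 condition} to constrain the quotient at the prime $3$—is the same as the paper's, and you correctly identify $\varphi_3$ as the crux of parts (iii)--(iv). However, there is a genuine gap at the step where you assert that ``the assumption $G_\ell = \calG_\ell$ for all $\ell$ makes both projections of $G_M$ surjective.'' Having full $\ell$-adic projections for each individual prime $\ell \mid N$ does \emph{not} imply that the projection $G_M \to \calG_N$ (a product over several primes) is surjective, i.e.\ that $G_N = \calG_N$: entanglements among the primes dividing $N$ could make $G_N$ a proper subgroup. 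The paper establishes $G_N = \calG_N$ (resp.\ $G_{N_0} = \calG_{N_0}$ with $N_0 := M/3$) by a separate argument: it inserts the auxiliary group $W := G_N \times \prod_{\ell \nmid N}\GL_2(\ZZ_\ell)$ between $G$ and $\calG$, checks that $W$ is agreeable with $W_\ell = \calG_\ell$ for all $\ell$, and then uses maximality of $G \in \calM(\calG)$ together with the fact that $2$ (resp.\ $3$) divides the level of $G$ but not of $W$ to conclude $W = \calG$. Without this argument, Goursat's lemma cannot even be applied to $G_M \subseteq \GL_2(\ZZ_m)\times\calG_N$ in the form you use it.

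Two smaller issues: in part (i), your argument for $N \mid M$ asserts that $\ell \mid N$ forces $\calG_\ell \neq \GL_2(\ZZ_\ell)$, which is false in general (for $\ell \in \{2,3\}$ an agreeable $\calG$ can have full $\ell$-adic projection while $\ell$ still divides its level, via entanglement; indeed this is exactly the situation in parts (iii)--(iv)). The correct and much simpler observation is that $G \subseteq \calG$ directly forces every prime dividing the level of $\calG$ to divide the level of $G$. Second, for the level computation in part (ii) you would also need the agreeable hypothesis to force $\ZZ_2^\times\cdot I \subseteq \ker\alpha$ (and hence $\ker\alpha \supseteq \ZZ_2^\times\cdot[\GL_2(\ZZ_2),\GL_2(\ZZ_2)]$, which has level dividing $8$), and to force the level of $\ker\beta$ to equal that of $\calG_N$; you flag these as ``a finite check'' but they rely on the scalar-matrix condition in a way your proposal does not make explicit. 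You also correctly flag the ``minimality of $Q$'' obstacle at the end; the paper resolves it not by abstract minimality but by explicitly exhibiting the candidate group $W_M$ containing $G_M$ and deducing equality from maximality of $G$—the same move as the missing $G_N = \calG_N$ step.
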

\begin{proof}
Since $G_\ell=\calG_\ell$ for all primes $\ell$,  Lemma~\ref{L:prime divisors of level} implies that the levels of $G\cap \SL_2(\Zhat)$ and $\calG\cap\SL_2(\Zhat  )$ have the same prime divisors $\ell\geq 5$.   Since $G$ and $\calG$ are both agreeable, we deduce that the levels of $G$ and $\calG$ have the same prime divisors $\ell\geq 5$.   The inclusion $G\subseteq \calG$ implies that $N|M$.    We have $M\in \{N,2N,3N,6N\}$ since $M$ and $N$ are squarefree.   This proves (\ref{L:calM description i}). \\

Now suppose that $M =2N$ and hence $N$ is odd.
 
We claim that $G_N=\calG_N$.  
Define $W:=G_{N} \times \prod_{\ell\nmid N} \GL_2(\ZZ_\ell)$.   We have inclusions $G\subseteq W \subseteq \calG$ since $G\subseteq \calG$ and $N|M$. The group $W$ is open in $\GL_2(\Zhat)$ and satisfies $\det(W)=\Zhat^\times$ and $\Zhat^\times \cdot I \subseteq W$ since $G$ has these properties as well.  Intersecting with $\SL_2(\Zhat)$, we obtain inclusions $G\cap \SL_2(\Zhat) \subseteq W\cap\SL_2(\Zhat) \subseteq \calG \cap \SL_2(\Zhat)$.   Since $G$ and $\calG$ are agreeable group whose levels have the same odd primes divisors, these inclusions imply that $W$ is also agreeable.   For each prime $\ell$, we have $G_\ell=\calG_\ell$ since $G\in \calM(\calG)$ and hence $W_\ell=\calG_\ell$.  Since $G\subseteq W \subseteq \calG$ and $G\in \calM(\calG)$, we have $W=G$ or $W=\calG$.  We have $W=\calG$ since $2$ divides the level of $G$ but not the level of $W$.   Therefore, $G_N=W_N=\calG_N$ as claimed.

We have $G_N=\calG_N$ and $G_2=\calG_2=\GL_2(\ZZ_2)$, where the last equality uses that $2\nmid N$.  So we can identify $G_M$ with a proper subgroup of $\GL_2(\ZZ_2) \times \calG_{N}$ so that the projection maps $G\to \GL_2(\ZZ_2)$ and $G\to \calG_{N}$ are surjective.    Let $B$ be the subgroup of $\GL_2(\ZZ_2)$ for which $B\times\{I\}$ is the kernel of $G\to \calG_{N}$.  The group $B$ is open and normal in $\GL_2(\ZZ_2)$.   By Goursat's lemma, there is a surjective homomorphism $\psi'\colon \calG_{N} \to \GL_2(\ZZ_2)/B$ such that 
\[
G_M = \{ (a,b) \in \GL_2(\ZZ_2)\times \calG_{N} :  a B = \psi'(b)\}.
\]
We have $\GL_2(\ZZ_2)/B \neq 1$ since $2$ divides the level of $G$.  The maximal abelian quotient of $\GL_2(\ZZ_2)$ is pro-$2$ by Lemma~\ref{L:basics commutators}(\ref{L:basics commutators v}), so $\GL_2(\ZZ_2)/B$ has a quotient isomorphic to $\{\pm 1\}$.   So there are surjective homomorphisms $\alpha \colon \GL_2(\ZZ_2)\to \{\pm 1\}$ and $\beta\colon \calG_N \to \{\pm 1\}$ such that 
\[
G_M \subseteq \{(a,b) \in \GL_2(\ZZ_2)\times \calG_{N} : \alpha(a)=\beta(b)\}=:W_M.
\]
Define $W:=W_M \times \prod_{\ell\nmid M} \GL_2(\ZZ_\ell) \subseteq \GL_2(\Zhat)$.   We have inclusions $G\subseteq W \subsetneq \calG$, where $W\neq \calG$ since the level of $\calG$ is not divisible by $2$.  Since $G$ and $\calG$ are agreeable and $M=2N$, we find that $W$ is also agreeable.   We have $G=W$ since $G$ is in $\calM(\calG)$ and hence $G_M=W_M$. 

From our description of $G_M$, we find that the level of $G$ is equal to the product of the levels of $\ker \alpha$ and $\ker \beta$.   The level of $\ker \beta$ is the same as the level of $\calG_N$, and hence also $\calG$, since $N$ is odd.  So to complete the proof of (\ref{L:calM description ii}), it suffices to show that $\ker \alpha$ has level $2$, $4$ or $8$.  We have $\ZZ_2^\times \cdot I \subseteq \ker \alpha$ since $G$ is agreeable.  Therefore, $\ker\alpha \supseteq \ZZ_2^\times \cdot [\GL_2(\ZZ_2),\GL_2(\ZZ_2)]$ and this second group has level dividing $8$ by Lemmas~\ref{L:basics commutators}(\ref{L:basics commutators v}) and \ref{L:level by adjoining scalars}.  Since $\ker\alpha\neq 1$, it must has level $2$, $4$ or $8$.
\\

Now suppose that $M$ is $3N$ or $6N$, and hence $3\nmid N$.  Define $N_0 :=  M/3 \in \{N,2N\}$.
 
We claim that $G_{N_0}=\calG_{N_0}$.  
Define $W:=G_{N_0} \times \prod_{\ell\nmid N_0} \GL_2(\ZZ_\ell)$.   
  We have $G\subseteq W$ since $N_0|M$. The group $W$ is open in $\GL_2(\Zhat)$ and satisfies $\det(W)=\Zhat^\times$ and $\Zhat^\times \cdot I \subseteq W$ since $G$ has these properties as well.    Since $G\subseteq \calG$ and $N|N_0$,  we have inclusions $G \subseteq W \subseteq \calG$ and hence also $G \cap \SL_2(\Zhat) \subseteq W \cap \SL_2(\Zhat)$.   Since $G$ and $\calG$ are agreeable, we deduce that the levels of $W$ and $W\cap \SL_2(\Zhat)$ have the same prime divisors $\ell \geq 5$, i.e., the odd prime that divide $N$.   Since $3\nmid N_0$, the levels of $W$ and $W\cap \SL_2(\Zhat)$ are not divisible by $3$ and hence they have the same odd prime divisors.  Therefore, $W$ is agreeable.       For each prime $\ell$, we have $G_\ell=\calG_\ell$ since $G\in \calM(\calG)$ and hence $W_\ell=\calG_\ell$.  Since $G\subseteq W \subseteq \calG$ and $G\in \calM(\calG)$, we have $W=G$ or $W=\calG$.   We have $W=\calG$ since $3$ divides the level of $G$ but not the level of $W$.   So $G_{N_0}=W_{N_0}=\calG_{N_0}$ as claimed.

We have $G_3=\calG_3=\GL_2(\ZZ_3)$ and $G_{N_0}=\calG_{N_0}$.   So we can identify $G_M$ with a proper subgroup of $\GL_2(\ZZ_3) \times \calG_{N_0}$ so that the projection maps $G\to \GL_2(\ZZ_3)$ and $G\to \calG_{N_0}$ are surjective.   Let $B$ be the subgroup of $\GL_2(\ZZ_3)$ for which $B\times\{I\}$ is the kernel of $G\to \calG_{N_0}$.  The group $B$ is open and normal in $\GL_2(\ZZ_3)$.   By Goursat's lemma, there is a surjective homomorphism $\psi'\colon \calG_{N_0} \to \GL_2(\ZZ_3)/B$ such that 
\[
G_M = \{ (a,b) \in \GL_2(\ZZ_3)\times \calG_{N_0} :  a B = \psi'(b)\}.
\]
If $B \supseteq \SL_2(\ZZ_3)$, then $3$ does not divide the level of $G\cap\SL_2(\Zhat)$.  Since $3|M$ and $G$ is agreeable, the level of $G\cap\SL_2(\Zhat)$ must be divisible by $3$.  Therefore, $B \not \supseteq \SL_2(\ZZ_3)$.   By Lemma~\ref{L:varphi3 condition}, we have $\varphi_3(B)=1$.  Let $\psi\colon \calG_{N_0} \to \mathfrak{S}_3$ be the composition of $\psi'$ with $\varphi_3$.  Using that $\varphi_3(B)=1$, we deduce that 
\begin{align*}\label{E:GM proof new}
G_M \subseteq \{(a,b) \in \GL_2(\ZZ_3)\times \calG_{N_0} : \varphi_3(a)=\psi(b)\}=:W_M.
\end{align*}
Define the subgroup $W:=W_M \times \prod_{\ell\nmid M} \GL_2(\ZZ_\ell)$ of $\GL_2(\Zhat)$.  We have inclusions $G\subseteq W \subseteq \calG$ and hence also $G\cap\SL_2(\Zhat)\subseteq W \cap\SL_2(\Zhat)\subseteq \calG\cap\SL_2(\Zhat)$.   Since $G$ and $\calG$ are agreeable and their levels have the same prime divisors $\ell\geq 5$, the inclusions imply that $W$ and $W\cap\SL_2(\Zhat)$ have the same prime divisors $\ell\geq 5$.  Since the levels of $W$ and $W\cap\SL_2(\Zhat)$ are both divisible by $3$, we deduce that $W$ is agreeable.    Since $G\subseteq W \subseteq \calG$ and $G\in \calM(\calG)$, we have $W=G$ or $W=\calG$.   We have $W=G$ since $3$ divides the level of $W$ but not the level of $\calG$.   So 
\[
G_{M}=W_{M}= \{(a,b) \in \GL_2(\ZZ_3)\times \calG_{N_0} : \varphi_3(a)=\psi(b)\}.
\]

Consider the special case $M=3N$, equivalently $N_0=N$.  To prove part (\ref{L:calM description iii}), it remains to show that the level of $G$ is $3$ times the level of $\calG$.   From our description of $G_M$, it suffices to show that the level of $\ker \psi$ is equal to the level of $\calG_N$ (and hence also the level of $\calG$).  Let $B$ be the subgroup of $\calG_N$ consisting of matrices that are congruent modulo the level of $\calG$ to the identity matrix.    Since $B$ is a normal subgroup of $\calG_N$ that is the product of $p$-groups, $\psi(B)$ is a normal subgroup of $\mathfrak{S}_3$ whose $p$-Sylow subgroups are all normal.  Therefore, $\psi(B)=1$ and hence the level of $\ker \psi$ equals the level $\calG_N$.  \\  

Finally, we are left with the special case $M=6N$; equivalently, $N_0=2N$. Since $2$ does not divide the level of $\calG$, we have $\calG_{N_0} = \GL_2(\ZZ_2) \times \calG_N$.   Since $\psi$ is surjective, $\psi(\GL_2(\ZZ_2)\times\{I\})$ and $\psi(\{I\}\times \calG_N))$ are normal subgroup of $\mathfrak{S}_3$ that commute with each other and generate $\mathfrak{S}_3$; this can only happen if one of these groups is trivial and the other is $\mathfrak{S}_3$.  From our description of $G_M$, if $\psi$ is trivial on $\GL_2(\ZZ_2)\times\{I\}$, then the level of $G_M$ (and also $G$) is not divisible by $2$.  Since $2|M$, we deduce that $\psi$ is trivial on $\{I\}\times \calG_N$.      So there is a surjective homomorphism $\psi\colon \GL_2(\ZZ_2)\to \mathfrak{S}_3$ such that 
\[
G_6 = \{(a,b) \in \GL_2(\ZZ_3)\times \GL_2(\ZZ_2) : \varphi_3(a)=\psi(b)\}
\]
and $G_M=G_6 \times \calG_N$.  From this description of $G_M$, the level of $G$ is clearly $6$ times the level of $\calG$. This completes the proof of (\ref{L:calM description iv}).
\end{proof}

Lemma~\ref{L:agreeable different primes} gives a description of the groups $G\in \calM(\calG)$ for which the levels of $G$ and $\calG$ have different prime divisors (actually it gives a finite number of candidates for $G$ for which one can check directly if they are agreeable).   We now want to say something about the case where the prime divisors are the \emph{same}.  Let $N$ be the product the primes that divide the level of $\calG$.   

Take any proper divisor $1<d_1\leq \sqrt{N}$ of $N$.  Set $d_2=N/d_1$; it is relatively prime to $d_1$ and $d_1<d_2$.    We may identify $\calG_N$ with a subgroup of $\calG_{d_1} \times \calG_{d_2}$.   The kernel of the projection maps $\calG_N \to \calG_{d_2}$ and $\calG_N \to \calG_{d_1}$ are of the form $B_1 \times \{I\}$ and $\{I\}\times B_2$, respectively.

Let $\mathscr{C}_{d_1}$ be the set of pairs $(C_1,C_2)$ with open subgroups $C_1\subsetneq B_1$ and $C_2 \subsetneq B_2$ that satisfy the following conditions:
\begin{itemize}
\item
$C_i$ is a normal subgroup of $\calG_{d_i}$, 
\item
$C_i$ is maximal amongst the closed normal subgroup $H$ of $\calG_{d_i}$ that satisfy $H \subsetneq B_i$,
\item
$C_i$ contains the scalar matrices $\ZZ_{d_i}^\times \cdot I$,
\item
$B_1/C_1$ and $B_2/C_2$ are isomorphic abelian groups,
\item 
$\calG_{d_1}/C_1$ and $\calG_{d_2}/C_2$ are isomorphic groups.
\end{itemize}
For a pair $(C_1,C_2) \in \mathscr{C}_{d_1}$, let $\calA(C_1,C_2)$ denote an abelian group isomorphic to $B_1/C_1$ and $B_2/C_2$.

\begin{lemma}  \label{L:MG info}
Take any group $G$ in $\calM(\calG)$ and suppose that $N$ and the level of $G$ have the same prime divisors.  Then there is a proper divisor $1<d_1\leq \sqrt{N}$ of $N$ and a pair $(C_1,C_2)\in \mathscr{C}_{d_1}$ such that 
\[
 C_1 \times C_2\subseteq G_N \subsetneq \calG_N
 \]
 and $[\calG:G]=|\calA(C_1,C_2)|$.
\end{lemma}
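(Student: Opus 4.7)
The plan is to apply Goursat's lemma to $\calG_N$ inside a product $\calG_{d_1}\times \calG_{d_2}$ for a suitable coprime factorization $N=d_1 d_2$, identify the pair $(C_1,C_2)$ via the subgroup $G_N$, and then use the maximality of $G$ in $\calM(\calG)$ to verify the conditions defining $\mathscr{C}_{d_1}$.

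First I would establish that $G_N\subsetneq \calG_N$. If instead $G_N=\calG_N$, then since $G$ and $\calG$ are both determined by their images modulo a common multiple of their levels (which is supported on the primes dividing $N$), the preimage in $\calG$ of $\calG_N$ under reduction modulo $N$ would give an agreeable subgroup strictly between $G$ and $\calG$, contradicting maximality. For each coprime factorization $N=d_1 d_2$ with $1<d_1\le \sqrt N$, Goursat's lemma applied to $\calG_N\subseteq \calG_{d_1}\times \calG_{d_2}$ with surjective projections yields the normal subgroups $B_1\lhd \calG_{d_1}$, $B_2\lhd \calG_{d_2}$, and an isomorphism $\varphi\colon \calG_{d_1}/B_1\xrightarrow{\sim}\calG_{d_2}/B_2$ appearing in the definition of $\mathscr{C}_{d_1}$.

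I then choose $d_1$ to be the smallest divisor for which a nontrivial Goursat-style obstruction in $G_N$ is already visible inside $\calG_{d_1}\times \calG_{d_2}$, and define $C_i$ to be the largest closed normal subgroup of $\calG_{d_i}$ contained in $\{x\in \calG_{d_i}: (x,I)\in G_N\}$ (with the obvious ordering of coordinates). The inclusions $C_i\subseteq B_i$ and $C_1\times C_2\subseteq G_N$ are then tautological, and $\ZZ_{d_i}^\times\cdot I\subseteq C_i$ follows from $G$ being agreeable. The maximality of $C_i$ among closed normal subgroups of $\calG_{d_i}$ properly contained in $B_i$ comes from the maximality of $G$ in $\calM(\calG)$: if some strictly larger such $C_i'$ existed, the preimage in $\calG$ of the graph of the induced isomorphism between $\calG_{d_1}/C_1'$ and an appropriate quotient of $\calG_{d_2}$ would define an agreeable subgroup $G\subsetneq G'\subsetneq\calG$, contradicting maximality. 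A second application of Goursat to the image of $G_N$ in $\calG_{d_1}/C_1\times \calG_{d_2}/C_2$ gives the isomorphism $\calG_{d_1}/C_1\cong \calG_{d_2}/C_2$; the induced isomorphism $B_1/C_1\cong B_2/C_2$ is forced to be abelian because $B_1\times \{I\}$ and $\{I\}\times B_2$ commute elementwise inside $\calG_N$ and this commutation descends to the quotient. The index equality $[\calG:G]=|\calA(C_1,C_2)|$ then reduces to $[\calG_N:G_N]=[B_1:C_1]$ via the Goursat description.

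The main obstacle is verifying that the intermediate group $G'$ used in the maximality argument is actually agreeable, i.e., that the level of $G'\cap \SL_2(\Zhat)$ has the same odd prime divisors as the level of $G'$. This requires a careful analysis of how the Goursat decomposition interacts with the commutator subgroups of $\calG_{d_i}$ studied in \S\ref{S:group theory facts}, and it is here that the constraint $1<d_1\le\sqrt N$ and the specific isomorphism and abelian conditions in the definition of $\mathscr{C}_{d_1}$ become essential.
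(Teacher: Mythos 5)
Your proposal has the right skeleton (two applications of Goursat, maximality of $G$ in $\calM(\calG)$), but there are two significant problems, one of which is a genuine error.

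The claimed argument that $B_1/C_1 \cong B_2/C_2$ is abelian is wrong. You say this is ``forced\ldots because $B_1\times\{I\}$ and $\{I\}\times B_2$ commute elementwise inside $\calG_N$ and this commutation descends to the quotient.'' But $B_1\times\{I\}$ and $\{I\}\times B_2$ always commute with \emph{each other}, trivially, because they sit in different coordinates; this says nothing about whether elements of $B_1$ commute \emph{among themselves}, and hence nothing about whether $B_1/C_1$ is abelian. The paper's actual argument is substantially harder: it first uses the ``no intermediate normal subgroup between $C_i$ and $B_i$'' property to conclude that if $B_i/C_i$ is nonabelian then it must be perfect (since $[B_i,B_i]\cdot C_i$ is a normal subgroup of $\calG_{d_i}$ squeezed between $C_i$ and $B_i$), then extracts a nonabelian simple quotient $S$ appearing as a quotient of an open normal subgroup of both $\calG_{p_1}$ and $\calG_{p_2}$ for distinct primes $p_1\mid d_1$, $p_2\mid d_2$, and finally invokes Lemma~\ref{L:JH factors} plus the fact that $\SL_2(\FF_{p_1})/\{\pm I\}$ and $\SL_2(\FF_{p_2})/\{\pm I\}$ have different cardinalities to obtain a contradiction. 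This structural input cannot be replaced by a formal commutation observation.

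The step you flag at the end as ``the main obstacle'' — verifying that the intermediate group is agreeable — is in fact not hard given the lemma's hypothesis, and the paper does it directly; you should not leave it as an open issue. The paper picks $n$ to be the smallest divisor of $N$ with $G_n\subsetneq\calG_n$, sets $G'$ to be the preimage of $G_n$ under $\calG\to\calG_n$, and checks: (i) $G'_\ell=\calG_\ell$ for all $\ell$; (ii) the level of $G'$ is squeezed between that of $\calG$ and that of $G$, both of which have prime support equal to that of $N$ (the latter by hypothesis); and (iii) $G\cap\SL_2(\Zhat)\subseteq G'\cap\SL_2(\Zhat)\subseteq\calG\cap\SL_2(\Zhat)$ similarly squeezes the odd prime support of the level of $G'\cap\SL_2(\Zhat)$. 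Then $G'=G$ by maximality in $\calM(\calG)$, which is precisely what gives $G_{d_1}=\calG_{d_1}$ and $G_{d_2}=\calG_{d_2}$ and makes the projections of $G_N\subseteq\calG_{d_1}\times\calG_{d_2}$ surjective — a prerequisite for Goursat that your definition of $C_i$ as ``the largest closed normal subgroup of $\calG_{d_i}$ contained in $\{x:(x,I)\in G_N\}$'' silently presupposes. Your choice of $d_1$ as ``the smallest divisor for which a nontrivial Goursat-style obstruction\ldots is already visible'' is also too vague to pin down $d_1\le\sqrt N$ or to run the minimality argument; the paper's definition via the minimal $n$ is what makes this precise.
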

\begin{proof}
We have $G=G_N \times \prod_{\ell \nmid N} \GL_2(\ZZ_\ell)$ and $\calG=\calG_N \times \prod_{\ell \nmid N} \GL_2(\ZZ_\ell)$ by our assumption on the level of $G$.   Therefore, $G_N \subsetneq \calG_N$ since $G\subsetneq \calG$ .  In particular, $[\calG:G]=[\calG_N:G_N]$.

Let $n\geq 1$ be the smallest divisor of $N$ for which $G_n \subsetneq \calG_n$.   The integer $n$ is composite since $G_\ell = \calG_\ell$ for all primes $\ell |N$.    Choose a proper divisor $1<d_1\leq \sqrt{n}$ of $n$ and set $d_2=N/d_1$.   By the minimality of $n$, we have $G_{d_1}=\calG_{d_1}$ and $G_{n/d_1}=\calG_{n/d_1}$.    Let $G'$ be the inverse image of $G_n$ under the projection map $\calG\to \calG_n$.     For any prime $\ell$, we have $G_\ell \subseteq G'_\ell \subseteq \calG_\ell$ and hence $G'_\ell=\calG_\ell$.    We have $G\subseteq G' \subsetneq \calG$ and hence the level of $G'$ has the same prime divisors as $N$.  Since $G$ and $\calG$ are agreeable, the inclusions $G\cap \SL_2(\Zhat)\subseteq G'\cap \SL_2(\Zhat) \subseteq \calG\cap \SL_2(\Zhat)$ imply that the odd primes dividing the level of $G'\cap \SL_2(\Zhat)$ are the same as those that divide $N$.  Therefore, $G'$ is agreeable.   We must have $G'=G$ since $G\in \calM(\calG)$.  Therefore, $G_{d_1}=\calG_{d_1}$ and $G_{d_2}=\calG_{d_2}$ where $N=d_1d_2$ with $1<d_1 \leq \sqrt{n}\leq \sqrt{N}$.

 We may identify $G_N$ and $\calG_N$ with subgroups of $\calG_{d_1} \times \calG_{d_2}$.    The kernel of the projection maps $G_N\to G_{d_2}=\calG_{d_2}$ and $G_N\to G_{d_1}=\calG_{d_1}$ are of the form $C_1 \times \{I\}$ and $\{I\}\times C_2$, respectively, where $C_i$ is a normal subgroup of $\calG_{d_i}$.    By Goursat's lemma, the image of the natural homomorphism
\[
G_N/(C_1\times C_2)\hookrightarrow (\calG_{d_1}\times \calG_{d_2})/(C_1\times C_2) =  \calG_{d_1}/C_1 \times \calG_{d_2}/C_2
\]
is the graph of an isomorphism $f\colon \calG_{d_1}/C_1 \xrightarrow{\sim} \calG_{d_2}/C_2$. 

The kernel of the projection maps $\calG_N\to \calG_{d_2}$ and $\calG_N\to \calG_{d_1}$ are of the form $B_1 \times \{I\}$ and $\{I\}\times B_2$, respectively, where $B_i$ is a normal subgroup of $\calG_{d_i}$.  By Goursat's lemma, the image of the natural homomorphism
\[
\calG_N/(B_1\times B_2)\hookrightarrow (\calG_{d_1}\times \calG_{d_2})/(B_1\times B_2) =  \calG_{d_1}/B_1 \times \calG_{d_2}/B_2
\]
is the graph of an isomorphism $\overline f\colon \calG_{d_1}/B_1 \xrightarrow{\sim} \calG_{d_2}/B_2$.    Using that $G_N\subseteq \calG_N$, we find that $C_i \subseteq B_i$ and that $f$ induces $\overline{f}$, i.e, composing $f$ with the quotient map $\calG_{d_2}/C_2 \to \calG_{d_2}/B_2$ gives a homomorphism that factors through $\overline{f}$.  In particular, $B_1/C_1$ and $B_2/C_2$ are isomorphic. 

For a fixed $i\in\{1,2\}$, take any normal subgroup $D_i$ of $\calG_{d_i}$ satisfying $C_i \subseteq D_i \subseteq B_i$.   Using the isomorphism $f$, we may assume that such a group $D_i$ exists for each $i\in \{1,2\}$ and that $f$ induces an isomorphism $D_1/C_1 \xrightarrow{\sim} D_2/C_2$.  Then $W:=\{(g_1,g_2) \in \calG_{d_1}\times \calG_{d_2} : f(g_1 D_1) = g_2 D_2\}$ satisfies $G_N\subseteq W \subseteq \calG_N$ and $[\calG_N:W]=[B_i:D_i]$ and $[W:G_N]=[D_i:C_i]$.   The subgroup $G_N$ of $\calG_N$ is maximal since $G$ is a maximal agreeable subgroup of $\calG$ and $G_N\neq \calG_N$, so $D_i=B_i$ or $D_i=C_i$.      Therefore, there are no normal subgroups $D_i$ of $\calG_{d_i}$ that satisfy $C_i \subsetneq D_i \subsetneq B_i$.  By considering the special case where $D_i=C_i$, we find that $[\calG:G]=[B_i:C_i]$.   We have $C_i\neq B_i$ since $G\neq \calG$.

Now suppose that the groups $B_i/C_i$ are nonabelian.   The group $[B_i,B_i]\cdot C_i$ is a normal subgroup of $\calG_{d_i}$ that lies between $C_i$ and $B_i$, so it is either $C_i$ or $B_i$.   Since $B_i/C_i$ is nonabelian by assumption, we deduce that $[B_i,B_i] \cdot C_i = B_i$ and hence $B_i/C_i$ is a nonabelian perfect group.   So there is a nonabelian simple group $S$ that is isomorphic to the quotient of an open normal subgroup of $\calG_{p_1}$ and $\calG_{p_2}$ for some prime $p_1|d_1$ and $p_2|d_2$.  Since $p_1\neq p_2$, Lemma~\ref{L:JH factors} implies that $\SL_2(\FF_{p_1})/\{\pm I\}$ and $\SL_2(\FF_{p_2})/\{\pm I\}$ are isomorphic which is impossible since they have different cardinalities.  So the groups $B_1/C_1$ and $B_2/C_2$ are both abelian

We have now verified that $(C_1,C_2)$ is in $\mathscr{C}_{d_1}$, $G_N\supseteq C_1\times C_2$, and $[\calG:G]=[\calG_N:G_N]=[B_i:C_i] = |\calA(C_1,C_2)|$.  
\end{proof}

\subsubsection{Computing $\calM(\calG)$} \label{SSS:computing MG}

Fix an agreeable subgroup $\calG$ of $\GL_2(\Zhat)$.  We now explain how to compute the groups in the set $\calM(\calG)$.   We assume that $\calG$ is given explicitly by its level and generators of its image modulo its level.  Let $N$ be the product of the primes that divide the level of $\calG$.  \\

Take any proper divisor $1<d_1\leq \sqrt{N}$ of $N$ and set $d_2=N/d_1$.    From $\calG$, we can compute the corresponding subgroups $B_1 \subseteq \calG_{d_1}$ and $B_2 \subseteq \calG_{d_2}$.  The group $\ZZ_{d_i}^\times \cdot [B_i,B_i]$ is open in $\GL_2(\ZZ_{d_i})$, normal in $\calG_{d_i}$ and is computable using \S\ref{SSS:computing commutator subgroups} and Lemma~\ref{L:level by adjoining scalars}.  For any pair $(C_1,C_2)\in \mathscr{C}_{d_1}$, we have 
\begin{align}\label{E:C inclusions}
\ZZ_{d_i}^\times \cdot [B_i,B_i]\subseteq C_i \subsetneq B_i 
\end{align}
for $i\in\{1,2\}$ since $B_i/C_i$ is abelian and $C_i$ contains the scalar matrices.   

We can compute the finite number of pairs of groups $(C_1,C_2)$ that satisfy (\ref{E:C inclusions}).  The group $C_i$ contains the scalars $\ZZ_{d_i}^\times$.  The group $C_i$ is normal in $B_i$ and $B_i/C_i$ is abelian since $[B_i,B_i] \subseteq C_i$.  For each pair, we can determine whether $(C_1,C_2)$ lies in $\mathscr{C}_{d_i}$.  So we can thus compute the set $\mathscr{C}_{d_i}$.     For each pair $(C_1,C_2) \in \mathscr{C}_{d_i}$, we can compute the groups $G$ satisfying $\calG \supseteq G \supseteq C_1\times C_2$ and $[\calG:G] = |\calA(C_1,C_2)|$, and determine which lie in $\calM(\calG)$.     

By varying over all proper divisors $1<d_1\leq \sqrt{N}$ of $N$,  Lemma~\ref{L:MG info} says that the above method will find all the groups $G$ in $\calM(\calG)$ for which the levels of $G$ and $\calG$ have the same prime divisors.

To compute the $G\in \calM(\calG)$ for which the levels of $G$ and $\calG$ have different prime divisors, we can check the finite number of possible groups $G$ arising from parts (\ref{L:calM description ii}), (\ref{L:calM description iii}) and (\ref{L:calM description iv}) of Lemma~\ref{L:agreeable different primes}.

\section{Constructing agreeable groups} \label{S:constructing agreeable}

The goal of this section is to prove Theorem~\ref{T:main agreeable} and to explain how to construct all the relevant groups and modular curves.  Set $\calL:=\{2,3,5,7,11,13,17,37\}$.

\subsection{$\ell$-adic case}  \label{SS:ell-adic case for scrA}

Fix any prime $\ell \in \calL$.   We want to construct an analogue of the set  $\scrA$ in Theorem~\ref{T:main agreeable} when we restrict to groups whose level is a power of $\ell$.  More precisely, we want a finite set $\scrS_\ell$ of agreeable subgroups that are pairwise non-conjugate in $\GL_2(\Zhat)$ and satisfy the following conditions:
\begin{itemize}
\item
For any group $\calG\in \scrS_\ell$, the level of $\calG$ is a power of $\ell$.
\item
Let $G$ be any agreeable group whose level is a power of $\ell$ and $X_G(\QQ)$ has a non-CM point.
\begin{itemize}
\item
If $X_{G}(\QQ)$ is infinite, then $G$ is conjugate in $\GL_2(\Zhat)$ to some group $\calG\in \scrS_\ell$.
\item
If $X_G(\QQ)$ is finite, then $G$ is conjugate in $\GL_2(\Zhat)$ to a subgroup of some $\calG\in \scrS_\ell$ with $X_\calG(\QQ)$ finite.
\end{itemize}
\item 
If $\calG\in \scrS_\ell$ is a group for which $X_{\calG}(\QQ)$ is finite, then $X_G(\QQ)$ is infinite for all agreeable groups $\calG\subsetneq G \subseteq \GL_2(\Zhat)$.
\item
If $\calG\in \scrS_\ell$ is a group for which $X_{\calG}(\QQ)$ has genus at most $1$, then $X_\calG(\QQ)$ has a non-CM point.
\end{itemize}

In \cite{MR3671434}, there is a classification of the open subgroups $\calG$ of $\GL_2(\Zhat)$ for which the level of $\calG$ is a power of $\ell$, $\det(\calG)=\Zhat^\times$, $-I \in \calG$, and $X_\calG(\QQ)$ is infinite.   In the recent work of Rouse, Sutherland and Zureick-Brown \cite{RSZ}, they consider the more general  problem of describing the $\ell$-adic images of elliptic curves over $\QQ$ and they give a complete description up to a few modular curves of high genus whose rational points they cannot determine.

By combining the results from \cite{MR3671434} and \cite{RSZ}, it is easy to produce a finite set $\scrS_\ell$ of agreeable groups that satisfy the desired properties.  Note that our groups need to be open in $\GL_2(\Zhat)$ with $\ell$-power level, have full determinant, and contain the scalars $\Zhat^\times$; the agreeable property is then immediate.

For the groups $\calG\in \scrS_\ell$ of genus at most $1$, we computed a model for $X_\calG$ and the morphism $\pi_\calG\colon X_\calG\to \PP^1_\QQ$ to the $j$-line using the methods outlined in \S\ref{SS:explicit low genus model}.   For these groups $\calG$ and a rational number $j\in \QQ$, it is a direct computation to check whether $j=\pi_\calG(P)$ for some $P\in X_\calG$ (it will reduce to finding roots of polynomials in $\QQ[x]$).

Now consider a group $\calG \in \scrS_\ell$ with $X_\calG$ having genus at least $2$.   In the cases where $X_\calG(\QQ)$ is known in \cite{RSZ}, they have computed the finite number of $j$-invariants of the non-CM points (if there are no non-CM points, then $\calG$ can be removed from the set $\scrA$).   In the few cases, where $X_\calG(\QQ)$ is not known, we can follow the method of \S11 in \cite{RSZ} to use Frobenius matrices to rule out rational points lying above any particular $j$-invariant $j\in \QQ$ of a non-CM elliptic curve (or in an incredibly unlikely case, you will find an unexpected rational point on their modular curves).   

\subsection{Constructions of $\scrA$}
\label{SS:construction of scrA}

For each prime $\ell\in \calL$, let $\scrS_\ell$ be a set of agreeable groups as in \S\ref{SS:ell-adic case for scrA}.   Let $\scrS$ be the (finite) set of subgroups of $\GL_2(\Zhat)$ that are of the form $\bigcap_{\ell\in \calL} H_\ell$ with $H_\ell\in \scrS_\ell$.   Every group $G \in \scrS$ is of the form $\prod_\ell G_\ell$ where $G_\ell$ is an open subgroup of $\GL_2(\ZZ_\ell)$ that satisfies $\det(G_\ell)=\ZZ_\ell^\times$ and $\ZZ_\ell^\times \cdot I \subseteq G_\ell$ for all $\ell$, and $G_\ell=\GL_2(\ZZ_\ell)$ for $\ell\notin \calL$.   So each group $G\in \scrS$ is agreeable and its level is not divisible by any prime $\ell\notin \calL$.

The groups in $\scrS$ are pairwise non-conjugate in $\GL_2(\Zhat)$.  We define a partial ordering on the set $\scrS$ by saying that $G\preceq \calG$ if $G$ is conjugate in $\GL_2(\Zhat)$ to a subgroup of $\calG$.  We now construct a subset $\scrB \subseteq \scrS$ by applying the following algorithm.   
\begin{itemize}
\item
Set $\scrB:=\emptyset$ and $\calS:=\scrS$.
\item
Choose a maximal element $G$ of $\calS$ with respect to $\preceq$ and remove it from the set $\calS$.   

When $X_G$ has genus at most $1$, we can determine whether the set $X_G(\QQ)$ is infinite and whether it has a non-CM point.  As in \S\ref{SS:explicit low genus model}, we can compute an explicit model of $X_G$ and compute its rational points.   When $X_G$ has a rational point, we can also compute the morphism $\pi_G$ down to the $j$-line and then determine if $X_G(\QQ)$ has a non-CM point.

When $X_G$ has genus at least $2$, then $X_G(\QQ)$ is finite by Faltings.

\item
If $X_G$ has genus at least $2$ or $X_G(\QQ)$ has a non-CM point, then adjoin $G$ to the set $\scrB$.

\item
If $X_G(\QQ)$ is finite, then remove from $\calS$ all the elements $\calG$ for which $\calG \preceq G$.    

\item 
If $\calS$ is nonempty, we go back to the second step where we chose another maximal element of $\calS$.  
\end{itemize}

The above process eventually halts since $\scrS$ is finite  and when it ends we will have our desired set $\scrB$.

\begin{lemma} \label{L:entanglement consistency}
Let $G$ be an agreeable subgroup of $\GL_2(\Zhat)$ such that $X_G(\QQ)$ has a non-CM point.  Assume further that $G=\prod_\ell G_\ell$, where $G_\ell$ is an open subgroup of $\GL_2(\ZZ_\ell)$ satisfying  $G_\ell=\GL_2(\ZZ_\ell)$ for all primes $\ell\notin \calL$.
\begin{romanenum}
\item \label{L:entanglement consistency i}
If $X_G(\QQ)$ is infinite, then $G$ is conjugate in $\GL_2(\Zhat)$ to some group $\calG\in \scrB$.
\item \label{L:entanglement consistency ii}
If $X_G(\QQ)$ is finite, then $G$ is conjugate in $\GL_2(\Zhat)$ to a subgroup of some $\calG\in \scrB$ with $X_\calG(\QQ)$ finite.
\end{romanenum}
\end{lemma}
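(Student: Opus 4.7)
The plan is to reduce prime-by-prime using the local sets $\scrS_\ell$ and then track the algorithm constructing $\scrB$.

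For each $\ell \in \calL$, set $G_\ell^\sharp := G_\ell \times \prod_{p\ne\ell}\GL_2(\ZZ_p)$, an open subgroup of $\GL_2(\Zhat)$ with $\ell$-power level. Since $G = \prod_p G_p$ is agreeable, the odd prime divisors of the levels of $G_p$ and $G_p\cap\SL_2(\ZZ_p)$ match for each $p$, and this immediately implies that $G_\ell^\sharp$ is also agreeable. The finite morphism $X_G\to X_{G_\ell^\sharp}$ sends the non-CM rational point of $X_G$ to a non-CM rational point of $X_{G_\ell^\sharp}$, and also transfers infiniteness in case (i). Applying the defining properties of $\scrS_\ell$ to $G_\ell^\sharp$ yields, after conjugation inside $\GL_2(\ZZ_\ell)$ (embedded in $\GL_2(\Zhat)$ by the identity at other primes), some $H_\ell\in\scrS_\ell$ with $G_\ell^\sharp\subseteq H_\ell$, with equality whenever $X_{G_\ell^\sharp}(\QQ)$ is infinite. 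Performing these conjugations simultaneously produces a single global conjugation of $G$ after which $G\subseteq \calG':=\bigcap_{\ell\in\calL}H_\ell\in\scrS$.

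In case (i), each $X_{G_\ell^\sharp}(\QQ)$ is infinite (it receives the infinite set $X_G(\QQ)$ via a finite morphism), so $G_\ell^\sharp=H_\ell$ for every $\ell$, giving $G=\calG'\in\scrS$. In case (ii), either every $X_{G_\ell^\sharp}(\QQ)$ is infinite, in which case $G=\calG'\in\scrS$ and $X_{\calG'}(\QQ)=X_G(\QQ)$ is finite by hypothesis; or else some $X_{G_{\ell_0}^\sharp}(\QQ)$ is finite, so by construction $X_{H_{\ell_0}}(\QQ)$ is finite and hence $X_{\calG'}(\QQ)$ is finite via the finite morphism $X_{\calG'}\to X_{H_{\ell_0}}$. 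In both subcases we obtain $\calG'\in\scrS$ containing a conjugate of $G$ with $X_{\calG'}(\QQ)$ of the expected cardinality.

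To upgrade $\calG'$ to an element of $\scrB$, note that the algorithm only deletes an element of $\calS$ in two ways: by processing it directly, or by processing some strictly larger $G_0$ (with $\calG'\preceq G_0$) whose $X_{G_0}(\QQ)$ is finite. In case (i), the group $G=\calG'$ cannot be removed without being processed, since any such $G_0$ would force $X_G(\QQ)$ to be finite through the finite morphism $X_G\to X_{G_0}$; hence $G$ is processed, and is adjoined to $\scrB$ because $X_G(\QQ)$ has a non-CM point. In case (ii), either $\calG'$ itself is processed, in which case a non-CM point is inherited from $X_G$ via the cover $X_G\to X_{\calG'}$ and $\calG'\in\scrB$; or $\calG'$ is removed by processing some $G_0$ with $\calG'\preceq G_0$ and $X_{G_0}(\QQ)$ finite, in which case $X_{G_0}$ also acquires a non-CM rational point via $X_G\to X_{G_0}$ and is adjoined to $\scrB$, so $\calG:=G_0$ does the job. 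The main subtlety will be the ``entangled'' subcase of (ii) in which every $\ell$-adic projection has infinitely many rational points on its modular curve yet $X_G(\QQ)$ itself is finite; the argument sidesteps it by observing that in this regime $G$ already lies in $\scrS$ and can serve as its own $\calG'$.
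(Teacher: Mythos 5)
Your proof is correct and follows essentially the same route as the paper: blow up $G$ to the prime-power-level groups $H_\ell := G_\ell \times \prod_{p\neq \ell}\GL_2(\ZZ_p)$ (your $G_\ell^\sharp$), invoke the defining properties of $\scrS_\ell$ at each prime, conjugate simultaneously to land inside a group $\calG'\in \scrS$, and then argue from the construction of $\scrB$. The only divergence is cosmetic. In the subcase where $X_{H_\ell}(\QQ)$ is finite for some $\ell$, the paper never forms the intersection $\calG'$ at all; it applies the third property of $\scrS_\ell$ (no agreeable group strictly containing a finite-rational-point member of $\scrS_\ell$ has finite rational points) to conclude directly that the relevant member of $\scrS_\ell$ is processed and hence lies in $\scrB$. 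And in the remaining ``entangled'' subcase of (ii), where $G$ itself lies in $\scrS$, the paper chooses at once a group maximal in $\scrS$ among those $\succeq G$ with finite rational points, rather than tracking whether $\calG'$ is processed or removed. Your dichotomy --- either $\calG'$ is processed and belongs to $\scrB$, or it is removed when processing some larger $G_0$ with $X_{G_0}(\QQ)$ finite and then $G_0$ serves --- is precisely the same argument unrolled step by step; both versions are valid, with the paper's being slightly more compact and yours making the bookkeeping of the $\scrB$-construction fully explicit.
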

\begin{proof}
For each $\ell \in \calL$, define $H_\ell := G_\ell \times \prod_{p \neq \ell} \GL_2(\ZZ_p)$.  The group $H_\ell$ is agreeable and $X_{H_\ell}(\QQ)$ has a non-CM point since $G\subseteq H_\ell$ and $X_G(\QQ)$ has a non-CM point. 

First suppose that $X_{H_\ell}(\QQ)$ is finite for some $\ell \in \calL$ and hence $X_G(\QQ)$ is finite.   By the properties of $\scrS_\ell$, there is a group $\calG \in \scrS_\ell$ such that $H_\ell$ is conjugate to a subgroup of $\calG$, $X_{\calG}(\QQ)$ is finite, and $X_{G'}(\QQ)$ is infinite for all $\calG \subsetneq G' \subseteq \GL_2(\Zhat)$.   We have $\calG \in \scrS$.    Since $X_{G'}(\QQ)$ is infinite for all $\calG \subsetneq G' \subseteq \GL_2(\Zhat)$, we find that $\calG$ is in the set $\scrB$. Therefore, $G$ is conjugate to a subgroup of $\calG \in \scrB$ and $X_{\calG}(\QQ)$ is finite.  

For the rest of the proof, we may assume that $X_{H_\ell}(\QQ)$ is infinite for all $\ell \in \calL$. After replacing $G$ by a conjugate in $\GL_2(\Zhat)$, we may assume by the properties of the sets $\scrS_\ell$ that $H_\ell \in \scrS_\ell$ for all $\ell \in \calL$.  Therefore, $G= \cap_{\ell \in \calL} H_\ell$ is an element of $\scrS$.

Suppose $X_G(\QQ)$ is infinite and hence $X_\calG(\QQ)$ is infinite for all groups $\calG\in \scrS$ with $G \preceq \calG$.  From our construction of $\scrB$ and $G\in \scrS$, we deduce that $G$ is an element of $\scrB$.  This completes the proof of part (\ref{L:entanglement consistency i}).

Finally suppose that $X_G(\QQ)$ is finite.    Let $\calG \in \scrS$ be a group that is maximal, with respect to $\preceq$, amongst the groups for which $X_\calG(\QQ)$ is finite and $G \preceq \calG$ (such a group exists since $G\in \scrS$ and $X_G(\QQ)$ is finite).   For every group $G' \in \scrS$ with $\calG \preceq G'$ and $\calG\neq G'$, $X_{G'}(\QQ)$ is infinite by the maximality of $\calG$.    From our construction of $\scrB$, we deduce that $\calG$ is an element of $\scrB$.    This completes the proof of part (\ref{L:entanglement consistency ii}).
\end{proof}

Take any group $H \in \scrB$; it is agreeable and satisfies $H=\prod_\ell H_\ell$.  From $H$, we now construct a finite set $\scrA_H$ of agreeable groups by applying the following algorithm.   
\begin{itemize}
\item
Set $\scrA_H:=\{H\}$ and $\calS:=\{H\}$.
\item
Choose a group $\calG$ in $\calS$ with minimal index $[\GL_2(\Zhat):\calG]$ and remove it from the set $\calS$.  
\item
We  compute the set $\calM(\calG)$, with notation as in \S\ref{SS:maximal WAWA}, using the method outlined in  \S\ref{SSS:computing MG}.   
\item 
We now let $G \subseteq \GL_2(\Zhat)$ vary over the elements of $\calM(\calG)$.

If $X_G$ has genus at least $2$ and $G$ is not conjugate in $\GL_2(\Zhat)$ to a group in $\scrA_H$, then we adjoin $G$ to the set $\scrA_H$.

Now suppose that $X_G$ has genus at most $1$  and $G$ is not conjugate in $\GL_2(\Zhat)$ to a group in $\scrA_H$.    As in \S\ref{SS:explicit low genus model}, we can compute an explicit model of $X_G$ and compute its rational points.   When $X_G$ has a rational point, we can also compute the morphism from $X_G$ to $X_H$ and then down to the $j$-line.  We can then determine if $X_G(\QQ)$ has a non-CM point.   If $X_G(\QQ)$ has infinitely many points, then we adjoin $G$ to the sets $\scrA_H$ and $\calS$.   If $X_G(\QQ)$ has finitely many points and a non-CM point, then we adjoin $G$ to the set $\scrA_H$. 

\item 
If $\calS$ is nonempty, we go back to the second step where we chose another group $\calG$ in $\calS$.
\end{itemize}
For each $\calG\in \scrA_H$, we will have $\calG_\ell=H_\ell$ for all primes $\ell$.  The above process halts by Lemma~\ref{L:agreeable finiteness}.  Finally, define the (finite) set 
\[
\scrA:=\bigcup_{H\in \scrB} \scrA_H
\]
of agreeable subgroups of $\GL_2(\Zhat)$.  From our descriptions of the sets $\scrB$ and $\scrA_H$, we have explained how the set $\scrA$ is computable.

For any group $\calG\in \scrA$, the level of $\calG$ is divisible only by primes in $\calL$.   For each $\calG\in \scrA$ with $X_\calG$ having genus at most $1$, $X_\calG(\QQ)$ has a non-CM point by construction; moreover, we will have found a model of $X_\calG$ and the morphism from $X_\calG$ to the $j$-line. 

\begin{lemma} \label{L:more entanglement consistency}
Take any agreeable group $G$ for which $X_G(\QQ)$ has a non-CM point.  Assume that the level of $G$ is not divisible by any prime $\ell\notin\calL$.
\begin{romanenum} 
\item \label{L:more entanglement consistency i}
If $X_G(\QQ)$ is infinite, 
then $G$ is conjugate in $\GL_2(\Zhat)$ to a group $\calG\in \scrA$.    
\item \label{L:more entanglement consistency ii}
If $X_G(\QQ)$ is finite, then $G$ is conjugate in $\GL_2(\Zhat)$ to a subgroup of some group $\calG \in \scrA$ with $X_\calG(\QQ)$ finite.
\end{romanenum}
\end{lemma}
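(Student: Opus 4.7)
Plan: Let $\widetilde G := \prod_\ell G_\ell$ with $G_\ell$ the $\ell$-adic projection of $G$. I will first show that $\widetilde G$ satisfies the hypotheses of Lemma~\ref{L:entanglement consistency}. Openness, $\det(\widetilde G)=\Zhat^\times$, and $\Zhat^\times\cdot I\subseteq \widetilde G$ are inherited from $G\subseteq \widetilde G$; the condition $\widetilde G_\ell=\GL_2(\ZZ_\ell)$ for $\ell\notin \calL$ holds by the hypothesis on the level of $G$. For the agreeability of $\widetilde G$, observe that since $\det(G_\ell)=\ZZ_\ell^\times$, we have $G_\ell=\GL_2(\ZZ_\ell)$ if and only if $G_\ell\cap\SL_2(\ZZ_\ell)=\SL_2(\ZZ_\ell)$, which shows that the odd primes dividing the levels of $\widetilde G$ and of $\widetilde G\cap \SL_2(\Zhat)$ coincide. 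Finally, $X_{\widetilde G}(\QQ)$ has a non-CM point because by Proposition~\ref{P:intial moduli property} the inclusion $G\subseteq \widetilde G$ gives $\pi_G(X_G(\QQ))\subseteq \pi_{\widetilde G}(X_{\widetilde G}(\QQ))$.

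Now apply Lemma~\ref{L:entanglement consistency} to $\widetilde G$. If $X_{\widetilde G}(\QQ)$ is finite then $X_G(\QQ)$ is also finite, and we obtain $H\in \scrB\subseteq \scrA$ with $\widetilde G$ conjugate to a subgroup of $H$ and $X_H(\QQ)$ finite; conjugating $G$ accordingly and taking $\calG:=H$ completes case (ii). In the remaining situation---which necessarily includes case (i), since the finite-degree covering $X_G\to X_{\widetilde G}$ shows that $X_G(\QQ)$ infinite forces $X_{\widetilde G}(\QQ)$ infinite---Lemma~\ref{L:entanglement consistency}(i) furnishes $H\in \scrB$ with $\widetilde G$ conjugate to $H$. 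After conjugating, we may assume $\widetilde G=H$, so $G\subseteq H$ with $G_\ell=H_\ell$ for every prime $\ell$.

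Set $G'':=G$ in case (i) and, in case (ii), let $G''$ be any agreeable subgroup maximal subject to $G\subseteq G''\subseteq H$, $G''_\ell=H_\ell$ for all $\ell$, and $X_{G''}(\QQ)$ finite; such a $G''$ exists since $G$ itself satisfies these conditions, and $G''$ inherits a non-CM point from $G$ via the covering $X_G\to X_{G''}$. Extend $G''$ to a chain $G''=G_{k+1}\subsetneq G_k\subsetneq\cdots\subsetneq G_0=H$ of agreeable subgroups of $H$ with $(G_i)_\ell=H_\ell$ that is maximal under refinement; maximality forces $G_{i+1}\in \calM(G_i)$ for each $0\leq i\leq k$. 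For $i\leq k$ the group $G_i$ strictly contains $G''$, and $X_{G_i}(\QQ)$ is infinite: in case (i) because $X_G(\QQ)$ is infinite and the covering $X_G\to X_{G_i}$ has finite degree, and in case (ii) by the chosen maximality of $G''$.

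To conclude, I trace the construction of $\scrA_H$. Initially $\calS=\{H\}$ and $H\in \scrA_H$. Processing $G_0=H$ computes $\calM(G_0)$, and since $G_1\in \calM(G_0)$ has $X_{G_1}(\QQ)$ infinite with a non-CM point, $G_1$ is adjoined to both $\scrA_H$ and $\calS$; proceeding inductively, each $G_i$ with $i\leq k$ is eventually processed and $G_{i+1}$ is adjoined to $\scrA_H$. When $G_{k+1}=G''$ is reached, $X_{G''}(\QQ)$ is finite but contains a non-CM point, so $G''$ is adjoined to $\scrA_H$. In case (i) this gives $G=G''\in \scrA_H\subseteq \scrA$, and in case (ii) $\calG:=G''\in \scrA$ satisfies $G\subseteq \calG$ with $X_\calG(\QQ)$ finite. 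The main technical hurdle is verifying that $\widetilde G$ is agreeable and that the chain's refinement-maximality actually forces $G_{i+1}\in\calM(G_i)$ at each step; once these are in place, what remains is careful bookkeeping through the iterative algorithm.
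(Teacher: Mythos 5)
Your argument is correct and uses the same essential ingredients as the paper's proof: both pass to $H:=\prod_\ell G_\ell$, invoke Lemma~\ref{L:entanglement consistency}, and then exploit the $\calM(\cdot)$-recursion that builds $\scrA_H$ (and you take the trouble to verify that $H$ is agreeable, which the paper states without proof). The difference is organizational. The paper is top-down: it picks $\calG\in\scrA_H$ of maximal index with $G$ conjugate to a subgroup of $\calG$, and, if neither conclusion already holds, finds $M\in\calM(\calG)\cap\scrA_H$ with $G\subseteq M\subsetneq\calG$, contradicting maximality. You work bottom-up, building a refinement-maximal chain from $H$ down to $G''$ and tracing the $\scrA_H$ algorithm along it; introducing $G''$ is a tidy way to make the two cases run in parallel. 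One caveat on precision: the algorithm adjoins an element of $\calM(\cdot)$ to $\scrA_H$ only when it is not already conjugate to a member of $\scrA_H$, so the inductive invariant you actually get is that each $G_i$ is \emph{conjugate} to some $\widetilde G_i\in\scrA_H$, and (for $i\le k$) that $\widetilde G_i$ is placed in $\calS$ and eventually processed, at which point a conjugate of $G_{i+1}$ appears in $\calM(\widetilde G_i)$ and is therefore conjugate to a member of $\scrA_H$. Since the lemma only asks for conjugacy this does no harm, but the conclusion should be phrased as: $G''$ is conjugate to a group in $\scrA_H$, rather than $G''\in\scrA_H$.
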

\begin{proof}
 Define $H:=\prod_\ell G_\ell$, where $G_\ell \subseteq \GL_2(\ZZ_\ell)$ is the $\ell$-adic projection of $G$; it is an agreeable group whose level is only divisible by primes in $\calL$.    Since $G\subseteq H$, we deduce that $X_H(\QQ)$ has a non-CM point.  
 
Suppose that $X_H(\QQ)$ is finite.   By Lemma~\ref{L:entanglement consistency}(\ref{L:entanglement consistency ii}), $H$ is conjugate to a subgroup of some group $\calG \in \scrB$ with $X_\calG(\QQ)$ finite. In particular, $X_G(\QQ)$ is finite and $G$ is conjugate to a subgroup of $\calG$.   Part (\ref{L:more entanglement consistency ii}) in this case follows since $\calG \in \scrB$ and hence $\calG \in \scrA_\calG \subseteq \scrA$.
 
We may now assume that $X_H(\QQ)$ is infinite.   After first replacing $G$ by a conjugate in $\GL_2(\Zhat)$, we may assume by Lemma~\ref{L:entanglement consistency}(\ref{L:entanglement consistency i}) that $H \in \scrB$.    Let $\calG \in \scrA_H$ be a group with maximal index in $\GL_2(\Zhat)$ for which $G$ is conjugate in $\GL_2(\Zhat)$ to a subgroup of $\calG$ (it exists since $G\subseteq H$ and $H\in \scrA_H$).   If $G$ is conjugate to $\calG$ in $\GL_2(\Zhat)$, then the lemma is immediate since $\calG \in \scrA_H \subseteq \scrA$.  If $X_\calG(\QQ)$ is finite, and hence $X_G(\QQ)$ is finite as well, then the lemma also holds.

Finally, after replacing $G$ by a conjugate in $\GL_2(\Zhat)$, we are left to consider the case where $G$ is a proper subgroup of $\calG$ and $X_\calG(\QQ)$ is infinite.  Choose an agreeable group $M$ that is maximal amongst those that satisfy $G \subseteq M \subsetneq\calG$.   Since $\calG\in \scrA_H$, we have $G_\ell=\calG_\ell$ and hence $M_\ell =\calG_\ell$ for all primes $\ell$, where $\calG_\ell$ and $M_\ell$ are the $\ell$-adic projections of $\calG$ and $M$, respectively.    Since $M$ is a maximal agreeable subgroup of $\calG$, we have $M \in \calM(\calG)$.   Since $\calG\in \scrA_H$ and $X_\calG(\QQ)$ is infinite, we have $M\in \scrA_H \subseteq \scrA$.  Since $G\subseteq M \subsetneq \calG$, this contradicts the maximality in our choice of $\calG$.  Therefore, this final case does not occur.
\end{proof}

\subsection{Proof of Theorem~\ref{T:main agreeable}}

Let $\scrA$ be the finite set of agreeable groups constructed in \S\ref{SS:construction of scrA}.  

 By construction, our set $\scrA$ satisfies  (\ref{I:prop of goal a}).  The set $\scrA$ satisfies  (\ref{I:prop of goal b}) by Lemma~\ref{L:more entanglement consistency}.  The set $\scrA$ satisfies  (\ref{I:prop of goal d}) and (\ref{I:prop of goal e}) since in our construction we computed a model of any modular curve of genus at most $1$ that occurred and ignored those with no non-CM points.

For each group $\calG \in \scrA$ for which $X_\calG$ has genus at least $2$, we can check whether there is a group $G\in \scrA$ that is conjugate in $\GL_2(\Zhat)$ to a proper subgroup of $\calG$.  If any such group $G$ exists, we can remove it from the set $\scrA$.  Since condition (\ref{I:prop of goal b}) holds, our possibly smaller set $\scrA$ will satisfy condition (\ref{I:prop of goal c}).   

The set $\scrA$ we constructed may contain distinct subgroups of $\GL_2(\Zhat)$ that are conjugate.   To obtain our final set, we replace $\scrA$ by a maximal subset of groups that are non-conjugate in $\GL_2(\Zhat)$.  This does not affect the validity of conditions (\ref{I:prop of goal a})--(\ref{I:prop of goal e}).

It remains to explain how to implement (\ref{I:prop of goal f}).  Take any group $\calG \in \scrA$.   If $X_\calG$ has genus at most $1$, then from (\ref{I:prop of goal e}) we have an explicit model of $X_\calG$ and we have computed the map to the $j$-line.  So (\ref{I:prop of goal f}) can be done directly when $X_\calG$ has genus at most $1$.    We may now assume that $X_\calG$ has genus at least $2$.  

As mentioned in \S\ref{SS:ell-adic case for scrA}, \cite{RSZ} explains how to check whether a given $j\in \QQ-\{0,1728\}$ is in $\pi_G(X_G(\QQ))$ for an open subgroup $G$ of $\GL_2(\Zhat)$ with $\det(G)=\Zhat^\times$ and prime power level.  So we may further assume that the level of $\calG$ is not a prime power.  

For each prime $\ell$, let $\calG_\ell$ be the $\ell$-adic projection of $\calG$.   Define $H:=\prod_{\ell} \calG_\ell\subseteq \GL_2(\Zhat)$; it is agreeable and $\calG\subseteq H$.  Suppose that $\calG=H$, i.e., $\calG$ has no ``entanglements''.    Take any $j\in \QQ-\{0,1728\}$.   Using Proposition~\ref{P:revised moduli property} and $\calG=H$, we have  $j\in\pi_\calG(X_\calG(\QQ))$ if and only if $j\in \pi_{\calG'}(X_{\calG'}(\QQ))$ with $\calG':=\calG_\ell \times \prod_{p\neq \ell} \GL_2(\ZZ_p)$ for all $\ell$ dividing the level of $\calG$.   So in this case, (\ref{I:prop of goal f}) reduces to the prime power case covered in \cite{RSZ}.

We are left to consider the case where also  $\calG$ is a proper subgroup of $H=\prod_{\ell} \calG_\ell$.  Choose an agreeable group $\calG \subsetneq G \subseteq H$ with $[G:\calG]$ minimal.
 Since $X_\calG$ has genus at least $2$, and hence $X_\calG(\QQ)$ is finite by Faltings, (\ref{I:prop of goal c}) implies that $X_G(\QQ)$ is infinite.  By (\ref{I:prop of goal b}), $G$ is conjugate in $\GL_2(\Zhat)$ to a group in $\scrA$.  After replacing $\calG$ with a conjugate, we may assume without loss of generality that $G\in \scrA$.   
 
We are now in the setting of \S\ref{SS:alternate models}.  So we can compute a singular model of $X_\calG$ and, with respect to this model, we have the natural morphism $\pi\colon X_\calG \to X_G$.  Now take any $j\in \QQ-\{0,1728\}$.  By (\ref{I:prop of goal e}), we have a model of $X_G$ and can compute the morphism $\pi_G\colon X_G\to \PP^1_\QQ$.  So we can compute the finite set of $Q\in X_G(\QQ)$ with $\pi_G(Q)=j$.   For each such $Q$, we can then compute if there are any $P\in X_\calG(\QQ)$ with $\pi(P)=Q$.   This gives (\ref{I:prop of goal f}) since $\pi_G \circ \pi=\pi_\calG$.  (To deal with any singularities we can either resolve them or compute another model of the curve with different choices.)

\begin{remark}
In practice, one wants to conjugate the groups in $\scrA$ so that for each $\calG\in \scrA-\{\GL_2(\Zhat)\}$, there is another group $G\in \scrA$ with $\calG \subsetneq G$ and $[G: \calG]$ as small as possible.   We have $\pi_\calG=\pi_G \circ \pi$, where $\pi\colon X_\calG\to X_G$ is the natural morphism of degree $[G:\calG]$.  Repeating, we can hope to express $\pi_\calG$ as the composition of morphisms of relatively small degree.  Having such an expression makes it easier to compute the set of $P\in X_\calG(\QQ)$ with $\pi_\calG(P)$ equal to a fixed $j\in \QQ-\{0,1728\}$.
\end{remark}

\section{Finding the agreeable closure of the image of Galois} \label{S:finding the agreeable closure}

Fix a non-CM elliptic curve $E$ defined over $\QQ$.    We have defined a representation 
\[
\rho_E^*\colon \Gal_\QQ \to \GL_2(\Zhat).
\] 
From Serre (Theorem~\ref{T:Serre 1972}), we know that the image $G_E$ of $\rho_E^*$ is an open subgroup of $\GL_2(\Zhat)$.   We have $\det(G_E)=\Zhat^\times$.  By Proposition~\ref{P:agreeable closure}, there is a minimal agreeable subgroup $\calG_E$ of $\GL_2(\Zhat)$ for which $G_E\subseteq \calG_E$, i.e., the agreeable closure of $G_E$.  The group $G_E$, and hence also $\calG_E$, is uniquely determined up to conjugacy in $\GL_2(\Zhat)$.   In this section, we describe how to compute the group $\calG_E$.   

The hardest steps have already been completed by the proof of Theorem~\ref{T:main agreeable} in \S\ref{S:constructing agreeable}.   Let $\scrA$ be a finite set of agreeable subgroups of $\GL_2(\Zhat)$ as in Theorem~\ref{T:main agreeable}.  Note that the set $\scrA$ can be computed once and does not depend on the curve $E/\QQ$.      

For our algorithm, we want as input an elliptic curve $E/\QQ$ given by a Weierstrass model with output the level $N$ of $\calG_E$ along with a set of generators of the image of $\calG_E$ modulo $N$.  This algorithm has been implemented \cite{github} assuming that Conjecture~\ref{C:Serre question} holds for $E$ and that $E$ does not give rise to unknown rational points on a few explicit high genus modular curves (these conditions are verified during the algorithm).   For any remaining cases, which involve only a finite number of $j$-invariants or a counterexample to Conjecture~\ref{C:Serre question}, we can compute the group $\calG_E$ using ad hoc techniques like those in \S\ref{SS:closure in exceptional cases}.\\

Once we know $\calG_E$, we can compute the commutator subgroup $[\calG_E,\calG_E]$, cf.~\S\ref{SSS:computing commutator subgroups},  which is an open subgroup of $\SL_2(\Zhat)$.    By Proposition~\ref{P:agreeable closure} and Lemma~\ref{L:abelian quotients and same commutators}, we have  $G_E \cap\SL_2(\Zhat)=[\calG_E,\calG_E]$
and 
\[
[\GL_2(\Zhat): G_E] = [\SL_2(\Zhat): [\calG_E,\calG_E]].
\]
In particular, we have an algorithm to compute the group $G_E \cap\SL_2(\Zhat)$, up to conjugacy in $\GL_2(\Zhat)$, and to compute the index $[\GL_2(\Zhat):G_E] = [\GL_2(\Zhat):\rho_E(\Gal_\QQ)]$ occurring in Serre's open image theorem.

\begin{lemma}  \label{L:level of agreeable and primes}
Let $E/\QQ$ be a non-CM elliptic curve over $\QQ$ for which Conjecture~\ref{C:Serre question} holds.   Then the level of $\calG_E$ in $\GL_2(\Zhat)$ is not divisible by any prime $\ell \notin \calL:=\{2,3,5,7,11,13,17,37\}$.
\end{lemma}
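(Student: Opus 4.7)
The plan is to trace the primes dividing the level of $\calG_E$ back to primes $\ell$ for which the mod-$\ell$ representation $\rho_{E,\ell}$ fails to be surjective, and then invoke Conjecture~\ref{C:Serre question} to constrain these primes to the set $\calL$.

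First I would recall the explicit construction of the agreeable closure from the proof of Proposition~\ref{P:agreeable closure}: if $N$ is the squarefree product of the primes dividing the level of $[G_E,G_E]$, then
\[
\calG_E = \ZZ_N^\times \cdot (G_E)_N \,\times\, \prod_{\ell\nmid N}\GL_2(\ZZ_\ell),
\]
and in particular the level of $\calG_E$ is divisible only by primes dividing the level of $[G_E,G_E]$ in $\SL_2(\Zhat)$. By Lemma~\ref{L:KW} we have $[G_E,G_E]=G_E\cap\SL_2(\Zhat)$, so it suffices to control which primes divide the level of $G_E\cap \SL_2(\Zhat)$.

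Next, for primes $\ell\geq 5$, I would apply Lemma~\ref{L:prime divisors of level} to the open subgroup $G_E\subseteq \GL_2(\Zhat)$: the prime $\ell$ divides the level of $G_E\cap\SL_2(\Zhat)$ if and only if the $\ell$-adic projection $\rho_{E,\ell^\infty}(\Gal_\QQ)$ is a proper subgroup of $\GL_2(\ZZ_\ell)$. By Lemma~\ref{L:modell to elladic}, for $\ell\geq 5$ this is equivalent to $\rho_{E,\ell}$ failing to be surjective. Hence for every prime $\ell\geq 5$ that divides the level of $\calG_E$, we must have $\rho_{E,\ell}(\Gal_\QQ)\neq \GL_2(\ZZ/\ell\ZZ)$.

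Finally I would invoke Conjecture~\ref{C:Serre question}, which we are assuming holds for $E$: whenever $\ell>13$ and $\rho_{E,\ell}$ is not surjective, we must have $\ell\in\{17,37\}$. Combining this with the previous paragraph, any prime $\ell\geq 5$ dividing the level of $\calG_E$ lies in $\{5,7,11,13,17,37\}$. Together with the trivially allowed primes $2,3$, this shows that the only primes that can divide the level of $\calG_E$ lie in $\calL=\{2,3,5,7,11,13,17,37\}$, as required. There is no real obstacle here; the lemma is essentially a bookkeeping consequence of the earlier group-theoretic results (Lemmas~\ref{L:KW}, \ref{L:modell to elladic}, \ref{L:prime divisors of level}) together with the input hypothesis on $E$.
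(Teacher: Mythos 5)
Your proposal is correct and reaches the same conclusion by essentially the same chain of lemmas the paper uses (Conjecture~\ref{C:Serre question} $\Rightarrow$ mod-$\ell$ surjectivity $\Rightarrow$ $\ell$-adic surjectivity via Lemma~\ref{L:modell to elladic} $\Rightarrow$ control on the level via Lemma~\ref{L:prime divisors of level}). The only routing difference is that you unpack the explicit construction of $\calG_E$ from~(\ref{E:agreeable closure}) and invoke Lemma~\ref{L:KW}, whereas the paper instead observes $(\calG_E)_\ell=\GL_2(\ZZ_\ell)$ directly and applies Lemma~\ref{L:prime divisors of level} together with the agreeable property of $\calG_E$; this is cosmetic, and your appeal to Lemma~\ref{L:KW} is in fact slightly redundant since Lemma~\ref{L:prime divisors of level} already equates the prime divisors $\geq 5$ of the levels of $[G_E,G_E]$ and of $G_E\cap\SL_2(\Zhat)$.
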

\begin{proof}
By hypothesis on $E/\QQ$, we have $\rho_{E,\ell}(\Gal_\QQ)=\GL_2(\ZZ/\ell\ZZ)$ for all primes $\ell \notin \calL$.   By Lemma~\ref{L:modell to elladic}, we have $\rho_{E,\ell^\infty}(\Gal_\QQ)=\GL_2(\ZZ_\ell)$ for all primes $\ell \notin \calL$.   So for each prime $\ell\notin \calL$, we have $\GL_2(\ZZ_\ell)=(G_E)_\ell \subseteq (\calG_E)_\ell$ and hence $(\calG_E)_\ell=\GL_2(\ZZ_\ell)$, where $(G_E)_\ell$ and $(\calG_E)_\ell$ are the $\ell$-adic projections of $G_E$ and $\calG_E$, respectively.   Since $\calG_E$ is agreeable, its level is not divisible by any prime $\ell \notin \calL$ by Lemma~\ref{L:prime divisors of level}.
\end{proof}

\subsection{Finding the agreeable closure in most cases} 
\label{SS:closure in most cases}

Throughout \S\ref{SS:closure in most cases}, we assume that $j_E \notin \pi_\calG(X_\calG(\QQ))$ for all groups $\calG\in \scrA$ for which $X_{\calG}$ is finite.   This can be verified for the given $j_E$ using Theorem~\ref{T:main agreeable}(\ref{I:prop of goal f}).   This excludes a finite number of $j$-invariants from consideration that we will describe how to deal with in \S\ref{SS:closure in exceptional cases}.

We can apply the algorithm in \cite{surjectivityalgorithm} to compute the finite set of primes $\ell>13$ for which $\rho_{E,\ell}(\Gal_\QQ)\neq \GL_2(\ZZ/\ell\ZZ)$.   For the rest of \S\ref{SS:closure in most cases}, we shall further assume that Conjecture~\ref{C:Serre question} holds for $E$, i.e., $\rho_{E,\ell}(\Gal_\QQ)= \GL_2(\ZZ/\ell\ZZ)$ for all primes $\ell>13$ with   
\[
(\ell,j_E) \in \big\{\, (17, -17^2 \!\cdot\! 101^3/2), \,(17,-17\!\cdot\! 373^3/2^{17}),\, (37,-7\!\cdot\! 11^3),\, (37,-7\!\cdot\! 137^3\!\cdot\! 2083^3) \,\big\}.
\]
Any potential counterexample to Conjecture~\ref{C:Serre question} can be dealt with using the methods from \S\ref{SS:closure in exceptional cases}.\\

By Lemma~\ref{L:level of agreeable and primes}, the level of $\calG_E$ is not divisible by any primes $\ell\notin \calL:=\{2,3,5,7,11,13,17,37\}$.   Since $G_E \subseteq \calG_E$, we know that $X_{\calG_E}$ has a rational non-CM point; in particular, there is a $P\in X_{\calG_E}(\QQ)$ such that $\pi_{\calG_E}(P)=j_E$.

If $X_{\calG_E}(\QQ)$ is finite, then $\calG_E$ is conjugate in $\GL_2(\Zhat)$ to a subgroup of some $\calG \in \scrA$ for which $X_\calG(\QQ)$ is finite by Theorem~\ref{T:main agreeable}(\ref{I:prop of goal b}).   Therefore, $X_{\calG_E}(\QQ)$ is infinite by our assumption on $E$.  By Theorem~\ref{T:main agreeable}(\ref{I:prop of goal b}), $\calG_E$ is conjugate in $\GL_2(\Zhat)$ to a unique group $\calG\in \scrA$.   Since $\calG_E$ is the agreeable closure of $G_E$, we can characterize $\calG$ as the group in $\scrA$ with maximal index in $\GL_2(\Zhat)$ for which $j_E \in \pi_\calG(X_\calG(\QQ))$.  So by making use of Theorem~\ref{T:main agreeable}(\ref{I:prop of goal f}), we can find $\calG$ which gives the group $\calG_E$ up to conjugacy in $\GL_2(\Zhat)$.

\subsection{Finding the agreeable closure in exceptional cases} 
\label{SS:closure in exceptional cases}

Now suppose that $j_E \in \pi_\calG(X_\calG(\QQ))$ for some group $\calG\in \scrA$ with $X_\calG(\QQ)$ finite.     There are only finitely many $j$-invariants $j_E$ that can arise in this way but they are difficult to determine since finding rational points on high genus curves can be very challenging.   In \S\ref{SS:closure in exceptional cases}, we shall make use of the notation from \S\ref{projection notation}.

However, we can compute the agreeable closure $\calG_E$ for any such $E/\QQ$ that arises.   The group $\calG_E$, up to conjugacy in $\GL_2(\Zhat)$, depends only on $j_E$.  So far, we have found 81 exceptional $j$-invariants that needed to be considered specially.  Any others that may arise can be dealt with in a similar manner.   For simplicity, let us assume that Conjecture~\ref{C:Serre question} holds for $E/\QQ$; we can handle any counterexamples that occur by similar techniques.
\\

By assumption, there is an agreeable group $\calG\in \scrA$ such that $j_E \in \pi_{\calG}(X_{\calG}(\QQ))$ and $X_\calG$ has only finite many rational points.    After conjugating $\rho_E^*$, we may assume that $\calG_E \subseteq \calG$.

We claim that we can compute the $\ell$-adic projection $(\calG_E)_\ell$, up to conjugacy in $\GL_2(\ZZ_\ell)$, for all primes $\ell$.  Using the explicit definition (\ref{E:agreeable closure}) of the agreeable closure, this is equivalent to computing $\ZZ_\ell^\times \cdot \rho_{E,\ell^\infty}^*(\Gal_\QQ)$ for all primes $\ell$.  When $\rho_{E,\ell}(\Gal_\QQ)=\GL_2(\ZZ/\ell\ZZ)$ and $\ell\geq 5$, we have $\rho_{E,\ell^\infty}^*(\Gal_\QQ)=\GL_2(\ZZ_\ell)$ by Lemma~\ref{L:modell to elladic}, and hence $(\calG_E)_\ell=\GL_2(\ZZ_\ell)$.   For the finite number of $\ell$ with $\ell\leq 3$ or $\rho_{E,\ell}(\Gal_\QQ)\neq\GL_2(\ZZ/\ell\ZZ)$, we can compute $\rho_{E,\ell^\infty}(\Gal_\QQ)$, and hence also $(\calG_E)_\ell$, by using the results from \cite{RSZ}.

Using $\calG_E \subseteq \calG$,  we can then check whether or not $(\calG_E)_\ell=\calG_\ell$ for all primes $\ell$.    Suppose that $(\calG_E)_\ell\subsetneq\calG_\ell$ for some prime $\ell$.  This will produce an explicit proper agreeable group $\calG' \subsetneq \calG$ for which $j_E\in \pi_{\calG'}(X_{\calG'}(\QQ))$.    We can replace $\calG$ by $\calG'$ and then repeat the above process.   We will eventually end up with an explicit agreeable subgroup $\calG$ of $\GL_2(\Zhat)$ with $X_\calG(\QQ)$ finite, $j_E\in \pi_\calG(X_\calG(\QQ))$, and $\calG_\ell$ and $(\calG_E)_\ell$ conjugate in $\GL_2(\ZZ_\ell)$ for all primes $\ell$.\\

As in \S\ref{SS:maximal WAWA}, we can define $\calM(\calG)$ to be the set of maximal proper agreeable subgroups $G$ of $\calG$ that satisfy $G_\ell=\calG_\ell$ for all primes $\ell$.  The set $\calM(\calG)$ is finite and computable, cf.~\ref{SSS:computing MG}.   

Take any of the groups $G\in \calM(\calG)$ up to conjugacy in $\GL_2(\Zhat)$.  We want to know whether or not $j_E \in \pi_G(X_G(\QQ))$.  The direct approach is to check after first computing a model for the curve $X_G$ and the morphism $\pi_G$ to the $j$-line; this is doable using the techniques from \S\ref{S:computing modular forms}.   However, these computations seem excessive to deal with a single $j$-invariant $j_E$.  We now explain our ad hoc computations with traces of Frobenius, which can be found in \cite{github}, that allows to verify if $j_E \in \pi_G(X_G(\QQ))$ holds without computing any further modular curves.  

Let $N$ be the level of $G$ and let $M$ be the product of $N$ with the bad primes of $E/\QQ$.   Take any prime $p\nmid M$ and let $a_p(E)$ be the trace of Frobenius of the reduction of $E$ mod $p$, i.e., $a_p(E)=p+1-|E(\FF_p)|$, where we are using a model of $E$ with good reduction at $p$.  The representation $\rho_{E,N}^*$ has good reduction at $p$ and $\rho_{E,N}^*(\Frob_p)^{-1} \in\rho_{E,N}^*(\Gal_\QQ)\subseteq \GL_2(\ZZ/N\ZZ)$ has trace $a_p(E)$ and determinant $p$ modulo $N$.  Let $\xi_p$ be the pair $(a_p(E),p)$ modulo $N$.   Now suppose we found a prime $p\nmid M$ such that $(\tr(g),\det(g))\neq \xi_p$ for all $g$ in the image of $G$ modulo $N$.  In particular, the group $\rho_{E,N}^*(\Gal_\QQ)$ contains an element that is not conjugate in $\GL_2(\ZZ/N\ZZ)$ to any element of the image of $G$ modulo $N$.   Therefore, $\rho_{E}^*(\Gal_\QQ)$ is not conjugate in $\GL_2(\Zhat)$ to a subgroup of $G$ and thus $j_E\notin \pi_G(X_G(\QQ))$.   So by computing $\xi_p$ for many primes $p\nmid M$, we hope to be able to prove that $j_E\notin \pi_G(X_G(\QQ))$.  \\    

Now suppose that after computing $\xi_p$ for many primes $p\nmid M$, we are unable to conclude that $j_E\notin \pi_G(X_G(\QQ))$.   In all the exceptional cases we considered, we then had $[\calG:G]=2$.  The group $G$ is normal in $\calG$ and we obtain a quadratic character 
\[
\chi\colon \Gal_\QQ\xrightarrow{\rho_E^*} \calG_E \hookrightarrow \calG \to \calG/G \cong \{\pm 1\}.
\]
For $\sigma\in \Gal_\QQ$, $\chi(\sigma)$ depends only on $\rho_{E,N}^*(\sigma)$ since $G$ has level $N$.   Therefore, $\chi$ is unramified at all primes $p\nmid M$.    In particular, there are only finite many possible quadratic characters that could arise as $\chi$. Take any prime $p\nmid M$.  If $(\tr(g),\det(g))\neq \xi_p$ for all $g$ in the image of $\calG-G$ modulo $N$, then $\chi(\Frob_p)=1$.    

Suppose that we are able to show that $\chi(\Frob_p)=1$ for enough primes $p\nmid M$, to rule out all possibilities for the characters $\chi$ except $\chi=1$.   Since $\chi=1$, we deduce that $\rho_E^*(\Gal_\QQ)$ is conjugate in $\GL_2(\Zhat)$ to subgroup $G$.   Therefore, $j_E \in \pi_G(X_G(\QQ))$ and we can replace $\calG$ by $G$ and repeat the above process.\\

In the remaining cases, which occurred for three of our $j$-invariants, we are left with a unique $G\in \calM(\calG)$, up to conjugacy in $\GL_2(\Zhat)$, for which we were not yet able to determine whether or not $j_E$ lies in $\pi_G(X_G(\QQ))$. In these remaining cases, we found that for some prime $p\in \{3,5\}$, we have $\calG_{2p}=\calG_2\times \calG_p$ and $G_{2p} \subsetneq G_2 \times G_p = \calG_{2p}$.  We computed the division polynomials for $E$ at $2$ and $p$, factored them into irreducible polynomials over $\QQ$, and computing the discriminants of these polynomials.   From this information, we found a quadratic extension $K/\QQ$ with $K\subseteq \QQ(E[2])$, $K\subseteq \QQ(E[p])$ and $K$ not equal to $\QQ(\sqrt{(-1)^{(p-1)/2} p })$.  Since $\calG_{2p}=\calG_2\times \calG_p$, this proves that $\calG$ is not the agreeable closure of $\rho^*_E(\Gal_\QQ)$.   Therefore, $j_E \in \pi_G(X_G(\QQ))$ and we can replace $\calG$ by $G$ and repeat the above process.

In all our cases, the above arguments eventually lead to an explicit minimal agreeable group $\calG$ with $j_E \in \pi_\calG(X_\calG(\QQ))$ and hence we can take $\calG_E=\calG$.

\section{Abelian quotients} \label{S:finding image given agreeable closure}

Let $\calG$ be an open subgroup of $\GL_2(\Zhat)$ satisfying $\det(\calG)=\Zhat^\times$ and $-I\in \calG$.   Fix an open subgroup $G$ of $\calG$ satisfying $\det(G)=\Zhat^\times$ such that $G$ is a normal subgroup of $\calG$ with $\calG/G$ abelian.
From the openness, the abelian group $\calG/G$ is also finite.    Fix an integer $N\geq 3$ divisible by the level of $G$ and let $\bbar{G}$ and $\bbar{\calG}$ be the images of $G$ and $\calG$, respectively, in $\GL_2(\ZZ/N\ZZ)$.  Reduction modulo $N$ induces an isomorphism $\calG/G \xrightarrow{\sim} \bbar{\calG}/\bbar{G}$ that we will use as an identification. 

\subsection{Setup} \label{SS:general setup}

\subsubsection{Some representations}   \label{SSS:setup reps}

With notation as in \S\ref{SS:specializations}, we have a surjective homomorphism
\[
\varrho:=\varrho_{\scrE_\calG,N}^*\colon \pi_1(U_\calG,\bbar{\eta}) \to \bbar \calG.
\]
Recall that $\varrho$ depends on a choice of a nonzero modular form $f_0$ in $M_3(\Gamma(N),\QQ(\zeta_N))$ and a choice of $\beta$ in a field extension of $\calF_N$ that satisfies $\beta^2=j\cdot f_0^2/E_6$.   When $-I \notin G$, we shall further assume that $f_0$ is chosen to be a nonzero element of $M_{3,\bbar{G}}$; the existence of such an $f_0$ is a consequence of Lemma~\ref{L:weight 3 existence}.     By composing $\varrho$ with the quotient map $\bbar\calG \to \bbar\calG/\bbar G = \calG/G$, we obtain a surjective homomorphism
\[
\alpha \colon \pi_1(U_\calG,\bbar{\eta}) \to \calG/G.
\]   
Let $\phi\colon Y \to U_\calG$ be the \'etale cover corresponding to $\alpha$.   The cover $\phi$ is Galois with Galois group $\calG/G$.  When $-I \in G$, we will have $Y=U_G$ and $\phi$ will be the natural morphism.\\

Now consider the representation $\rho_{\calE,N}^* \colon \Gal_{\QQ(j)} \to \GL_2(\ZZ/N\ZZ)$ as in \S\ref{SS:representations modular explicit}.  By Lemma~\ref{L:FN revisited}, $\rho_{\calE,N}^*$ is surjective and factors through an isomorphism $\Gal(\calF_N(\beta)/\QQ(j)) \xrightarrow{\sim} \GL_2(\ZZ/N\ZZ)$.  We let $\GL_2(\ZZ/N\ZZ)$ act on the right of $\calF_N(\beta)$ via $f * \rho_{\scrE,N}^*(\sigma)^{-1} := \sigma(f)$ for all $f\in \calF_N(\beta)$.   By Lemma~\ref{L:FN revisited}(\ref{L:FN revisited iii}), this extends our earlier right action of $\GL_2(\ZZ/N\ZZ)$ on $\calF_N$.   Since $\beta\notin \calF_N$, we have $\beta * (-I) = -\beta$ by Lemma~\ref{L:FN revisited}(\ref{L:FN revisited vi}).  Define the subfield  $L:=\calF_N(\beta)^{\bbar{G}}$ of  $\calF_N(\beta)$.   The representations $\rho_{\calE,N}^*$ induces an isomorphism $\Gal(L/\QQ(X_\calG)) \xrightarrow{\sim} \bbar{\calG}/\bbar{G} = \calG/G$. \\
 
Moreover, the representation $\varrho$ in \S\ref{SS:specializations} is constructed so that the specialization at the generic point of $U_G$ gives the representation $\Gal_{\QQ(X_G)}\to \bbar\calG \subseteq \bbar\calG \subseteq \GL_2(\ZZ/N\ZZ)$ that is the restriction of the representation $\rho_{\calE,N}^*$.   We can thus identify $L$ with the function field of $Y$ and the field extension $L/\QQ(X_\calG)$ corresponds to the morphism $\phi\colon Y\to U_\calG$.   Note that the curve $Y$ is geometrically irreducible since $\QQ$ is algebraically closed in $L$ by Lemma~\ref{L:FN revisited}(\ref{L:FN revisited ii}) and our assumption $\det(G)=\Zhat^\times$. 

\subsubsection{Specializations} \label{SSS:general setup - specializations}
Take any point $u\in U_\calG(\QQ)$.  Specializing $\varrho$ at $u$ gives a homomorphism $\varrho_{u}\colon \Gal_\QQ\to \bbar\calG$.     The representations $ \Gal_\QQ\xrightarrow{\varrho_u} \bbar\calG \subseteq \GL_2(\ZZ/N\ZZ)$ and $\rho_{(\scrE_\calG)_u,N}^*\colon \Gal_\QQ \to \GL_2(\ZZ/N\ZZ)$ are isomorphic, where $(\scrE_\calG)_u$ is the elliptic curve over $\QQ$ defined by the Weierstrass equation $y^2 =  x^3 -  27  j (j-1728)  \cdot  x +54  j (j-1728)^2$ with $j:=\pi_\calG(u)$.   Let 
\[
\alpha_u\colon \Gal_\QQ\to \calG/ G
\] 
be the homomorphism that is the specialization of $\alpha$ at $u$.  Equivalently, $\alpha_u$ is the composition of $\varrho_u$ with the quotient map $\bbar\calG\to \bbar\calG/\bbar G= \calG/G$.     Since $\calG/ G$ is abelian, $\alpha_u$ is uniquely determined (while the specialization $\varrho_u$ is only uniquely determined up to conjugation by an element in $\bbar\calG$).   

Let $\phi^{-1}(u)\subseteq Y$ be the fiber of $\phi$ over $u$.   The action of $\calG/G$ on the $\Qbar$-points of $\phi^{-1}(u)$ is simply transitive since $\phi$ is \'etale.   The group $\Gal_\QQ$ acts on the $\Qbar$-points of $\phi^{-1}(u)$ since $\phi$ and $u$ are defined over $\QQ$.  These actions of $\calG/G$ and $\Gal_\QQ$ commute. For a fixed $y_0\in Y(\Qbar)$ with $\phi(y_0)=u$, we have
\begin{align} \label{E:characterization of alphau}
\sigma(y_0)= \alpha_u(\sigma) \cdot y_0
\end{align}
for all $\sigma\in \Gal_\QQ$; note that this does not depend on the choice of $y_0$ since $\calG/ G$ is abelian and it commutes with the Galois action.   In particular, the expression (\ref{E:characterization of alphau}) determines $\alpha_u(\sigma)$.

\subsection{Defining $\alpha_E$ using modular curves} \label{SS:finding image setup}

Consider a non-CM elliptic curve $E/\QQ$ for which $G_E=\rho_E^*(\Gal_\QQ)$ is conjugate in $\GL_2(\Zhat)$ to a subgroup of $\calG$.   By Proposition~\ref{P:intial moduli property}, there is a point $u\in U_\calG(\QQ)$ such that $\pi_\calG(u)=j_E$.   

Since any two non-CM elliptic curves with the same $j$-invariant are quadratic twists of each other, there is a unique squarefree integer $d$ such that $E/\QQ$ is isomorphic to the quadratic twist of $E':=(\scrE_\calG)_u$ by $d$, where $(\scrE_\calG)_u$ is the elliptic curve over $\QQ$ defined as in \S\ref{SSS:general setup - specializations}.   Let $\chi_d\colon \Gal_\QQ \to \{\pm 1\}$ be the homomorphism that factors through $\Gal(\QQ(\sqrt{d})/\QQ) \hookrightarrow \{\pm 1\}$.   Define the homomorphism 
\[
\alpha_E \colon \Gal_\QQ \to \calG/G
\]
by $\alpha_E=\chi_d \cdot \alpha_u$ with $\alpha_u$ as in \S\ref{SSS:general setup - specializations}.   The following lemma shows that this definition of $\alpha_E$ is consistent with our earlier definition in \S\ref{SS:finding Galois}.

\begin{lemma} \label{L:alphaE properties}
After replacing $\rho_E^*$ by an isomorphic representation, we will have $\rho_E^*(\Gal_\QQ) \subseteq \calG$ and the composition of $\rho_E^*\colon \Gal_\QQ\to \calG$ with the quotient map $\calG\to\calG/G$ is $\alpha_E$.
\end{lemma}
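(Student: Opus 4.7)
The plan is to first produce a clean representative of $\rho_{E'}^*$ where $E' := (\scrE_\calG)_u$, and then transfer to $\rho_E^*$ via the quadratic twist identity $\rho_E^* = \chi_d\cdot\rho_{E'}^*$. This last identity holds (after compatible choices of bases) by the same argument as in Lemma~\ref{L:quadratic twist group}, since by construction $E$ is the quadratic twist of $E'$ by $d$. Because fixing a representative of $\rho_{E'}^*$ determines a representative of $\rho_E^*$ through this identity, the task reduces to producing a representative of $\rho_{E'}^*$ with $\rho_{E'}^*(\Gal_\QQ)\subseteq\calG$ whose composition with the quotient $q\colon\calG\to\calG/G$ is $\alpha_u$.

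The first step is to arrange $\rho_{E',N}^* = \varrho_u$ on the nose. By \S\ref{SSS:elliptic scheme for G}, the specialization $\varrho_u$ of $\varrho_{\scrE_\calG,N}^*$ at $u$ is $\GL_2(\ZZ/N\ZZ)$-isomorphic to $\rho_{E',N}^*$. Writing $\rho_{E',N}^* = g\varrho_u g^{-1}$ with $g\in\GL_2(\ZZ/N\ZZ)$, I would lift $g$ to $\tilde g\in\GL_2(\Zhat)$ via the surjective reduction map and replace $\rho_{E'}^*$ by $\tilde g^{-1}\rho_{E'}^*\tilde g$. This gives $\rho_{E',N}^* = \varrho_u$; since $\calG$ is the full preimage of $\bbar{\calG}$ under reduction modulo $N$ (the level of $\calG$ divides $N$), we deduce $\rho_{E'}^*(\Gal_\QQ)\subseteq\calG$. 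The quotient $q$ also factors through reduction modulo $N$, so $q\circ\rho_{E'}^* = q\circ\rho_{E',N}^* = q\circ\varrho_u$, which is $\alpha_u$ by the definition in \S\ref{SSS:general setup - specializations}.

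For the transfer, the hypothesis $-I\in\calG$ from \S\ref{SS:general setup} combined with $\rho_E^* = \chi_d\cdot\rho_{E'}^*$ gives $\rho_E^*(\Gal_\QQ)\subseteq\{\pm I\}\cdot\calG = \calG$. Since $\calG/G$ is abelian and $\chi_d$ takes scalar values, composing with $q$ yields
\[
q\circ\rho_E^* \;=\; \chi_d\cdot(q\circ\rho_{E'}^*) \;=\; \chi_d\cdot\alpha_u \;=\; \alpha_E,
\]
where $\chi_d$ is interpreted as a character into $\calG/G$ via $\{\pm I\}\hookrightarrow\calG\to\calG/G$, in accordance with the definition of $\alpha_E$ in \S\ref{SS:finding image setup}.

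The step requiring care is lifting the mod $N$ conjugator $g$ to $\GL_2(\Zhat)$ while simultaneously preserving the twist relation $\rho_E^* = \chi_d\cdot\rho_{E'}^*$; conjugating both representations by the same $\tilde g$ handles this cleanly because $\chi_d$ is central. A subtler issue worth mentioning is that $\alpha_u$ is well defined even though $\varrho_u$ is only defined up to $\bbar{\calG}$-conjugacy — this is the abelianness of $\calG/G$ at work, which is also what makes $\alpha_E$ a well-defined homomorphism into $\calG/G$.
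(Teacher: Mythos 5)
Your proof is correct and follows essentially the same route as the paper: arrange compatible bases so that $\rho_{E',N}^*=\varrho_u$ and $\rho_E^*=\chi_d\cdot\rho_{E'}^*$, use the level condition to deduce $\rho_E^*(\Gal_\QQ)\subseteq\calG$, and then read off $q\circ\rho_E^*=\chi_d\cdot\alpha_u=\alpha_E$. You make the basis change more explicit by lifting the mod-$N$ conjugator to $\GL_2(\Zhat)$ and observing that centrality of $\chi_d$ preserves the twist relation, but this is just unpacking the paper's "we can choose bases so that...".
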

\begin{proof}
Since $E$ is a quadratic twist of $E'$ by $d$, we can choose bases so that $\rho_{E',N}^*=\varrho_u$ and $\rho_E^* = \chi_d\cdot \rho_{E'}^*$.  In particular, $\rho_{E,N}^*(\Gal_\QQ) \subseteq \pm \varrho_u(\Gal_\QQ) \subseteq \bbar\calG$ and hence $\rho_{E}^*(\Gal_\QQ)\subseteq\calG$ since the level of $\calG$ divides $N$.  Take any $\sigma\in \Gal_\QQ$.   We have $\rho_{E,N}^*(\sigma) \cdot \bbar G = \chi_d(\sigma)\cdot \varrho_u(\sigma) \cdot \bbar G$ and hence $\rho_E^*(\sigma) \cdot G=\chi_d(\sigma) \cdot \alpha_u(\sigma) = \alpha_E(\sigma)$.  
\end{proof}

\begin{lemma} \label{L:alphaE properties0}
The homomorphism $\alpha_E$ is unramified at all primes $p\nmid N$ for which $E$ has good reduction.
\end{lemma}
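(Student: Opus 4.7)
The plan is to show that $\alpha_E$ factors through the mod $N$ Galois representation $\rho_{E,N}^*$ and then invoke the criterion of N\'eron--Ogg--Shafarevich.

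First, I would invoke Lemma~\ref{L:alphaE properties}, which allows us to assume (after replacing $\rho_E^*$ by an isomorphic representation) that $\rho_E^*(\Gal_\QQ) \subseteq \calG$ and that $\alpha_E$ equals the composition of $\rho_E^*\colon \Gal_\QQ \to \calG$ with the quotient map $\calG \to \calG/G$.  The key observation is that the normal subgroup $G$ contains the kernel of the reduction homomorphism $\calG \to \bbar\calG \subseteq \GL_2(\ZZ/N\ZZ)$, since $N$ is divisible by the level of $G$ by hypothesis.  Consequently, the quotient map $\calG \to \calG/G$ factors as $\calG \to \bbar\calG \to \bbar\calG/\bbar G = \calG/G$, and composing with $\rho_E^*$ shows that $\alpha_E$ factors through $\rho_{E,N}^*$.

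Now fix a prime $p \nmid N$ for which $E$ has good reduction.  By the criterion of N\'eron--Ogg--Shafarevich, $\rho_{E,N}(\Gal_\QQ)$ is unramified at $p$, i.e., the inertia subgroup at any prime of $\Qbar$ above $p$ lies in the kernel of $\rho_{E,N}$.  Since $\rho_{E,N}^*(\sigma)$ is the transpose of $\rho_{E,N}(\sigma^{-1})$, the kernels of $\rho_{E,N}$ and $\rho_{E,N}^*$ coincide, so $\rho_{E,N}^*$ is also unramified at $p$.  Since $\alpha_E$ factors through $\rho_{E,N}^*$, it follows that $\alpha_E$ is unramified at $p$, as required.

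There is no real obstacle here: the statement is essentially an immediate combination of Lemma~\ref{L:alphaE properties} (which identifies $\alpha_E$ with a quotient of $\rho_E^*$ when composed on the correct conjugate) together with the standard good reduction criterion for the Galois representation on torsion points.  The only small point requiring care is verifying that the quotient map $\calG \to \calG/G$ really does factor through reduction modulo $N$, which is simply the statement that the level of $G$ divides $N$.
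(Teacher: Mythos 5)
Your proof is correct and follows essentially the same approach as the paper's. The paper's version is more terse: it simply notes that under the identification $\calG/G = \bbar{\calG}/\bbar{G}$, one can view $\alpha_E$ directly as the composition of $\rho_{E,N}^*$ with the quotient map $\bbar{\calG} \to \bbar{\calG}/\bbar{G}$, and declares the result ``immediate'' from the good-reduction criterion for $\rho_{E,N}^*$. You have made the same two observations explicit and justified them — namely that Lemma~\ref{L:alphaE properties} identifies $\alpha_E$ with the image of $\rho_E^*$ under $\calG \to \calG/G$, and that this quotient map factors through reduction modulo $N$ because the level of $G$ divides $N$ — and then invoked N\'eron--Ogg--Shafarevich. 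These are exactly the points the paper is implicitly relying on, so this is the same argument spelled out more carefully.
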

\begin{proof}
Using our isomorphism $\calG/G = \bbar{\calG}/\bbar{G}$, we could also view $\alpha_E$ as the composition of $\rho_{E,N}^*$ with the quotient map $\bbar{\calG}\to \bbar{\calG}/\bbar{G}=\calG/G$.   The lemma is immediate since $\rho_{E,N}^*$ is unramified at all primes $p\nmid N$ for which $E$ has good reduction.
\end{proof}

Take any prime $p\nmid 2Nd$ for which $E$ has good reduction.   The character $\chi_d$ is unramified at $p$ and $\chi_d(\Frob_p)=1$ if and only if $d$ is a square modulo $p$.     By Lemma~\ref{L:alphaE properties0}, we deduce that $\alpha_u$ is unramified at $p$ and that $\alpha_E(\Frob_p) = \chi_d(\Frob_p) \cdot \alpha_u(\Frob_p)$.

In \S\ref{SSS:low genus phi}, we will describe how to compute $\alpha_u(\Frob_p)\in \calG/G$ for all sufficiently large primes $p$ under the additional assumptions that $\calG/G$ is cyclic of prime power order, $X_\calG(\QQ)$ is infinite, and $u$ lies outside some explicit finite subset of $X_\calG(\QQ)$.

\subsection{The function field $L$ in a special case}  \label{SS:L special}

We shall now assume further that $\calG/G$ is a cyclic group of prime power order $p_0^e>1$.    In this section, we will describe a set of generators of the extension $L$ of $\QQ(X_\calG)$ with a simple and explicit action of $\calG/G$ on them.  \\

Fix a matrix $g_0 \in \bbar{\calG} \cap \SL_2(\ZZ/N\ZZ)$ so that $g_0 \bbar{G}$ generates the cyclic group $\bbar{\calG}/\bbar{G}=\calG/G$.   For an integer $k\geq 2$, $M_{k,\bbar{G}}$ has a right action by the group $\calG/ G$.       We can compute a basis of $M_{k,\bbar{G}}$ using the methods of \S\ref{SS:MkG} and we can compute the action of $g_0$ with respect to this basis by using \S\ref{S:explicit slash}.    We can choose $k\geq 2$ so that the action of $\calG/ G$ on $M_{k,\bbar{G}}$ is faithful.  When $-I \in G$, we further assume that $k$ is chosen to be even and large enough so that there is a nonzero $h \in M_{k,\bbar{\calG}}$.   

Suppose $-I \notin  G$. By Lemma~\ref{L:weight 3 existence}, there is a nonzero $f_1\in M_{3,\bbar{G}}$.  We claim that $\calG/ G$ acts faithfully on $M_{3,\bbar{G}}$.   Since $\calG/ G$ is cyclic of order $p_0^e>1$, it suffices to show that the minimal nontrivial subgroup of $\calG/ G$ acts faithfully.   Since $-I \notin  G$, this group is $(\pm  G)/ G$ and it acts faithfully on $M_{3,\bbar{G}}$ because this space is nonzero and $-I$ acts as multiplication by $-1$.  So when $-I \notin  G$, we may always take $k=3$.

Since $\calG/ G$ is cyclic of prime power order, there is a $\QQ$-subspace $V \subseteq M_{k,\bbar{G}}$ for which the right action of $\calG/ G$ is faithful and  irreducible.   Moreover, we can find an explicit basis $f_1,\ldots, f_m$ of $V$ such that 
\[
f_j * \,g_0 = \sum_{i=1}^m f_i  \cdot C_{i,j}
\] 
for all $1\leq j \leq m$, where $m=\phi(p_0^e)=p_0^{e-1}(p_0-1)$ and $C\in \GL_m(\QQ)$ is the companion matrix of the cyclotomic polynomial $\Phi_{p_0^e}(x)$.  Note that the matrix $C$ has order $p_0^e$ in $\GL_m(\QQ)$.

Let $\calR$ be the $\QQ$-subalgebra of $\QQ[x_1,\ldots,x_m]$ consisting of polynomials $F$ for which $F(x_1,\ldots, x_m) = F((x_1,\ldots, x_m) C)$.  Take any homogeneous polynomial $F\in \calR$ and denote its degree by $d$.  When $-I\notin  G$, a power of $C$ is $-I$ and hence $d$ is even.    The modular form $F(f_1,\ldots, f_m)$ has weight $dk$ and is fixed by $\bbar{G}$ and $g_0$.  Therefore,  $F(f_1,\ldots, f_m)$ is an element of $M_{dk,\overline{\calG}}$.  Define 
 \[
 c_F := \begin{cases}
      \dfrac{F(f_1,\ldots, f_m)}{h^{d}} & \text{if $-I \in \bbar G$,} \\
       \dfrac{F(f_1,\ldots, f_m)  \,j^{d/2}}{E_6^{d/2}}& \text{if $-I\notin \bbar G$}.
\end{cases}
 \]
Note that $c_F$ is in $\calF_N^{\bbar{\calG}} = \QQ(X_\calG)$.   The following lemma describes the extension $L/\QQ(X_\calG)$ in terms of the $c_F$.
  
\begin{lemma}  \label{L:fibers and L}
We have $L=\QQ(X_\calG)(y_1,\ldots, y_m)$, where the $y_j$ can be chosen such that: 
\begin{itemize}
\item $y_j * g_0  = \sum_{i=1}^m y_i\cdot C_{i,j}$ for all $1\leq m$, 
\item $F(y_1,\ldots, y_m)= c_F$ for all homogeneous polynomials $F\in \calR$.
\end{itemize}
\end{lemma}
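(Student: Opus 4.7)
The plan is to exhibit explicit generators, verify the two displayed identities directly from the definitions, and then use cyclic Galois theory together with the irreducibility of the cyclotomic polynomial $\Phi_{p_0^e}(x)$ over $\QQ(X_\calG)$ to conclude that these generators produce the full extension $L$.

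I would set $y_j := f_j/h$ in the case $-I\in G$ and $y_j := f_j\beta/f_0$ in the case $-I\notin G$. In either case $y_j$ lies in $\calF_N(\beta)$, and it lies in $L$ because it is fixed by $*b$ for each $b\in\bbar G$: when $-I\in G$ this is immediate from $f_j,h\in M_{k,\bbar G}$; when $-I\notin G$ one uses $f_j,f_0\in M_{\cdot,\bbar G}$ together with Lemma~\ref{L:FN revisited}(\ref{L:FN revisited vi}), which gives $\beta*b=(f_0*b)/f_0\cdot\beta=\beta$ for $b\in\bbar G$. The identity $y_j*g_0=\sum_i y_i C_{i,j}$ follows directly: in the first case because $h*g_0=h$, and in the second because the factor $(f_0*g_0)/f_0$ appearing in $\beta*g_0$ cancels the $f_0*g_0$ in the denominator. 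The identity $F(y_1,\ldots,y_m)=c_F$ for homogeneous $F\in\calR$ of degree $d$ follows by substitution, using $\beta^2/f_0^2=j/E_6$ in the case $-I\notin G$ together with the observation, recalled in the setup, that $d$ must be even in that case.

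Next I would prove that $y_1,\ldots,y_m$ are linearly independent over $\QQ(X_\calG)$. Suppose $a=(a_1,\ldots,a_m)^T\in\QQ(X_\calG)^m$ satisfies $\sum_i a_iy_i=0$. Applying $g_0^s$ and using that each $a_i$ lies in $\QQ(X_\calG)=\calF_N(\beta)^{\bbar\calG}$ and is therefore $g_0$-fixed, one obtains $\sum_j(C^s a)_j y_j=0$ for every $s\ge 0$. The $\QQ(X_\calG)$-span $W$ of $\{C^s a:s\ge 0\}$ is then a $C$-invariant subspace of $\QQ(X_\calG)^m$ each of whose vectors annihilates $(y_1,\ldots,y_m)$. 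The minimal polynomial of $C$ is $\Phi_{p_0^e}(x)$; since $X_\calG$ is geometrically irreducible over $\QQ$, $\QQ$ is algebraically closed in $\QQ(X_\calG)$, so $\QQ(X_\calG)$ contains no primitive $p_0^e$-th root of unity and $\Phi_{p_0^e}$ remains irreducible there. Hence $C$ has no nontrivial proper $\QQ(X_\calG)$-invariant subspace, so $W=0$ or $W=\QQ(X_\calG)^m$; the latter would force every $y_i=0$, contradicting that $f_1,\ldots,f_m$ is a basis of $V\neq 0$. Therefore $a=0$.

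Finally, set $F:=\QQ(X_\calG)(y_1,\ldots,y_m)\subseteq L$. Since $F$ is $g_0$-stable it is stable under all of $\calG/G$, and by cyclic Galois theory it corresponds to a subgroup $H\le\calG/G$. If $g_0^s\in H$, then $g_0^s$ fixes each $y_j$, so $\sum_i(C^s)_{i,j}y_i=y_j$ for all $j$; the linear independence of the $y_i$ over $\QQ(X_\calG)$ then forces $C^s=I_m$, and since $C$ has order $p_0^e$ in $\GL_m(\QQ)$ we conclude $s\equiv 0\pmod{p_0^e}$. Thus $H$ is trivial and $F=L$. The main obstacle is the irreducibility step, namely verifying that $\Phi_{p_0^e}$ does not factor over the function field $\QQ(X_\calG)$, which rests on the geometric irreducibility of $X_\calG$ over $\QQ$ recorded in Lemma~\ref{L:FN revisited}(\ref{L:FN revisited ii}); everything else reduces to direct bookkeeping with the slash action and the formula for $\beta*b$.
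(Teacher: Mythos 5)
Your construction of the $y_j$ and your verification of the two displayed identities are exactly the paper's (including the cancellation of the $f_0*g_0$ factor coming from Lemma~\ref{L:FN revisited}(\ref{L:FN revisited vi}) in the $-I\notin G$ case). The difference is in how you close the argument that $\QQ(X_\calG)(y_1,\ldots,y_m)$ exhausts $L$: you establish $\QQ(X_\calG)$-linear independence of the $y_j$ by a module-theoretic argument built on the irreducibility of $\Phi_{p_0^e}$ over $\QQ(X_\calG)$ (which in turn rests on $\QQ$ being algebraically closed in $\QQ(X_\calG)$), and then run the Galois correspondence. The paper is more economical: it only uses $\QQ$-linear independence of the $y_j$ (immediate, since $f_1,\ldots,f_m$ is a $\QQ$-basis of $V$ and $h$, $f_0$, $\beta$ are nonzero) to see that the Galois automorphism corresponding to $g_0$ acts on the $\QQ$-span of the $y_j$ with order exactly $p_0^e = [L:\QQ(X_\calG)]$, so its image in $\Gal(\QQ(X_\calG)(y_1,\ldots,y_m)/\QQ(X_\calG))$ already has full order, forcing equality of degrees. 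In fact your own final step also only needs $\QQ$-linear independence, since the offending identity $\sum_i(C^s-I)_{ij}y_i=0$ has coefficients in $\QQ$; so the whole irreducibility-over-$\QQ(X_\calG)$ detour (and the appeal to geometric irreducibility of $X_\calG$) can be dropped without loss. Your proof is correct, but carries more machinery than the statement requires; the paper's endgame is the streamlined version of what you did.
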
 
\begin{proof}
First suppose that $-I \in  G$.  We define $y_j:=f_j/h$ for $1\leq j \leq m$.   For a homogeneous $F\in \calR$ of degree $d$, we have $F(y_1,\ldots, y_m)=F(f_1,\ldots,f_m)/h^d=c_F$.   Take any $\sigma\in \Gal_{\QQ(X_\calG)}$ and set $A:=\rho_{\calE,N}^*(\sigma)^{-1} \in \bbar{\calG}$.  We have $\sigma(y_j)= y_j *A=(f_j *A)/(h*A)=(f_j * A)/h$, where the last equality uses our choice of $h$.  We have $y_j\in L$ since $\sigma(y_j)=y_j$ when $A\in \bbar{G}$.  Now suppose $\sigma$ is chosen such that $A= g_0$.  Then
 $y_j * g_0 = (f_j * g_0)/h =  (\sum_{i=1}^m f_i  \cdot C_{i,j})/h = \sum_{i=1}^m y_i  \cdot C_{i,j}$.
 
Now suppose that $-I \notin  G$ and hence $k=3$.  Define $y_i:=\beta \cdot f_i/f_0$ for $1\leq i \leq m$.  Take any homogeneous $F\in \calR$ of (even) degree $d$.  Since $\beta^2 = j\cdot {f_0^2}/{E_6}$, we have 
 \[
 F(y_1,\ldots, y_m)=\left(\frac{\beta^2}{ f_0^2}\right)^{d/2}  F( f_1,\ldots,  f_m) = \left(\frac{j}{E_6}\right)^{d/2}  F( f_1,\ldots,  f_m) = c_F. 
 \]
Take any $\sigma\in \Gal_{\QQ(X_\calG)}$ and set $A:=\rho_{\calE,N}^*(\sigma)^{-1}$.   By Lemma~\ref{L:FN revisited}(\ref{L:FN revisited vi}), we have $\sigma(\beta) = \tfrac{f_0 *A}{f_0}\, \beta$. Since $f_j/f_0 \in \calF_N$, we have 
 \[
 \sigma(y_j)= \sigma(\beta)\cdot \sigma(\tfrac{f_j}{f_0}) = \tfrac{f_0 *A}{f_0}\, \beta \cdot (\tfrac{f_j}{f_0}) *A =\tfrac{f_0 *A}{f_0}\, \beta \cdot \tfrac{f_j * A}{f_0*A} =\beta \cdot \tfrac{f_j * A}{f_0}.
\] 
If $A\in \bbar{G}$, then $\sigma(y_j) = \beta \cdot f_j/f_0 = y_j$.  Therefore, $y_1,\ldots, y_m$ all lie in $L$. Now suppose $\sigma$ is chosen such that $A= g_0$.  Therefore,
\[
y_j* g_0=
\sigma(y_j) = \beta  \cdot \tfrac{f_j *A}{f_0} = \beta  \tfrac{f_j * g_0}{f_0} = \beta (\sum_{i=1}^m f_i \cdot C_{i,j})/f_0 = \sum_{i=1}^m y_i \cdot C_{i,j}.
\]
 
In both cases, we have proved that $\QQ(X_\calG)(y_1,\ldots, y_m) \subseteq L$ and that $y_1,\ldots, y_m$ have the desired properties.   In both constructions, we have shown that there is a $\sigma \in \Gal_{\QQ(X_\calG)}$ whose action on $\oplus_{j=1}^m \QQ y_j \subseteq L$ is given by the matrix $C$.  Since the order of $C\in \GL_m(\QQ)$ is equal to $|\bbar\calG/\bbar G|=[L:\QQ(X_\calG)]$, we deduce that $L=\QQ(X_\calG)(y_1,\ldots,y_m)$.
\end{proof}

\subsubsection{Low genus setting} \label{SSS:low genus phi}

We now further assume that the curve $X_\calG$ has infinitely many rational points; in particular, $X_\calG$ has genus at most $1$ and a rational point.  As outlined in \S\ref{SS:explicit low genus model}, we can compute an explicit model for $X_\calG$.  In particular, the function field $\QQ(X_\calG)$ will be of the form $\QQ(f)$ or $\QQ(x,y)$ with $x$ and $y$ satisfying a Weierstrass equation of an elliptic curve over $\QQ$.

For any given homogeneous polynomial $F\in \calR$, we can express $c_F \in \QQ(X_\calG)$ in terms of the explicit generators of our function field $\QQ(X_\calG)$ using the methods from \S\ref{SSS:recognizing elements in function field}.\\
 
We now apply Lemma~\ref{L:fibers and L} to describe all but finite many fibers of $\phi$.   Since $\calR$ is a finitely generated $\QQ$-algebra, there are only finite many $u\in U_\calG(\QQ)$ for which $c_F$ has a pole at $u$ for some homogeneous $F\in \calR$.   For a point $u \in U_\calG(\QQ)$ for which $c_F$ never has a pole at $u$, we let $Z_u \subseteq \AA^m_\QQ$ be the subscheme defined by the equations 
\[
F(x_1,\ldots, x_m) = c_F(u)
\]
with $F \in \calR$ homogeneous.   The group $\bbar{\calG}/\bbar{G}=\calG/G$  acts on $Z_u(\Qbar)$ by $g_0\bbar{G} \cdot (a_1,\ldots, a_m) := (a_1,\ldots,a_m) \cdot C$.   For all but finitely many $u$, $Z_u$ is a reduced finite $\QQ$-scheme of degree $|\calG/ G|=p_0^e$ with $\calG/ G$ acting transitively on $Z_u(\Qbar)$; for such $u$, we have an isomorphism $\phi^{-1}(u)\cong Z_u$ with compatible $\calG/ G$-actions.

Choose homogeneous polynomials $F_1,\ldots, F_r \in \calR$ such that there is a point $u \in U_\calG(\QQ)$ for which the equations
\begin{align} \label{E:fiber model finite Fs}
F_1(x_1,\ldots, x_m)=c_{F_1}(u),\quad \ldots, \quad F_r(x_1,\ldots, x_m)=c_{F_r}(u)
\end{align}
define $Z_u$ as a variety (so it is finite of degree $p_0^e$ with a transitive $\calG/ G$-action on its $\Qbar$-points).   Recall that we can compute the functions $c_{F_j}\in \QQ(X_\calG)$.  The equations (\ref{E:fiber model finite Fs}) will also define $Z_u$ for $u\in U_\calG(\QQ)-\calS$, where $\calS$ is a finite set that can be computed.   So for any point $u\in U_\calG(\QQ)-\calS$, (\ref{E:fiber model finite Fs}) gives an explicit model for the fiber $\phi^{-1}(u)$ with the action of $ \calG/ G$ upon it.    So from (\ref{E:characterization of alphau}), we have 
\[
\sigma(z_0)=\alpha_u(\sigma)\cdot  z_0
\] 
for any fixed point $z_0\in Z_u(\Qbar)$ and $\sigma\in \Gal_\QQ$.

Fix a point $u\in U_\calG(\QQ)-\calS$.   For all primes $p\nmid Np_0$ large enough,  we can reduce  the equations (\ref{E:fiber model finite Fs}) modulo $p$ to obtain a variety $Z_{u,p} \subseteq \AA_{\FF_p}^m$ with an action of $ \calG/ G$ so that the action of $ \calG/ G$ on $Z_{u,p}(\FFbar_p)$ is simply transitive.  For such a prime $p$, $\alpha_u$ is unramified at $p$ and we have $(z_1^p,\ldots, z_m^p)=\alpha_u(\Frob_p)\cdot (z_1,\ldots, z_m)$ for any $(z_1,\ldots, z_m)\in Z_{u,p}(\FFbar_p)$.   In particular, for any large enough prime $p$, we can use this to compute $\alpha_u(\Frob_p)\in  \calG/G$.

\begin{remark}
There are a finite number of excluded points $u\in \calS \subseteq U_\calG(\QQ)$.  By changing variables, adding more equations of the form $F_i(x_1,\ldots, x_m) = c_{F_i}(u)$, or making a different choice of $h$ if relevant, we can often find a model for the fiber $\phi^{-1}(u)$ that allows us to compute $\alpha_u(\Frob_p)$ for any sufficiently large prime $p$.   

For our application to Serre's open image theorem, we work with a \emph{single} choice of $u\in U_\calG(\QQ)$ satisfying $\pi_\calG(u)=j$ for some fixed $j\in \QQ-\{0,1728\}$. So for our application we can exclude from consideration any $u\in \calS$ for which there is another point $u'\in U_\calG(\QQ)-\calS$ satisfying $\pi_\calG(u)=\pi_\calG(u')$.
\end{remark}

\subsection{Precomputations}  \label{SS:precomputation}

We now describe some one-time computations that will be required for our algorithm for computing the image of $\rho_E^*$, up to conjugacy, for any non-CM elliptic curve $E/\QQ$.

Consider any of the finite number of groups $\calG\in \scrA$, with $\scrA$ from Theorem~\ref{T:main agreeable}, that satisfy the following properties:
\begin{alphenum}
\item  \label{I:precomp a}
$X_\calG(\QQ)$ is infinite,
\item \label{I:precomp b}
if $\calG$ is conjugate in $\GL_2(\Zhat)$ to a proper subgroup of some $\calG' \in \scrA$, then $[\calG,\calG]$ and $[\calG',\calG']$ are not conjugate in $\GL_2(\Zhat)$.
\end{alphenum}
From Theorem~\ref{T:main agreeable}, we will already have computed a model for $X_\calG$ and the morphism $\pi_\calG$ to the $j$-line.  In particular, we have a model for $U_\calG$.\\

Choose an open subgroup $G_0$ of $\calG$ satisfying $\det(G_0)=\Zhat^\times$ and $G_0\cap \SL_2(\Zhat) =[\calG,\calG]$.  In our cases, we choose $G_0$ with minimal level; the levels of the groups $G_0$ and $\calG$ turn out to have the same odd prime divisors.   Note that $G_0$ is a normal subgroup of $\calG$ and that $\calG/G_0$ is finite and abelian.   So we can choose proper normal subgroups $G_1,\ldots, G_s$ of $\calG$ containing $G_0$ such that the quotient maps $\calG\to \calG/G_i$ induce an isomorphism
\begin{align} \label{E:calG/G decomp}
\calG/G_0 \xrightarrow{\sim} \calG/G_1\times \cdots \times \calG/G_s,
\end{align}
where the groups $\calG/G_i$ are all nontrivial and cyclic of prime power order.  Moreover, we choose our groups $G_i$, with $1\leq i \leq s$, so that at most one does not contain $-I$ and so that the levels of the groups are as small as possible.     Since $G_0\subseteq G_i \subseteq \calG$, the group $G_i$ is open in $\GL_2(\Zhat)$, $\det(G_i)=\Zhat^\times$ and $-I\in G_i$.  

With notation as in \S\ref{SS:general setup} and $G:=G_0$, we have a homomorphism 
\[
\alpha\colon \pi_1(U_\calG,\bbar\eta) \to \calG/G_0
\]
with specializations $\alpha_u\colon \Gal_\QQ \to \calG/G_0$ for $u\in U_\calG(\QQ)$.   Taking instead $G:=G_i$ with $1\leq i \leq s$, we obtain a homomorphism 
\[
\alpha_i \colon \pi_1(U_\calG,\bbar\eta) \to \calG/G_i
\]
with specializations $\alpha_{i,u} \colon \Gal_\QQ \to \calG/G_i$.  Note that $\alpha_i$ can also be obtained by composing $\alpha$ with the isomorphism (\ref{E:calG/G decomp}) and then projecting to the factor $\calG/G_i$.

Now take any $1\leq i \leq s$ and consider the setting of \S\ref{SS:L special} with $G:=G_i$. With notation as in \S\ref{SS:L special} and \S\ref{SSS:low genus phi}, we compute homogeneous polynomials $F_1,\ldots, F_r$ and rational functions $c_{F_1},\ldots, c_{F_r} \in \QQ(X_\calG)$ such that for all $u\in U_\calG(\QQ)$ away from an explicit finite set $\calS_i$, the equations (\ref{E:fiber model finite Fs}) define a $\QQ$-scheme $Z_u$ with a transitive $\calG/G$-action on $Z_u(\Qbar)$ such that 
\[
\sigma(z)=\alpha_{i,u}(\sigma) \cdot z
\]
for all $z\in Z_u(\Qbar)$ and $\sigma\in \Gal_\QQ$.   As explained in  \S\ref{SSS:low genus phi}, we can compute $\alpha_{i,u}(\Frob_p)\in \calG/G=\calG/G_i$ for any sufficiently large primes $p$ by considering the reduction modulo $p$.   

\begin{remark}
For $p$ large enough, the point $u$ modulo $p$ will be enough to determine $\alpha_{i,u}(\Frob_p)$.  So that we can reuse computations when dealing with many elliptic curves over $\QQ$, we have precomputed these values for several primes $p$ and $\FF_p$-points on a model of $U_\calG$.
\end{remark}

Define $\calS:=\calS_1\cup \cdots\cup \calS_s$; it is an explicit finite set.  Consider any $u\in U_\calG(\QQ)-\calS$.   For each $1\leq i \leq s$, we noted that one can compute $\alpha_{i,u}(\Frob_p)$ for all sufficiently large primes $p$.  By making use of the isomorphism (\ref{E:calG/G decomp}), we can verify that $\alpha_u$ is unramified at $p$ and compute $\alpha_u(\Frob_p) \in \calG/G_0$ for any sufficient large primes.

\section{Computing the image of $\rho_E$} \label{S:putting it together}

Take any non-CM elliptic curve $E/\QQ$.  We now combine the previous sections to explain how to compute the image of $\rho_E$ up to conjugacy in $\GL_2(\Zhat)$.   We assume that $E$ is given explicitly as a Weierstrass model.  Let $j_E\in \QQ$ be the $j$-invariant of $E$. 

\subsection{Agreeable closure}

As outlined in \S\ref{S:finding the agreeable closure}, we can compute the agreeable closure $\calG_E$ of $G_E:=\rho_E^*(\Gal_\QQ)$, up to conjugacy in $\GL_2(\Zhat)$, and determine whether $X_{\calG_E}(\QQ)$ is infinite or not.   If $X_{\calG_E}(\QQ)$ is infinite, we may choose $\calG_E$ so that it lies in our finite set $\scrA$ from Theorem~\ref{T:main agreeable}.  

As noted in \S\ref{S:finding the agreeable closure}, from $\calG_E$ we can already compute the index $[\GL_2(\Zhat): G_E]$ and the open subgroup $G_E \cap \SL_2(\Zhat) = [\calG_E,\calG_E]$ of $\SL_2(\Zhat)$ up to conjugacy in $\GL_2(\Zhat)$.

\subsection{Computing the image of Galois in most cases}  \label{SS:gammaE main case}

Fix a group $\calG \in \scrA$ with $X_\calG(\QQ)$ infinite for which $\calG_E$ is conjugate in $\GL_2(\Zhat)$ to a subgroup of $\calG$.   If $X_{\calG_E}(\QQ)$ is infinite, we shall further assume that $\calG$ is chosen so that $[\calG_E,\calG_E]$ and $[\calG,\calG]$ are conjugate in $\SL_2(\Zhat)$ (such a group exists in this case since $\calG_E$ is conjugate to an element of $\scrA$).    After possibly replacing $\calG$ by a different group in $\scrA$, we may further assume that condition (\ref{I:precomp b}) of \S\ref{SS:precomputation} holds; it already satisfies condition (\ref{I:precomp a}).  

 In \S\ref{SS:precomputation}, we chose (independent of $E$) an open subgroup $G:=G_0$ of $\calG$ such that $\det(G)=\Zhat^\times$ and $G\cap \SL_2(\Zhat) =[\calG,\calG]$.   In particular, $G$ is a normal subgroup of $\calG$ with $\calG/G$ finite and abelian.

Let $\calS\subseteq X_\calG(\QQ)$ be finite set from \S\ref{SS:precomputation}. We now make the additional assumption on $E/\QQ$ that there is a rational point $u\in U_\calG(\QQ)-\calS$ for which $j_E = \pi_\calG(u)$.   For the models from our computations, this assumption always holds; if not, it could be treated separately as we do in  \S\ref{SS:gammaE exceptional case}.  
As in \S\ref{SS:precomputation}, we have a homomorphism
\[
\alpha_u\colon \Gal_\QQ \to \calG/G
\]
and for all sufficiently large primes $p$, we can verify that $\alpha_u$ is unramified at $p$ and actually compute $\alpha_u(\Frob_p)\in \calG/G$.\\

Let $d$ be the unique squarefree integer for which $E$ is isomorphic to the quadratic twist of $E'$ by $d$, where $E'/\QQ$ is the elliptic curve defined by the Weierstrass equation $y^2 =  x^3 -  27  j_E (j_E-1728)  \cdot  x +54  j_E(j_E-1728)^2$.  As in \S\ref{SS:finding image setup}, we can define a homomorphism
\[
\alpha_E\colon \Gal_\QQ \to \calG/G,\quad \sigma\mapsto \chi_d(\sigma) \cdot \alpha_u(\sigma),
\]
where $\chi_d\colon \Gal_\QQ \to \{\pm 1\}$ is the character that factors through $\Gal(\QQ(\sqrt{d})/\QQ)\hookrightarrow \{\pm 1\}$.  For $p\nmid 2d$, $\chi_d$ is unramified at $p$ and $\chi_d(\Frob_p)=1$ if and only if $d$ is a square modulo $p$.   Let $M$ be the product of those primes that divide $N$ or for which $E$ has bad reduction.   The homomorphism $\alpha_E$ is unramified at all $p\nmid M$ by Lemma~\ref{L:alphaE properties0}.  We can thus compute $\alpha_E(\Frob_p) \in \calG/G$ for any sufficiently large primes $p\nmid M$.

By Lemma~\ref{L:alphaE properties}, we may assume that, after possibly replacing $\rho_E^*$ by an isomorphic representation, that $\rho_E^*(\Gal_\QQ) \subseteq \calG$ and that $\alpha_E$ is the composition of $\rho_E^*$ with the quotient map $\calG/G$.  Let
\[
\gamma_E \colon \Zhat^\times \to \calG/G
\] 
be the unique homomorphism for which $\gamma_E(\chi_\cyc(\sigma)^{-1})=\alpha_E(\sigma)$ for all $\sigma\in \Gal_\QQ$.  Since $\alpha_E$ is unramified at all primes $p\nmid M$, we find that $\gamma_E$ factors through a homomorphism
\[
\bbar\gamma_E \colon \ZZ_M^\times/(\ZZ_M^\times)^e \to \calG/G,
\]
where $e$ is the exponent of the group $\calG/G$, and $\bbar\gamma_E(p\cdot (\ZZ_M^\times)^e)=\alpha_E(\Frob_p)^{-1} \in \calG/G$ for all primes $p\nmid M$.   So we can find $\gamma_E$ by computing $\alpha_E(\Frob_p)$ for a finite set of primes $p\nmid M$ that generate the finite group $\ZZ_M^\times/(\ZZ_M^\times)^e$.

\begin{remark}
We have used the larger group $\calG$ instead of $\calG_E$ since it leads to fewer cases to consider in \S\ref{SS:precomputation}.   There are {$454$} groups $H \in \scrA$ for which $X_H(\QQ)$ is infinite, but only {$138$} of these groups $H$ will arise as a group $\calG$ like above.  
\end{remark}


Define the explicit subgroup
\[
\calH_E := \{ g \in \calG :  g\cdot G = \gamma_E(\det g)\} 
\]
of $\GL_2(\Zhat)$. In particular, note that $\calH_E$ is computable, cf.~Remark~\ref{R:let us make clear}.   The group $G_E:=\rho_E^*(\Gal_\QQ)$ is a subgroup of $\calH_E$ since  $\rho_E^*(\sigma) \cdot G =\alpha_E(\sigma)=\gamma_E(\chi_\cyc(\sigma)^{-1}) = \gamma_E( \det(\rho_E^*(\sigma)) )$ for all $\sigma\in \Gal_\QQ$. \\

Now consider the case where $[\calG_E,\calG_E]$ and $[\calG,\calG]$ are conjugate subgroups of $\SL_2(\Zhat)$; this condition is automatic when $X_{\calG_E}(\QQ)$ is infinite by our choice of $\calG$.   We have $\calG_E \subseteq \calG$ so $[\calG_E,\calG_E] \subseteq [\calG,\calG]$ and hence $[\calG_E,\calG_E] = [\calG,\calG]$ since they are conjugate open subgroups of $\SL_2(\Zhat)$.    In particular, $G_E \cap \SL_2(\Zhat) = [\calG,\calG]$ by Lemma~\ref{L:KW} and Proposition~\ref{P:agreeable closure}.  Therefore,
\[
\calH_E \cap \SL_2(\Zhat) = G \cap \SL_2(\Zhat) = [\calG,\calG] = G_E \cap \SL_2(\Zhat).
\]    
Since $G_E$ is a subgroup of $\calH_E$ with $G_E \cap \SL_2(\Zhat)= \calH_E \cap \SL_2(\Zhat)$ and $\det(G_E)=\Zhat^\times$, we deduce that $G_E=\calH_E$.

\subsection{Computing the image of Galois in the remaining cases}\label{SS:gammaE exceptional case}

We have already computed $\calG_E$, up to conjugacy in $\GL_2(\Zhat)$, and we know if $X_{\calG_E}(\QQ)$ is infinite or not.   If $X_{\calG_E}(\QQ)$ is infinite, then \S\ref{SS:gammaE main case} shows how to compute the group $\rho_E^*(\Gal_\QQ)$ up to conjugacy in $\GL_2(\Zhat)$.  

We now restrict our attention to the case when $X_{\calG_E}(\QQ)$ is finite.   If $E/\QQ$ is not a counterexample to Conjecture~\ref{C:Serre question}, then the $j$-invariant $j_E$ lies in the finite set
\[
\calJ:= \bigcup_{\calG \in \scrA,\, X_\calG(\QQ) \text{ finite}} \pi_\calG( U_{\calG}(\QQ)) \subseteq \QQ
\]
with $\scrA$ as in Theorem~\ref{T:main agreeable}.  We are aware of 81 rational numbers $j\in \calJ$ for which $j$ is the $j$-invariant of a non-CM elliptic curve.   We will explain how to compute $G_E:=\rho_E^*(\Gal_\QQ)$, up to conjugacy, in these cases.   Any other non-CM $j$-invariants in $\calJ$, or counterexamples to Conjecture~\ref{C:Serre question}, can be dealt with in a similar direct manner.

For the finite number of $j$-invariants under consideration, we need only consider a single elliptic curve with that $j$-invariant (from Lemma~\ref{SS:quadratic twist}, replacing $E$ by a quadratic twist changes $G_E$ in an explicit way).

\subsubsection{Case 1:  the previous approach works}

For our 81 exceptional $j$-invariants, the group $G_E$ can be computed for 28 of them using the method of \S\ref{SS:gammaE main case}.   In particular, we can find an agreeable groups $\calG\in \scrA$ so that $\calG_E$ is conjugate in $\GL_2(\Zhat)$ to a subgroup of $\calG$, $X_\calG(\QQ)$ is infinite, and $[\calG_E,\calG_E]$ is conjugate to a subgroup of $[\calG,\calG]$ in $\SL_2(\Zhat)$.

\subsubsection{Case 2: intersections with relatively prime levels}

Suppose that there are distinct primes $2=\ell_1<\ell_2< \cdots < \ell_s$ and agreeable subgroups $\calG_1,\ldots, \calG_s \in \scrA$ so that all the following hold:
\begin{itemize}
\item 
$\calG_E$ is conjugate in $\GL_2(\Zhat)$ to a subgroup of $\calG_i$ for all $1\leq i \leq s$,
\item
the level of $\calG_i$ divides a power of $\ell_i$ for all $1\leq i \leq s$,
\item
$X_{\calG_i}(\QQ)$ has infinitely many points for all $1\leq i \leq s$,
\item 
$[\calG_E,\calG_E]$ and $\bigcap_{i=1}^s [\calG_i,\calG_i]$ are open subgroups of $\SL_2(\Zhat)$ that are conjugate in $\GL_2(\Zhat)$.
\end{itemize}
Since the levels of the groups $\calG_i$ are pairwise relatively prime, we find that the group $\bigcap_{i=1}^s [\calG_i,\calG_i]$, up to conjugacy in $\GL_2(\Zhat)$, does not change if we replace any $\calG_i$ by a conjugate in $\GL_2(\Zhat)$.  Thus we may assume further that the $\calG_i$ are chosen so that they satisfy condition (\ref{I:precomp b}) of \S\ref{SS:precomputation}.   

For each $1\leq i \leq s$, we choose an open subgroup $G_i$ of $\calG_i$ with $\det(G_i)=\Zhat^\times$ such that the level of $G_i$ is a power of $\ell_i$ and $G_i \cap \SL_2(\Zhat)= [\calG_i,\calG_i]$. We have $j_E \in \pi_{\calG_i}(X_{\calG_i}(\QQ))$ since $\calG_E$ is conjugate to a subgroup of $\calG_i$.   As in \S\ref{SS:precomputation} and \S\ref{SS:gammaE main case}, we can compute an explicit homomorphism $\gamma_{E,i}\colon \Zhat^\times \to \calG_i/G_i$ so that $\rho_E^*(\Gal_\QQ)$ is conjugate in $\GL_2(\Zhat)$ to a subgroup of 
\[
\calH_i := \{ g\in \calG_i : g \cdot G_i = \gamma_{E,i}(\det(g)) \}.
\]
This previous step uses that $X_{\calG_i}(\QQ)$ is infinite.\\

Since the group $G_1,\ldots, G_s$ have pairwise relatively prime levels, we find that after replacing $\rho_E^*$ by an isomorphic representation we have $G_E:=\rho_E^*(\Gal_\QQ) \subseteq \calH_i$ for all $1\leq i \leq s$.    In particular, $G_E \subseteq \calH:= \bigcap_{i=1}^s \calH_i$.  

We claim that $G_E=\calH$; since $\calH$ has an explicit description this will conclude our description of how to compute $G_E$ up to conjugacy.
  We have
\begin{align} \label{E:case 2 exceptional image}
G_E \cap \SL_2(\Zhat) \subseteq \calH \cap \SL_2(\Zhat) \subseteq \bigcap_{i=1}^s (\calH_i\cap \SL_2(\Zhat))= \bigcap_{i=1}^s (G_i \cap \SL_2(\Zhat)) = \bigcap_{i=1}^s [\calG_i,\calG_i].
\end{align}
We have $G_E \cap \SL_2(\Zhat) = [\calG_E,\calG_E]$ by Lemma~\ref{L:KW} and Proposition~\ref{P:agreeable closure}.  So by assumption, $G_E \cap \SL_2(\Zhat)$ and $\bigcap_{i=1}^s [\calG_i,\calG_i]$ are open subgroups of $\SL_2(\Zhat)$ that are conjugate in $\GL_2(\Zhat)$.  From the inclusions (\ref{E:case 2 exceptional image}), we deduce that $G_E \cap \SL_2(\Zhat) = \calH \cap \SL_2(\Zhat)$.  Since $G_E$ is a subgroup of $\calH$ with full determinant, we conclude that $G_E=\calH$.

Of the 53 exceptional $j$-invariants not handled by Case 1, we use the method above to compute $G_E$, up to conjugacy, for an additional 24 $j$-invariants.   Of the $29$ remaining exceptional $j$-invariants, $20$ of them arise in \cite{RSZ}.

\subsubsection{Case 3: check directly}

We already know the agreeable closure $\calG_E$ of $G_E=\rho_E^*(\Gal_\QQ)$.   We can choose an open subgroup $G$ of $\calG_E$ with minimal level that satisfies $\det(G)=\Zhat^\times$ and $G\cap \SL_2(\Zhat)=[\calG_E,\calG_E]$.     Let $M$ be the product of the primes $p$ so that $p$ divides the level of $G$ or $E$ has bad reduction at $p$; this is an integer we can compute.  The homomorphism $\alpha_E\colon \Gal_\QQ\to \calG_E/G$ obtained by composing $\rho_E^*$ with the obvious quotient map will be unramified at all primes $p\nmid M$.    Using Lemma~\ref{L:intro HE}, we deduce that the every prime that divides the level of $G_E$ must also divide $M$.
\\

After replacing $\calG_E$ by a conjugate, we can find a group $\calG \in \scrA$ so that $X_\calG(\QQ)$ is infinite and $\calG_E \subseteq \calG$.  We choose $\calG$ so that $[\calG:\calG_E]$ is minimal.   Using \S\ref{SS:gammaE main case},  we can construct a computable open subgroup $\calH$ of $\GL_2(\Zhat)$ for which $\calH \cap \SL_2(\Zhat)=[\calG,\calG]$, $\det \calH =\Zhat^\times$ and $G_E$ is conjugate in $\GL_2(\Zhat)$ to a subgroup of $\calH$.     

So after possibly replacing $\rho_E^*$ by an isomorphic representation, we find that $G_E=\rho_E^*(\Gal_\QQ)$ is an open subgroup of $\calH$ with 
\[
[\calH:G_E]=[\calH \cap\SL_2(\Zhat):G_E \cap\SL_2(\Zhat)]  = [ [\calG,\calG], [\calG_E,\calG_E] ]=:m
\]
So $G_E$ is an index $m$ open subgroup of $\calH$ whose level in $\GL_2(\Zhat)$ divides some power of $M$.  However, there are only finitely many such groups, so we can compute them all up to conjugacy in $\GL_2(\Zhat)$.   It remains to check which of these explicit candidates is actually conjugate to $G_E$.   Looking at traces of Frobenius can be useful to rule out some possibilities and hope that one case remains.   In any remaining cases, one can directly compute division polynomials for the curve $E$ and study their Galois groups to determine $G_E$.   For example, \S\ref{SS:largest known index} gives one of the exceptional elliptic curves we dealt with directly using division polynomials.

\subsection{Finding the image} \label{SS:end games}

From \S\ref{SS:gammaE main case} or \S\ref{SS:gammaE exceptional case}, we have found the following:
\begin{itemize}
\item
an agreeable group $\calG$ and an open and normal subgroup $G$ of $\calG$ satisfying $\det(G)=\Zhat$ and $G\cap \SL_2(\Zhat) = [\calG,\calG]$,
\item
a homomorphism $\gamma_E\colon \Zhat^\times \to \calG/G$ such that, after replacing $\rho_E^*$ by an isomorphic representation, we have $\rho_E^*(\Gal_\QQ) \subseteq \calG$ and the homomorphism
\[
\alpha_E\colon \Gal_\QQ \to \calG/G
\]
obtained by composing $\rho_E^*$ with the quotient map $\calG\to \calG/G$ satisfies $\gamma_E(\chi_\cyc(\sigma)^{-1})=\alpha_E(\sigma)$ for all $\sigma\in \Gal_\QQ$.
\item
the commutator subgroups of the two groups $\rho_E^*(\Gal_\QQ)$ and $\calG$ are conjugate in $\GL_2(\Zhat)$.
\end{itemize}

We have $G_E:=\rho^*_E(\Gal_\QQ) \subseteq \calG$ and hence $[G_E,G_E]\subseteq [\calG,\calG]$.  We have $[G_E,G_E]= [\calG,\calG]$ since they are open subgroups of $\SL_2(\Zhat)$ that are conjugate in $\GL_2(\Zhat)$.  In particular, $G_E \cap \SL_2(\Zhat) = [\calG,\calG]$ by Lemma~\ref{L:KW}.    By Lemma~\ref{L:intro HE}, $\rho_E^*(\Gal_\QQ)$ is conjugate in $\GL_2(\Zhat)$ to the explicit group
\[
\calH_E := \{ g \in \calG :  g\cdot G = \gamma_E(\det g)\}
\]
which is computable, cf.~Remark \ref{R:let us make clear}.

Let $\calH_E^t\subseteq \GL_2(\Zhat)$ be the group obtained by taking the transpose of all elements in $\calH_E$.  The groups $\rho_E(\Gal_\QQ)$ and $\calH_E^t$ are then conjugate in $\GL_2(\Zhat)$.

\section{Universal elliptic curves} \label{S:universal elliptic curves}

Consider an open subgroup $G$ of $\GL_2(\Zhat)$ that satisfies $\det(G)=\Zhat^\times$ and $-I \notin G$.    Define the group $\calG:=\pm G$.    From our definition in \S\ref{S:first modular curve}, we have $X_\calG=X_G$.  Recall that $U_G=U_\calG$ is the open subvariety $X_G - \pi_G^{-1}(\{0,1728,\infty\})$ of $X_G$.   

We will say that an elliptic scheme $E\to U_G$ is a \defi{universal elliptic curve over $U_G$} if the following hold for any number field $K$:
\begin{itemize}
\item
for any point $u\in U_G(K)$, the $j$-invariant of $E_u/K$ is $\pi_G(u)$, where the elliptic curve $E_u$ is the fiber of $E\to U_G$ over $u$,
\item
for all $u\in U_G(K)$, $\rho_{E_u}^*(\Gal_K)$ is conjugate in $\GL_2(\Zhat)$ to a subgroup of $G$.
\end{itemize}
In this section, we sketch some methods for computing such a universal elliptic curve.  This will follows directly from other parts of the paper, but we state it here for convenient reference.  We will not use this elsewhere. 

\begin{remark}
In this paper, we have not taken a moduli point of view for modular curves.  However, such a viewpoint makes the existence of  a universal elliptic curve obvious;  the underlying moduli space is fine since $-I \notin G$ (note that  we are excluding elliptic curves with extra automorphisms by focusing on $U_G$).  Moreover, using the moduli approach, one can also show that if $E'$ is an elliptic curve over a number field $K$ with $j_{E'} \in K-\{0,1728\}$, then $\rho_{E'}^*(\Gal_K)$ is conjugate in $\GL_2(\Zhat)$ to a subgroup of $G$ if and only if $E'/K$ is isomorphic to $E_u$ for some $u\in U_G(K)$.
\end{remark}

Let $N$ be the level of $G$.   We have $N\geq 3$ since $-I\notin G$.  Let $\bbar{G}$ and $\bbar{\calG}=\pm \bbar{G}$ be the images of $G$ and $\calG$, respectively, in $\GL_2(\ZZ/N\ZZ)$.   By Lemma~\ref{L:weight 3 existence}, there is a nonzero modular form $f_0\in M_{3,\bbar{G}}$; we can construct such an $f_0$ by using Corollary~\ref{C:Eisenstein span 2}.  Define 
\[
  \delta :=j\cdot f_0^2/E_6;
\]   
it is a nonzero element of $\calF_N^{\bbar{G}}=\QQ(X_G)$.   All poles of $\delta$ lie above the points $0$, $1$ and $\infty$ on the $j$-line; recall that $E_6^2=(j-1728)\Delta$.    In particular, we can view $\delta$ and $j$ as morphisms $U_G\to \AA_\QQ^1$.  

Consider the Weierstrass equation:
\begin{align} 
\label{E:universal Weierstrass}
\delta\cdot y^2 =  x^3 -  27 \cdot j (j-1728)  \cdot  x +54 \cdot j (j-1728)^2.
\end{align}

Let $U'$ be the maximal open subvariety of $U_G$ such that the valuation of $\delta$ at $P$ is even for all closed points $P$ of $U_G$.    The equation  (\ref{E:universal Weierstrass}) defines an elliptic scheme $E\to U'$.  This is clear if we instead restrict to the smaller open subvariety of $U_G$ for which $\delta$ is nonzero.  For excluded points of $U'$, we can scale $y$ appropriately and change coordinates, using our assumption on valuations, to extend the model.

\begin{prop}
We have $U'=U_G$ and $E\to U'=U_G$ defined by (\ref{E:universal Weierstrass}) is a universal elliptic curve over $U_G$.
\end{prop}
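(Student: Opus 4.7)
The plan is to prove the two assertions in turn: first that the Weierstrass equation (\ref{E:universal Weierstrass}) extends to an elliptic scheme over all of $U_G$, and second that this scheme is universal.

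For the first assertion $U'=U_G$, the key point is to show that the function $\delta = j \cdot f_0^2/E_6 \in \QQ(X_G)$ has even valuation at every closed point $P$ of $U_G$. The modular form $f_0^2$ lies in $M_{6,\bbar{\calG}}$, since $f_0 \in M_{3,\bbar{G}}$ and the action of $-I \in \bbar\calG$ on $M_3$ is by $-1$ (as weight $3$ is odd), so that $f_0^2$ is fixed by $\bbar\calG = \pm \bbar{G}$. Both $f_0^2$ and $E_6$ are therefore global sections of the invertible sheaf $\scrL_6$ on $X_G$, and $\delta \in \QQ(X_G)$ is the product of $j$ with their quotient. On $U_G$, we have excluded preimages of $0,1728,\infty$, so $j$ is a unit, and the section $E_6$ has no zeros (its zeros lie above $j=1728$ via $E_6^2=(j-1728)\Delta$). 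Thus on $U_G$ the divisor of $\delta$ coincides with the divisor of $f_0^2$, which is twice the effective divisor of $f_0$ (computed on the open locus where $\scrL_3$ admits an odd-weight interpretation as a section of the Hodge line bundle cubed, since $U_G$ contains no elliptic points or cusps). In particular all valuations are even, so the coordinate change absorbing $\sqrt{\delta}$ into $y$ locally yields an integral Weierstrass model, extending $E$ to an elliptic scheme over $U_G$.

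For universality, the substitution $y \mapsto y/\sqrt{\delta}$ formally transforms (\ref{E:universal Weierstrass}) into (\ref{E:generic Weierstrass}), exhibiting $E$ as the quadratic twist by $\delta$ of the pullback of $\scrE$ to $U_G$. Since quadratic twists preserve $j$-invariants, $j_{E_u}=\pi_G(u)$ for every $u \in U_G(K)$. To determine the Galois image, I work generically over $\QQ(X_G) = \calF_N^{\bbar{\calG}}$: using $\beta^2 = \delta$ as in \S\ref{SS:N torsion of calE}, the torsion $P_\alpha^\calE=(x_\alpha,\beta u_\alpha)$ of $\calE$ transforms under $y\mapsto y/\beta$ to the $N$-torsion points $P_\alpha^E := (x_\alpha, u_\alpha)$ of $E$, which now lie in $\calF_N^2$ (no $\beta$ needed). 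For $\sigma \in \Gal_{\QQ(X_G)}$, set $A := \rho_{\calE,N}^*(\sigma)^{-1} \in \bbar\calG$. Lemma~\ref{L:action on x and u}(\ref{L:action on x and u iii}) gives $\sigma(x_\alpha)=x_{\alpha A}$ and $\sigma(u_\alpha) = (f_0/(f_0*A))\,u_{\alpha A}$. Because $f_0 \in M_{3,\bbar G}$ and $\bbar\calG = \pm\bbar G$ with $-I \notin \bbar G$, we have $f_0*A = \epsilon\,f_0$, where $\epsilon=+1$ if $A\in\bbar G$ and $\epsilon=-1$ if $A\in -\bbar G$. Hence
\[
\sigma(P_\alpha^E) = (x_{\alpha A}, \epsilon\, u_{\alpha A}) = \epsilon\cdot P_{\alpha A}^E = P_{\alpha(\epsilon A)}^E,
\]
using $-P_\gamma^E = P_{-\gamma}^E$, and $\epsilon A$ lies in $\bbar G$ in both cases.

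This computation shows that with respect to the basis $\{P_{(1,0)}^E, P_{(0,1)}^E\}$, the representation $\rho^*_{E,N}\colon \Gal_{\QQ(X_G)} \to \GL_2(\ZZ/N\ZZ)$ has image contained in $\bbar G$, and therefore $\rho_E^*(\Gal_{\QQ(X_G)}) \subseteq G$ since $G$ has level $N$. The specialization argument of \S\ref{SS:specializations} (or equivalently Proposition~\ref{P:revised moduli property}) then yields, for every $u \in U_G(K)$, that $\rho_{E_u}^*(\Gal_K)$ is conjugate in $\GL_2(\Zhat)$ to a subgroup of $G$. I expect the main obstacle to be tracking the sign $\epsilon$ arising from the action of $-I \in \bbar\calG\setminus \bbar G$ on $f_0$ and $\beta$: this is precisely where the hypothesis $-I \notin G$ is used, and where the specific choice of $f_0 \in M_{3,\bbar G}$ (whose existence is guaranteed by Lemma~\ref{L:weight 3 existence}) becomes essential, since it is what cuts the image down from $\bbar\calG$ to $\bbar G$.
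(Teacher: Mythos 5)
Your proof is correct, and it takes a genuinely different route from the paper's. The paper realizes the universal elliptic curve abstractly as the quadratic twist of $\scrE_\calG$ by the order-$2$ character $\chi$ obtained from $\varrho_{\scrE_\calG,N}^*$ via $\bbar\calG/\bbar G\cong\{\pm1\}$, then invokes the machinery of \S\ref{SS:L special} (Lemma~\ref{L:fibers and L} with $m=1$, $C=(-1)$, $k=3$) to identify the corresponding quadratic extension $L$ of $\QQ(X_\calG)$ with $\QQ(X_\calG)(\beta)$; the equality $U'=U_G$ then falls out of the fact that this extension comes from an \'etale double cover of $U_\calG$. You instead bypass that formalism: for $U'=U_G$ you argue directly that $\text{div}(\delta)$ is even on $U_G$ (because $\delta$ differs from $\psi_6(f_0^2)$ by factors with neither zeros nor poles on $U_G$, and a weight-$6$ form that is a perfect square of a weight-$3$ form has even vanishing orders away from elliptic points and cusps), and for universality you compute the Galois action on the explicit torsion points $P_\alpha^E=(x_\alpha,u_\alpha)\in E(\calF_N)$ obtained after absorbing $\beta$ into the $y$-coordinate, using Lemma~\ref{L:action on x and u}(\ref{L:action on x and u iii}) and the transformation $f_0*A=\epsilon f_0$ to see that $\sigma(P_\alpha^E)=P_{\alpha(\epsilon A)}^E$ with $\epsilon A\in\bbar G$. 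Your approach is more self-contained and makes the role of the choice $f_0\in M_{3,\bbar G}$ completely transparent, whereas the paper's approach is shorter given that the \'etale cover machinery has already been set up for other purposes. Two minor points worth smoothing out: the paper never defines $\scrL_3$ as a line bundle (only $\scrL_k$ for even $k$), so your parenthetical about "the open locus where $\scrL_3$ admits an odd-weight interpretation" should instead be phrased as: on $U_\calG$ (no elliptic points, no cusps) the vanishing order of $\psi_6(f_0^2)$ at a point $P$ corresponding to $\tau_0\in\calH$ equals $\nu_{\tau_0}(f_0^2)=2\nu_{\tau_0}(f_0)$ by the formula of \S\ref{SS:global sections} with $e=1$, hence is even. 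And Proposition~\ref{P:revised moduli property} is not quite "equivalent" to what you need for the final specialization step; the precise input is that $\Gal_{\QQ(X_G)}\twoheadrightarrow\pi_1(U_G,\bbar\eta)$, so the generic computation forces the image of $\pi_1(U_G)$ in $\GL_2(\ZZ/N\ZZ)$ into $\bbar G$, and hence all specializations as well.
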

\begin{proof}
With notation as in \S\ref{SS:specializations}, we have an elliptic scheme $\scrE_\calG\to U_\calG$ and a surjective homomorphism 
\[
\varrho_{\scrE_\calG,N}^*\colon \pi_1(U_\calG,\bbar\eta) \to  \bbar\calG.
\]
Recall that $\varrho_{\scrE_\calG,N}^*$ depends on a choice of a nonzero modular form $f_0$ in $M_3(\Gamma(N),\QQ(\zeta_N))$ and a choice of $\beta$ in a field extension of $\calF_N$ that satisfies $\beta^2=j\cdot f_0^2/E_6$.   By Lemma~\ref{L:weight 3 existence}, we can assume that $f_0 \in M_{3,\bbar{G}}$ since $-I \notin \bbar G$. Let $\alpha\colon \pi_1(U_\calG,\bbar\eta) \to \bbar \calG/\bbar G$ be the homomorphism obtained by composing $\varrho_{\scrE_\calG,N}^*$ with the quotient map $\bbar\calG\to \bbar\calG/\bbar G$.  Let $\chi\colon \pi_1(U_\calG,\bbar\eta) \to \{\pm 1\}$ be the character obtained by composing $\alpha$ with the isomorphism $\bbar\calG/\bbar G \cong \{\pm 1\}$.

Take any number field $K\subseteq \Qbar$ and point $u\in U_\calG(K)$.   The fiber $(\scrE_\calG)_u$ above $u$ is an elliptic curve over $K$ that is isomorphic to the curve given defined by (\ref{E:generic Weierstrass}) with $j=\pi_\calG(u) \in K-\{0,1728\}$.  In particular, $(\scrE_\calG)_u$ has $j$-invariant $\pi_\calG(u)$.   The specialization of $\varrho_{\scrE_\calG,N}^*$ at $u$ is a representation $\Gal_K \to \bbar{\calG} \subseteq \GL_2(\ZZ/N\ZZ)$ that is isomorphic to $\rho_{(\scrE_\calG)_u,N}^*$.   In particular, if $E'/K$ is the quadratic twist of $(\scrE_\calG)_u$ by the specialization $\chi_u\colon \Gal_K\to \{\pm 1\}$ of $\chi$ at $u$, then $E'$ has $j$-invariant $\pi_\calG(u)$ and $\rho_{E',N}^*(\Gal_K)$ is conjugate in $\GL_2(\ZZ/N\ZZ)$ to a subgroup of $\bbar G$.

Since $N$ is the level of $G$, we deduce that the elliptic scheme $E \to U_\calG=U_G$ obtained by taking the quadratic twist of the elliptic scheme $\scrE_\calG\to U_\calG$ by $\chi$.

The homomorphisms $\alpha$ and $\chi$ correspond to an \'etale cover $\phi\colon Y\to U_\calG$ of degree $2$.   With notation as in \S\ref{SSS:setup reps},  $\phi$ corresponds to a quadratic extension $L/\QQ(X_G)$ with $L\subseteq \calF_N(\beta)$.  Since $\beta^2=\delta \in \QQ(X_\calG)$, it suffices to prove that $L=\QQ(X_\calG)(\beta)$.  That $U'=U_\calG=U_G$ is a consequence of $L/\QQ(X_\calG)$ arising from an \'etale cover of $U_\calG$.

The group $\bbar\calG/\bbar G$ is cyclic of order $2$ and $-I \notin \bbar G$.   With notation and definitions as in \S\ref{SS:L special}, we can assume that $m=1$, $C=(-1)$, $g_0=-I$, $k=3$ and $f_1:=f_0$.   Moreover, with $F(x_1):=x_1^2$ we have $c_F= j\cdot f_0^2/ E_6 = \delta$.  By Lemma~\ref{L:fibers and L}, we find that  $L=\QQ(X_\calG)(y_1)$ where $y_1^2 = \delta$. Since $\beta^2=\delta$, we conclude that $L=\QQ(X_\calG)(\beta)$.
\end{proof}

\section{Families of modular curves} \label{S:families}

We now discuss a point of view that may be of use for further study of modular curves and Mazur's Program~B; these remarks will not be used elsewhere in the paper.

\subsection{Families and twists}

Let $\calG$ be an open subgroup of $\GL_2(\Zhat)$ satisfying $\det(\calG)=\Zhat^\times$ and $-I \in \calG$.    Fix a closed subgroup $B$ of $\calG$ satisfying $[\calG,\calG] \subseteq B$.

\begin{definition}
The \defi{family} of groups associated to the pair $(\calG,B)$ is the set $\scrF(\calG,B)$ of subgroups $H$ of $\calG$ that satisfy $\det(H)=\Zhat^\times$ and $H\cap \SL_2(\Zhat)=B \cap \SL_2(\Zhat)$.
\end{definition}

Suppose that $\scrF(\calG,B)\neq \emptyset$.  Fix a group $G \in \scrF(\calG,B)$.  Note that $\scrF(\calG,B)=\scrF(\calG,G)$.   Since $G  \supseteq [\calG,\calG]$ and $G$ is open, we find that $G$ is a normal subgroup of $\calG$ and that the group $\calG/G$ is finite and abelian. For each homomorphism $\gamma\colon \Zhat^\times \to \calG/G$, define
the subgroup
\[
G_\gamma:=\{g \in \calG :  g\cdot G = \gamma(\det g) \}
\]
of $\GL_2(\Zhat)$.  

\begin{lemma}
With notation as above, the set $\scrF(\calG,B)$ consists of the groups $G_\gamma$ with $\gamma\colon \Zhat^\times \to \calG/G$ a homomorphism.
\end{lemma}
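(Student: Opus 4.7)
Proof plan. The strategy is a direct verification in two directions. Since $[\calG,\calG] \subseteq B$ and $[\calG,\calG] \subseteq G$ (as $G \in \scrF(\calG,B)$ implies $G \supseteq B \cap \SL_2(\Zhat) \supseteq [\calG,\calG]$—actually better: simply note $[\calG,\calG] \subseteq G$ because $B \cap \SL_2(\Zhat) = G \cap \SL_2(\Zhat)$ and $[\calG,\calG] \subseteq B \cap \SL_2(\Zhat)$), the group $G$ is normal in $\calG$ with abelian quotient $\calG/G$, so the notation $G_\gamma$ makes sense for any homomorphism $\gamma\colon \Zhat^\times \to \calG/G$.

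First I would show $G_\gamma \in \scrF(\calG,B)$. The subgroup property follows from the fact that $\gamma$ and $\det$ are homomorphisms: if $g,h \in G_\gamma$, then $(gh)G = (gG)(hG) = \gamma(\det g)\gamma(\det h) = \gamma(\det(gh))$, and similarly for inverses. For the intersection with $\SL_2(\Zhat)$: an element $g \in G_\gamma \cap \SL_2(\Zhat)$ satisfies $gG = \gamma(1) = G$, hence $g \in G \cap \SL_2(\Zhat)$; conversely every $g \in G \cap \SL_2(\Zhat)$ lies in $G_\gamma$. For surjectivity of the determinant, given $t \in \Zhat^\times$ write $\gamma(t) = kG$ for some $k \in \calG$; since $\det(G) = \Zhat^\times$ we may pick $h \in G$ with $\det h = t(\det k)^{-1}$, and then $kh \in G_\gamma$ has $\det(kh) = t$.

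For the converse, given $H \in \scrF(\calG,B)$, I would construct $\gamma$ explicitly as follows. Define $\gamma\colon \Zhat^\times \to \calG/G$ by $\gamma(t) := hG$ for any $h \in H$ with $\det h = t$ (such $h$ exists because $\det(H) = \Zhat^\times$). This is well-defined: if $h, h' \in H$ both have determinant $t$, then $h^{-1}h' \in H \cap \SL_2(\Zhat) = G \cap \SL_2(\Zhat) \subseteq G$, so $hG = h'G$. It is a homomorphism because $\gamma(\det h)\gamma(\det h') = (hG)(h'G) = (hh')G = \gamma(\det(hh'))$. By construction $H \subseteq G_\gamma$.

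To upgrade this inclusion to equality, I would compare indices in $\calG$. The short exact sequence coming from the determinant map gives $[\calG : K] = [\calG \cap \SL_2(\Zhat) : K \cap \SL_2(\Zhat)]$ for any subgroup $K \subseteq \calG$ with $\det(K) = \Zhat^\times$. Applying this to $K = H$ and $K = G_\gamma$, and using that both intersect $\SL_2(\Zhat)$ in the same group $G \cap \SL_2(\Zhat)$, we conclude $[\calG : H] = [\calG : G_\gamma]$, hence $H = G_\gamma$. No step here is hard; the main conceptual content is that the condition $[\calG,\calG] \subseteq G$ is precisely what forces $\calG/G$ to be abelian, which in turn lets the well-definedness argument for $\gamma$ go through cleanly.
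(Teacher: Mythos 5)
Your proof is correct and follows essentially the same route as the paper: verify directly that each $G_\gamma$ has the required intersection with $\SL_2(\Zhat)$ and full determinant, then for each $H\in\scrF(\calG,B)$ build $\gamma$ from $H$ via the determinant, deduce $H\subseteq G_\gamma$, and use that both have full determinant and the same intersection with $\SL_2(\Zhat)$ to conclude equality. The only cosmetic differences are that the paper defines $\gamma$ by composing the inverse of the determinant isomorphism $H/(H\cap\SL_2(\Zhat))\xrightarrow{\sim}\Zhat^\times$ with the induced map to $\calG/G$ (rather than elementwise), and concludes $H=G_\gamma$ by a direct element argument rather than by comparing indices.
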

\begin{proof}
First take any $\gamma$.  We have $G_\gamma \cap \SL_2(\Zhat) = G\cap \SL_2(\Zhat)=B$.    The natural map $(\calG \cap \SL_2(\Zhat))/B \to \calG/G$ is an isomorphism since $G\cap \SL_2(\Zhat) = B$ and $\det(G)=\Zhat^\times$.  Using this isomorphism, we find that $\det(G_\gamma)=\Zhat^\times$.   Therefore, $G_\gamma \in \scrF(\calG,B)$.

Conversely, take any $H \in \scrF(\calG,B)$.  The quotient map $H\to \calG/G$ induces a homomorphism $f\colon H/(H\cap \SL_2(\Zhat)) \to \calG/G$ since $H\cap \SL_2(\Zhat)=B=G\cap \SL_2(\Zhat)$.   Let $\gamma\colon \ZZ^\times \to \calG/G$ be the homomorphism obtained by composing the inverse of the determinant map $H/(H\cap \SL_2(\Zhat)) \xrightarrow{\sim} \Zhat^\times$ with $f$.  For each $h\in H$, we have $h \cdot G = \gamma(\det h)$.  Therefore, $H \subseteq G_\gamma$.  Since $H$ and $G_\gamma$ both have full determinant and have the same intersection with $\SL_2(\Zhat)$, we conclude that $H=G_\gamma$.
\end{proof}

\begin{remark} \label{R:generalized Serre curves}
We saw a family of groups in \S\ref{SS:Serre curves} when discussing Serre curves.   In fact, one can show that $E/\QQ$ is a Serre curve if and only if $\rho_E(\Gal_\QQ)$ is an element of $\scrF(\GL_2(\Zhat),[\GL_2(\Zhat),\GL_2(\Zhat)])$.   With terminology from \cite{MR3350106}, $E/\QQ$ is a ``$\calG$-Serre curve'' if and only if $\rho_E(\Gal_\QQ)$ is conjugate in $\GL_2(\Zhat)$ to some group in $\scrF(\calG,[\calG,\calG])$.
\end{remark}

Let us loosely reinterpret some our results from \S\ref{SS:agreeable intro} and \S\ref{SS:finding Galois} in terms of families.  

We have proved that there are finitely many pairs $\calG_1,\ldots, \calG_m$ such  that for any non-CM elliptic curve $E/\QQ$  for which  Conjecture~\ref{C:Serre question} holds, $\rho_E^*(\Gal_\QQ)$ is conjugate in $\GL_2(\Zhat)$ to a group in the family $\scrF(\calG_i,[\calG_i,\calG_i])$ for some $1\leq i \leq m$.      The agreeable closure of $\rho_E^*(\Gal_\QQ)$, computed as in \S\ref{SS:agreeable intro}, determines which of our explicit families $\scrF(\calG_i,[\calG_i,\calG_i])$ our group lies in.    Once we know the specific family of groups, the results from \S\ref{SS:finding Galois} allow us to identify $\rho_E^*(\Gal_\QQ)$ in the family by constructing the appropriate  homomorphism $
\gamma_E \colon \Zhat^\times \to \calG_i/G_i$, where $G_i$ is a fixed group in $\scrF(\calG_i,[\calG_i,\calG_i])$.\\

So that we can talk about modular curves and groups interchangeably, let us now consider the case a nonempty family   $\scrF(\calG,B)$ for which $-I \in B$.  As before, fix $G\in \scrF(\calG,B)$; we have $-I\in G$.
 
   Let $\pi\colon X_G\to X_\calG$ be the morphism of modular curves induced by the inclusion $G \subseteq \calG$.  Since $G$ is a normal subgroup of $\calG$, the group $\calG$ acts on the modular curve $X_G$ with $G$ acting trivially.  This induces an isomorphism $\calG/G \xrightarrow{\sim} \Aut(X_G/X_\calG)$, where $\Aut(X_G/X_\calG)$ is the group of automorphisms $f$ of the curve $X_G$ that satisfy $\pi \circ f = \pi$.

For a fixed homomorphism $\gamma\colon \Zhat^\times \to \calG/G$, we obtain a homomorphism 
\[
\xi := \gamma\circ \chi_\cyc^{-1} \colon \Gal_\QQ \to \calG/G \cong \Aut(X_G/X_\calG).
\]
In particular, we can view $\xi$ as a $1$-cocycle of $X_G$.  Twisting $X_G$ by $\xi$ gives a curve $(X_G)_\xi$ and a morphism $\pi_\xi\colon (X_G)_\xi \to X_\calG$ that are both defined over $\QQ$.   A straightforward computation shows that we can in fact take $(X_G)_\xi=X_{G_\gamma}$ with $\pi_\xi\colon X_{G_\gamma}\to X_\calG$ being the morphism induced by the inclusion $G_\gamma\subseteq \calG$.  So our family of groups $\scrF(\calG,B)=\scrF(\calG,G)$ corresponds to a family of twists $\{(X_G)_\xi\}_\xi$ as we vary over $1$-cocycles $\xi\colon \Gal_\QQ \to \Aut(X_G/X_\calG)$.

Note that the modular curve $X_{G_\gamma}$ need not have a rational non-CM point for every $\gamma$ (moreover, there are families where $X_{G_\gamma}(\QQ)=\emptyset$ for all $\gamma$).

Consider two pairs $(C_1,\pi_1)$ and $(C_2,\pi_2)$, where $C_i$ is a curve over $\QQ$ and $\pi_i\colon C_i \to \PP^1_\QQ$ is a morphism.  We say that the pairs $(C_1,\pi_1)$ and $(C_2,\pi_2)$ are \defi{isomorphic} if there is an isomorphism $f\colon C_1\to C_2$ defined over $\QQ$ so that $\pi_2 \circ f = \pi_1$.  Rakvi \cite{Rakvi} has recently classified the pairs $(X_G,\pi_G)$, up to isomorphism, for which $G$ is an open subgroup of $\GL_2(\Zhat)$ satisfying $\det(G)=\Zhat^\times$, $-I\in G$, and $X_G\cong  \PP^1_\QQ$.   She accomplishes this by showing that all such group $G$ lie in a finite number of families and then identifying which curves arising from these families are isomorphic to $\PP^1_\QQ$.

\subsection{A conjecture on non-CM points of high genus modular curves} \label{SS:conjecture 2023}

The modular curve $X_0(37)$ has exactly two rational points that are not cusps, cf.~\cite{MR482230}.   These rational points of $X_0(37)$ map to the values $-7\!\cdot\! 11^3$ and $-7\!\cdot\! 137^3\!\cdot\! 2083^3$ in the $j$-line.  Let $G_1$ and $G_2$ be the group $\pm \rho_E^*(\Gal_\QQ)$, where $E/\QQ$ is an elliptic curve with $j$-invariant $-7\!\cdot\! 11^3$ and $-7\!\cdot\! 137^3\!\cdot\! 2083^3$, respectively.  Note that the subgroups $G_1$ and $G_2$ of $\GL_2(\Zhat)$ are uniquely defined up to conjugacy.     From \S\ref{SS:largest known index}, we find that $G_1$ is conjugate to the open subgroup of $\GL_2(\Zhat)$ whose level divides $5180$ and whose image modulo $5180$ is generated by $-I$ and the matrices (\ref{E:exceptional gen 2023}).  One can check that $G_2$ is conjugate in $\GL_2(\Zhat)$ to the group $G_1^t$ obtained by taking the transpose of the elements of $G_1$.   

One can show that the modular curves $X_{G_1}$ and $X_{G_2}$ both have genus $97$ and clearly $X_{G_1}(\QQ)$ and $X_{G_2}(\QQ)$ both have a non-CM point. The following conjecture predicts  that $X_{G_1}$ and $X_{G_2}$ are the highest genus modular curves with a rational non-CM point; we will motivate it in the next section.

\begin{conj} \label{C:2023}
Let $G$ be an open subgroup  of $\GL_2(\Zhat)$ with $\det(G)=\Zhat^\times$ and $-I \in G$.  Assume that $X_G$ has genus at least $54$ and that $G$ is not conjugate in $\GL_2(\Zhat)$ to $G_1$ or $G_2$.   Then $X_G(\QQ)$ has no non-CM points.
\end{conj}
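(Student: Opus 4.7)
The plan is to reduce Conjecture~\ref{C:2023} to a finite enumeration via the classification of Galois images developed in this paper. If $X_G(\QQ)$ contains a non-CM point, then by Proposition~\ref{P:intial moduli property} applied to any non-CM elliptic curve $E/\QQ$ with $j_E=\pi_G(P) \notin \{0,1728\}$ for some $P\in X_G(\QQ)$, the group $H_E := \pm\rho_E^*(\Gal_\QQ)$ is, after conjugation, contained in $G$. The inclusion $H_E\subseteq G$ gives a dominant morphism $X_{H_E}\to X_G$ defined over $\QQ$, so $\operatorname{genus}(X_{H_E}) \ge \operatorname{genus}(X_G) \ge 54$. (The finitely many $P$ with $\pi_G(P) \in \{0,1728\}$ correspond to CM $j$-invariants and are excluded.) The conjecture thus reduces to two claims: (A) the modular curve $X_{H_E}$ has genus at least $54$ if and only if $j_E \in \{-7\cdot 11^3,\, -7\cdot 137^3\cdot 2083^3\}$, in which case $H_E$ is conjugate to $G_1$ or $G_2$ respectively; and (B) any open subgroup $G$ of $\GL_2(\Zhat)$ with $-I\in G$, $\det(G)=\Zhat^\times$, and $G_i\subsetneq G$ for some $i\in\{1,2\}$ satisfies $\operatorname{genus}(X_G)<54$.

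For (A), combine Conjecture~\ref{C:Serre question} with Theorem~\ref{T:main agreeable} and the main algorithm of \S\ref{S:finding the agreeable closure}--\S\ref{S:putting it together} to obtain, up to conjugacy in $\GL_2(\Zhat)$, the finite explicit list of all groups $H_E$ arising from non-CM $E/\QQ$. Compute $\operatorname{genus}(X_{H_E})$ for each using the standard dimension formula of \S\ref{SS:modular form setup} applied to the explicit mod $N$ image. The list splits into a \emph{generic} part --- for each of the $454$ agreeable groups $\calG\in \scrA$ with $X_\calG(\QQ)$ infinite, run over the finitely many $H\subseteq \calG$ with $H\cap \SL_2(\Zhat)=[\calG,\calG]$, as parametrized by the homomorphisms $\gamma_E$ of \S\ref{SS:finding Galois} --- and an \emph{exceptional} part corresponding to the $81$ known non-CM rational points on the high-genus modular curves in $\scrA$. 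The assertion is that only the two points on $X_0(37)$ produce genus $\ge 54$: a direct calculation from the explicit description of $G_1$ in \S\ref{SS:largest known index} gives $\operatorname{genus}(X_{G_1})=97$, and similarly for $G_2$, while all other $H_E$ on the list should be seen to have genus strictly less than $54$.

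For (B), enumerate the finitely many proper open supergroups $G$ of $G_i$ (modulo conjugacy) with $-I\in G$ and $\det(G)=\Zhat^\times$. Each inclusion $G_i\subsetneq G$ gives a morphism $X_{G_i}\to X_G$ of degree $[G:G_i]\ge 2$, so Riemann--Hurwitz, together with the explicit ramification data at cusps and elliptic points computable from the mod $N$ image, forces a sharp drop in the genus. Heuristically, the bulk of the genus of $X_{G_i}$ comes from its $37$-adic component, so any enlargement of $G_i$ that reduces the $37$-adic level collapses the genus well below $54$; the remaining enlargements (at primes dividing $2\cdot 5\cdot 7$ only) are a short finite check.

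The main obstacle is twofold. First, the conjecture is conditional on Conjecture~\ref{C:Serre question} --- still open in general, with non-split Cartan curves $X_{\operatorname{ns}}^+(\ell)$ for a few primes unresolved --- and on the complete determination of $X_\calG(\QQ)$ for the handful of high-genus $\calG\in\scrA$ left open in our computation; any unexpected rational point on the latter could produce a new $H_E$ of large genus not on our list. Second, although Claims (A) and (B) are finite and systematic within the framework of this paper, the genus bookkeeping for the hundreds of candidate groups $H$ is substantial, and carrying it out rigorously is the real work of the proof.
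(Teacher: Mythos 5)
Your proposal follows the same route as the paper: what you call Claims (A) and (B) correspond to the two halves of the (conditional) proof of Lemma~\ref{L:2023}, and Conjecture~\ref{C:2023} then follows from that lemma together with the conjectural value $\beta=53$ introduced in \S\ref{S:families}. The paper organizes Claim (A) by defining $\beta$ as the maximum of the genera $g_i$ of $X_{G_E}$ over those agreeable closures $\calG_i$ whose mod-$37$ image is \emph{not} conjugate into the upper triangular Borel, and then proves (assuming Conjecture~\ref{C:Serre question}) that a genus bound of $\max\{\beta,49\}$ suffices; your phrasing is equivalent after unwinding the definition of $\beta$, because $g_i>\beta$ forces $\calG_i$ to be Borel mod~$37$, whence $E$ furnishes a non-CM rational point on $X_0(37)$ and Mazur gives $j_E\in\{-7\cdot 11^3,\,-7\cdot 137^3\cdot 2083^3\}$. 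Your Claim (B) is the paper's computation that every proper overgroup of $G_j$ has genus at most $49$, which is stronger than your claimed bound of $54$.

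One correction to the setup of Claim (A): there is \emph{not} a finite list of groups $H_E=\pm\rho_E^*(\Gal_\QQ)$ up to conjugacy. For instance, over Serre curves the quadratic entanglement at $2$ can come from $\QQ(\sqrt d)$ for any squarefree $d$, so the level of $H_E$ is unbounded. What is finite (conditionally on Conjecture~\ref{C:Serre question} and the determination of the exceptional set $\calJ'$) is the set of conjugacy classes of $H_E\cap\SL_2(\Zhat)=\pm[\calG_E,\calG_E]$, which is all the genus of $X_{H_E}$ depends on; this is exactly what the families $\scrF(\calG_i,[\calG_i,\calG_i])$ of \S\ref{S:families} encode, and why the enumeration you propose is well-founded. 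With that correction your argument is sound, and the conditional dependencies you flag at the end --- Conjecture~\ref{C:Serre question} plus the completeness of the list of high-genus rational points --- are exactly the ones the paper acknowledges in presenting this as a conjecture rather than a theorem.
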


\begin{remark}
It can be shown that there are \emph{infinitely many} groups $G\subseteq \GL_2(\Zhat)$ with $\det(G)=\Zhat^\times$ and $-I\in G$ such that $X_G$ has genus at least $53$ and $X_G(\QQ)$ has a non-CM point.  So the value $54$ in Conjecture~\ref{C:2023} would be best possible.
\end{remark}

\subsection{Motivation for our conjectures}

In this section, we give some brief motivation behind Conjectures~\ref{C:brave} and \ref{C:2023}.  We shall assume throughout that that Conjecture~\ref{C:Serre question} holds.    Let 
\[
\calG_1,\ldots, \calG_m
\]
be the subgroups of $\GL_2(\Zhat)$, up to conjugacy, that are the agreeable closures of $\rho_E^*(\Gal_\QQ)$ for some non-CM $E/\QQ$.    Using  Theorem~\ref{T:main agreeable} (with Lemma~\ref{L:level of agreeable and primes}), we find that there are indeed only finitely many groups $\calG_i$.  

Take any $1\leq i \leq m$.  Consider any group $G \in \scrF(\calG_i,[\calG_i,\calG_i])$.   Let $g_i$ be the genus of the curve $X_{G}$ and let $n_i$ be the index of $G\cap \SL_2(\Zhat)$ in $\SL_2(\Zhat)$.    Since $g_i$ and $n_i$ depend only on $G  \cap \SL_2(\Zhat) = [\calG_i,\calG_i]$, we find that $g_i$ and $n_i$ are independent of the choice of $G$.   

We now prove a version of Conjecture~\ref{C:brave} (assuming Conjecture~\ref{C:Serre question}).  We define $\calI$ to be the set of integers $n_i$ with $1\leq i \leq m$.  

\begin{lemma} \label{L:brave}
If $E/\QQ$ is a non-CM elliptic curve, then $[\GL_2(\Zhat):\rho_E(\Gal_\QQ)]$ lies in the set $\calI$.
\end{lemma}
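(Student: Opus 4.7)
The plan is to reduce the computation of the index $[\GL_2(\Zhat):\rho_E(\Gal_\QQ)]$ to an invariant of the agreeable closure of $G_E := \rho_E^*(\Gal_\QQ)$, and then invoke the assumed Conjecture~\ref{C:Serre question} to conclude that this invariant is one of the $n_i$.

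First I would observe that the indices of $\rho_E(\Gal_\QQ)$ and $\rho_E^*(\Gal_\QQ) = G_E$ in $\GL_2(\Zhat)$ coincide, since the transpose-inverse involution of $\GL_2(\Zhat)$ carries one onto the other. By Lemma~\ref{L:KW} this common index equals $[\SL_2(\Zhat): G_E\cap\SL_2(\Zhat)]$, and applying Lemma~\ref{L:KW} a second time yields $G_E\cap\SL_2(\Zhat) = [G_E,G_E]$. Next, let $\calG_E$ denote the agreeable closure of $G_E$ as produced by Proposition~\ref{P:agreeable closure}; by that proposition we have $[G_E,G_E] = [\calG_E,\calG_E]$, so
\[
[\GL_2(\Zhat):\rho_E(\Gal_\QQ)] = [\SL_2(\Zhat): [\calG_E,\calG_E]].
\]

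Now I would invoke Conjecture~\ref{C:Serre question}, which is our standing assumption. Lemma~\ref{L:level of agreeable and primes} then guarantees that the level of $\calG_E$ is divisible only by primes in $\calL$, so $\calG_E$ arises as the agreeable closure of $\rho_E^*(\Gal_\QQ)$ for an elliptic curve satisfying that conjecture, and by definition of the list $\calG_1,\ldots,\calG_m$ the group $\calG_E$ is conjugate in $\GL_2(\Zhat)$ to some $\calG_i$. Conjugation preserves commutator subgroups and preserves indices in $\SL_2(\Zhat)$ (since $\SL_2(\Zhat)$ is normal), so
\[
[\SL_2(\Zhat): [\calG_E,\calG_E]] = [\SL_2(\Zhat): [\calG_i,\calG_i]].
\]

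Finally I would check that the right-hand side equals $n_i$. Since $G_E$ lies in $\scrF(\calG_E,[\calG_E,\calG_E])$, this family is nonempty, and pulling back along the conjugation shows $\scrF(\calG_i,[\calG_i,\calG_i])$ is nonempty as well. For any $G$ in that family one has $G\cap\SL_2(\Zhat) = [\calG_i,\calG_i]\cap\SL_2(\Zhat) = [\calG_i,\calG_i]$, because $[\calG_i,\calG_i]$ is already contained in $\SL_2(\Zhat)$. Therefore $n_i = [\SL_2(\Zhat): [\calG_i,\calG_i]]$, and combining with the displayed equalities above gives $[\GL_2(\Zhat):\rho_E(\Gal_\QQ)] = n_i \in \calI$. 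No real obstacle arises here: the lemma is essentially a bookkeeping consequence of the agreeable-closure machinery of \S\ref{S:agreeable} and the Kronecker--Weber constraint, once one has the finite list $\calG_1,\ldots,\calG_m$ in hand.
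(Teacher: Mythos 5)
Your proof is correct and follows essentially the same route as the paper: reduce to $[\SL_2(\Zhat):[\calG_E,\calG_E]]$ via Lemma~\ref{L:KW} and Proposition~\ref{P:agreeable closure}, identify $\calG_E$ with some $\calG_i$ up to conjugacy, and observe that for any $G$ in the (nonempty) family $\scrF(\calG_i,[\calG_i,\calG_i])$ one has $G\cap\SL_2(\Zhat)=[\calG_i,\calG_i]$, so the index is $n_i$. One small wrinkle: the appeal to Lemma~\ref{L:level of agreeable and primes} is a logical detour that does not actually yield the conclusion you draw from it --- $\calG_E$ is conjugate to some $\calG_i$ simply by the definition of the list $\calG_1,\ldots,\calG_m$ as the set of all agreeable closures of $\rho_E^*(\Gal_\QQ)$ (with Conjecture~\ref{C:Serre question} assumed throughout); the level statement is what shows the list is \emph{finite}, not that any given $\calG_E$ belongs to it.
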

\begin{proof}
Take any non-CM elliptic curve $E/\QQ$ and let $\calG$ be the agreeable closure of $G_E:=\rho_E^*(\Gal_\QQ)$.   After conjugating in $\GL_2(\Zhat)$, we may assume that $G_E\subseteq \calG=\calG_i$ for some $1\leq i \leq m$.   We have $G_E \cap \SL_2(\Zhat) = [\calG,\calG]=[\calG_i,\calG_i]$ and hence $G_E$ is in the family $\scrF(\calG_i,[\calG_i,\calG_i])$.  Therefore, $[\GL_2(\Zhat): G_E]=[\SL_2(\Zhat): G_E \cap \SL_2(\Zhat)] =n_i$.  In particular, $[\GL_2(\Zhat): G_E] \in \calI$.   
\end{proof}

We now prove a version of Conjecture~\ref{C:2023} (assuming Conjecture~\ref{C:Serre question}).  We define $\beta$ to be the maximum value of $g_i$ as we vary over all $1\leq i \leq m$ for which the image of $\calG_i$ modulo $37$ is not conjugate in $\GL_2(\ZZ/37\ZZ)$ to a group of upper triangular matrices.    

\begin{lemma} \label{L:2023}
Let $G$ be an open subgroup  of $\GL_2(\Zhat)$ with $\det(G)=\Zhat^\times$ and $-I \in G$.  Assume that $X_G$ has genus strictly greater than $\max\{\beta,49\}$ and that $G$ is not conjugate in $\GL_2(\Zhat)$ to $G_1$ or $G_2$.   Then $X_G(\QQ)$ has no non-CM points.
\end{lemma}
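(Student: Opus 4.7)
The plan is to prove the contrapositive. Assume $X_G(\QQ)$ contains a non-CM point. Choose a non-CM elliptic curve $E/\QQ$ with $j_E = \pi_G(P)$ for some such point $P$. By Proposition~\ref{P:intial moduli property}, after conjugating $G$ in $\GL_2(\Zhat)$ (which is harmless since the conclusion is conjugation invariant), we may assume $G_E:=\rho_E^*(\Gal_\QQ)\subseteq G$. Let $\calG$ be the agreeable closure of $G_E$; then $\calG$ is conjugate to some $\calG_i$, and $G_E \in \scrF(\calG, [\calG,\calG])$, so the genus of $X_{G_E}$ equals $g_i$.

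The argument splits according to whether the image of $\calG$ (equivalently $\calG_i$) modulo $37$ is conjugate in $\GL_2(\ZZ/37\ZZ)$ to a subgroup of the upper triangular matrices. If it is \emph{not}, then by definition $g_i \leq \beta$. Since $G_E \subseteq G$ induces a non-constant (hence surjective) morphism $X_{G_E} \to X_G$ of smooth projective curves, genus is monotone: $g(X_G) \leq g(X_{G_E}) = g_i \leq \beta$, contradicting $g(X_G) > \beta$.

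Now suppose the image of $\calG$ modulo $37$ is upper triangular conjugate. Since $G_E \subseteq \calG$, the image of $G_E$ modulo $37$ stabilizes a line in $(\ZZ/37\ZZ)^2$, so $E$ admits a rational $37$-isogeny. Mazur's determination of $X_0(37)(\QQ)$ then forces $j_E \in \{-7\cdot 11^3,\, -7\cdot 137^3\cdot 2083^3\}$, and by the very definition of $G_1$ and $G_2$ this means $\pm G_E$ is conjugate in $\GL_2(\Zhat)$ to $G_1$ or to $G_2$. After one more conjugation, we may assume $G_1 \subseteq G$ or $G_2 \subseteq G$; because $G$ is not conjugate to either, the inclusion is strict, and the natural map $X_{G_i}\to X_G$ has degree $d\geq 2$. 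Applying Riemann--Hurwitz with $g(X_{G_1})=g(X_{G_2})=97$ gives
\[
2g(X_G)-2 \leq \frac{2\cdot 97 - 2}{d} = \frac{192}{d} \leq 96,
\]
so $g(X_G)\leq 49$, contradicting $g(X_G) > 49$.

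The two cases together cover all possibilities, giving the contradiction and completing the proof. The only non-routine input is the appeal to Mazur's theorem on $X_0(37)$ to pin down the two exceptional $j$-invariants in Case~B; everything else is a monotonicity/Riemann--Hurwitz computation together with the definitions of $\calG_i$, $g_i$, and $\beta$ already recorded in the paper. The threshold $49$ in the statement is exactly the value forced by the numerics $(97-1)/2+1 = 49$, so the Riemann--Hurwitz estimate is the pressure point that explains why the hypothesis takes the form $g(X_G) > \max\{\beta,49\}$.
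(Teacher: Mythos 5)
Your proof is correct and follows essentially the same route as the paper's: contrapositive, reduce to the agreeable closure $\calG_i$, use the definition of $\beta$ to force the mod-$37$ image into a Borel when $g_i$ is large, invoke Mazur on $X_0(37)$ to pin down the two exceptional $j$-invariants, and conclude that (after conjugation) $G$ strictly contains some $G_j$. The one spot where you genuinely part ways with the paper's writeup is at the end: where the paper asserts ``a computation shows'' that $g(X_{G'})\leq 49$ for every $G_j \subsetneq G'\subseteq\GL_2(\Zhat)$, you derive this from Riemann--Hurwitz applied to the degree-$d\geq 2$ cover $X_{G_j}\to X_{G'}$ (valid since $-I \in G_j \subseteq G'$, so the degree is $[G':G_j]$, and $2\cdot 97 - 2 = 192$ gives $g(X_{G'})\leq 1 + 96/d \leq 49$). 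That is a clean clarification: it makes the threshold $49$ transparent and replaces an implicit enumeration of overgroups with a two-line genus inequality. The remaining details — that $\pm G_E$ depends only on $j_E$ because quadratic twists differ by a $\{\pm 1\}$-valued character, and that the Riemann--Hurwitz bound holds vacuously when $g(X_G)\in\{0,1\}$ — all check out.
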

\begin{proof}
Let $G$ be an open subgroup  of $\GL_2(\Zhat)$ with $\det(G)=\Zhat^\times$ and $-I \in G$.  Assume that the genus $g$ of $X_G$ is strictly greater than $\max\{\beta,49\}$ and that $X_G(\QQ)$ has a non-CM point.  We need to show that $G$ is conjugate to $G_1$ or $G_2$.

Since $X_G(\QQ)$ has a non-CM point, there is a non-CM elliptic curve $E/\QQ$ so that, after conjugating, we have an inclusion $G_E:=\rho_E^*(\Gal_\QQ)\subseteq G$.   After conjugating our groups appropriately, we may further assume that $\calG_i$ is the agreeable closure of $G_E$ for some $1\leq i \leq m$.    We have $G_E \in \scrF(\calG_i, [\calG_i,\calG_i])$ since $G_E \cap \SL_2(\Zhat) = [G_E,G_E]=[\calG_i,\calG_i]$.   Therefore, $X_{G_E}$ has genus $g_i$.  The inclusion $G_E\subseteq G$ implies that $g_i\geq g$.    Since $g_i\geq g>\beta$, we deduce that $\calG_i$ modulo $37$ is conjugate to a group of upper triangular matrices.  From the inclusions $G_E \subseteq G \subseteq \calG_i$, we deduce that the modular curve $X_0(37)$ has a rational non-CM point arising from $E/\QQ$.   

From the beginning of \S\ref{SS:conjecture 2023}, we find that $\pm G_E$ is conjugate to $G_1$ or $G_2$ in $\GL_2(\Zhat)$.  So after conjugating $G$ appropriately, we may now assume that $G_j \subseteq G$ for some $j\in \{1,2\}$.  A computation shows that $X_{G_j}$ has genus $97$ and that $X_{G'}$ has genus at most $49$ for every group $G_j \subsetneq G' \subseteq \GL_2(\Zhat)$.   Since $X_G$ has genus $g>49$, we deduce that $G=G_j$ (after conjugation).
\end{proof}

We now explain how we made a conjecture for the list of groups $\calG_1,\ldots, \calG_m$.    By Theorem~\ref{T:main agreeable}, we can explicitly determine all the groups $\calG_i$ for which $X_{\calG_i}(\QQ)$ is infinite.

Let $\calJ$ be the set  (\ref{E:calJ new}) and let $\calJ'$ be the subset of $\calJ$ consisting of rational numbers that are the $j$-invariants of non-CM elliptic curves.  Equivalently, $\calJ'$ is the set of $j$-invariants of non-CM elliptic curves $E/\QQ$ for which $X_\calG(\QQ)$ is finite, where $\calG$ is the agreeable closure of $\rho_E^*(\Gal_\QQ)$.  Using Lemma~\ref{L:level of agreeable and primes} and Theorem~\ref{T:main agreeable}, the groups $\calG_i$ for which $X_{\calG_i}(\QQ)$ is finite are all obtained by taking the agreeable closure of $\rho_E^*(\Gal_\QQ)$ with $E/\QQ$ an elliptic curve whose $j$-invariant lies in $\calJ'$.  

So to find the groups $\calG_1,\ldots, \calG_m$, it would suffice to first compute $\calJ'$.  Note that the set $\calJ'$ is finite by Faltings theorem (which is ineffective).   Unfortunately,  $\calJ'$ is extremely difficult to compute.    As noted in \S\ref{SS:agreeable intro}, we have found $81$ elements of $\calJ'$; this was done by searching for low height rational points on models of the relevant modular curves.   We conjecture that $\calJ'$ actually has cardinality $81$ and hence we conjecturally know $\calJ'$.      In particular, this conjecture would allow us to compute our sequence of agreeable groups $\calG_1,\ldots, \calG_m$.    With this explicit list of groups, direct computations lead to the predictions that $\calI$ is the set from the statement of Conjecture~\ref{C:brave} and that $\beta$ is $53$.  These conjectural values of $\calI$ and $\beta$ along with Lemmas~\ref{L:brave} and \ref{L:2023} give rise to Conjectures~\ref{C:brave} and \ref{C:2023}, respectively.

\begin{bibdiv}
\begin{biblist}

\bib{MR1858080}{article}{
   author={An, Sang Yook},
   author={Kim, Seog Young},
   author={Marshall, David C.},
   author={Marshall, Susan H.},
   author={McCallum, William G.},
   author={Perlis, Alexander R.},
   title={Jacobians of genus one curves},
   journal={J. Number Theory},
   volume={90},
   date={2001},
   number={2},
   pages={304--315},
   issn={0022-314X},
   review={\MR{1858080}},
   doi={10.1006/jnth.2000.2632},
}

\bib{MR3961086}{article}{
   author={Balakrishnan, Jennifer},
   author={Dogra, Netan},
   author={M\"{u}ller, J. Steffen},
   author={Tuitman, Jan},
   author={Vonk, Jan},
   title={Explicit Chabauty-Kim for the split Cartan modular curve of level
   13},
   journal={Ann. of Math. (2)},
   volume={189},
   date={2019},
   number={3},
   pages={885--944},
   issn={0003-486X},
   review={\MR{3961086}},
   doi={10.4007/annals.2019.189.3.6},
}

\bib{chabautyalgo}{article}{
   author={Balakrishnan, Jennifer S.},
   author={Dogra, Netan},
   author={M\"{u}ller, J. Steffen},
   author={Tuitman, Jan},
   author={Vonk, Jan},
   title={Quadratic Chabauty for modular curves: algorithms and examples},
   journal={Compos. Math.},
   volume={159},
   date={2023},
   number={6},
   pages={1111--1152},
   issn={0010-437X},
}

\bib{2002.04717}{inproceedings}{
	author={Best, Alex},  
	author={Bober, Jonathan}, 
	author={Booker, Andrew},  
	author={Costa, Edgar}, 
	author={Cremona, John}, 
	author={Derickx, Maarten},
	author={Lee, Min},  
	author={Lowry-Duda, David},
	author={Roe, David},
	author={Sutherland, Andrew}, 
	author={Voight, John},
	title={Computing classical modular forms},
	booktitle={Arithmetic Geometry, Number Theory, and Computation},
	editors={Balakrishnan, Jennifer S. and Elkies, Noam and Hassett, Brendan and Poonen, Bjorn and Sutherland, Andrew V. and Voight, John},
	date={2021},
	pages={131--213},
	publisher={Springer International Publishing},
}

\bib{MR3137477}{article}{
   author={Bilu, Yuri},
   author={Parent, Pierre},
   author={Rebolledo, Marusia},
   title={Rational points on $X^+_0(p^r)$},
   language={English, with English and French summaries},
   journal={Ann. Inst. Fourier (Grenoble)},
   volume={63},
   date={2013},
   number={3},
   pages={957--984},
   issn={0373-0956},
   review={\MR{3137477}},
   doi={10.5802/aif.2781},
}

\bib{Magma}{article}{
      author={Bosma, Wieb},
      author={Cannon, John},
      author={Playoust, Catherine},
       title={The {M}agma algebra system. {I}. {T}he user language},
        date={1997},
     journal={J. Symbolic Comput.},
      volume={24},
      number={3-4},
       pages={235\ndash 265},
        note={Computational algebra and number theory (London, 1993)},
}

\bib{MR1994218}{book}{
   author={Bourbaki, Nicolas},
   title={Algebra II. Chapters 4--7},
   series={Elements of Mathematics (Berlin)},
   note={Translated from the 1981 French edition by P. M. Cohn and J. Howie;
   Reprint of the 1990 English edition [Springer, Berlin;  MR1080964
   (91h:00003)]},
   publisher={Springer-Verlag, Berlin},
   date={2003},
   pages={viii+461},
   isbn={3-540-00706-7},
}

\bib{MR3447646}{article}{
   author={Brau, Julio},
   author={Jones, Nathan},
   title={Elliptic curves with $2$-torsion contained in the $3$-torsion
   field},
   journal={Proc. Amer. Math. Soc.},
   volume={144},
   date={2016},
   number={3},
   pages={925--936},
   issn={0002-9939},
}

\bib{BrauThesis}{thesis}{
   author={Brau Avilo, Julio},
   title={Galois representations of elliptic curves and abelian entanglements},
   year={2015},
   type={Doctoral Thesis},
   organization={Leiden University},
}

\bib{MR3705252}{article}{
   author={Brunault, Fran\c{c}ois},
   title={R\'{e}gulateurs modulaires explicites via la m\'{e}thode de
   Rogers-Zudilin},
   language={French, with English and French summaries},
   journal={Compos. Math.},
   volume={153},
   date={2017},
   number={6},
   pages={1119--1152},
   issn={0010-437X},
   review={\MR{3705252}},
   doi={10.1112/S0010437X17007023},
}

\bib{BN2019}{article}{
  author={Brunault, Fran\c{c}ois},
  author={Neururer, Michael},
  title={Fourier expansions at cusps},
  journal={The Ramanujan Journal}
  date={2019},
}

\bib{MR1228206}{book}{
   author={Cohen, Henri},
   title={A course in computational algebraic number theory},
   series={Graduate Texts in Mathematics},
   volume={138},
   publisher={Springer-Verlag, Berlin},
   date={1993},
   pages={xii+534},
   isbn={3-540-55640-0},
   review={\MR{1228206}},
   doi={10.1007/978-3-662-02945-9},
}

\bib{MR2016709}{article}{
      author={Cummins, C.~J.},
      author={Pauli, S.},
       title={Congruence subgroups of {${\rm PSL}(2,{\Bbb Z})$} of genus less
  than or equal to 24},
        date={2003},
        ISSN={1058-6458},
     journal={Experiment. Math.},
      volume={12},
      number={2},
       pages={243\ndash 255},
}

\bib{arXiv:2105.02060}{article}{
   author={Daniels, Harris B.},
   author={Lozano-Robledo, \'{A}lvaro},
   author={Morrow, Jackson S.},
   title={Towards a classification of entanglements of Galois
   representations attached to elliptic curves},
   journal={Rev. Mat. Iberoam.},
   volume={39},
   date={2023},
   number={3},
   pages={803--844},
   issn={0213-2230},
}

\bib{arxiv2008.09886}{article}{
   author={Daniels, Harris B.},
   author={Morrow, Jackson S.},
   title={A group theoretic perspective on entanglements of division fields},
   journal={Trans. Amer. Math. Soc. Ser. B},
   volume={9},
   date={2022},
   pages={827--858},
}

\bib{MR2448246}{article}{
   author={Fisher, Tom},
   title={The invariants of a genus one curve},
   journal={Proc. Lond. Math. Soc. (3)},
   volume={97},
   date={2008},
   number={3},
   pages={753--782},
   issn={0024-6115},
   review={\MR{2448246}},
   doi={10.1112/plms/pdn021},
}

\bib{MR2439422}{article}{
   author={Jones, Nathan},
   title={A bound for the torsion conductor of a non-CM elliptic curve},
   journal={Proc. Amer. Math. Soc.},
   volume={137},
   date={2009},
   number={1},
   pages={37--43},
   issn={0002-9939},
   review={\MR{2439422}},
   doi={10.1090/S0002-9939-08-09436-7},
}

\bib{Jones}{article}{
   author={Jones, Nathan},
   title={Almost all elliptic curves are Serre curves},
   journal={Trans. Amer. Math. Soc.},
   volume={362},
   date={2010},
   number={3},
   pages={1547--1570},
 }

\bib{MR3350106}{article}{
   author={Jones, Nathan},
   title={${\rm GL}_2$-representations with maximal image},
   journal={Math. Res. Lett.},
   volume={22},
   date={2015},
   number={3},
   pages={803--839},
   issn={1073-2780},
   review={\MR{3350106}},
   doi={10.4310/MRL.2015.v22.n3.a10},
}

\bib{MR4190460}{article}{
   author={Jones, Nathan},
   title={A bound for the conductor of an open subgroup of ${\rm GL}_2 $
   associated to an elliptic curve},
   journal={Pacific J. Math.},
   volume={308},
   date={2020},
   number={2},
   pages={307--331},
   issn={0030-8730},
   review={\MR{4190460}},
   doi={10.2140/pjm.2020.308.307},
}

\bib{MR4374148}{article}{
   author={Jones, Nathan},
   author={McMurdy, Ken},
   title={Elliptic curves with non-abelian entanglements},
   journal={New York J. Math.},
   volume={28},
   date={2022},
   pages={182--229},
}

\bib{MR2104361}{article}{
   author={Kato, Kazuya},
   title={$p$-adic Hodge theory and values of zeta functions of modular
   forms},
   language={English, with English and French summaries},
   note={Cohomologies $p$-adiques et applications arithm\'{e}tiques. III},
   journal={Ast\'{e}risque},
   number={295},
   date={2004},
   pages={ix, 117--290},
   issn={0303-1179},
   review={\MR{2104361}},
}

\bib{MR0447119}{article}{
   author={Katz, Nicholas M.},
   title={$p$-adic properties of modular schemes and modular forms},
   conference={
      title={Modular functions of one variable, III},
      address={Proc. Internat. Summer School, Univ. Antwerp, Antwerp},
      date={1972},
   },
   book={
      publisher={Springer, Berlin},
   },
   date={1973},
   pages={69--190. Lecture Notes in Mathematics, Vol. 350},
}

\bib{MR2904927}{article}{
   author={Khuri-Makdisi, Kamal},
   title={Moduli interpretation of Eisenstein series},
   journal={Int. J. Number Theory},
   volume={8},
   date={2012},
   number={3},
   pages={715--748},
   issn={1793-0421},
   review={\MR{2904927}},
   doi={10.1142/S1793042112500418},
}

\bib{MR0568299}{book}{
      author={Lang, Serge},
      author={Trotter, Hale},
       title={Frobenius distributions in {${\rm GL}_{2}$}-extensions},
      series={Lecture Notes in Mathematics, Vol. 504},
   publisher={Springer-Verlag},
     address={Berlin},
        date={1976},
        note={Distribution of Frobenius automorphisms in
  ${{\rm{G}}L}_{2}$-extensions of the rational numbers},
      review={\MR{MR0568299 (58 \#27900)}},
}

\bib{lmfdb}{misc}{
	label={LMFDB},
      author={{LMFDB Collaboration}, The},
       title={\href{https://www.lmfdb.org}{The $L$-functions and modular forms
  database}},
        note={Online database, accessed March 2024},
}

\bib{MR0450283}{article}{
   author={Mazur, B.},
   title={Rational points on modular curves},
   conference={
      title={Modular functions of one variable, V},
      address={Proc. Second Internat. Conf., Univ. Bonn, Bonn},
      date={1976},
   },
   book={
      publisher={Springer, Berlin},
   },
   date={1977},
   pages={107--148. Lecture Notes in Math., Vol. 601},
   review={\MR{0450283}},
}

\bib{MR488287}{article}{
   author={Mazur, B.},
   title={Modular curves and the Eisenstein ideal},
   note={With an appendix by Mazur and M. Rapoport},
   journal={Inst. Hautes \'{E}tudes Sci. Publ. Math.},
   number={47},
   date={1977},
   pages={33--186 (1978)},
   issn={0073-8301},
   review={\MR{488287}},
}

\bib{MR482230}{article}{
   author={Mazur, B.},
   title={Rational isogenies of prime degree (with an appendix by D.
   Goldfeld)},
   journal={Invent. Math.},
   volume={44},
   date={1978},
   number={2},
   pages={129--162},
   issn={0020-9910},
   review={\MR{482230}},
   doi={10.1007/BF01390348},
}

\bib{MR3957898}{article}{
   author={Morrow, Jackson S.},
   title={Composite images of Galois for elliptic curves over $\bold{Q}$ and
   entanglement fields},
   journal={Math. Comp.},
   volume={88},
   date={2019},
   number={319},
   pages={2389--2421},
   issn={0025-5718},
   review={\MR{3957898}},
   doi={10.1090/mcom/3426},
}

\bib{MR0282975}{article}{
   author={Mumford, David},
   title={Varieties defined by quadratic equations},
   conference={
      title={Questions on Algebraic Varieties},
      address={C.I.M.E., III Ciclo, Varenna},
      date={1969},
   },
   book={
      publisher={Edizioni Cremonese, Rome},
   },
   date={1970},
   pages={29--100},
   review={\MR{0282975}},
}

\bib{Rakvi}{article}{
	author={Rakvi},
	title={A Classification of Genus $0$ Modular Curves with Rational Points},
	date={2021},
	note={\href{https://arxiv.org/abs/2105.14623}{arXiv:2105.14623} [math.NT]},
}

\bib{Ribet-76}{article}{
   author={Ribet, K. A.},
   title={Galois action on division points of abelian varieties with real
   multiplications},
   journal={Amer. J. Math.},
   volume={98},
   date={1976},
   number={3},
   pages={751--804},
}

\bib{RSZ}{article}{
   author={Rouse, Jeremy},
   author={Sutherland, Andrew V.},
   author={Zureick-Brown, David},
   title={$\ell$-adic images of Galois for elliptic curves over $\Bbb{Q}$},
   note={With an appendix with John Voight},
   journal={Forum Math. Sigma},
   volume={10},
   date={2022},
   pages={Paper No. e62, 63},
}

\bib{MR3500996}{article}{
   author={Rouse, Jeremy},
   author={Zureick-Brown, David},
   title={Elliptic curves over $\Bbb Q$ and 2-adic images of Galois},
   journal={Res. Number Theory},
   volume={1},
   date={2015},
   pages={Paper No. 12, 34},
   issn={2522-0160},
   review={\MR{3500996}},
   doi={10.1007/s40993-015-0013-7},
}

\bib{MR289516}{article}{
   author={Saint-Donat, Bernard},
   title={Sur les \'{e}quations d\'{e}finissant une courbe alg\'{e}brique},
   language={French},
   journal={C. R. Acad. Sci. Paris S\'{e}r. A-B},
   volume={274},
   date={1972},
   pages={A324--A327},
   issn={0151-0509},
   review={\MR{289516}},
}

\bib{Serre-Inv72}{article}{
   author={Serre, J.-P.},
   title={Propri\'et\'es galoisiennes des points d'ordre fini des courbes
   elliptiques},
   journal={Invent. Math.},
   volume={15},
   date={1972},
   number={4},
   pages={259--331},
}

\bib{MR644559}{article}{
      author={Serre, J.-P.},
       title={Quelques applications du th\'eor\`eme de densit\'e de
  {C}hebotarev},
        date={1981},
     journal={Inst. Hautes \'Etudes Sci. Publ. Math.},
      number={54},
       pages={323\ndash 401},
}

\bib{Serre-abelian}{book}{
      author={Serre, J.-P.},
   title={Abelian $l$-adic representations and elliptic curves},
   series={Research Notes in Mathematics},
   volume={7},
   note={With the collaboration of Willem Kuyk and John Labute;
   Revised reprint of the 1968 original},
   publisher={A K Peters Ltd.},
   place={Wellesley, MA},
   date={1998},
   pages={199},
}
\bib{MR1291394}{book}{
   author={Shimura, Goro},
   title={Introduction to the arithmetic theory of automorphic functions},
   series={Publications of the Mathematical Society of Japan},
   volume={11},
   note={Reprint of the 1971 original;
   Kan\^{o} Memorial Lectures, 1},
   publisher={Princeton University Press, Princeton, NJ},
   date={1994},
   pages={xiv+271},
}

\bib{MR3482279}{article}{
   author={Sutherland, Andrew V.},
   title={Computing images of Galois representations attached to elliptic
   curves},
   journal={Forum Math. Sigma},
   volume={4},
   date={2016},
   pages={Paper No. e4, 79},
   review={\MR{3482279}},
}

\bib{MR3671434}{article}{
   author={Sutherland, Andrew V.},
   author={Zywina, David},
   title={Modular curves of prime-power level with infinitely many rational
   points},
   journal={Algebra Number Theory},
   volume={11},
   date={2017},
   number={5},
   pages={1199--1229},
   issn={1937-0652},
   review={\MR{3671434}},
   doi={10.2140/ant.2017.11.1199},
}

\bib{MR2721742}{article}{
   author={Zywina, David},
   title={Elliptic curves with maximal Galois action on their torsion
   points},
   journal={Bull. Lond. Math. Soc.},
   volume={42},
   date={2010},
   number={5},
   pages={811--826},
   issn={0024-6093},
}
\bib{possibleindices}{article}{
	author={Zywina, David},
	title={Possible indices for the Galois image of elliptic curves over $\QQ$},
	date={2015},
	note={\href{https://arxiv.org/abs/1508.07663}{arXiv:1508.07663} [math.NT]},
}

\bib{possibleimages}{article}{
	author={Zywina, David},
	title={On the possible images of the mod $\ell$ representations associated to elliptic curves over $\QQ$},
	date={2015},
	note={\href{https://arxiv.org/abs/1508.07660}{arXiv:1508.07660} [math.NT]},
}

\bib{actionsoncuspsforms}{article}{
	author={Zywina, David},
	title={Computing actions on cusp forms},
	date={2020},
	note={\href{https://arxiv.org/abs/2001.07270}{arXiv:2001.07270} [math.NT]},
}

\bib{surjectivityalgorithm}{article}{
	author={Zywina, David},
	title={On the surjectivity of mod $\ell$ representations associated to elliptic curves},
   journal={Bull. Lond. Math. Soc.},
   volume={54},
   date={2022},
   number={6},
   pages={2045--2584},
}

\bib{github}{misc}{
author={Zywina, David},
date={2023},
title={\emph{GitHub repository related to} Explicit open images for elliptic curves over $\QQ$},
note={\url{https://github.com/davidzywina/OpenImage}},
}

\end{biblist}
\end{bibdiv}

\end{document}